\theoremstyle{plain}
\newtheorem{theorem}[subsubsection]{Theorem}
\newtheorem{lemma}[subsubsection]{Lemma}
\newtheorem{proposition}[subsubsection]{Proposition}
\newtheorem{notation}[subsubsection]{Notation}
\newtheorem{corollary}[subsubsection]{Corollary}
\theoremstyle{definition}
\newtheorem{definition}[subsubsection]{Definition}
\newtheorem{construction}[subsubsection]{Construction}
\newtheorem{example}[subsubsection]{Example}
\theoremstyle{remark}
\newtheorem{remark}[subsubsection]{Remark}
\numberwithin{equation}{section}
\newcommand{\CC}{\mathbb{C}}
\newcommand{\RR}{\mathbb{R}}
\newcommand{\QQ}{\mathbb{Q}}
\newcommand{\ZZ}{\mathbb{Z}}
\newcommand{\HH}{\mathrm{H}}
\newcommand{\Pic}{\mathrm{Pic}}
\newcommand{\Br}{\mathrm{Br}}
\newcommand{\Hom}{\mathrm{Hom}}
\newcommand{\Coh}{\mathrm{Coh}}
\newcommand{\cA}{\mathcal{A}}
\newcommand{\cL}{\mathcal{L}}
\newcommand{\dX}{\mathcal{X}}
\newcommand{\cB}{\mathcal{B}}
\newcommand{\sX}{\mathscr{X}}
\newcommand{\too}{\longrightarrow}
\newcommand{\tX}{\widetilde{X}}
\newcommand{\cEnd}{\mathcal{E}nd}
\begin{document}
\title{Morita theory of twisted sheaves on $\mu_{n}$-gerbes of line bundles}
		\date{}
\author{Yeqin Liu and Yu Shen}
  \address{Department of Mathematics, University of Michigan, 530 Church St,
  Ann Arbor, MI 48109, USA}
\email{yqnl@umich.edu}
\address{Department of Mathematics, Michigan State University, 619 Red Cedar Road, East Lansing, MI 48824, USA}
  \email{shenyu5@msu.edu}	
  
\subjclass[2020]{14F22}

\begin{abstract}
    We study Morita theory of twisted sheaves on $\mu_{n}$-gerbes of line bundles $\sX$. In this context,  we find explicit equivalent conditions for when two Azumaya algebras on $\sX$ are Morita equivalent. Additionally, we   provide an example showing that   Căldăraru's Conjecture does not hold for Deligne--Mumford stacks in general.
    
    \end{abstract}
  
\maketitle

\section{Introduction}
In \cite{Gab62}, Gabriel showed that a Noetherian scheme $X$ can be reconstructed from $\operatorname{Coh}(X)$, the abelian  category of coherent sheaves on $X$:

\begin{theorem}[{\cite{Gab62, Ros04}}]\label{reconsctruction theorem} Let $X$ and $Y$ be Noetherian schemes.  Then
\begin{center}
   
  $\operatorname{Coh}(X)\cong \operatorname{Coh}(Y)$ as abelian categories  $\Longleftrightarrow$ $X\cong Y$.
  \end{center}

\end{theorem}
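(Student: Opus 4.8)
The direction $X \cong Y \Rightarrow \operatorname{Coh}(X) \cong \operatorname{Coh}(Y)$ is immediate: an isomorphism $f\colon X \to Y$ induces an exact equivalence $f^{*}\colon \operatorname{Coh}(Y)\to\operatorname{Coh}(X)$ (any equivalence of abelian categories is automatically exact). The content lies in the converse, and the plan is to reconstruct the locally ringed space $(X,\mathcal{O}_{X})$ purely from the abelian category $\operatorname{Coh}(X)$, so that an abstract equivalence $\operatorname{Coh}(X)\cong\operatorname{Coh}(Y)$ transports into an isomorphism of the reconstructed spaces.

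First I would pass from coherent to quasi-coherent sheaves. Since $X$ is Noetherian, every quasi-coherent sheaf is a filtered colimit of its coherent subsheaves, so $\operatorname{Qcoh}(X)\cong\operatorname{Ind}(\operatorname{Coh}(X))$ is the ind-completion. Because ind-completion is a purely categorical construction, the given equivalence extends canonically to $\operatorname{Qcoh}(X)\cong\operatorname{Qcoh}(Y)$, and it suffices to reconstruct $X$ from $\operatorname{Qcoh}(X)$. The gain is that $\operatorname{Qcoh}(X)$ is a Grothendieck category, hence has enough injectives, which is exactly what Gabriel's method exploits.

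Next I would recover the underlying set and topology. Following Gabriel, the indecomposable injective objects of $\operatorname{Qcoh}(X)$ are precisely the injective hulls $E(\kappa(x))$ of the residue skyscrapers, and $x\mapsto E(\kappa(x))$ is a bijection onto the points of $X$; this recovers $|X|$ as a set. The topology is then extracted from the lattice of localizing (Serre) subcategories: closed subsets $Z\subseteq X$ correspond to the localizing subcategories $\operatorname{Qcoh}_{Z}(X)$ of sheaves supported on $Z$, and the specialization order among the points $E(\kappa(x))$ is detected by membership in these subcategories. To recover the structure sheaf, for an open $U\subseteq X$ one uses the Serre quotient $\operatorname{Qcoh}(X)/\operatorname{Qcoh}_{X\setminus U}(X)\cong\operatorname{Qcoh}(U)$ and identifies $\mathcal{O}_{X}(U)=\Gamma(U,\mathcal{O}_{U})$ with the endomorphism ring of the identity functor (the center) of this quotient category. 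These rings assemble into a sheaf of rings, producing a locally ringed space canonically isomorphic to $(X,\mathcal{O}_{X})$; transporting this construction through the equivalence yields $X\cong Y$.

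The hard part will be the last step: recovering not merely the topological space but the full scheme structure, namely realizing the structure sheaf as a sheaf of rings and checking compatibility with restriction maps so that the local categorical data glue to the global $\mathcal{O}_{X}$. Verifying that the bijection on points is a homeomorphism and that every piece of the reconstruction is natural in the equivalence — so that no choices intervene — is where the genuine work resides. I would also need to confirm that the ind-completion step and the injective-hull classification behave well in the non-affine, merely Noetherian setting, which is precisely the generality treated in \cite{Gab62, Ros04}.
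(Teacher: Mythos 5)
The paper does not actually prove this statement---it is imported directly from Gabriel and Rosenberg (the cited references), so there is no internal proof to compare against. Your outline reproduces exactly the standard Gabriel--Rosenberg reconstruction used in those sources (points from indecomposable injectives of $\operatorname{Qcoh}(X)=\operatorname{Ind}(\operatorname{Coh}(X))$, topology from Serre/localizing subcategories, structure sheaf as the center of the Serre quotients $\operatorname{Qcoh}(X)/\operatorname{Qcoh}_{X\setminus U}(X)$), and as a sketch it is sound; the one imprecision worth noting is that localizing subcategories correspond to \emph{specialization-closed} subsets rather than closed ones, so picking out the genuinely closed subsets (e.g.\ as supports of single coherent objects) is part of the work you rightly defer to the final step.
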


This theorem can be generalized to the setting of twisted sheaves. A notable special case is the following conjecture by Căldăraru \cite{Cal00}, 
which was proved in various generalities
by \cite{CS07,Per09,  Ant16, CG13} for quasi-compact quasi-separated algebraic spaces.


\begin{theorem}[Conjectured by \cite{Cal00}, proved by {\cite{CS07,Per09,  Ant16, CG13}}]\label{big theorem}
Let $X$  be a Noetherian algebraic space over a field $k$, and $\alpha, \beta \in \HH_{\acute{e}t}^{2}(X, \mathbb{G}_{m})$. Then
\begin{center}
$\operatorname{Coh}(X,\alpha)\cong \operatorname{Coh}(X,\beta) $ as $k$-linear abelian categories $  \Longleftrightarrow$ 
there exists a $k$-automorphism $f: X\to X$, such that $f^{*}\beta=\alpha$.
\end{center}
\end{theorem}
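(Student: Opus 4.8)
\emph{Reverse implication.} This direction carries no real content and I would dispatch it first. Given a $k$-automorphism $f\colon X\to X$ with $f^{*}\beta=\alpha$, pulling back a $\beta$-twisted sheaf along $f$ produces an $f^{*}\beta=\alpha$-twisted sheaf, so $f^{*}$ defines a $k$-linear functor $\Coh(X,\beta)\to\Coh(X,\alpha)$, which is an equivalence with quasi-inverse $(f^{-1})^{*}$.

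\emph{Forward implication, Step 1: recovering $X$.} Suppose $\Phi\colon\Coh(X,\alpha)\to\Coh(X,\beta)$ is a $k$-linear equivalence. The first task is to extract an underlying automorphism of $X$. Since a $\Gm$-gerbe is \'etale-locally trivial, on a fine enough cover $\{U_{i}\}$ one has $\Coh(U_{i},\alpha)\simeq\Coh(U_{i})$, so the twist does not affect the intrinsic data used in Theorem \ref{reconsctruction theorem} to rebuild $X$ from $\Coh(X)$---namely the spectrum of the abelian category, its topology, and its structure sheaf, all read off from lattices of subobjects and centers of localizations. I would therefore upgrade Theorem \ref{reconsctruction theorem} to twisted categories and conclude that $\Phi$ induces a $k$-automorphism $f\colon X\to X$. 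Composing $\Phi$ with the pullback equivalence along $f$ from the reverse direction, I reduce to the case that the induced automorphism is trivial, i.e. to an $\OO_{X}$-linear equivalence $\Coh(X,\alpha)\to\Coh(X,\gamma)$ with $\gamma$ the $f$-transform of $\beta$; unwinding at the end will turn $\alpha=\gamma$ into $f^{*}\beta=\alpha$.

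\emph{Forward implication, Step 2: the Morita step.} It remains to show that an $\OO_{X}$-linear equivalence $\Coh(X,\alpha)\to\Coh(X,\gamma)$ forces $\alpha=\gamma$. Here I would apply Morita theory for twisted sheaves: such an equivalence is given by tensoring with an invertible bimodule, which in the gerby setting is an invertible (rank-one) $\gamma\alpha^{-1}$-twisted sheaf $\mathcal{P}$, so that $\Phi\cong(-)\otimes\mathcal{P}$. The final observation is that an invertible $\delta$-twisted sheaf exists precisely when $\delta=0$: trivializing it on a cover reduces its gluing data to a $1$-cochain $\{\phi_{ij}\}\subset\Gm(U_{ij})$ whose coboundary represents $\delta$, so its mere existence exhibits $\delta$ as a coboundary. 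Applying this to $\delta=\gamma\alpha^{-1}$ gives $\gamma=\alpha$, and hence $f^{*}\beta=\alpha$.

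\emph{Main obstacle.} The principal difficulty is Step 1. Unlike $\Coh(X)$, the twisted category $\Coh(X,\alpha)$ has no unit and is not monoidal, so $\OO_{X}$ cannot be read off as an internal endomorphism algebra; I would need to verify that the reconstruction underlying Theorem \ref{reconsctruction theorem} is genuinely lattice-theoretic, hence transported by any $k$-linear equivalence, and is insensitive to the \'etale-local twist. A secondary point is to carry out the Morita classification over an algebraic space rather than a scheme, checking that the local ``tensoring by a line bundle'' descriptions descend to a single invertible twisted sheaf carrying the stated cohomological obstruction.
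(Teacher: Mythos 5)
The paper offers no proof of Theorem \ref{big theorem}: it is quoted as a known result, with the proofs living in the cited references \cite{CS07,Per09,Ant16,CG13}, and it is later invoked only through Antieau's formulation (Lemma \ref{twisted theorem antieau}). So your attempt can only be compared against those proofs, and in outline it follows the same route they take in the scheme case: a Gabriel-type reconstruction of $X$ from the twisted abelian category, followed by a Morita-type step showing that the residual $\OO_{X}$-linear autoequivalence is tensoring by an invertible twisted sheaf, whose mere existence trivializes the difference of the two classes. Your reverse implication is fine, and your closing observation --- that a $\delta$-twisted line bundle exhibits $\delta$ as a \v{C}ech coboundary, hence $\delta=0$ --- is correct and is exactly the mechanism by which the equality of classes appears in the literature.

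That said, both load-bearing steps are deferred, and one of them would fail as you describe it. First, the theorem is stated for Noetherian \emph{algebraic spaces}, and there your Step 1 cannot work in the ``genuinely lattice-theoretic'' form you propose: the lattice of Serre subcategories together with the centers of localizations recovers only the underlying Zariski ringed space, and an algebraic space is not determined by its Zariski ringed space. This is precisely why \cite{CG13} and \cite{Ant16} abandon Gabriel's lattice argument in favor of a moduli-theoretic one (rebuilding $X$ as a moduli functor of point-like quotient objects in the category); upgrading Theorem \ref{reconsctruction theorem} along the lines you sketch can at best reach the scheme case of \cite{Per09}. Second, in Step 2 the claim that an $\OO_{X}$-linear equivalence is given by tensoring with an invertible bimodule is an Eilenberg--Watts statement: it requires the functor to be a module functor over $\operatorname{QCoh}(X)$ (compatible with tensoring by all untwisted sheaves), which does not follow formally from $k$-linearity plus triviality of the induced automorphism. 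In the actual proofs the twisted line bundle is constructed by hand --- untwisted Gabriel on a trivializing cover produces local line bundles, and the obstruction to gluing them into a global $(\gamma\alpha^{-1})$-twisted line bundle is killed by the existence of the global functor; your bimodule invocation conceals exactly the step where this work happens. So the skeleton is right and matches the cited proofs, but the two points you flag as ``to be verified'' are the entire content of the theorem, and the reconstruction point needs a different idea from the one you propose.
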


Note that Theorem \ref{reconsctruction theorem} does not hold for Deligne--Mumford stacks. 
\begin{example}\label{trivial example}
    $\operatorname{Coh}(\mathbf{B}\mu_{2, \mathbb{C}})\cong \operatorname{Coh}(\operatorname{Spec}(\mathbb{C}\times \mathbb{C}))$ as $\mathbb{C}$-linear categories, but $\mathbf{B}\mu_{2,\mathbb{C}} \not \cong \operatorname{Spec}(\mathbb{C}\times \mathbb{C})$ as stacks.
\end{example}
However, it was previously unknown whether Theorem \ref{big theorem} extends to Deligne--Mumford stacks.  In this paper, we study Morita theory of twisted sheaves on $\mu_{n}$-gerbes of line bundles over smooth projective varieties.  We get a complete characterization of Morita equivalent Azumaya algebras. As a corollary, we show that Căldăraru's Conjecture \textbf{does not} hold for Deligne--Mumford stacks in general (see Corollary \ref{caldararu}), which was not known before.

To state the main result, we need the following settings. Let $\dX$ be a Noetherian Deligne--Mumford stack over a field $k$. We say that two Azumaya algebras $\cA$ and $\cB$ on $\dX$ are \emph{Morita equivalent} if $\Coh(\dX, \cA)\cong \Coh(\dX,\cB)$ as $k$-linear categories. Let $n\in \ZZ_{>0}$, and  assume the base field $k$ contains $n$-th roots of unity with $\operatorname{char}{k}=p \nmid n$ ($p$ can be 0). Let $X/k$ be a smooth projective variety, and $\sX$ be a $\mu_{n}$-th  gerbe of line bundle  over $X$ (see Definition \ref{rootgerbe}). For example, $\sX$ can be $\mathbf{B}\mu_{n,X}$. Then there is an isomorphism (see Proposition \ref{exact sequence}):
\begin{equation}\label{intropsi}
    \psi: \HH^{1}_{\acute{e}t}(X,\mu_{n})\oplus \Br(X) \xrightarrow{\sim} \Br(\sX),\quad  (q: \tX\to X, [\cA']) \mapsto [\cA].
\end{equation}
Here, $q: \widetilde{X}\to X$ is  a $\mu_{n}$-torsor corresponding to an element  in $\HH^{1}_{\acute{e}t}(X,\mu_{n})$. Let $\cA$ and $\cB$ be Azumaya algebras over $\sX$ such that $[\cA]=\psi(q_{1}: \widetilde{X}_{1}\to X, [\cA'])$ and $[\cB]=\psi(q_{2}: \widetilde{X}_{2}\to X, [\cB'])$ under (\ref{intropsi}).
Our main result is a full classification of Morita equivalent Azumaya algebras over a $\mu_{n}$-gerbe of line bundle.

\begin{theorem}[{Theorem \ref{Theorem root gerbe general}}]\label{main theorem 2} Let $\cA$ and $\cB$ be two Azumaya algebras on $\sX$. Then $\cA$ and $\cB$ are Morita equivalent if and only if there is an isomorphism $f: \widetilde{X}_{1}
\xrightarrow{\sim}
\widetilde{X}_{2}$ as algebraic varieties over $k$ (not as $\mu_{n}$-torsors), and $[q_{1}^{*}\cA']=[f^{*}q_{2}^{*}\cB']$ in $\Br(\widetilde{X}_{1})$.
 \end{theorem}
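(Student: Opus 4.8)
The plan is to reduce the assertion to the reconstruction theorem for twisted sheaves, Theorem~\ref{big theorem}, applied to the covers $\tX_1$ and $\tX_2$, by first establishing a single structural equivalence that trades a twisted-sheaf category on the gerbe $\sX$ for one on the torsor $\tX$. Since $\cA$ is Azumaya with class $[\cA]\in\Br(\sX)$, Morita theory gives a $k$-linear equivalence $\Coh(\sX,\cA)\cong\Coh(\sX,[\cA])$ between $\cA$-modules and $[\cA]$-twisted sheaves, and likewise for $\cB$; so it suffices to work with the twisted categories $\Coh(\sX,\gamma)$ for $\gamma=\psi(q,[\cA'])$. The heart of the argument is the following lemma, which I would isolate and prove separately:
\begin{center}
$\Coh(\sX,\psi(q\colon\tX\to X,[\cA']))\;\cong\;\Coh(\tX,q^{*}\cA')$ as $k$-linear categories.
\end{center}

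To prove the lemma I would use the $\mu_n$-weight decomposition of $\Coh(\sX)$. The gerbe-of-a-line-bundle hypothesis (Definition~\ref{rootgerbe}) enters precisely here: the gerbe class has trivial image in $\Br(X)$ by the Kummer sequence, so each weight summand is the untwisted $\Coh(X)$, and $\Coh(\sX)$ is a product of $n$ copies of $\Coh(X)$ with no interaction between weights. The content of the first summand of $\psi$ from Proposition~\ref{exact sequence} is that twisting by $\psi(q,0)$ deforms this trivial product structure: the weight-shift by the tautological line bundle $\cL$ (with $\cL^{\otimes n}\cong\pi^{*}L$) is rotated by the torsor line bundle $M$ (with $M^{\otimes n}\cong\OO_{X}$), so that, after absorbing the tautological twist $\cL$, the $n$ weight summands glue into modules over the cyclic $\OO_{X}$-algebra $q_{*}\OO_{\tX}=\bigoplus_{i=0}^{n-1}M^{\otimes i}$, that is, into $\Coh(\tX)$; carrying the base class $[\cA']$ through the same bookkeeping tensors it up to $\Coh(\tX,q^{*}\cA')$. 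Making the identification of this twisted weight-shift algebra with $q_{*}\OO_{\tX}$ fully precise — tracking the cocycle describing the first summand of $\psi$ in Proposition~\ref{exact sequence} — is the main obstacle, and I expect it to be the technical core.

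With the lemma in hand the theorem is immediate. The covers $\tX_i\to X$ are finite étale over a smooth projective variety, hence are themselves smooth projective $k$-varieties, in particular Noetherian algebraic spaces over $k$, and $q_i^{*}\cA'$, $q_i^{*}\cB'$ define Brauer classes on them. Applying Theorem~\ref{big theorem} — in the evident two-variety form, which follows from the same reconstruction arguments of the cited references (the underlying space is recovered from its twisted category) — gives $\Coh(\tX_1,q_1^{*}\cA')\cong\Coh(\tX_2,q_2^{*}\cB')$ as $k$-linear categories if and only if there is a $k$-isomorphism $f\colon\tX_1\xrightarrow{\sim}\tX_2$ with $f^{*}[q_2^{*}\cB']=[q_1^{*}\cA']$. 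Combining with the lemma, $\cA$ and $\cB$ are Morita equivalent precisely under this condition; for the backward direction one needs only the functoriality of twisted pullback along $f$, while the forward direction uses the full reconstruction. Note that $f$ is recovered merely as a map of $k$-varieties, with no compatibility over $X$, which is exactly why the statement asks for an isomorphism of varieties rather than of $\mu_n$-torsors.
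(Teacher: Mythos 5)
Your overall architecture coincides with the paper's: the lemma you isolate is exactly the paper's Lemma \ref{actualmaintheorem} (an equivalence $\Coh(\tX, q^{*}\cA')\simeq\Coh(\sX,\cA_{(\mathcal{L},\alpha)}\otimes p^{*}\cA')$), and your concluding step is exactly the paper's as well — the paper invokes the two-variety twisted reconstruction theorem directly as Lemma \ref{twisted theorem antieau} (Antieau), so you do not even need to argue that a ``two-variety form'' of Theorem \ref{big theorem} follows from the cited references. Your side remarks (the covers $\tX_i$ are smooth projective since $\mu_n$ is \'etale in the stated characteristic; $f$ is recovered only as a $k$-isomorphism, with no compatibility over $X$) are also correct and match the paper.

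The gap is the part you flag yourself: the identification of the twisted weight-shift structure with $q_{*}\OO_{\tX}$ is not carried out, and this is where essentially all of the paper's work in Sections \ref{section 4} and \ref{section 5} lives, so it cannot be waved through. The paper does not track cocycles. Instead it chooses a good Azumaya representative of the class $\psi((\mathcal{L},\alpha),[\cA'])$: Construction \ref{construction1} takes $F=\bigoplus_{j=0}^{n-1}\mathcal{L}^{\otimes j}$ and endows $\cEnd(F)$ with the $\mu_{n}$-action twisted by the trivialization $\alpha$, producing $\cA_{(\mathcal{L},\alpha)}$; Proposition \ref{description of injective} checks this representative really realizes the summand $i\colon \HH^{1}(X,\mu_{n})\hookrightarrow\Br(\sX)$; Lemma \ref{pushforward of sheaf} computes $p_{*}\cA_{(\mathcal{L},\alpha)}\cong q_{*}\OO_{\tX}$, so that $\Coh(X,p_{*}\cA_{(\mathcal{L},\alpha)})\cong\Coh(\tX)$; and the equivalence $\widetilde{p}^{*}\colon\Coh(X,p_{*}\cA)\to\Coh(\sX,\cA)$ is then proved by combining fully faithfulness (Proposition \ref{fully faithful}) with a cone argument and faithfully flat base change to a cover where the Brauer class dies (Lemma \ref{equivalence lemma}, Lemma \ref{halfmaintheorem}, and the proof of Theorem \ref{general theorem}). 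A concrete pitfall your representative-free sketch must confront: $\widetilde{p}^{*}$ is \emph{not} an equivalence for an arbitrary representative of the class — for $\cA=\OO_{\sX}$ it is the fully faithful but non-essentially-surjective embedding $\Coh(X)\hookrightarrow\Coh(\sX)$ — so the argument genuinely requires a representative $\cEnd(F)$ all of whose weight components $F_{i}$ are nonzero (the hypothesis of Lemma \ref{equivalence lemma}); ``absorbing the tautological twist'' has to be implemented so as to land in that situation, which is precisely what Construction \ref{construction1} accomplishes. Until that step is supplied, your proof of the key lemma, and hence of the theorem, is incomplete.
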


 As an application, we show that Căldăraru's Conjecture does not hold for stacks in general.

\begin{corollary}[{Corollary \ref{caldararu}}]
Căldăraru's conjecture
does not hold for Deligne--Mumford stacks.   There exist  two Azumaya algebras $\cA$ and $\cB$ on $\mathbf{B}\mu_{2,\mathbb{R}}$ which are Morita equivalent, but there does not exist an automorphism $\varphi: \mathbf{B}\mu_{2,\mathbb{R}}\to \mathbf{B}\mu_{2,\mathbb{R}} $ over  $\mathbb{R}$ such that $[\cA]=[\varphi^{*}\cB]$ in $\Br(\mathbf{B}\mu_{2,\mathbb{R}})$.
    
\end{corollary}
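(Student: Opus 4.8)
The plan is to derive this Corollary directly from the main classification Theorem \ref{main theorem 2} applied to the specific case $X = \Spec(\mathbb{R})$ with $n = 2$, so that $\sX = \mathbf{B}\mu_{2,\mathbb{R}}$. First I would unwind the isomorphism $\psi$ from \eqref{intropsi} in this setting. Since $X = \Spec(\mathbb{R})$ is a point, $\Br(X) = \Br(\mathbb{R}) \cong \mathbb{Z}/2\mathbb{Z}$, generated by the Hamilton quaternions, and $\HH^{1}_{\acute{e}t}(\Spec\mathbb{R}, \mu_{2}) \cong \mathbb{R}^{\times}/(\mathbb{R}^{\times})^{2} \cong \mathbb{Z}/2\mathbb{Z}$ by Kummer theory. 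An element of the latter group corresponds to a $\mu_{2}$-torsor $q\colon \widetilde{X} \to \Spec\mathbb{R}$; the nontrivial class gives $\widetilde{X} = \Spec\mathbb{C}$, while the trivial class gives $\widetilde{X} = \Spec(\mathbb{R} \times \mathbb{R})$.

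The key idea is to exploit the mismatch between the torsor structure and the bare variety structure that Theorem \ref{main theorem 2} highlights: Morita equivalence only requires an isomorphism $f\colon \widetilde{X}_{1} \xrightarrow{\sim} \widetilde{X}_{2}$ of $k$-varieties, \emph{not} of $\mu_{n}$-torsors. I would choose $\cA$ and $\cB$ so that they correspond under $\psi$ to the \emph{same} nontrivial torsor $\widetilde{X} = \Spec\mathbb{C}$ but to opposite behavior on the Brauer factor, or more cleverly, choose the two Azumaya algebras so that the induced Brauer classes on $\widetilde{X}_{1} = \widetilde{X}_{2} = \Spec\mathbb{C}$ both vanish (since $\Br(\mathbb{C}) = 0$, every Brauer class on $\Spec\mathbb{C}$ is trivial). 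Concretely, if both $[\cA]$ and $[\cB]$ come from the nontrivial torsor $\Spec\mathbb{C} \to \Spec\mathbb{R}$, then taking $f = \id$ gives $[q_{1}^{*}\cA'] = [f^{*}q_{2}^{*}\cB'] = 0$ automatically in $\Br(\Spec\mathbb{C}) = 0$, so $\cA$ and $\cB$ are Morita equivalent for \emph{any} choice of $\cA', \cB'$; in particular I can arrange $[\cA] \neq [\cB]$ in $\Br(\sX)$ by choosing $\cA'$ and $\cB'$ to represent the two distinct classes of $\Br(\mathbb{R})$.

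The second half of the argument is to show that no automorphism $\varphi$ of $\mathbf{B}\mu_{2,\mathbb{R}}$ over $\mathbb{R}$ can satisfy $[\cA] = [\varphi^{*}\cB]$. For this I would compute the automorphism group of $\mathbf{B}\mu_{2,\mathbb{R}}$ as a stack over $\mathbb{R}$ and its induced action on $\Br(\mathbf{B}\mu_{2,\mathbb{R}}) \cong \HH^{1}_{\acute{e}t}(\Spec\mathbb{R},\mu_{2}) \oplus \Br(\mathbb{R}) \cong \mathbb{Z}/2 \oplus \mathbb{Z}/2$. Automorphisms of the classifying stack $\mathbf{B}\mu_{2,\mathbb{R}}$ arise from automorphisms of the group scheme $\mu_{2,\mathbb{R}}$ together with the trivial base; since $\mu_{2}$ has no nontrivial automorphisms, the automorphism group is essentially trivial (or at most acts trivially on the relevant Brauer classes), so $\varphi^{*}$ acts as the identity on $\Br(\mathbf{B}\mu_{2,\mathbb{R}})$. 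Then $[\cA] = [\varphi^{*}\cB] = [\cB]$ would force $[\cA] = [\cB]$, contradicting our choice. The main obstacle is the careful identification of $\operatorname{Aut}(\mathbf{B}\mu_{2,\mathbb{R}})$ and the verification that its action on $\Br(\mathbf{B}\mu_{2,\mathbb{R}})$ is trivial (or at least fixes the classes in question); this requires understanding the functoriality of the decomposition \eqref{intropsi} under stack automorphisms, and is where I expect the real content of the argument to lie, since the Morita-equivalence direction follows almost formally from the vanishing of $\Br(\mathbb{C})$.
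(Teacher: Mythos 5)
Your first half is correct and agrees in substance with the paper's Example \ref{example mortia}: the paper takes $\cA=M_{2,B}(\RR)$ and $\cB=M_{2,B}(\RR)\otimes p^{*}\mathbb{H}$ with $B$ as in (\ref{matrix B}) (so $B^{2}=-\operatorname{Id}$), and verifies the Morita equivalence by computing the coarse pushforwards $p_{*}\cA\cong\CC$ and $p_{*}\cB\cong M_{2}(\CC)$, whereas you deduce it from Theorem \ref{main theorem 2} together with $\Br(\CC)=0$; both routes are fine.

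The second half has a genuine gap, located exactly where you predicted the real content would lie, and it cannot be repaired. The claim that $\operatorname{Aut}_{\RR}(\mathbf{B}\mu_{2,\RR})$ is essentially trivial because $\operatorname{Aut}(\mu_{2})$ is trivial is false: a morphism $\mathbf{B}\mu_{2,\RR}\to\mathbf{B}\mu_{2,\RR}$ over $\RR$ is the same as a $\mu_{2}$-torsor on $\mathbf{B}\mu_{2,\RR}$, i.e.\ a pair $(R,\chi)$ with $R$ a $\mu_{2}$-torsor over $\Spec\RR$ and $\chi\in\Hom(\mu_{2},\mu_{2})$, and it is an equivalence iff $\chi$ is an automorphism; hence $\operatorname{Aut}_{\RR}(\mathbf{B}\mu_{2,\RR})\cong\HH^{1}(\Spec\RR,\mu_{2})\rtimes\operatorname{Aut}(\mu_{2})\cong\ZZ/2\ZZ$ up to $2$-isomorphism, generated by the translation $\varphi_{Q}\colon P\mapsto P\wedge^{\mu_{2}}Q_{T}$ by the nontrivial torsor $Q=\Spec\CC$. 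This automorphism acts nontrivially on $\Br(\mathbf{B}\mu_{2,\RR})$. Indeed, composing $\varphi_{Q}$ with the chart $\pi\colon\Spec\RR\to\mathbf{B}\mu_{2,\RR}$ classifies the torsor $Q$, so $\pi^{*}\varphi_{Q}^{*}\cA$ is the value of the sheaf of algebras $\cA$ at $Q$, namely the Galois twist $\{m\in M_{2}(\CC):\ B\,\bar{m}\,B^{-1}=m\}$ of $M_{2}(\RR)$ by the cocycle sending complex conjugation to conjugation by $B$; a direct check shows this is the set of matrices $\left(\begin{smallmatrix} a & b\\ -\bar{b} & \bar{a}\end{smallmatrix}\right)$, i.e.\ the division algebra $\mathbb{H}$, so $\pi^{*}\varphi_{Q}^{*}[\cA]=[\mathbb{H}]\neq 0=\pi^{*}[\cA]$ in $\Br(\RR)$ and thus $\varphi_{Q}^{*}[\cA]\neq[\cA]$. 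Since $\varphi_{Q}^{*}$ fixes $p^{*}\Br(\RR)$ pointwise (because $p\circ\varphi_{Q}=p$) and induces the identity on the quotient $\HH^{1}(\Spec\RR,\mu_{2})\cong\ZZ/2\ZZ$, the only remaining possibility is $\varphi_{Q}^{*}[\cA]=[\cA]+p^{*}[\mathbb{H}]=[\cB]$, whence also $[\varphi_{Q}^{*}\cB]=[\cA]$. So for this pair an automorphism realizing the Brauer classes \emph{does} exist, and the non-existence statement fails; your missing step is not merely unproved but unprovable. You should also be aware that the paper's own proof of Corollary \ref{caldararu} rests on the same flaw: from $\varphi^{*}\circ p^{*}=p^{*}$ it deduces that $\varphi^{*}$ is the identity on the summand $p^{*}\Br(\RR)$, but the further assertion $\varphi^{*}(\langle\bar{1},0\rangle)=\langle\bar{1},0\rangle$ does not follow and is contradicted by $\varphi_{Q}$, which acts on $\ZZ/2\ZZ\oplus\ZZ/2\ZZ$ by the shear $(a,b)\mapsto(a,a+b)$.
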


We first prove Theorem \ref{main theorem 2} in the case where $X=\operatorname{Spec} k$ and $\Br(k)=0$. In this setting, we obtain a useful and simple criterion for determining when two Azumaya algebras are Morita equivalent. We also show that Căldăraru's Conjecture holds in this special case.

\begin{theorem}[Theorem {\ref{Theorem for field}}, Corollary \ref{hold calda}]\label{main theorem}
Assume $\Br(k)=0$. Let  $\cA, \cB$ be two Azumaya algebras on $\mathbf{B}\mu_{n,k}$.  Then $\cA$ and $\cB$ are Morita equivalent if and only $[\cA]$ and $[\cB]$ generate the same subgroup in $\Br(\mathbf{B}\mu_{n,k})$. 

In this case (assuming $\Br(k)=0$), Căldăraru's Conjecture holds. To be precise, two Azumaya algebras $\cA$ and $\cB$ on  $\mathbf{B}\mu_{n,k}$ are Morita equivalent if and only if there exists an automorphism $\varphi: \mathbf{B}\mu_{n,k} \to \mathbf{B}\mu_{n,k} $ such that $[\cA]=[\varphi^{*}\cB]$ in $\Br(\mathbf{B}\mu_{n,k})$.
\end{theorem}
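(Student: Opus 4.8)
The plan is to reduce the entire statement to a concrete computation inside $k^{*}/(k^{*})^{n}$. First I would make the identification of the Brauer group explicit: since $k$ contains the $n$-th roots of unity, Kummer theory gives $\HH^{1}_{\acute{e}t}(\Spec k,\mu_{n})\cong k^{*}/(k^{*})^{n}$, and because $\Br(k)=0$ the isomorphism $\psi$ of \eqref{intropsi} specializes to a group isomorphism $\Br(\mathbf{B}\mu_{n,k})\cong k^{*}/(k^{*})^{n}$. Write $a,b\in k^{*}/(k^{*})^{n}$ for the classes matching $[\cA]$ and $[\cB]$, so that the associated $\mu_{n}$-torsors are $\tX_{1}=\Spec\!\big(k[t]/(t^{n}-a)\big)$ and $\tX_{2}=\Spec\!\big(k[t]/(t^{n}-b)\big)$, with $[\cA']=[\cB']=0$. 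Now I would invoke Theorem \ref{main theorem 2}: since $\Br(k)=0$ forces $q_{1}^{*}[\cA']=f^{*}q_{2}^{*}[\cB']=0$ automatically, Morita equivalence of $\cA$ and $\cB$ is equivalent to the bare existence of a $k$-scheme isomorphism $\tX_{1}\xrightarrow{\sim}\tX_{2}$, with the torsor structure irrelevant.

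Next I would classify the $\tX_{i}$ as $k$-schemes. Let $d=\operatorname{ord}(a)$ be the order of $a$ in $k^{*}/(k^{*})^{n}$; then $k(\sqrt[n]{a})$ is cyclic of degree $d$ over $k$, every root $\zeta^{i}\sqrt[n]{a}$ generates this same field, and $t^{n}-a$ splits into $n/d$ irreducible factors of degree $d$, so $\tX_{1}\cong(\Spec k(\sqrt[n]{a}))^{\sqcup n/d}$ as $k$-schemes, and similarly for $\tX_{2}$. Comparing connected components and residue fields, $\tX_{1}\cong\tX_{2}$ holds iff $k(\sqrt[n]{a})\cong_{k}k(\sqrt[n]{b})$; as these are Galois, a $k$-isomorphism forces equality inside a fixed $\overline{k}$, which by Kummer theory is exactly $\langle a\rangle=\langle b\rangle$ (this in particular matches the orders $d$, hence the number $n/d$ of components). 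Transporting through $\psi$, this says precisely that $[\cA]$ and $[\cB]$ generate the same subgroup of $\Br(\mathbf{B}\mu_{n,k})$, which proves the first assertion.

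For the Căldăraru part, the backward direction is immediate: if $[\cA]=[\varphi^{*}\cB]$ for an automorphism $\varphi$, then $\cA$ and $\varphi^{*}\cB$ have equal Brauer class hence are Morita equivalent, while $\varphi$ itself induces an equivalence $\Coh(\mathbf{B}\mu_{n,k},\varphi^{*}\cB)\cong\Coh(\mathbf{B}\mu_{n,k},\cB)$, so $\cA\sim\cB$. For the forward direction I would use the automorphisms $\mathbf{B}\sigma$ attached to $\sigma\in\operatorname{Aut}(\mu_{n})=(\ZZ/n)^{*}$: if $\sigma$ is $\zeta\mapsto\zeta^{r}$, then $(\mathbf{B}\sigma)^{*}$ acts on $\Br(\mathbf{B}\mu_{n,k})\cong k^{*}/(k^{*})^{n}$ by $a\mapsto a^{r}$. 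Given Morita equivalence, the first part yields $\langle a\rangle=\langle b\rangle$, so $a=b^{s}$ for some $s$ coprime to $d=\operatorname{ord}(b)$; since the reduction $(\ZZ/n)^{*}\twoheadrightarrow(\ZZ/d)^{*}$ is surjective, $s$ lifts to some $r$ coprime to $n$ with $r\equiv s\ (\mathrm{mod}\ d)$, and then $\varphi=\mathbf{B}\sigma$ gives $[\varphi^{*}\cB]=b^{r}=b^{s}=a=[\cA]$.

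The main obstacle I anticipate is pinning down the action of $(\mathbf{B}\sigma)^{*}$ on the Brauer group through the identification $\psi$: I must check both that each $\sigma\in\operatorname{Aut}(\mu_{n})$ genuinely defines a stack automorphism of $\mathbf{B}\mu_{n,k}$ over $k$, and that its pullback really raises the Kummer class to the correct power. I emphasize that I do \emph{not} need a full description of $\operatorname{Aut}(\mathbf{B}\mu_{n,k})$ — only that these particular automorphisms exist and act by powering, which suffices to realize every generator of $\langle b\rangle$. Everything else is routine: the Kummer-theoretic bookkeeping of the first two paragraphs and the elementary surjectivity of $(\ZZ/n)^{*}\to(\ZZ/d)^{*}$.
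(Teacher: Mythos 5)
Your argument is circular when read against the paper's own logical structure. The pivot of your first paragraph is an appeal to Theorem \ref{main theorem 2} (i.e.\ Theorem \ref{Theorem root gerbe general}), but in the paper that theorem is deduced from Lemma \ref{twisted theorem antieau} and Lemma \ref{actualmaintheorem}; Lemma \ref{actualmaintheorem} is proved by citing the proof of Theorem \ref{general theorem}, which is in turn built directly on the proof of Theorem \ref{Theorem for field} --- precisely the statement you are asked to prove. The entire categorical content of the present theorem, namely the equivalence
$$\Coh(\mathbf{B}\mu_{n,k},\cA)\;\simeq\;\Coh\bigl(\Spec k,\,p_{*}\cA\bigr)\;\simeq\;\Coh\bigl(k[x]/(x^{n}-a)\bigr),$$
is exactly what Theorem \ref{main theorem 2} packages, and you supply no independent proof of it. The paper's route is: realize $[\cA]$ by the explicit $\mu_{n}$-equivariant matrix algebra $M_{n,B}(k)$ (Proposition \ref{important propersition}); show after base change to $k_{1}=k(\sqrt[n]{a})$ that $M_{n,B}(k_{1})\cong\cEnd(\rho_{0}\oplus\cdots\oplus\rho_{n-1})$, so that Lemma \ref{equivalence lemma} applies over $k_{1}$; descend along $\Spec k_{1}\to\Spec k$ to conclude that $\widetilde{p}^{*}:\Coh(\Spec k,p_{*}M_{n,B}(k))\to\Coh(\mathbf{B}\mu_{n,k},M_{n,B}(k))$ is an equivalence; and compute $p_{*}M_{n,B}(k)\cong k[x]/(x^{n}-a)$ (Lemma \ref{pushforward of algebra}). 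Only then do Gabriel's theorem \ref{reconsctruction theorem}, Lemma \ref{importan corollary} and Kummer theory (Proposition \ref{Kummer theory}) finish the argument in the way your second paragraph describes. A valid blind proof must reproduce this descent step or some substitute for it; quoting the global theorem whose proof runs through it does not close the gap.

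The rest of your proposal is correct, and in one place sharper than the paper. Your Kummer-theoretic bookkeeping (the degree $d=\operatorname{ord}[a]$, the decomposition $\tX_{1}\cong(\Spec k(\sqrt[n]{a}))^{\sqcup n/d}$, and the reduction of $k$-isomorphism to equality of Galois subfields of $\bar{k}$) matches Lemma \ref{importan corollary} and Proposition \ref{Kummer theory}. Your treatment of the C\u{a}ld\u{a}raru direction is the same in substance as Corollary \ref{hold calda} but more complete: the paper only records that $\varphi_{i}^{*}$ acts by $[a]\mapsto[a^{i}]$ and asserts the conclusion, whereas you make explicit the required lift of $s\in(\ZZ/d)^{*}$ to $r\in(\ZZ/n)^{*}$ along the surjection $(\ZZ/n)^{*}\twoheadrightarrow(\ZZ/d)^{*}$. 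So the field theory and the automorphism argument stand; what is missing is the categorical heart of the theorem, the equivalence between twisted sheaves on the gerbe and modules over the pushforward algebra on the coarse space.
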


Our main result (Theorem \ref{Theorem root gerbe general}) also exhibits the following interesting phenomenon: 
 \begin{center}
 \textit{
 A decomposable category can become indecomposable after a Brauer twist.}
 \end{center}
 This is discussed in Example \ref{elliptic}. Finally,  note that although  this paper  focuses on  $\mu_{n}$-gerbes of line bundles over smooth projective varieties, it is reasonable to expect that Theorem \ref{main theorem 2} holds for $\mu_{n}$-gerbes of line bundles over general Noetherian schemes using the same approach.

\subsection{Outline of Paper} In Section \ref{section 2}, we review the definitions and basic properties of the Brauer group and twisted sheaves on Deligne--Mumford stacks. We also introduce the notion of  $\mu_{n}$-gerbes of line bundles $\sX$, which are the main class of stacks considered in this paper. Finally, we review the properties of sheaves of noncommutative algebras. In Section \ref{section Brauer}, we calculate the Brauer group $\Br(\sX)$ and provide the detailed description of its structure. In Section \ref{section 4}, we prove Theorem \ref{main theorem 2}  for $X=\operatorname{Spec} k$. We will also provide an example showing that   Căldăraru's Conjecture does not hold for stacks in general, see Example \ref{example mortia} and Corollary \ref{caldararu}.
In Section \ref{section 5}, we globalize the arguments to prove Theorems \ref{main theorem} in full generality. 

\subsection{Acknowledgment}

We would like to thank Izzet Coskun, Rajesh Kulkarni, Alexander Perry, and Shitan Xu for many useful discussions.

The second author was partially supported by NSF grant DMS-2101761.
\subsection{Notation} Fix a positive integer $n$. In this paper, we assume that the field $k$ contains all $n$-th roots of unity with $\operatorname{char}(k)=p\nmid n$ ($p$ can be 0). The classifying stack of the $n$-th cyclic group $\mu_{n}$ over $X$ is denoted by $\mathbf{B}\mu_{n,X}$.
All cohomology groups in this paper are understood to be taken in the étale topology.
  
\section{Preliminaries}\label{section 2}

\subsection{Brauer group on Deligne-Mumford stack} In this subsection, we collect basic facts about the Brauer group. For more details about Brauer group in general, see \cite{Gro68,Shi19,AM20}. 
\begin{definition}
   An \textit{Azumaya algebra} over a Deligne--Mumford stack $\dX$ is a sheaf of quasi-coherent $\mathcal{O}_{\dX}$-algebra $\cA$ such that $\cA$ is \'etale locally on $\dX$ isomorphic to $M_{m}(\mathcal{O}_{\dX})$, the sheaf of $m\times m$ matrices over $\mathcal{O}_{\dX}$, for some $m\geq 1$.
\end{definition}
\begin{example}
   \begin{enumerate}
   \item If $E$ is a vector bundle on $\dX$ of rank $m> 0$, then $\mathcal{E}nd(E)$ is an Azumaya algebra on $\dX$.
   \item The \emph{quaternion algebra} 
   $$\mathbb{H}=\{a+bi+cj+dij:~ a,b,c,d\in \mathbb{R} \},\quad \text{where } i^2=j^2=-1, ij=-ji$$
   is an Azumaya algebra over $\mathbb{R}$.
   \end{enumerate}
\end{example}

If $\cA$ and $\mathcal{B}$ are Azumaya algebras on $\mathcal{X}$, then $\cA\otimes_{\mathcal{O}_{\dX}}\mathcal{B}$ is an Azumaya algebra. We give the following definitions of Brauer groups.

\begin{definition}
    Two Azumaya algebras $\cA$ and $\mathcal{B}$ are \textit{Brauer equivalent} if there are vector bundles $E$ and $F$ such that $\cA\otimes_{O_{\dX}}\mathcal{E}nd(E)\cong \mathcal{B}\otimes_{\mathcal{O}_{\dX}}\mathcal{E}nd(F)$. The \textit{Brauer group} $\Br(\dX)$ of  $\dX$ is the set of isomorphism classes of Azumaya algebras modulo Brauer equivalence, where $[\cA]+[\cB]=[\cA\otimes_{\mathcal{O}_{\dX}}\cB],$ and $-[\cA]=[\cA^{\operatorname{opp}}]$. Here  $\cA^{\operatorname{opp}}$ is the opposite algebra of $\cA$.
\end{definition}

\begin{definition}
    Let $\dX$ be a quasi-compact and quasi-separated Deligne--Mumford stack. The \textit{cohomological Brauer group} of $\dX$ is defined to be $\Br'(\dX):=\HH^{2}(\dX, \mathbb{G}_{m})_{\operatorname{tors}}$, the torsion subgroup of $\HH^{2}(\dX, \mathbb{G}_{m})$.
\end{definition}

Note that there exists a natural injective map $\Br(\dX)\hookrightarrow \Br'(\dX)$.
This map is often an isomorphism
if $\dX$ admits nice properties, as shown in the following proposition.
However in general, this map may not be surjective \cite{CTS21}.

\begin{proposition}[{\cite[Corollary 2.1.5]{Shi19}}]\label{two Brauer are same}
If $\dX$ is a smooth separated generically  tame Deligne--Mumford stack  over $k$ with quasi-projective coarse moduli space, then we have $$ \Br'(\dX)=\HH^{2}(\dX, \mathbb{G}_{m})=\Br(\dX). $$
\end{proposition}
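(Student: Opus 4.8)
The plan is to split the two asserted equalities into two logically independent statements: first, that $\HH^{2}(\dX,\Gm)$ is a torsion group, so that $\Br'(\dX)=\HH^{2}(\dX,\Gm)_{\operatorname{tors}}=\HH^{2}(\dX,\Gm)$; and second, that the canonical injection $\Br(\dX)\hookrightarrow\Br'(\dX)$ is in fact surjective. The first is a statement about étale cohomology and will follow from smoothness together with tameness; the second is the Azumaya-representability of cohomological Brauer classes, which is the substantive part.

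For the torsion-ness I would exploit smoothness. After reducing to connected (hence integral) components, $\dX$ is regular, so $\Gm$ satisfies a purity property and the restriction map to the generic residual gerbe is injective in degree two. Concretely, if $K$ denotes the function field and $\mathcal{G}_{\eta}\to\Spec K$ is the residual gerbe at the generic point, then regularity gives an injection $\HH^{2}(\dX,\Gm)\hookrightarrow\HH^{2}(\mathcal{G}_{\eta},\Gm)$, exactly as in Grothendieck's argument that $\HH^{2}(X,\Gm)\hookrightarrow\Br(K)$ for a regular integral scheme $X$. By the generic tameness hypothesis, $\mathcal{G}_{\eta}$ is a gerbe over $\Spec K$ banded by a finite group of order prime to $\operatorname{char}k$; its degree-two $\Gm$-cohomology is assembled, via the Leray (Hochschild--Serre) spectral sequence for $\mathcal{G}_{\eta}\to\Spec K$, out of $\Br(K)$ and the cohomology of this finite group, both of which are torsion. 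Hence $\HH^{2}(\dX,\Gm)$ is torsion and the first equality holds.

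For the surjectivity $\Br(\dX)=\Br'(\dX)$ I would follow Gabber's strategy, in the form adapted to stacks by Lieblich. The essential input is that $\dX$ has the \emph{resolution property}: every coherent sheaf is a quotient of a locally free sheaf. This is where the hypotheses on the coarse space are used, for a separated generically tame Deligne--Mumford stack with quasi-projective coarse moduli space admits a generating sheaf (Olsson--Starr, Kresch) and is a global quotient, which yields the resolution property. Given a class $\alpha\in\HH^{2}(\dX,\Gm)_{\operatorname{tors}}$ of order $m$, I would realize it as a $\Gm$-gerbe $\pi\colon\mathcal{Y}\to\dX$ and transport the resolution property to $\mathcal{Y}$; this produces a locally free $\alpha$-twisted sheaf $\mathcal{V}$ of positive finite rank. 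Then $\cEnd(\mathcal{V})$ has weight zero, so it descends to a sheaf of algebras $\cA$ on $\dX$ which is étale-locally a matrix algebra, i.e. an Azumaya algebra, and by construction its Brauer class is $\alpha$. This shows every cohomological class is represented, giving the second equality.

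The main obstacle is this representability step, namely producing a locally free twisted sheaf of finite rank, which is precisely the content of Gabber's theorem. Reducing it to the scheme-theoretic case is delicate: one must leverage the generating sheaf / global-quotient presentation furnished by the quasi-projective coarse moduli space, and use tameness to guarantee that the pushforward of the twisted structure sheaf along the coarse map behaves well (so that vanishing of higher cohomology and the resolution property descend appropriately). By contrast, the torsion-ness step is comparatively routine once purity and the structure of the generic tame gerbe are established.
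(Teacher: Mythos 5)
The paper offers no proof of this proposition; it is imported verbatim from [Shi19, Corollary 2.1.5], so your attempt can only be measured against the argument in the literature. Your first half is fine in outline: torsion-ness of $\HH^{2}(\dX,\Gm)$ does follow from regularity (injectivity of restriction to the generic residual gerbe, the stacky analogue of Grothendieck's argument, which is available for regular integral Noetherian Deligne--Mumford stacks) together with the fact that the degree-two $\Gm$-cohomology of a tame gerbe over a field is assembled, via Leray, from $\Br(K)$ and finite-group cohomology, all torsion.

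The second half has a genuine gap, and it sits exactly where you locate the ``essential input.'' The step ``transport the resolution property to $\mathcal{Y}$'' is not a transport at all: sheaves on the $\Gm$-gerbe $\mathcal{Y}$ decompose by the weight of the inertial $\Gm$-action, the resolution property of $\dX$ only supplies locally free sheaves of weight zero, and a locally free sheaf of weight one and positive rank \emph{is} a locally free $\alpha$-twisted sheaf --- that is, it is precisely the assertion $\alpha\in\Br(\dX)$ that you are trying to prove. So this step is circular. Two sanity checks: by Edidin--Hassett--Kresch--Vistoli, the gerbe $\mathcal{Y}$ is a quotient stack (equivalently, has the resolution property, under mild hypotheses) if and only if its class lies in $\Br(\dX)$, and they produce examples with $\Br\neq\Br'$ over bases that themselves have the resolution property; moreover, if the resolution property passed formally from the base to the gerbe, Gabber's theorem for quasi-projective schemes would be a triviality, since such schemes obviously have the resolution property. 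The proof behind the cited result uses the hypotheses differently: by Kresch--Vistoli, a smooth separated generically tame Deligne--Mumford stack with quasi-projective coarse moduli space admits a finite flat surjection $f\colon Z\to\dX$ from a quasi-projective scheme (this is where smoothness, separatedness, tameness, and quasi-projectivity all enter); given $\alpha\in\Br'(\dX)$, Gabber's theorem applied on $Z$ yields a locally free $f^{*}\alpha$-twisted sheaf of positive rank, and its pushforward along the finite flat map $f$ is a locally free $\alpha$-twisted sheaf of positive rank on $\dX$, whose endomorphism algebra is an Azumaya algebra representing $\alpha$. Your generating-sheaf and quotient-stack considerations are where these hypotheses should enter, but they cannot be shortcut into a resolution property for the gerbe itself.
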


The following are several examples of Brauer groups.

\begin{example}\label{trivial Brauer group}
\begin{enumerate} 
    \item If $C$ is a smooth curve over $\bar{k}$, then $\Br(C)=0$. Let $K(C)$ be the function field of $C$. Then $\Br(K(C))=0$.
    \item(\cite[Theorem 5.13]{CTS21}) $\Br(k)\cong \Br(\mathbb{P}^{m}_{k})$ for any field $k $.
    \item (\cite[Corollary 5.2.6]{CTS21}) If $X$ is a smooth projective stably rational variety over $\bar{k}$, then $\Br(X)=0$.
    \end{enumerate}    
\end{example}

\subsection{Twisted sheaf and Morita theory} In this subsection, we recall the notion of twisted sheaves. For more details, see \cite{Cal00, lieblich2007moduli}.

\begin{definition}[\cite{Cal00}]\label{twisted sheaves}
    Let $\dX$ be a Noetherian Deligne--Mumford stack over a field $k$. Let $\alpha\in \check{C}^{2}(\dX, \mathcal{O}_{\dX}^{*})$ be a Čech 2-cocycle (in the  \'etale topology), given by  an open cover $\mathcal{U}=\{U_{i}\}_{i\in I}$ and sections $\alpha_{ijk}\in \Gamma(U_{i}\cap U_{j} \cap U_{k}, \mathcal{O}_{\dX}^{*})$.  An $\alpha$-\emph{twisted sheaf} on $\dX$ consists of a collection $(\{ \mathcal{F}_{i}\}_{i\in I}, \{ \varphi_{ij}\}_{i,j\in I})$ with $\mathcal{F}_{i}$ being a sheaf of $\mathcal{O}_{\dX}$-modules on $U_{i}$ and $\varphi_{ij}: \mathcal{F}_{j}|_{U_{i}\cap U_{j}}\to\mathcal{F}_{i}|_{U_{i}\cap U_{j}}$ being isomorphisms such that
\begin{enumerate}
    \item $\varphi_{ii}$ is the identity for all $i\in I$;
    \item $\varphi_{ij}=\varphi_{ji}^{-1}$;
    \item $\varphi_{ij}\circ \varphi_{jk}\circ \varphi_{ki}$ is multiplication by $\alpha_{ijk}$ on $\mathcal{F}_{i}|_{U_{i}\cap U_{j}\cap U_{k} }$ for all $i,j,k \in I.$
\end{enumerate}
A \textit{homomorphism} $f$ between $\alpha$-twisted sheaves $\mathcal{F}, \mathcal{G}$ consists of a collection of maps $f_{i}: \mathcal{F}_{i} \to \mathcal{G}_{i} $ for each $i\in I$ such that $f_{i}\circ \varphi_{\mathcal{F}, ij}=\varphi_{\mathcal{G},ij}\circ f_{j} $ for all $i,j \in I$.
\end{definition}

\begin{remark}
In \cite[Definition 2.1.2.2]{lieblich2007moduli}, Lieblich gives a more general definition of twisted sheaves, which coincides with Definition \ref{twisted sheaves} in our setting by \cite[Proposition 2.1.3.3]{lieblich2007moduli}.
\end{remark}

\begin{definition}[\cite{Cal00}]
    An $\alpha$-sheaf $\mathcal{F}$ on $\dX$ is called \emph{(quasi-)coherent} if all the underlying sheaves $\mathcal{F}_{i}$ are (quasi-)coherent. The category of $\alpha$-twisted sheaves, $\alpha$-twisted quasicoherent sheaves, and $\alpha$-twisted coherent sheaves are denoted by $\operatorname{Mod}(\dX, \alpha), \operatorname{QCoh}(\dX,\alpha)$, and $ \Coh(\dX,\alpha)$, respectively.
\end{definition}
It turns out that the category of $\alpha$-twisted sheaves depend only on its cohomology class.
\begin{lemma}[{\cite[Lemma 1.2.8]{Cal00}}]\label{same cohomology class}
    If $\alpha$ and $\alpha'$ represent the same element of $\check{\HH}^{2}(\dX, \mathcal{O}_{\dX}^{*})$, then $\operatorname{Mod}(\dX, \alpha)=\operatorname{Mod}(\dX,\alpha')$. 
\end{lemma}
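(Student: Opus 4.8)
The plan is to exhibit an explicit functor, coming from a Čech $1$-cochain that witnesses $[\alpha]=[\alpha']$, which is an isomorphism of categories; the content is essentially bookkeeping with the gluing data, once one has reduced to a common cover.

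First I would reduce to the case where $\alpha$ and $\alpha'$ are cocycles for one and the same étale cover $\mathcal{U}=\{U_i\}_{i\in I}$. Since $\check{\HH}^{2}(\dX,\mathcal{O}_{\dX}^{*})$ is computed as the filtered colimit over refinements of covers, the hypothesis $[\alpha]=[\alpha']$ means that, after passing to a common refinement $\mathcal{V}\to\mathcal{U}$, the pulled-back cocycles differ by a Čech coboundary. To use this I would invoke that restriction along a refinement induces an equivalence $\operatorname{Mod}(\dX,\alpha)\simeq\operatorname{Mod}(\dX,\alpha|_{\mathcal{V}})$, which is effective descent for the stack of $\mathcal{O}_{\dX}$-modules applied twist by twist. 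Thus I may assume $\alpha,\alpha'$ both live on $\mathcal{U}$ and that there is a normalized, alternating $1$-cochain $\lambda=\{\lambda_{ij}\}$, with $\lambda_{ij}\in\Gamma(U_i\cap U_j,\mathcal{O}_{\dX}^{*})$, such that $\alpha'_{ijk}=\alpha_{ijk}\,(\delta\lambda)_{ijk}$, where $(\delta\lambda)_{ijk}=\lambda_{jk}\lambda_{ik}^{-1}\lambda_{ij}$.

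The heart of the argument is the rescaling functor. Given an $\alpha$-twisted sheaf $(\{\mathcal{F}_i\},\{\varphi_{ij}\})$ I would keep the underlying sheaves $\mathcal{F}_i$ unchanged and rescale the transition isomorphisms, setting $\varphi'_{ij}:=\lambda_{ij}\cdot\varphi_{ij}$. Because the $\lambda_{ij}$ are central scalars, the cocycle defect of the new data is
\[
\varphi'_{ij}\circ\varphi'_{jk}\circ\varphi'_{ki}
=(\lambda_{ij}\lambda_{jk}\lambda_{ki})\cdot(\varphi_{ij}\circ\varphi_{jk}\circ\varphi_{ki})
=\lambda_{ij}\lambda_{jk}\lambda_{ki}\,\alpha_{ijk}
=(\delta\lambda)_{ijk}\,\alpha_{ijk}
=\alpha'_{ijk},
\]
where the third equality uses $\lambda_{ki}=\lambda_{ik}^{-1}$. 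Normalization gives $\varphi'_{ii}=\id$ and the alternating property gives $\varphi'_{ij}=(\varphi'_{ji})^{-1}$, so $(\{\mathcal{F}_i\},\{\varphi'_{ij}\})$ is a genuine $\alpha'$-twisted sheaf. On morphisms I would send $\{f_i\}$ to itself: since each $\lambda_{ij}$ is central, multiplying the intertwining relation $f_i\circ\varphi_{\mathcal{F},ij}=\varphi_{\mathcal{G},ij}\circ f_j$ by $\lambda_{ij}$ yields $f_i\circ\varphi'_{\mathcal{F},ij}=\varphi'_{\mathcal{G},ij}\circ f_j$, so $\{f_i\}$ is automatically a morphism of the rescaled data. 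This defines a functor $\operatorname{Mod}(\dX,\alpha)\to\operatorname{Mod}(\dX,\alpha')$ that is the identity on underlying sheaves and on morphisms; the analogous functor built from $\lambda^{-1}$ is a strict two-sided inverse, so it is an isomorphism of categories, and it restricts to the quasi-coherent and coherent subcategories because it never alters the $\mathcal{F}_i$.

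The axiom checks and the morphism compatibility are routine once one commits to the centrality of the scalars $\lambda_{ij}$. The step I expect to be the genuine obstacle is the first one: making precise that the twisted-sheaf category is insensitive to the chosen cover, i.e. that refinement induces an equivalence, so that the coboundary furnished by the colimit definition of $\check{\HH}^{2}$ can actually be exploited. This is where one must appeal to descent for $\mathcal{O}_{\dX}$-modules on the Deligne--Mumford stack $\dX$ and check that the refinement and rescaling functors are mutually compatible; granting that, the equality $\operatorname{Mod}(\dX,\alpha)=\operatorname{Mod}(\dX,\alpha')$ follows formally.
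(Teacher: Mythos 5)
Your proof is correct and takes essentially the same route as the source: the paper gives no argument of its own for this lemma, citing \cite{Cal00}, and your rescaling functor $(\{\mathcal{F}_i\},\{\varphi_{ij}\})\mapsto(\{\mathcal{F}_i\},\{\lambda_{ij}\varphi_{ij}\})$ built from a $1$-cochain witnessing the coboundary, together with the reduction to a common refinement, is exactly C\u{a}ld\u{a}raru's proof of Lemma 1.2.8. The only cosmetic gap is that your argument yields an isomorphism (or equivalence, after refinement) of categories rather than the literal equality written in the statement, which is the same harmless abuse of language made in the source.
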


Let $\cA$ be an Azumaya algebra. The category of right $\cA$-modules, quasi-coherent $\cA$-modules, and coherent right $\cA$-Modules are denoted by $\operatorname{Mod}(\dX, \cA), \operatorname{QCoh}(\dX,\cA)$, and $ \Coh(\dX,\cA)$, respectively. Let $\alpha:=[\cA]$ in $\Br(\dX)$. Then we have the following proposition.

\begin{proposition}[{\cite[Theorem 1.3.7]{Cal00}}]\label{equivalence between twisted sheaf and Azumaya algebra}
    There is a $k$-linear functor $F:\operatorname{Mod}(\dX, \alpha )\to \operatorname{Mod}(\dX,\cA)$ which is an equivalence. Furthermore, this will induce equivalences of $k$-linear categories $F|_{\operatorname{QCoh(\dX,\alpha)}}: \operatorname{QCoh}(\dX,\alpha)\to \operatorname{QCoh}(\dX,\cA)$ and $F|_{\operatorname{Coh(\dX,\alpha)}}: \operatorname{Coh}(\dX,\alpha)\to \operatorname{Coh}(\dX,\cA)$.
\end{proposition}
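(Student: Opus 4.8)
The plan is to exhibit an explicit Morita bimodule and verify it induces the equivalence by reducing to the étale-local case. Write $\alpha\in\check{\HH}^{2}(\dX,\OO_{\dX}^{*})$ for a Čech cocycle $\{\alpha_{ijk}\}$ representing $[\cA]$, computed on an étale cover $\{U_{i}\}$ over which $\cA|_{U_{i}}\cong M_{m}(\OO_{U_{i}})$.

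First I would construct a locally free $\alpha$-twisted sheaf $\mathcal{E}$ of rank $m$ together with an isomorphism $\cA\cong\cEnd_{\OO}(\mathcal{E})$. On each $U_{i}$ take $\mathcal{E}_{i}=\OO_{U_{i}}^{\oplus m}$, the standard module on which $M_{m}(\OO_{U_{i}})$ acts. On an overlap $U_{i}\cap U_{j}$ the two trivializations of $\cA$ differ by an automorphism of $M_{m}(\OO)$, which by Skolem--Noether is conjugation by some $\bar g_{ij}\in \mathrm{PGL}_{m}(\OO_{U_{i}\cap U_{j}})$; choosing lifts $g_{ij}\in \mathrm{GL}_{m}$ and setting $\varphi_{ij}=g_{ij}$ gives transition maps with $\varphi_{ij}\varphi_{jk}\varphi_{ki}=\alpha_{ijk}\cdot\id$, which is exactly condition (iii) of Definition \ref{twisted sheaves}. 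Because $\cEnd$ of a twisted sheaf cancels the twist, $\cEnd_{\OO}(\mathcal{E})$ is an honest sheaf of algebras, and by construction it is $\cA$; a short check on overlaps (conjugation on $\cA$ versus left multiplication on $\mathcal{E}$) shows the $\cA$-action on $\mathcal{E}$ is compatible with the twisting, so $\mathcal{E}$ is a left $\cA$-module in the $\alpha$-twisted category.

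For any $\alpha$-twisted sheaf $\mathcal{F}$ the sheaf $\cHom_{\OO}(\mathcal{E},\mathcal{F})$---which is untwisted since the two $\alpha$-twists cancel---then carries a right $\cA$-module structure by precomposition with the $\cA$-action on $\mathcal{E}$. This defines the functor
\[
F\colon \operatorname{Mod}(\dX,\alpha)\too\operatorname{Mod}(\dX,\cA),\qquad \mathcal{F}\mapsto \cHom_{\OO}(\mathcal{E},\mathcal{F}),
\]
with candidate quasi-inverse $G(\mathcal{M})=\mathcal{M}\otimes_{\cA}\mathcal{E}$, which is again $\alpha$-twisted. Both are $k$-linear and additive by construction.

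Finally I would check $F$ and $G$ are mutually inverse. The unit and counit are the evaluation and coevaluation maps of the Morita context, and whether they are isomorphisms is an étale-local question. Over each $U_{i}$, where $\cA\cong M_{m}(\OO)$ and $\mathcal{E}\cong\OO^{\oplus m}$, the statement is the classical Morita equivalence between $M_{m}(\OO_{U_{i}})$-modules and $\OO_{U_{i}}$-modules, so the unit and counit restrict to isomorphisms there; since being an isomorphism is local on $\dX$, they are global isomorphisms. The main obstacle is the descent bookkeeping: one must verify that the local trivializations $\mathcal{E}_{i}\cong\OO^{\oplus m}$ and the local Morita equivalences are compatible on overlaps through the $g_{ij}$---the whole point being that $\mathcal{E}$ itself does \emph{not} glue to an untwisted sheaf, yet the twist-insensitive functors $\cHom_{\OO}(\mathcal{E},-)$ and $-\otimes_{\cA}\mathcal{E}$ do produce well-defined global objects. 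Once the equivalence on $\operatorname{Mod}$ is established, restricting it to quasi-coherent and to coherent objects preserves those subcategories, because $\mathcal{E}$ is locally free of finite rank and both quasi-coherence and coherence are étale-local properties; this yields the asserted equivalences $F|_{\operatorname{QCoh}(\dX,\alpha)}$ and $F|_{\operatorname{Coh}(\dX,\alpha)}$.
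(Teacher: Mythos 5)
The paper does not prove this proposition at all---it is quoted directly from \cite[Theorem 1.3.7]{Cal00}---and your argument is precisely the standard proof behind that citation: realize $\cA$ as $\cEnd_{\OO_{\dX}}(\mathcal{E})$ for a locally free $\alpha$-twisted sheaf $\mathcal{E}$ obtained by Skolem--Noether gluing of the local trivializations, then verify that the Morita functors $\cHom_{\OO_{\dX}}(\mathcal{E},-)$ and $-\otimes_{\cA}\mathcal{E}$ are mutually inverse by reducing étale-locally to classical Morita theory for $M_{m}(\OO)$, and note that both preserve (quasi-)coherence since $\mathcal{E}$ has finite rank. The only step you elide is that the lifts $g_{ij}\in\mathrm{GL}_{m}$ of the $\mathrm{PGL}_{m}$-valued gluing automorphisms need only exist after refining the étale cover (the obstruction lies in $\HH^{1}(U_{ij},\Gm)$), which is the same standard refinement already implicit in defining the cocycle $\alpha$ from $\cA$, so this is bookkeeping rather than a gap.
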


In this paper, by Proposition \ref{equivalence between twisted sheaf and Azumaya algebra}, we will consider twisted sheaves as right modules over Azumaya algebras.


Now, we will introduce the notion of Morita equivalence. First, we briefly recall the classical definition. For more details, see \cite{Lam12}. Let $R$ be a ring with identity. Let  $\operatorname{Mod}_{R}$ be the categories  right $R$-modules.

\begin{definition}\label{Morita ring}
    Let $R, T$ be two rings. $R$ is \textit{Morita equivalent} to $T$ if $\operatorname{Mod}_{R}$ is equivalent to $\operatorname{Mod}_{T}$ as abelian categories.
\end{definition}
The following definition introduces the notion of \emph{progenerators.}

\begin{definition}\label{progenerator}
    Let $R$ be a ring. A right $R$-module $E$ is said to be an $R$-\textit{progenerator} if it satisfies the following two conditions:
    \begin{enumerate} 
        \item E is finitely generated projective;
        \item E is a generator, i.e. the functor $\Hom_{R}(E, - )$ from $\operatorname{Mod}_{R}$ to the category of abelian groups is faithful.
    \end{enumerate}
\end{definition}
In fact, over a commutative ring $R$, progenerators are equivalent to vector bundles on $\operatorname{Spec}(R)$.
\begin{lemma}[{\cite[18.11 and Ex. 2.24]{Lam12}}]\label{commutative}
If $R$ is a commutative ring. Then $E$ is $R$-progenerator if and only if $E$ is a finitely generated projective $R$-module with positive rank on each component of $\operatorname{Spec}(R)$.
\end{lemma}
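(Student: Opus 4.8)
The plan is to isolate the only nontrivial content of the equivalence. Both sides of the claimed ``if and only if'' require $E$ to be finitely generated projective, so after stripping that common hypothesis it suffices to prove that, for a finitely generated projective $R$-module $E$, the module $E$ is a generator of $\operatorname{Mod}_{R}$ if and only if $E$ has positive rank at every point of $\operatorname{Spec}(R)$. Recall that for a finitely generated projective module the rank function $r\colon\mathfrak{p}\mapsto \dim_{\kappa(\mathfrak{p})} E\otimes_{R}\kappa(\mathfrak{p})$ is locally constant, hence constant on each connected component, so ``positive rank on each component'' is the same as ``positive rank at every prime''. Moreover every prime $\mathfrak{p}$ lies in a maximal ideal $\mathfrak{m}$ with $\mathfrak{p}\subseteq\mathfrak{m}$, and then $V(\mathfrak{p})$ is an irreducible, hence connected, closed set containing both $\mathfrak{p}$ and $\mathfrak{m}$, so $r(\mathfrak{p})=r(\mathfrak{m})$; thus it is equivalent to require positive rank only at the maximal ideals.

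First I would translate the generator condition into an ideal-theoretic one. For a finitely generated projective $E$ the evaluation pairing $E\otimes_{R}\Hom_{R}(E,R)\to R$ has as image the trace ideal $\tau(E):=\sum_{\phi\in \Hom_{R}(E,R)}\phi(E)$, and the key fact is that $E$ is a generator if and only if $\tau(E)=R$. I would record the elementary direction: if $\tau(E)=R$ then $1=\sum_{i}\phi_{i}(e_{i})$ for finitely many $\phi_{i}\in\Hom_{R}(E,R)$ and $e_{i}\in E$, so the map $E^{\oplus n}\to R$ sending $(x_{i})$ to $\sum_{i}\phi_{i}(x_{i})$ is surjective; as $R$ is projective this splits, exhibiting $R$ as a direct summand of $E^{\oplus n}$, whence $E$ generates $R$ and therefore all of $\operatorname{Mod}_{R}$. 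Conversely a generator must admit a surjection from some $E^{\oplus n}$ onto $R$, forcing $\tau(E)=R$.

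Next I would run a local check, using that the formation of the trace ideal commutes with localization, i.e. $\tau(E)_{\mathfrak{m}}=\tau(E_{\mathfrak{m}})$ (localization is exact and commutes with $\otimes$ and, since $E$ is finitely presented, with $\Hom_{R}(E,-)$), and that $\tau(E)=R$ is equivalent to $\tau(E)_{\mathfrak{m}}=R_{\mathfrak{m}}$ for every maximal ideal $\mathfrak{m}$. Over the local ring $R_{\mathfrak{m}}$ the module $E_{\mathfrak{m}}$ is free of rank $r(\mathfrak{m})$. If $r(\mathfrak{m})>0$ then $R_{\mathfrak{m}}$ is a free summand of $E_{\mathfrak{m}}$, so $\tau(E_{\mathfrak{m}})=R_{\mathfrak{m}}$; if $r(\mathfrak{m})=0$ then $E_{\mathfrak{m}}=0$ and $\tau(E_{\mathfrak{m}})=0\neq R_{\mathfrak{m}}$. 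Hence $\tau(E)=R$ holds precisely when $r(\mathfrak{m})>0$ at every maximal ideal, which by the first paragraph is exactly positivity of the rank on each component. For the forward implication I would additionally invoke the point-set fact that every nonempty connected component of $\operatorname{Spec}(R)$, being closed of the form $V(I)=\operatorname{Spec}(R/I)$, contains a maximal ideal of $R$, so vanishing of the rank on some component genuinely produces a maximal ideal $\mathfrak{m}$ witnessing $\tau(E)\neq R$.

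The step I expect to require the most care is the interplay in the third paragraph: checking that the trace ideal localizes correctly and, above all, that positivity of the rank can be detected on maximal ideals alone rather than on all primes, which is what lets the local computation settle the global statement. The generator-versus-trace-ideal equivalence and the final rank-by-rank comparison are then routine.
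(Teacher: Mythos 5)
Your proof is correct. The paper gives no argument for this lemma at all---it is quoted directly from Lam \cite[18.11 and Ex.\ 2.24]{Lam12}---and your route (strip the common finitely-generated-projective hypothesis, characterize the generator property by the trace ideal $\tau(E)=R$, localize via $\tau(E)_{\mathfrak{m}}=\tau(E_{\mathfrak{m}})$ at maximal ideals where $E_{\mathfrak{m}}$ is free, and use that every nonempty closed subset of $\operatorname{Spec}(R)$ contains a closed point to detect a vanishing rank) is precisely the standard argument underlying the cited result, with the delicate points (local constancy of the rank and the reduction from all primes to maximal ideals) handled correctly.
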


We have the following \emph{Fundamental Theorem of Morita Theory}. 
\begin{theorem}[{Fundamental Theorem of Morita Theory}]\label{fundamental theory}
Let $R, T$ be rings. Then $R$ is Morita equivalent to $T$ if and only if there exists an $R$-progenerator $E$ such that $T\cong \operatorname{End}_{R}(E)$. In this case, the functors
$$ -\otimes_{T}E: {\operatorname{Mod}_{T}}\to \operatorname{Mod}_{R}  \quad  \operatorname{and} \quad  -\otimes_{R}E^{\vee}: {\operatorname{Mod}_{R}}\to {\operatorname{Mod}_{T}} $$
are mutually inverse.    
\end{theorem}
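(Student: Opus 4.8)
The plan is to prove both implications via the standard bimodule machinery, so I begin by fixing the relevant structures. Regard $E$ as a $(T,R)$-bimodule, where $T=\operatorname{End}_R(E)$ acts on the left and $R$ on the right, and set $E^\vee:=\operatorname{Hom}_R(E,R)$, which is naturally an $(R,T)$-bimodule. I introduce two maps: the \emph{evaluation} map
$$\mu\colon E^\vee\otimes_T E\to R,\qquad f\otimes e\mapsto f(e),$$
a homomorphism of $(R,R)$-bimodules, and the \emph{trace} map
$$\tau\colon E\otimes_R E^\vee\to T,\qquad e\otimes f\mapsto\bigl(x\mapsto e\cdot f(x)\bigr),$$
a homomorphism of $(T,T)$-bimodules. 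The entire theorem rests on the claim that, when $E$ is an $R$-progenerator, both $\mu$ and $\tau$ are isomorphisms.

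For the direction ($\Leftarrow$), assume $E$ is an $R$-progenerator with $T=\operatorname{End}_R(E)$, and I first verify the claim. That $\tau$ is an isomorphism I would deduce from condition (i) of Definition \ref{progenerator}: finite generation and projectivity yield a dual basis $e_1,\dots,e_m\in E$, $f_1,\dots,f_m\in E^\vee$ with $x=\sum_i e_i\, f_i(x)$ for all $x\in E$; then $\sum_i e_i\otimes f_i\mapsto\operatorname{id}_E=1_T$ under $\tau$, and the assignment $\phi\mapsto\sum_i\phi(e_i)\otimes f_i$ gives an explicit two-sided inverse. That $\mu$ is an isomorphism I would deduce from condition (ii): the image of $\mu$ is the trace ideal $\sum_{f\in E^\vee}f(E)\subseteq R$, and $E$ being a generator is precisely the statement that this two-sided ideal is all of $R$, whence $\sum_j g_j(u_j)=1$ for suitable $g_j\in E^\vee$, $u_j\in E$, from which one again builds an explicit inverse. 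Granting both isomorphisms, the functors $F=-\otimes_T E$ and $G=-\otimes_R E^\vee$ satisfy
$$G\circ F\cong -\otimes_T(E\otimes_R E^\vee)\cong -\otimes_T T\cong\operatorname{id}_{\operatorname{Mod}_T},\qquad F\circ G\cong -\otimes_R(E^\vee\otimes_T E)\cong -\otimes_R R\cong\operatorname{id}_{\operatorname{Mod}_R},$$
by associativity of the tensor product together with $\tau$ and $\mu$. Hence $F,G$ are mutually inverse equivalences and $R,T$ are Morita equivalent.

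For the direction ($\Rightarrow$), suppose $\Psi\colon\operatorname{Mod}_T\xrightarrow{\sim}\operatorname{Mod}_R$ is an equivalence, and set $E:=\Psi(T_T)$. I would first note that the regular module $T_T$ is a $T$-progenerator: it is free of rank one, hence finitely generated and projective, and $\operatorname{Hom}_T(T,-)\cong\operatorname{id}$ is faithful. Next I would argue that each condition in Definition \ref{progenerator} is categorical and therefore transported by $\Psi$: projectivity of $P$ is the exactness of $\operatorname{Hom}_R(P,-)$, finite generation of a projective $P$ is compactness (commutation of $\operatorname{Hom}_R(P,-)$ with arbitrary direct sums), and the generator property is the faithfulness of $\operatorname{Hom}_R(P,-)$; all three are preserved by any additive equivalence. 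Thus $E=\Psi(T)$ is an $R$-progenerator. Finally, $\Psi$ induces a ring isomorphism on endomorphism rings, so $\operatorname{End}_R(E)\cong\operatorname{End}_T(T_T)\cong T$, the last isomorphism being left multiplication; this is exactly the desired presentation $T\cong\operatorname{End}_R(E)$.

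I expect the main obstacle to be the key claim of the first paragraph: proving that $\tau$ and $\mu$ are isomorphisms for a progenerator, where the bookkeeping of left and right actions and the verification that these maps are genuine bimodule homomorphisms (so that the displayed chains of functor isomorphisms are natural) is where the real work lies. By contrast, the ($\Rightarrow$) direction is comparatively soft, the only care needed being to phrase ``finitely generated projective generator'' in purely categorical terms so that its preservation under equivalences is manifest. As this is a classical result, I would ultimately cross-check the two bimodule isomorphisms against \cite{Lam12}.
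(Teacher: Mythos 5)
The paper does not prove this statement at all: it is quoted as the classical Fundamental Theorem of Morita Theory, with the details deferred to the cited reference \cite{Lam12}. Your proof is correct and is precisely the standard argument from that source: the $(T,R)$-/$(R,T)$-bimodule structures on $E$ and $E^\vee$, the evaluation and trace maps $\mu$ and $\tau$, the dual basis lemma to invert $\tau$, the trace ideal to invert $\mu$, and, for the converse, transport of the progenerator $T_T$ along the equivalence together with $\operatorname{End}_T(T_T)\cong T$. The only step you state as definitional that actually requires an argument relative to the paper's Definition \ref{progenerator} is the equivalence ``$E$ is a generator (i.e.\ $\operatorname{Hom}_R(E,-)$ is faithful) $\Longleftrightarrow$ the trace ideal equals $R$''; this is a short standard lemma (if the trace ideal were proper, the nonzero quotient map $R\to R/\mathrm{tr}(E)$ would kill every composite $E\to R\to R/\mathrm{tr}(E)$, contradicting faithfulness; conversely $\sum_j g_j(u_j)=1$ exhibits $R$, hence every module, as a quotient of copies of $E$), and with it supplied your proof is complete.
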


We can generalize the definition of Morita equivalence to sheaves of algebras on stacks in a natural way. Let $\dX$ be a  Noetherian Deligne-Mumford stack over $k$ and $\cA$ be a sheaf of coherent $\mathcal{O}_{\dX}$ algebra on $\dX$. Let $\Coh(\dX, \cA)$ be the category of coherent right $\cA$-modules.

\begin{definition}\label{Morita stack}
Let $\cA$ and $\cB$ be sheaves of coherent $\mathcal{O}_{\dX}$ algebras on $\dX$.   We say $\cA$ is \textit{Morita equivalent} to $\cB$ if  $\Coh(\dX, \cA)$ is equivalent to $\Coh(\dX, \cB)$ as $k$-linear abelian categories.
\end{definition}

The following proposition reveals the relation between Brauer group and Morita theory.
\begin{proposition}[{\cite[Theorem 1.3.15]{Cal00}}]\label{Azu--Br}
     Let $A, B$ be two Azumaya algebras over $k$. Then $A$ is Morita equivalent to $B$ if and only if $[A]=[B]$ in the Brauer group $\Br(k)$.
\end{proposition}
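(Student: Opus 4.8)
The plan is to reduce everything to the classical structure theory of central simple algebras together with the Fundamental Theorem of Morita Theory (Theorem \ref{fundamental theory}). Since the base is a field, an Azumaya algebra over $k$ is a finite-dimensional central simple $k$-algebra, so by Artin--Wedderburn we may write $A\cong M_{r}(D)$ and $B\cong M_{s}(D')$ for central division algebras $D,D'$ over $k$; in particular $A$ and $B$ are semisimple, each admits a unique simple right module (call them $S$ and $S'$), and every finitely generated right $A$-module is isomorphic to $S^{\oplus m}$ for some $m\geq 0$. I would first record that over such an $A$ every nonzero finitely generated module is automatically an $A$-progenerator: it is projective because $A$ is semisimple, and it is a generator because $S$ is a direct summand of $A$ (indeed $A\cong S^{\oplus r}$ as a right module). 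This is the noncommutative analogue of Lemma \ref{commutative}, and it essentially removes the progenerator condition from the discussion.

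For the direction $[A]=[B]\Rightarrow A$ Morita equivalent to $B$, I would unwind the definition of Brauer equivalence over $\Spec k$: it furnishes nonzero finite-dimensional vector spaces $V,W$ with $A\otimes_{k}\operatorname{End}_{k}(V)\cong B\otimes_{k}\operatorname{End}_{k}(W)$, i.e. $M_{a}(A)\cong M_{b}(B)$ with $a=\dim_{k}V$, $b=\dim_{k}W$. The key observation is the identification $M_{n}(A)\cong\operatorname{End}_{A}(A^{\oplus n})$, in which $A^{\oplus n}$ is a free, hence progenerator, right $A$-module. Theorem \ref{fundamental theory} then gives Morita equivalences $A\sim M_{a}(A)$ and $B\sim M_{b}(B)$, and since $M_{a}(A)\cong M_{b}(B)$ as $k$-algebras their module categories are literally isomorphic. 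Composing the three equivalences yields $\Coh(\Spec k,A)\cong\Coh(\Spec k,B)$, as desired.

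For the converse, Theorem \ref{fundamental theory} produces an $A$-progenerator $E$ with $B\cong\operatorname{End}_{A}(E)$, and the plan is simply to compute this endomorphism ring. Writing $E\cong S^{\oplus m}$ with $m\geq 1$, one has $\operatorname{End}_{A}(E)\cong M_{m}(\operatorname{End}_{A}(S))$, so the computation reduces to identifying $\operatorname{End}_{A}(S)$. A direct calculation with $A=M_{r}(D)$ and $S$ the minimal right ideal of row vectors $D^{1\times r}$ shows that the $A$-linear endomorphisms of $S$ are exactly the left scalar multiplications by elements of $D$, giving a ring isomorphism $\operatorname{End}_{A}(S)\cong D$. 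Hence $B\cong M_{m}(D)$, so that $[B]=[D]=[A]$ in $\Br(k)$, since $A=M_{r}(D)$ and $B=M_{m}(D)$ are each Brauer equivalent to $D$.

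The step requiring the most care is this last computation of $\operatorname{End}_{A}(S)$: the bookkeeping of left versus right actions must be tracked precisely, since a sign error there would produce $D^{\mathrm{op}}$ rather than $D$, and hence $[A]^{-1}$ rather than $[A]$. The cleanest way to avoid this is to fix once and for all the conventions of Theorem \ref{fundamental theory} --- $E$ a right $A$-module on which $B=\operatorname{End}_{A}(E)$ acts on the left --- and to verify $\operatorname{End}_{A}(A)\cong A$ (not $A^{\mathrm{op}}$) for $A$ regarded as a right module over itself, after which $\operatorname{End}_{A}(S)\cong D$ follows by the same bookkeeping. Everything else is formal, so I expect this orientation issue to be the only genuinely delicate point.
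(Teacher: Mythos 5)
The paper offers no argument for this proposition at all --- it is imported verbatim from \cite[Theorem 1.3.15]{Cal00} --- so your proposal has to be judged on its own merits rather than against an in-paper proof. Its skeleton is sound and is the standard classical one: Artin--Wedderburn, the observation that over a simple Artinian ring every nonzero finitely generated module is automatically a progenerator, the identification $M_n(A)\cong\operatorname{End}_A(A^{\oplus n})$ for the forward direction, and the computation $\operatorname{End}_A(S^{\oplus m})\cong M_m(\operatorname{End}_A(S))\cong M_m(D)$ for the converse. Your handling of the $D$-versus-$D^{\mathrm{op}}$ orientation is also correct: with the paper's convention of \emph{right} modules, $\operatorname{End}_A(A_A)\cong A$ and $\operatorname{End}_A(S)\cong D$ via left scalar multiplication.

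There is, however, one genuine gap in the converse, of exactly the same flavor as the orientation issue you flagged but which you did not flag: $k$-linearity. The Morita equivalence in Proposition \ref{Azu--Br} is an equivalence of $k$-linear categories (Definition \ref{Morita stack}; note the paper's ring-level Definition \ref{Morita ring} drops the $k$-linearity, an ambiguity your proof inherits), whereas Theorem \ref{fundamental theory} is a statement about rings and produces only a \emph{ring} isomorphism $B\cong\operatorname{End}_A(E)$. Your final step, ``$B\cong M_m(D)$, hence $[B]=[D]=[A]$ in $\Br(k)$,'' needs this to be an isomorphism of $k$-algebras: a ring isomorphism matches the centers only up to an automorphism $\tau$ of $k$, and then one can conclude only $[B]=[D^{\tau}]$, which may differ from $[D]$. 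This is not a phantom worry --- the proposition is false for non-$k$-linear equivalences. For instance, take $k=\QQ(\sqrt{2})$, $\tau$ the nontrivial automorphism, and $D$ the quaternion algebra $(\sqrt{2},-1)_k$; then $D$ and the twist $D^{\tau}$ are literally the same ring, so $\operatorname{Mod}_D$ and $\operatorname{Mod}_{D^{\tau}}$ are the same abelian category, yet $[D]\neq[D^{\tau}]$ in $\Br(k)$, since $D$ splits at the real place $\sqrt{2}\mapsto\sqrt{2}$ while $D^{\tau}$ does not. So any correct proof must invoke $k$-linearity somewhere, and yours, as written, never does. The repair is one line: the isomorphism furnished by the Fundamental Theorem is $b\mapsto F(\lambda_b)$, where $F$ is the given equivalence, $E=F(B)$, and $\lambda_b$ is left multiplication by $b$ on $B_B$; since $\lambda_{cb}=c\,\lambda_b$ for $c\in k$ and $F$ is $k$-linear, $B\cong\operatorname{End}_A(E)$ is a $k$-algebra isomorphism, after which your computation closes the argument. (A second, more cosmetic, mismatch: Definition \ref{Morita stack} concerns $\Coh$, i.e.\ finitely generated modules, while Theorem \ref{fundamental theory} concerns full module categories; over a finite-dimensional algebra this is harmless, but it deserves a sentence.)
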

By Proposition \ref{same cohomology class}, Brauer equivalence always implies Morita equivalence. On the other hand, in general, Morita equivalence does not imply Brauer equivalence, as we will see later.

\subsection{$\mu_{n}$-gerbe of line bundle} In this subsection, we recall the definition and properties of $\mu_{n}$-gerbes of line bundles.

\begin{definition}[{\cite[Example 3.9.12]{Alp23}}]\label{rootgerbe}
Fix $n\in \ZZ_{>0}$.
Let $X$ be a scheme and $L$ be a line bundle on $X$, which has the classifying morphism $[L]: X\to \mathbf{B}\mathbb{G}_{m}$. Let $n:\mathbf{B}\mathbb{G}_{m}\to \mathbf{B}\mathbb{G}_{m}$ be the morphism induced from the $n$-th power map $\mathbb{G}_{m}\to \mathbb{G}_{m}: t\to t^n$. The \textit{$\mu_n$-gerbe of a line bundle $L$}, denoted by \( \mathscr{X} \), is defined as the fiber product
\[ \begin{tikzcd}
\mathscr{X} \arrow{r} \arrow[swap]{d} {p}&\mathbf{B}\mathbb{G}_{m} \arrow{d}{n} \\%
X \arrow{r}{[L]}& \mathbf{B}\mathbb{G}_{m}.
\end{tikzcd}
\]
\end{definition}

\begin{proposition}\label{rootgerbeDM}
The $\mu_{n}$-gerbe $\sX$ of line bundle L  in Definition \ref{rootgerbe} has following properties:
\begin{enumerate}
    \item $p: \sX\to X$ is the coarse moduli space.
    \item $\sX$ is a Deligne--Mumford stack.
    \item If $X=\operatorname{Spec}(A)$ is an affine scheme, and $L=\mathcal{O}_{X}$ is trivial in the construction, then $$\sX \cong [\operatorname{Spec}(A)/\mu_{n}]\cong  \mathbf{B}\mu_{n,\operatorname{Spec}(A)}, $$
where $\mu_{n}$ acts trivially on $X$.
\end{enumerate}

\begin{remark}
    Note that we have the short exact sequence on $X$:
    \begin{equation}\label{mu exact sequence}
        1\to \mu_{n} \to \mathbb{G}_{m}\xrightarrow {n}\mathbb{G}_{m} \to 1.
    \end{equation}
Taking cohomology groups, we get a map $ \iota: \Pic(X)\to \HH^{2}(X,\mu_{n})$. The $\mu_{n}$-gerbe $\sX$ of line bundle $L$ in Definition \ref{rootgerbe} is corresponds to the $\mu_{n}$-gerbe $\iota(L)$.
    
    \end{remark}

\end{proposition}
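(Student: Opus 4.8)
The plan is to read everything off the functor of points of the fiber product. For a test scheme $T$, a morphism $T\to\sX$ is the data of a morphism $t\colon T\to X$, a line bundle $M$ on $T$, and an isomorphism $M^{\otimes n}\xrightarrow{\sim}t^{*}L$; thus $\sX$ is the stack of $n$-th roots of $L$ and $p\colon\sX\to X$ is a $\mu_{n}$-gerbe. The one input I would record at the outset is that the Kummer sequence (\ref{mu exact sequence}), together with $\mu_{n}=\ker(n\colon\mathbb{G}_m\to\mathbb{G}_m)$, identifies $\mathbf{B}\mu_{n}$ with the fiber of $n\colon\mathbf{B}\mathbb{G}_m\to\mathbf{B}\mathbb{G}_m$ over the basepoint:
\[
\mathbf{B}\mu_{n}\;\cong\;\pt\times_{\mathbf{B}\mathbb{G}_m,\,n}\mathbf{B}\mathbb{G}_m,
\]
since the right-hand side classifies pairs $(M,\,M^{\otimes n}\xrightarrow{\sim}\mathcal{O})$, which are exactly $\mu_{n}$-torsors.

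I would prove (iii) first, as it is the most concrete and feeds the other two parts. When $L=\mathcal{O}_{X}$ the classifying morphism $[L]$ factors as $\Spec A\to\pt\to\mathbf{B}\mathbb{G}_m$, so base-changing the fiber identification above along $X\to\pt$ gives
\[
\sX \;=\; X\times_{\mathbf{B}\mathbb{G}_m,\,n}\mathbf{B}\mathbb{G}_m \;\cong\; X\times_{\pt}\!\bigl(\pt\times_{\mathbf{B}\mathbb{G}_m,\,n}\mathbf{B}\mathbb{G}_m\bigr) \;\cong\; X\times\mathbf{B}\mu_{n} \;=\; \mathbf{B}\mu_{n,X},
\]
and because $\mu_{n}$ acts trivially on $X=\Spec A$ this is $[\Spec(A)/\mu_{n}]$.

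For (ii) I would use that the Deligne--Mumford property can be checked étale-locally on $X$. Choosing an étale cover $\{U_{i}\to X\}$ over which $L$ is trivial, part (iii) gives $\sX\times_{X}U_{i}\cong\mathbf{B}\mu_{n,U_{i}}$; since $p\nmid n$ the group scheme $\mu_{n}$ is finite étale, so the torsor $U_{i}\to\mathbf{B}\mu_{n,U_{i}}$ is an étale atlas and $\mathbf{B}\mu_{n,U_{i}}$ is Deligne--Mumford. As $\sX$ is algebraic (a fiber product of algebraic stacks) and $\{\sX\times_{X}U_{i}\to\sX\}$ is an étale cover of it by Deligne--Mumford stacks, $\sX$ is Deligne--Mumford.

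Finally, for (i) I would check the two defining properties of a coarse space directly. For the bijection on geometric points, over an algebraically closed field $\Omega$ any point $\Spec\Omega\to X$ trivializes $L$, so the fiber of $p$ is $\mathbf{B}\mu_{n,\Omega}$, whose isomorphism classes of $\Omega$-points are $\HH^{1}(\Omega,\mu_{n})\cong\Omega^{*}/(\Omega^{*})^{n}=0$ --- a single class. For initiality, given $f\colon\sX\to Y$ to an algebraic space, I would trivialize $L$ over an étale cover $\{U_{i}\}$; over each $U_{i}$ the morphism $\mathbf{B}\mu_{n,U_{i}}\to Y$ factors uniquely through $U_{i}$, because $U_{i}\to\mathbf{B}\mu_{n,U_{i}}$ is a $\mu_{n}$-torsor and, $Y$ being a sheaf, a morphism to $Y$ descends to one from $U_{i}$; these factorizations glue by uniqueness to a unique $X\to Y$ through which $f$ factors. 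The main obstacle is exactly this initiality: it is the only step that uses descent along the gerbe rather than a pointwise computation, and some care is needed to see that the étale-local factorizations agree on overlaps. Everything else reduces, via (iii), to the explicit model $\mathbf{B}\mu_{n}$ and the Kummer computation; alternatively (i) and (ii) may simply be quoted from the theory of gerbes and root stacks, e.g.\ \cite[Example 3.9.12]{Alp23}.
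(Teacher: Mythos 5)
The paper gives no proof of this proposition at all: it is quoted as a standard property of root gerbes, with Definition \ref{rootgerbe} pointing to \cite[Example 3.9.12]{Alp23}. Your argument is therefore not a variant of the paper's proof but a correct, self-contained verification of facts the paper takes on faith, and it follows the standard route. The key input, the identification $\mathbf{B}\mu_{n}\cong \pt\times_{\mathbf{B}\Gm,\,n}\mathbf{B}\Gm$ (a pair $(M,\,M^{\otimes n}\xrightarrow{\sim}\mathcal{O})$ is the same datum as a $\mu_{n}$-torsor), is exactly right, and from it (iii) follows by factoring $[\mathcal{O}_{X}]$ through the point, (ii) by assembling an étale atlas from the local models $\mathbf{B}\mu_{n,U_{i}}$ (this is where the running assumption $p\nmid n$ enters, making $\mu_{n}$ finite étale), and (i) by checking the two defining properties of a coarse space. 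Two minor tightenings: since line bundles are Zariski-locally trivial, a Zariski affine cover of $X$ suffices throughout, which is also what the paper uses in its own local arguments (e.g.\ Lemma \ref{injective of pushforward}); and in the initiality step of (i), the clean statement to invoke is that a morphism from a quotient stack $[U_{i}/\mu_{n}]$ to an algebraic space $Y$ is the same as a $\mu_{n}$-invariant morphism $U_{i}\to Y$, which, the action being trivial, is just a morphism $U_{i}\to Y$ --- this makes the step you flagged as the main obstacle immediate, and the agreement of the local factorizations on overlaps then follows from the uniqueness of these factorizations exactly as you say.
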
    

By \cite[Section 5]{IU15}, there exists the universal object $(\mathcal{M},\Phi)$ on $\sX$, where $\mathcal{M}$ is a line bundle on $\sX$ and $\Phi:\mathcal{M}^{\otimes n}\to p^{*}L$ is an isomorphism of line bundles.

\begin{definition}
    Let $\mathcal{M}$ be the universal line bundle on $\sX$ and $i\in \mathbb{Z}$. We will use $\rho_{i}$ to denote the $i$-th power of $\mathcal{M}$.
    $$\rho_{i}:=\mathcal{M}^{\otimes i}.$$
\end{definition}

\begin{lemma}[{\cite[Theorem 1.5]{IU15}}]\label{decomposition}
  The category $\Coh(\sX)$ splits as the following direct sum:
$$\operatorname{Coh}(\mathscr{X})\cong \operatorname{Coh}(X)\rho_{0} \oplus \operatorname{Coh}(X)\rho_{1}\oplus\cdots \oplus \operatorname{Coh}(X)\rho_{n-1},$$
where $p^{*}: \Coh(X) \to \Coh(\sX) $ is the fully faithful embedding.
\end{lemma}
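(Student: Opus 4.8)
The decomposition is fundamentally the weight decomposition for the linearly reductive group $\mu_{n}$, combined with the fact that the universal line bundle $\mathcal{M}$ trivializes the twist. The plan is to first produce a canonical grading on every coherent sheaf coming from the inertia of the gerbe, then identify each graded piece with $\Coh(X)$, and finally check that the pieces are mutually orthogonal so that the decomposition is a genuine direct sum of abelian categories.

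\textbf{Step 1: the weight grading.} Since $\sX$ is a $\mu_{n}$-gerbe over $X$, the band equips every quasi-coherent $\mathcal{O}_{\sX}$-module with a canonical action of $\mu_{n}$ (acting through the automorphisms of the objects of $\sX$). Because $\operatorname{char}(k)=p\nmid n$, the group $\mu_{n}$ is linearly reductive and diagonalizable, so its representation category is semisimple with simple objects the characters $\chi_{0},\dots,\chi_{n-1}$ indexed by $\ZZ/n$. Applying this fiberwise, I would obtain a canonical eigenspace decomposition $\mathcal{F}\cong\bigoplus_{i=0}^{n-1}\mathcal{F}_{i}$ of any $\mathcal{F}\in\Coh(\sX)$ into weight-$i$ components, functorial in $\mathcal{F}$. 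Working étale-locally on $X$ where $L$ trivializes and $\sX\cong\mathbf{B}\mu_{n,U}$ (Proposition \ref{rootgerbeDM}(iii)) makes this decomposition explicit: a coherent sheaf on $\mathbf{B}\mu_{n,U}$ is a $\mu_{n}$-equivariant sheaf on $U$, which splits into its weight spaces; these local decompositions glue because the grading is canonical.

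\textbf{Step 2: identifying each piece with $\Coh(X)$.} The universal line bundle $\mathcal{M}$ carries weight $1$, so $\rho_{i}=\mathcal{M}^{\otimes i}$ carries weight $i\bmod n$. For a weight-$i$ sheaf $\mathcal{F}_{i}$, the twist $\mathcal{F}_{i}\otimes\rho_{-i}$ has weight $0$, hence is pulled back from $X$; this is exactly where the hypothesis that $\sX$ is a gerbe \emph{of a line bundle} enters, since the isomorphism $\Phi:\mathcal{M}^{\otimes n}\xrightarrow{\sim}p^{*}L$ trivializes the associated $\mathbb{G}_{m}$-gerbe and prevents the weight-$i$ category from being a nontrivially twisted $\Coh(X,\beta^{i})$. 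Concretely, I would define
\[
\Phi_{i}\colon\Coh(X)\too\Coh(\sX),\quad G\mapsto p^{*}G\otimes\rho_{i},
\]
with candidate inverse $\mathcal{F}_{i}\mapsto p_{*}(\mathcal{F}_{i}\otimes\rho_{-i})$ onto the weight-$i$ summand. Since $p$ is the coarse moduli space and $\mu_{n}$ is linearly reductive, one has $p_{*}\mathcal{O}_{\sX}\cong\mathcal{O}_{X}$, and the projection formula gives $p_{*}p^{*}\cong\id$; thus $p^{*}$ is fully faithful with essential image the weight-$0$ part, and tensoring by $\rho_{i}$ transports this to an equivalence of $\Coh(X)$ with the weight-$i$ part.

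\textbf{Step 3: orthogonality and the main obstacle.} Any morphism in $\Coh(\sX)$ is $\mu_{n}$-equivariant, hence preserves the weight grading, so $\Hom_{\sX}(\mathcal{F}_{i},\mathcal{G}_{j})=0$ for $i\neq j$. This orthogonality upgrades the objectwise splitting of Step 1 into a direct-sum decomposition of the abelian category $\Coh(\sX)\cong\bigoplus_{i=0}^{n-1}\Coh(X)\rho_{i}$, which is the assertion. I expect the main technical point to be Step 1—making the fiberwise eigenspace decomposition into a canonical, global functorial splitting on the stack (rather than merely a local statement after trivializing $L$)—and, relatedly, verifying that the nontriviality of $L$ does not twist the summands, which is precisely resolved by the existence of $\mathcal{M}$ with $\mathcal{M}^{\otimes n}\cong p^{*}L$. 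Both are clean once linear reductivity of $\mu_{n}$ is invoked, so the argument reduces to standard descent together with the representation theory of a diagonalizable group.
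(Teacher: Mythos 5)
Your proposal is correct, but note that the paper does not prove this lemma at all — it is quoted directly from \cite[Theorem 1.5]{IU15}, so there is no internal proof to compare against. Your argument is the standard one underlying that citation: the inertia/band action of the diagonalizable group $\mu_{n}$ (linearly reductive since $p\nmid n$) gives a functorial weight decomposition, the weight-$0$ part is identified with $\Coh(X)$ via $p^{*}$ and $p_{*}$, and the universal weight-$1$ line bundle $\mathcal{M}$ (which exists precisely because $\sX$ is a gerbe \emph{of a line bundle}, so the associated $\mathbb{G}_{m}$-gerbe is trivial) shifts weight $0$ to weight $i$, with orthogonality coming from equivariance of morphisms. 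The one point worth stating slightly more carefully is that weight-$0$ sheaves descend along $p$ for \emph{any} $\mu_{n}$-gerbe (the counit $p^{*}p_{*}\mathcal{F}\to\mathcal{F}$ is an isomorphism on the weight-$0$ part, checked étale-locally on $\mathbf{B}\mu_{n,U}$); the line-bundle hypothesis is needed only to untwist the higher weights, exactly as you say in Step 2 — so your proof is complete and would serve as a valid substitute for the citation.
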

Note that the decomposition for $\Coh(\sX)$ is orthogonal, which also induces an orthogonal decomposition for $D^{b}(\sX)$.

\subsection{Sheaf of finite algebra} \label{section sheaf}  In this subsection, we review basic properties of sheaves of noncommutative algebras over varieties. For more details, see \cite{Kuz06,Kuz08}.

Let $X$ be a smooth proper variety over $k$ and $\cB_{X}$ be a sheaf of $\mathcal{O}_{X}$-algebra which is locally free of finite rank as $\mathcal{O}_{X}$-module. 
Let $\operatorname{QCoh}(X, \cB_{X})$ be the category of quasicoherent sheaves of right $\cB_{X}$-modules. Note that this category has enough injective and enough locally free objects. We will consider the pair $(X, \cB_{X})$ as a noncommutative algebraic variety. 
\begin{definition}

Let $(X, \cB_{X}), (Y, \cB_{Y})$ be such two pairs. A morphism $\widetilde{f}: (X, \cB_{X})\to (Y, \cB_{Y})$ is a pair $(f, f_{\cB})$, where $f: X\to Y$ is a morphism of algebraic varieties and $f_{\cB}: f^{*}\cB_{Y}\to \cB_{X}$ is a morphism of $f^{*}\mathcal{O}_{Y}\cong \mathcal{O}_{X}$-algebras.
    \end{definition}
As the usual cases, we can define the pushforward and pullback functors of the morphism $\widetilde{f}$.  Let $\Coh(X, \cB_{X})$ be the category of coherent sheaves of right $\cB_{X}$-modules, and let $D^{b}(X, \cB_{X}):=D^{b}(\Coh(X, \cB_{X}))$ be its bounded derived category.
\begin{definition}
    Let $\widetilde{f}: (X, \cB_{X})\to (Y, \cB_{Y})$. We can associate the pushforward $\widetilde{f}_{*}:\operatorname{Coh}(X, \cB_{X}) \to\operatorname{Coh}(Y, \cB_{Y})$ and the pullback $\widetilde{f}^{*}:\operatorname{Coh}(Y, \cB_{Y})\to\operatorname{Coh}(X, \cB_{X})$ as follows:
$$\widetilde{f}_{*}(F):=f_{*}F, \quad \widetilde{f}^{*}(G)=f^{*}G\otimes_{f^{*}\cB_{Y}}\cB_{X}. $$
Then $\widetilde{f}_{*}$ is left exact and $\widetilde{f}^{*}$ is right exact, and there are derived functors:
$$ R\widetilde{f}_{*}: D^{b}(\Coh(X, \cB_{X}))\to D^{b}(\Coh(Y, \cB_{Y})), \quad L\widetilde{f}^{*}:  D^{b}(\Coh(Y, \cB_{Y}))\to D^{b}(\Coh(X, \cB_{X})) .$$
The functors $\widetilde{f}_{*}, \widetilde{f}^{*}, R\widetilde{f}_{*}, L\widetilde{f}^{*}$, etc., behave similarly to the usual functors between varieties. All  propositions for usual functors still hold. The following propositions will be needed in this paper.

\begin{lemma}[{\cite[Lemma D.17]{Kuz06}}]
The functor $L\widetilde{f}^{*}$ is left adjoint to $R\widetilde{f}_{*}$.
\end{lemma}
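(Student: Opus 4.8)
The plan is to establish the stated adjunction first at the level of the underlying abelian categories and then promote it to the derived categories, the cleanest route being to recognize $\widetilde{f}^{*}$ and $\widetilde{f}_{*}$ as composites of two elementary adjoint pairs. Throughout I would work in the quasi-coherent categories, where there are enough locally free and enough injective objects, and only restrict to the bounded coherent derived categories at the end, using that $X$ and $Y$ are smooth and $\cB_{X},\cB_{Y}$ are locally free of finite rank, so that all functors have finite cohomological dimension and preserve boundedness.

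First I would factor both functors through the intermediate category $\operatorname{QCoh}(X, f^{*}\cB_{Y})$. The algebra map $f_{\cB}\colon f^{*}\cB_{Y}\to \cB_{X}$ gives the restriction of scalars $\operatorname{res}\colon \operatorname{QCoh}(X,\cB_{X})\to \operatorname{QCoh}(X, f^{*}\cB_{Y})$, which is exact, together with its left adjoint, the extension of scalars $-\otimes_{f^{*}\cB_{Y}}\cB_{X}$. On the other side one has the ordinary module pullback $f^{*}\colon \operatorname{QCoh}(Y,\cB_{Y})\to \operatorname{QCoh}(X, f^{*}\cB_{Y})$ and its right adjoint $f_{*}$, namely the module-level version of the usual $(f^{*},f_{*})$ sheaf adjunction. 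By the very definitions,
$$\widetilde{f}^{*}=\bigl(-\otimes_{f^{*}\cB_{Y}}\cB_{X}\bigr)\circ f^{*},\qquad \widetilde{f}_{*}=f_{*}\circ \operatorname{res},$$
so at the underived level $\widetilde{f}^{*}$ is left adjoint to $\widetilde{f}_{*}$, being a composite of two left adjoints. (Equivalently, one can verify the adjunction isomorphism directly by composing the tensor--Hom adjunction for $f_{\cB}$ with the sheaf adjunction for $f$.)

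Next I would pass to derived functors. The pair $(-\otimes^{L}_{f^{*}\cB_{Y}}\cB_{X},\ \operatorname{res})$ is a derived adjoint pair because $\operatorname{res}$ is exact, and $(Lf^{*},Rf_{*})$ is a derived adjoint pair exactly as for ordinary sheaves, the algebra structures being carried along formally. Composing derived adjunctions then yields $L\widetilde{f}^{*}=(-\otimes^{L}_{f^{*}\cB_{Y}}\cB_{X})\circ Lf^{*}$ as left adjoint to $R\widetilde{f}_{*}=Rf_{*}\circ \operatorname{res}$, which is the desired conclusion. To guarantee that these composites really are the derived functors of $\widetilde{f}^{*}$ and $\widetilde{f}_{*}$, I would check two adaptedness compatibilities: a locally free $\cB_{Y}$-module pulls back under $f^{*}$ to a locally free, hence flat, $f^{*}\cB_{Y}$-module, so extension of scalars is already underived on it; and the forgetful functor $\operatorname{QCoh}(X,\cB_{X})\to \operatorname{QCoh}(X)$ preserves injectives, since its left adjoint $-\otimes_{\mathcal{O}_{X}}\cB_{X}$ is exact as $\cB_{X}$ is locally free, so an injective $\cB_{X}$-module has injective underlying $\mathcal{O}_{X}$-module, is therefore $f_{*}$-acyclic, and its restriction computes $Rf_{*}$.

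The main obstacle is precisely this compatibility of adapted classes, not the formal composition of adjunctions: one must ensure that the locally free resolutions computing $L\widetilde{f}^{*}$ and the injective resolutions computing $R\widetilde{f}_{*}$ interact correctly with the factorization, i.e. that pullbacks of locally free modules remain flat and that injectives remain acyclic for the subsequent functor. Once these acyclicity statements are in place, the isomorphism $\Hom_{D^{b}(X,\cB_{X})}(L\widetilde{f}^{*}G,F)\cong \Hom_{D^{b}(Y,\cB_{Y})}(G,R\widetilde{f}_{*}F)$ follows by stringing together the two derived adjunction isomorphisms, and boundedness is secured by the finite cohomological dimension coming from smoothness of $X,Y$ and local freeness of $\cB_{X},\cB_{Y}$.
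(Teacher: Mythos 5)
Your proposal is correct, but it cannot be compared with ``the paper's own proof'' in the usual sense, because the paper offers none: the lemma is imported verbatim from \cite[Lemma D.17]{Kuz06} and used as a black box. What you have done is supply the missing argument, and it is the right one. Your central device --- factoring $\widetilde{f}$ through the intermediate pair $(X,f^{*}\cB_{Y})$, i.e.\ writing $\widetilde{f}=(f,\operatorname{id})\circ(\operatorname{id},f_{\cB})$ so that $\widetilde{f}^{*}$ is extension of scalars after module pullback and $\widetilde{f}_{*}$ is pushforward after restriction of scalars --- is exactly the decomposition the paper itself deploys later, when it factors $\widetilde{p}$ as $\widetilde{p^{s}}\circ\widetilde{p^{e}}$ in the proof that $\widetilde{p}_{*}\widetilde{p}^{*}F\cong F$, and it is also the mechanism behind Kuznetsov's own treatment; so your route is faithful to the source even though you reconstructed it blind. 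One refinement is worth making, both to streamline your adaptedness checks and to repair a slightly loose point at the end. By Lemma \ref{flat} (\cite[Lemma D.4]{Kuz06}), $\cB_{X}$ is locally free over $\mathcal{O}_{X}$ and hence locally projective, in particular flat, as an $f^{*}\cB_{Y}$-module; therefore the extension of scalars $-\otimes_{f^{*}\cB_{Y}}\cB_{X}$ is exact \emph{on the nose}, not merely underived on pullbacks of locally free modules. This makes the only genuinely derived ingredient of $L\widetilde{f}^{*}$ the functor $Lf^{*}$ at the level of $\mathcal{O}$-modules, which is bounded because $Y$ is smooth. That precision matters for your final boundedness claim: ``finite cohomological dimension coming from local freeness of $\cB_{X},\cB_{Y}$'' is not by itself a valid appeal, since a finite locally free $\mathcal{O}$-algebra can have infinite global dimension (e.g.\ $k[x]/(x^{2})$ over $k$); it is the flatness of $\cB_{X}$ over $f^{*}\cB_{Y}$ supplied by Lemma \ref{flat}, together with regularity of $Y$, that keeps $L\widetilde{f}^{*}$ within $D^{b}$. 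With that substitution your argument is complete, and the injectivity side (injective $\cB_{X}$-modules have injective, hence $f_{*}$-acyclic, underlying $\mathcal{O}_{X}$-modules because the forgetful functor has the exact left adjoint $-\otimes_{\mathcal{O}_{X}}\cB_{X}$) is handled exactly as it should be.
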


For simplicity, we will use $\widetilde{f}_{*}, \widetilde{f}^{*}$, and $\otimes$ to represent the derived functors $R\widetilde{f}_{*}, L\widetilde{f}^{*}$, and $\otimes^{L}$.
We will need the projection formula for sheaves on noncommutative varieties. 

\begin{lemma}[{\cite[Lemma D.12]{Kuz06}}]\label{projection formula}
Let $\widetilde{f}:(X, \cB_{X})\to (Y, \cB_{Y})$ be a morphism. Suppose $F\in D^{b}(X, \cB_{X}^{opp}), G\in D^{b}(Y, \cB_{Y})$, then we have
$$ \widetilde{f}_{*}({\widetilde{f}^{*}G}\otimes_{\cB_{X}}F)\cong G \otimes_{\cB_{Y}}\widetilde{f}_{*}F.   $$
\end{lemma}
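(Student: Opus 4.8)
The plan is to reduce this noncommutative projection formula to the ordinary (derived) projection formula for the underlying morphism $f\colon X\to Y$ by means of a bar resolution, following the spirit of \cite{Kuz06}. I keep the paper's convention that $\widetilde{f}_{*},\widetilde{f}^{*},\otimes$ denote the derived functors $R\widetilde{f}_{*},L\widetilde{f}^{*},\otimes^{L}$. Here $F\in D^{b}(X,\cB_{X}^{opp})$ is a left $\cB_{X}$-module complex and $G\in D^{b}(Y,\cB_{Y})$ is a right $\cB_{Y}$-module complex; pushing $F$ forward and twisting by $f_{\cB}$ turns $\widetilde{f}_{*}F$ into a left $\cB_{Y}$-module, so both sides of the asserted isomorphism are complexes of $\mathcal{O}_{Y}$-modules, and the goal is a natural isomorphism between them.

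First I would dispose of the pullback by a cancellation. Unwinding $\widetilde{f}^{*}G=f^{*}G\otimes_{f^{*}\cB_{Y}}\cB_{X}$ and using associativity of the (derived) tensor product together with the unit isomorphism $\cB_{X}\otimes_{\cB_{X}}F\cong F$, one gets
\[
\widetilde{f}^{*}G\otimes_{\cB_{X}}F\;\cong\;\bigl(f^{*}G\otimes_{f^{*}\cB_{Y}}\cB_{X}\bigr)\otimes_{\cB_{X}}F\;\cong\;f^{*}G\otimes_{f^{*}\cB_{Y}}F,
\]
where on the right $F$ is regarded as a left $f^{*}\cB_{Y}$-module by restriction of scalars along $f_{\cB}\colon f^{*}\cB_{Y}\to\cB_{X}$. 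This reduces the claim to the identity $\widetilde{f}_{*}\bigl(f^{*}G\otimes_{f^{*}\cB_{Y}}F\bigr)\cong G\otimes_{\cB_{Y}}\widetilde{f}_{*}F$.

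Next I would compute the two tensor products over the algebras by bar complexes. Because $\cB_{Y}$ is locally free of finite rank over $\mathcal{O}_{Y}$, the sheaf $f^{*}\cB_{Y}$ is flat over $\mathcal{O}_{X}$, so the bar complex $B_{\bullet}(f^{*}G,f^{*}\cB_{Y},F)$ with terms $f^{*}G\otimes_{\mathcal{O}_{X}}(f^{*}\cB_{Y})^{\otimes p}\otimes_{\mathcal{O}_{X}}F$ computes $f^{*}G\otimes_{f^{*}\cB_{Y}}F$, and likewise $B_{\bullet}(G,\cB_{Y},\widetilde{f}_{*}F)$ computes $G\otimes_{\cB_{Y}}\widetilde{f}_{*}F$. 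Using $f^{*}G\otimes_{\mathcal{O}_{X}}(f^{*}\cB_{Y})^{\otimes p}\cong f^{*}\bigl(G\otimes_{\mathcal{O}_{Y}}\cB_{Y}^{\otimes p}\bigr)$ and the ordinary derived projection formula for $f$ (valid since $f$ is proper between Noetherian spaces), each bar term satisfies
\[
\widetilde{f}_{*}\Bigl(f^{*}\bigl(G\otimes_{\mathcal{O}_{Y}}\cB_{Y}^{\otimes p}\bigr)\otimes_{\mathcal{O}_{X}}F\Bigr)\;\cong\;\bigl(G\otimes_{\mathcal{O}_{Y}}\cB_{Y}^{\otimes p}\bigr)\otimes_{\mathcal{O}_{Y}}\widetilde{f}_{*}F,
\]
which is exactly the $p$-th term of $B_{\bullet}(G,\cB_{Y},\widetilde{f}_{*}F)$. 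These isomorphisms are assembled from $f_{\cB}$, the algebra multiplication, and the natural projection-formula isomorphism, hence are compatible with the bar differentials, giving $\widetilde{f}_{*}B_{\bullet}(f^{*}G,f^{*}\cB_{Y},F)\cong B_{\bullet}(G,\cB_{Y},\widetilde{f}_{*}F)$; passing to totalizations produces the desired isomorphism.

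I expect the main obstacle to be the last step: justifying that $\widetilde{f}_{*}$ commutes with the totalization of the bar complex, so that the termwise isomorphisms glue to an isomorphism of totalized objects in $D^{b}$. Since we work with bounded complexes, $f$ is proper with finite cohomological dimension, and the bar filtration is exhaustive, this is controlled by the convergent spectral sequence of the bar filtration (equivalently, by truncating the resolution and bounding the tail), together with the standard reductions that let me assume $F$ and $G$ are single modules resolved respectively by flat and by $\widetilde{f}_{*}$-acyclic objects; the existence of enough locally free objects noted in this subsection makes such resolutions available. Verifying the compatibility of the termwise isomorphisms with the bar differentials is routine but must be checked to guarantee genuine naturality.
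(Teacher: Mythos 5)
The paper offers no proof of this lemma to compare against: it is imported as a black box from \cite[Lemma D.12]{Kuz06}, so your proposal can only be judged on its own terms. On those terms the architecture is sound and is the natural argument. The change-of-rings cancellation $\widetilde{f}^{*}G\otimes_{\cB_{X}}F\cong f^{*}G\otimes_{f^{*}\cB_{Y}}F$ is a valid derived associativity-plus-unit identity, and it correctly reduces the lemma to the case $\cB_{X}=f^{*}\cB_{Y}$, $f_{\cB}=\operatorname{id}$. The bar-complex comparison then works: each total degree of the bar bicomplex receives only finitely many terms (so totalization is unproblematic), the convergence issue you isolate is indeed settled by finite cohomological dimension of $Rf_{*}$ plus brutal truncation, and the compatibility with the bar differentials is pure naturality, because the left $\cB_{Y}$-module structure on $\widetilde{f}_{*}F$ is by definition induced by $f_{\cB}$ through adjunction, which is exactly what the last component of the bar differential becomes under the projection-formula isomorphism. (One small point: the classical projection formula does not need properness of $f$, only that $f$ be quasi-compact and quasi-separated; properness is what keeps you inside $D^{b}$ of coherent objects.)

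There is, however, one concrete misassignment which, as literally written, leaves a gap: you propose to resolve $F$ by flats and $G$ by $\widetilde{f}_{*}$-acyclic objects. Resolving $G$ by $\widetilde{f}_{*}$-acyclics is not meaningful, since $G$ lives on $Y$ and is never pushed forward; and resolving $F$ does not repair anything. What the argument actually requires is that $G$ be replaced by a bounded-above complex of $\cB_{Y}$-modules that are locally free (equivalently flat) as $\mathcal{O}_{Y}$-modules. That single hypothesis is what (i) makes the underived $f^{*}G$ appearing in your bar terms agree with $Lf^{*}G$, (ii) makes $f^{*}G$ termwise $\mathcal{O}_{X}$-flat, so that $B_{\bullet}(f^{*}G,f^{*}\cB_{Y},F)$ is a flat resolution computing $f^{*}G\otimes^{L}_{f^{*}\cB_{Y}}F$ for \emph{arbitrary} $F$, and (iii) makes $B_{\bullet}(G,\cB_{Y},\widetilde{f}_{*}F)$ compute $G\otimes^{L}_{\cB_{Y}}\widetilde{f}_{*}F$ and makes the underived tensor $(G\otimes_{\mathcal{O}_{Y}}\cB_{Y}^{\otimes p})\otimes_{\mathcal{O}_{Y}}\widetilde{f}_{*}F$ in your termwise projection formula agree with the derived one, even though $\widetilde{f}_{*}F$ is not termwise flat. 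Without this resolution of $G$, both the left-hand and right-hand bar complexes compute the wrong (underived) objects; with it, no resolution of $F$ is needed at all. Such resolutions exist by the ``enough locally free objects'' remark in this subsection, i.e.\ \cite[Lemma D.4]{Kuz06}. After swapping the roles of the resolutions in this way, your proof is complete.
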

    
\end{definition}

\begin{lemma}[{\cite[Lemma D.4]{Kuz06}}]\label{flat}
    A sheaf $F\in \Coh(X, \cB_{X})$ is locally projective over $\cB_{X}$ in the Zariski topolopgy if and only if $F$ is locally free as an $\mathcal{O}_{X}$-module. 
\end{lemma}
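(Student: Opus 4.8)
The plan is to reduce the assertion to a purely local, algebraic statement and then treat the two implications separately. Both ``(Zariski‑)locally projective over $\cB_{X}$'' and ``locally free over $\mathcal{O}_{X}$'' are local conditions, and for a finitely generated module over a (two‑sided) Noetherian ring projectivity may be checked after passing to the local rings over $X$; so I would first pass to stalks. Fix a point $x\in X$, write $A=\mathcal{O}_{X,x}$ (a regular, hence Noetherian, local ring since $X$ is smooth), let $B=\cB_{X,x}$, which is a finite $A$-algebra that is \emph{free} of finite rank as an $A$-module, and let $M=F_{x}$, a finitely generated right $B$-module. The statement to prove then becomes: $M$ is a projective $B$-module if and only if $M$ is a free $A$-module. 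Note that $B$ is both left and right Noetherian, being module‑finite over the Noetherian ring $A$, so $M$ is automatically finitely presented over $B$.

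For the forward implication I would argue directly. If $M$ is projective over $B$, then $M$ is a direct summand of some finite free module $B^{\oplus r}$. Since $B$ is free of finite rank over $A$, the module $B^{\oplus r}$ is free over $A$, so $M$ is a direct summand of a finite free $A$-module, hence finitely generated projective over $A$; over the local ring $A$ a finitely generated projective module is free, so $M$ is $A$-free. Reassembling over the points of $X$ shows $F$ is locally free over $\mathcal{O}_{X}$. This direction is clean and uses nothing beyond the $\mathcal{O}_{X}$-freeness of $\cB_{X}$.

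The substance is the converse. Starting from an $A$-free module $M$, I would choose a surjection of right $B$-modules $\pi\colon B^{\oplus r}\twoheadrightarrow M$ (possible as $M$ is finitely generated) and set $N=\ker\pi$, producing a short exact sequence
\[
0 \to N \to B^{\oplus r} \xrightarrow{\pi} M \to 0
\]
of right $B$-modules. Because $M$ is $A$-free it is $A$-projective, so this sequence is split as a sequence of $A$-modules; in particular $N$ is $A$-free and $\pi$ admits an $A$-linear section. The goal is to promote this $A$-linear section to a $B$-linear one: a $B$-linear splitting exhibits $M$ as a direct summand of $B^{\oplus r}$, hence as a projective $B$-module. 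Equivalently, using that a finitely presented flat module over any ring is projective, it suffices to upgrade $\mathcal{O}_{X}$-flatness of $M$ to flatness over $\cB_{X}$. I expect this promotion to be the \emph{main obstacle}: passing from an $A$-linear to a $B$-linear splitting is controlled by the obstruction class in $\Ext^{1}_{B}(M,N)$, and there is no formal reason for an $A$-split extension of $B$-modules to be $B$-split for an arbitrary finite free $A$-algebra $B$. The natural mechanism that forces the splitting is relative homological algebra over the pair $(B,A)$: when $\cB_{X}$ is separable over $\mathcal{O}_{X}$ — as in the Azumaya situations that concern this paper — every $A$-split epimorphism of $B$-modules is $B$-split by averaging against a separability idempotent, which closes the argument. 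I would therefore organize the converse around establishing this relative‑projectivity/averaging step, with the separability of $\cB_{X}$ over $\mathcal{O}_{X}$ as the key input, and then globalize by covering $X$ with the affine opens on which the section has been constructed.
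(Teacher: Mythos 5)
The paper does not actually prove this lemma---it is imported verbatim from \cite[Lemma D.4]{Kuz06}---so your attempt can only be measured against the statement itself. Your forward implication is complete and correct: Zariski-locally $F$ is a direct summand of $\cB_X^{\oplus r}$, hence of a finite free $\mathcal{O}_X$-module, hence finitely generated projective and therefore free over each local ring $\mathcal{O}_{X,x}$; the reduction to stalks is harmless because every module in sight is finitely presented over a Noetherian base.

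For the converse you stop short of a proof and flag that an additional input (separability of $\cB_X$ over $\mathcal{O}_X$) is needed. That caution is exactly right, and in fact decisive: under the hypotheses as printed in this paper ($\cB_X$ merely a sheaf of $\mathcal{O}_X$-algebras, locally free of finite rank as an $\mathcal{O}_X$-module) the converse is \emph{false}. Take $X=\Spec k$, $\cB_X=k[\epsilon]/(\epsilon^2)$, and $F=k=\cB_X/(\epsilon)$: then $F$ is free of rank one over $\mathcal{O}_X=k$, but it is not projective over the local ring $k[\epsilon]/(\epsilon^2)$, whose finitely generated projective modules are free. So the obstruction you locate in $\Ext^1_{B}(M,N)$ genuinely does not vanish in this generality, and no argument can close it; the lemma must be read with hypotheses under which the cited source proves it, or restricted to the separable case. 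Your proposed mechanism---averaging an $\mathcal{O}_X$-linear splitting into a $\cB_X$-linear one against a separability idempotent---is the standard relative-projectivity argument and does prove the equivalence whenever $\cB_X$ is separable over $\mathcal{O}_X$; for Azumaya algebras one can alternatively trivialize \'etale-locally as $M_m(\mathcal{O})$, apply Morita theory, and descend projectivity along the faithfully flat cover. Either route covers every application of the lemma in this paper, since the algebras that occur are Azumaya algebras on $\sX$, \'etale algebras such as $p_{*}\cA\cong q_{*}\mathcal{O}_{\widetilde{X}}$ (locally of the form $R[x]/(x^{n}-a)$ with $p\nmid n$ and $a$ a unit), and their pullbacks, all of which are separable. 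In short: your proof is a correct proof of the version of the lemma the paper actually uses, but not of the statement as literally printed, which is too strong.
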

So if there is a map $\widetilde{f}:(X, \cB_{X})\to (Y, \cB_{Y})$, then $\cB_{X}$ is a locally projective $f^{*}\cB_{Y}$-bimodule.

\begin{corollary}\label{Kuz flat}
    Let $\widetilde{f}:(X, \cB_{X})\to (Y, \cB_{Y})$ be a morphism. If $f^{*}: \Coh(Y)\to \Coh(X)$ is exact, then the functor $\widetilde{f}^{*}:\operatorname{Coh}(Y, \cB_{Y})\to\operatorname{Coh}(X, \cB_{X})$ is exact.
\end{corollary}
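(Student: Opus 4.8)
The plan is to factor $\widetilde{f}^{*}$ as a composition of two functors and verify exactness of each. By definition,
$$\widetilde{f}^{*}(G)=f^{*}G\otimes_{f^{*}\cB_{Y}}\cB_{X},$$
so $\widetilde{f}^{*}$ is the composite of the underlying pullback $f^{*}\colon\Coh(Y,\cB_{Y})\to\Coh(X,f^{*}\cB_{Y})$ followed by $-\otimes_{f^{*}\cB_{Y}}\cB_{X}\colon\Coh(X,f^{*}\cB_{Y})\to\Coh(X,\cB_{X})$. Since a composition of exact functors is exact, it suffices to treat the two factors separately.

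For the first factor, I would use that exactness of a sequence of right $\cB$-modules can be tested after applying the forgetful functor to the underlying $\mathcal{O}$-modules, which is exact and faithful. Thus, given a short exact sequence in $\Coh(Y,\cB_{Y})$, its image under $f^{*}$ has underlying $\mathcal{O}_{X}$-modules forming an exact sequence by the hypothesis that $f^{*}\colon\Coh(Y)\to\Coh(X)$ is exact; hence the sequence of $f^{*}\cB_{Y}$-modules is exact. So $f^{*}$ is exact on the module categories.

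The second factor is the crux of the argument: $-\otimes_{f^{*}\cB_{Y}}\cB_{X}$ is exact precisely because $\cB_{X}$ is flat as a left $f^{*}\cB_{Y}$-module. This is supplied by the remark following Lemma \ref{flat}: as $\cB_{X}$ is locally free of finite rank as an $\mathcal{O}_{X}$-module, Lemma \ref{flat} (applied to the pair $(X,f^{*}\cB_{Y})$) shows $\cB_{X}$ is locally projective over $f^{*}\cB_{Y}$, and local projectivity implies flatness. Since exactness may be checked Zariski-locally, where $\cB_{X}$ is a direct summand of a free $f^{*}\cB_{Y}$-module, tensoring preserves exact sequences.

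Combining the two steps yields that $\widetilde{f}^{*}$ is exact. The only real obstacle is the flatness of $\cB_{X}$ over $f^{*}\cB_{Y}$, but this is exactly the content of Lemma \ref{flat} and the remark immediately after it, so in this setting the proof reduces to careful bookkeeping of exactness along the factorization.
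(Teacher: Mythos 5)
Your proof is correct and follows essentially the same route as the paper's: apply $f^{*}$ (exact by hypothesis, with exactness checked on underlying $\mathcal{O}$-modules), then tensor with $\cB_{X}$ over $f^{*}\cB_{Y}$, which is exact because Lemma \ref{flat} makes $\cB_{X}$ locally projective, hence flat, over $f^{*}\cB_{Y}$. The paper compresses both steps into a single line, but it uses exactly the same two ingredients you spell out.
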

\begin{proof}
Let $0\to F\to G \to H \to 0 $ be an exact sequence   $\Coh(Y, \cB_{Y})$. Since $f^{*}$ is flat, by Lemma \ref{flat}, we have the following short exact sequence $$0\to  f^{*}F\otimes_{f^{*}\cB_{Y}} \cB_{X}\to f^{*}G\otimes_{f^{*}\cB_{Y}} \cB_{X} \to  f^{*}H\otimes_{f^{*}\cB_{Y}} \cB_{X} \to 0 . $$
Thus, $\widetilde{f}^{*}$ is exact.
\end{proof}

We also need the base change formula. Let $\widetilde{f}=(f,\operatorname{id}):(X, f^{*}\cB_{S} )\to (S, \cB_{S})$ and $\widetilde{g}: (Y,\cB_{Y})\to (S,\cB_{S}) $ be two morphisms. Let $p: X\times_{S}Y\to X$ and $q: X\times_{S}Y \to Y $ denote the projections.
\begin{lemma}[{\cite[Lemma D.37]{Kuz06}}]\label{fiber product}
We have the following fiber product diagram:
\begin{center}
    \begin{tikzcd}
(X\times_{S}Y, p^{*}\cB_{Y} ) \arrow[r, "\widetilde{p}"] \arrow[d, "\widetilde{q}"] & (Y,\cB_{Y}) \arrow[d, "\widetilde{g}"] \\
(X,f^{*}\cB_{S}) \arrow[r, "\widetilde{f}"]                & (S,\cB_{S}),    \end{tikzcd}
\end{center}
where $\widetilde{p}=(p,\operatorname{id})$ and $\widetilde{q}=(q, q^{*}f^{*}\cB_{S}=p^{*}g^{*}\cB_{S}\to p^{*}\cB_{Y})$.
\end{lemma}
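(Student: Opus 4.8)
The plan is to prove that the square is a \emph{fiber product in the category of noncommutative varieties} described above, by checking that it commutes and that it satisfies the corresponding universal property; in both cases the underlying-scheme statement is just the ordinary fiber product $X\times_{S}Y$, so the work is entirely in tracking the sheaf-of-algebras data. I will use two bookkeeping facts throughout. First, composition of morphisms of pairs is $(v,v_{\cB})\circ(u,u_{\cB})=\bigl(v\circ u,\; u_{\cB}\circ u^{*}v_{\cB}\bigr)$ on the algebra components. Second, $\widetilde{f}=(f,\operatorname{id})$ means the algebra component of $\widetilde{f}$ is the identity of $f^{*}\cB_{S}$. Following the labels of the diagram, $\widetilde{p}$ is the projection onto $(Y,\cB_{Y})$ and $\widetilde{q}$ the projection onto $(X,f^{*}\cB_{S})$, so on underlying schemes $p\colon X\times_{S}Y\to Y$ and $q\colon X\times_{S}Y\to X$, and I write $\gamma:=p^{*}g_{\cB}\colon q^{*}f^{*}\cB_{S}=p^{*}g^{*}\cB_{S}\to p^{*}\cB_{Y}$ for the algebra component of $\widetilde{q}$, where the identification comes from $f\circ q=g\circ p$.

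First I would verify commutativity. The underlying equality $f\circ q=g\circ p$ is immediate from the scheme fiber product. On algebras, $\widetilde{f}\circ\widetilde{q}$ has component $\gamma\circ q^{*}(\operatorname{id})=\gamma$, while $\widetilde{g}\circ\widetilde{p}$ has component $\operatorname{id}\circ p^{*}g_{\cB}=p^{*}g_{\cB}$; by the very definition of $\gamma$ these agree, so $\widetilde{f}\circ\widetilde{q}=\widetilde{g}\circ\widetilde{p}$.

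Next I would check the universal property. Let $(Z,\cB_{Z})$ be a test pair equipped with $\widetilde{a}=(a,a_{\cB})\colon(Z,\cB_{Z})\to(X,f^{*}\cB_{S})$ and $\widetilde{b}=(b,b_{\cB})\colon(Z,\cB_{Z})\to(Y,\cB_{Y})$ satisfying $\widetilde{f}\circ\widetilde{a}=\widetilde{g}\circ\widetilde{b}$. Unwinding that equality via the composition rule gives $f\circ a=g\circ b$ on schemes together with the algebra identity $a_{\cB}=b_{\cB}\circ b^{*}g_{\cB}$ (after identifying $a^{*}f^{*}\cB_{S}=b^{*}g^{*}\cB_{S}$). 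The universal property of $X\times_{S}Y$ in schemes yields a unique $h\colon Z\to X\times_{S}Y$ with $q\circ h=a$ and $p\circ h=b$. Since $h^{*}p^{*}\cB_{Y}=(p\circ h)^{*}\cB_{Y}=b^{*}\cB_{Y}$, I set $\widetilde{h}:=(h,b_{\cB})$. Then $\widetilde{p}\circ\widetilde{h}$ has underlying map $p\circ h=b$ and algebra component $b_{\cB}\circ h^{*}(\operatorname{id})=b_{\cB}$, so $\widetilde{p}\circ\widetilde{h}=\widetilde{b}$; and $\widetilde{q}\circ\widetilde{h}$ has underlying map $q\circ h=a$ and algebra component $b_{\cB}\circ h^{*}\gamma$, where $h^{*}\gamma=h^{*}p^{*}g_{\cB}=b^{*}g_{\cB}$, hence it equals $b_{\cB}\circ b^{*}g_{\cB}=a_{\cB}$, giving $\widetilde{q}\circ\widetilde{h}=\widetilde{a}$. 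For uniqueness, $h$ is already unique at the scheme level, and any lift $(h,\phi)$ with $\widetilde{p}\circ(h,\phi)=\widetilde{b}$ forces $\phi=b_{\cB}$.

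The step I expect to be the crux is the identity $\widetilde{q}\circ\widetilde{h}=\widetilde{a}$: this is precisely where the hypothesis that $\widetilde{a}$ and $\widetilde{b}$ agree over $(S,\cB_{S})$ is used, since only the relation $a_{\cB}=b_{\cB}\circ b^{*}g_{\cB}$ matches the map $b_{\cB}\circ h^{*}\gamma$ induced by the construction. The one genuinely noncommutative feature to keep in view is the asymmetry of the square: because $\widetilde{f}=(f,\operatorname{id})$ carries $\cB_{X}=f^{*}\cB_{S}$ pulled back from the base, the algebra on $X\times_{S}Y$ is simply $p^{*}\cB_{Y}$ rather than any fiber product of algebras, and it is exactly this that lets the lift $\widetilde{h}$ be read off from $\widetilde{b}$ and makes the universal property close. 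Finally, this Cartesian square is the geometric input for the flat base change formula for $\cB$-modules invoked later, which one obtains by combining it with ordinary flat base change on the underlying schemes together with the local freeness in Lemma \ref{flat}.
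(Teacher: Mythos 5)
Your proof is correct. Note that the paper itself gives no argument for this statement---it is quoted directly from \cite[Lemma D.37]{Kuz06}---so there is no internal proof to compare against; your direct verification of commutativity and of the universal property in the category of pairs $(X,\cB_{X})$ is a complete and valid substitute. Two points in your write-up deserve emphasis. First, you correctly resolved a notational slip in the paper: the text preceding the lemma declares $p\colon X\times_{S}Y\to X$ and $q\colon X\times_{S}Y\to Y$, which is incompatible with the diagram (the algebra $p^{*}\cB_{Y}$ on the fiber product only makes sense if $p$ lands in $Y$), and you adopted the diagram's convention. Second, you put your finger on the genuinely noncommutative content: since $\widetilde{f}=(f,\operatorname{id})$ carries the algebra $f^{*}\cB_{S}$ pulled back from the base, the algebra on the fiber product can simply be $p^{*}\cB_{Y}$, the lift $\widetilde{h}=(h,b_{\cB})$ is read off from $\widetilde{b}$ alone, and the compatibility $a_{\cB}=b_{\cB}\circ b^{*}g_{\cB}$ extracted from $\widetilde{f}\circ\widetilde{a}=\widetilde{g}\circ\widetilde{b}$ is exactly what closes $\widetilde{q}\circ\widetilde{h}=\widetilde{a}$; without that asymmetry no such simple description of the fibered algebra would be available. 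Your composition rule $(v,v_{\cB})\circ(u,u_{\cB})=(v\circ u,\,u_{\cB}\circ u^{*}v_{\cB})$ and all component computations are right, modulo the standard identification of $u^{*}v^{*}$ with $(v\circ u)^{*}$, an abuse the paper also makes. One last remark: for the paper's actual use of this lemma (feeding Lemma \ref{flat equal} in the proofs of Theorems \ref{Theorem for field} and \ref{general theorem}), only the construction of $\widetilde{p}$, $\widetilde{q}$ and the commutativity of the square are needed, so your universal-property argument proves somewhat more than is required---but it is what justifies calling the square a fiber product diagram.
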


\begin{lemma}[{\cite[Lemma 2.22]{Kuz06}}]\label{flat equal}
    The natural morphism of functors $\widetilde{q}_{*}\widetilde{f}^{*}\to\widetilde{f}^{*}\widetilde{g}_{*} $ is an isomorphism if and only if $q_{*}f^{*}\to f^{*}g_{*} $ is an isomorphism.
\end{lemma}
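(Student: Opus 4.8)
The plan is to reduce the statement to the analogous one for underlying $\mathcal{O}$-modules, by means of the forgetful functors. Write $\Phi_{X}\colon\Coh(X,f^{*}\cB_{S})\to\Coh(X)$ for the functor forgetting the module structure, and similarly $\Phi_{Y}$, $\Phi_{S}$, $\Phi_{X\times_{S}Y}$. The first point is that, because $\widetilde{f}=(f,\id)$ and $\widetilde{p}=(p,\id)$, the tensor twist in the definition of the noncommutative pullbacks is trivial: on underlying sheaves $\widetilde{f}^{*}(G)=f^{*}G\otimes_{f^{*}\cB_{S}}f^{*}\cB_{S}=f^{*}G$ and $\widetilde{p}^{*}(G)=p^{*}G\otimes_{p^{*}\cB_{Y}}p^{*}\cB_{Y}=p^{*}G$, while $\widetilde{q}_{*}$ and $\widetilde{g}_{*}$ are by definition $q_{*}$ and $g_{*}$ on underlying sheaves. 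This yields canonical identifications $\Phi_{X}\widetilde{f}^{*}=f^{*}\Phi_{S}$, $\Phi_{X\times_{S}Y}\widetilde{p}^{*}=p^{*}\Phi_{Y}$, $\Phi_{X}\widetilde{q}_{*}=q_{*}\Phi_{X\times_{S}Y}$, and $\Phi_{S}\widetilde{g}_{*}=g_{*}\Phi_{Y}$.

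Next I would compare the two base change morphisms. Denote by $\theta_{G}\colon\widetilde{q}_{*}\widetilde{p}^{*}G\to\widetilde{f}^{*}\widetilde{g}_{*}G$ the noncommutative base change morphism and by $\theta^{\mathrm{com}}_{M}\colon q_{*}p^{*}M\to f^{*}g_{*}M$ its commutative analogue, both assembled in the usual way from the unit and counit of the relevant adjunctions. The key step is to check that the forgetful functors intertwine these constructions, i.e.\ that $\Phi$ carries the noncommutative adjunction units and counits to the commutative ones under the identifications above. Granting this, one obtains, naturally in $G\in\Coh(Y,\cB_{Y})$, that
\[
\Phi_{X}(\theta_{G})=\theta^{\mathrm{com}}_{\Phi_{Y}G}.
\]
In other words, forgetting the module structure turns $\theta_{G}$ into the ordinary base change morphism evaluated at the underlying sheaf $\Phi_{Y}G$.

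The two implications now follow. Since a morphism of $\cB_{X}$-modules is an isomorphism exactly when it is an isomorphism of underlying $\mathcal{O}_{X}$-modules, the functor $\Phi_{X}$ is conservative. Hence if $\theta^{\mathrm{com}}$ is an isomorphism on every $\mathcal{O}_{Y}$-module, then in particular each $\theta^{\mathrm{com}}_{\Phi_{Y}G}=\Phi_{X}(\theta_{G})$ is an isomorphism, so every $\theta_{G}$ is. Conversely, assuming every $\theta_{G}$ is an isomorphism, I would test on the free modules $G=M\otimes_{\mathcal{O}_{Y}}\cB_{Y}$ for $M\in\Coh(Y)$: since $\cB_{Y}$ is locally free of finite rank $r$ over $\mathcal{O}_{Y}$, a local trivialization identifies $\Phi_{Y}G$ with $M^{\oplus r}$ and, by naturality and additivity, identifies $\theta^{\mathrm{com}}_{\Phi_{Y}G}$ locally with $(\theta^{\mathrm{com}}_{M})^{\oplus r}$. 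As isomorphy is local and detected on direct summands, this forces $\theta^{\mathrm{com}}_{M}$ to be an isomorphism for every $M$.

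The main obstacle lies in these last steps rather than in the bookkeeping of the first paragraph: precisely, verifying that the forgetful functor transports the noncommutative adjunction unit and counit to the commutative ones (so that $\theta$ genuinely forgets to $\theta^{\mathrm{com}}$), which is the technical heart; and handling the reverse implication, where the difficulty is that not every $\mathcal{O}_{Y}$-module is the underlying sheaf of a $\cB_{Y}$-module --- circumvented, as above, by testing on free modules and exploiting the local freeness of $\cB_{Y}$.
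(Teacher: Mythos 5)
The paper itself gives no proof of this lemma---it is quoted verbatim from Kuznetsov---so your proposal has to stand on its own merits. Your setup and the implication ``commutative $\Rightarrow$ noncommutative'' are sound: since $\widetilde{f}=(f,\id)$ and $\widetilde{p}=(p,\id)$ have identity algebra components, their noncommutative pullbacks literally agree with $f^{*}$, $p^{*}$ on underlying sheaves; pushforwards always commute with the forgetful functors; the intertwining of units and counits that you rightly flag as the technical heart does hold (the noncommutative unit of the $\widetilde{q}$-adjunction and counit of the $\widetilde{g}$-adjunction differ from the commutative ones exactly by the tensor-up maps $q^{*}F\to q^{*}F\otimes_{q^{*}f^{*}\cB_{S}}p^{*}\cB_{Y}$, and these cancel in the composite defining the base-change morphism); and conservativity of $\Phi_{X}$ then finishes. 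This is, incidentally, the only direction the paper ever uses: in Theorem \ref{Theorem for field} the commutative base change holds because the base field extension is flat, and the lemma is invoked to transfer it to the noncommutative square.

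The converse direction, however, contains a genuine gap. The trivialization $\cB_{Y}|_{V}\cong\mathcal{O}_{V}^{\oplus r}$ exists only over an open $V\subseteq Y$; it is not a morphism in $\Coh(Y)$, so the naturality of $\theta^{\mathrm{com}}$ cannot be applied to it. Worse, $\theta^{\mathrm{com}}_{N}$ is a morphism of sheaves on $X$ whose source and target are built from $g_{*}N$: it restricts along opens of $X$ and of $S$, but it does not localize over $Y$ in any sense, because $g_{*}$ mixes all of $Y$. So ``isomorphy is local'' invokes locality on the wrong space, and you never actually exhibit $\theta^{\mathrm{com}}_{M}$ as a direct summand of $\theta^{\mathrm{com}}_{M\otimes\cB_{Y}}$. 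Your block-summand observation would finish the proof \emph{if} the unit $\mathcal{O}_{Y}\to\cB_{Y}$ admitted a \emph{global} $\mathcal{O}_{Y}$-module splitting; this holds when $Y$ is affine, or when $\cB_{Y}$ is Azumaya of degree invertible in $k$ (reduced trace)---which covers the squares appearing in this paper---but not for a general locally free sheaf of algebras, which is the stated generality. A genuine repair must first localize the \emph{hypothesis} over $Y$: for instance, for an open immersion $j\colon V\hookrightarrow Y$ feed $G=\widetilde{j}_{*}G'$ into the noncommutative assumption (base change along open-immersion squares always holds, e.g.\ by Tor-independence, so $\theta$ at $\widetilde{j}_{*}G'$ computes the base-change morphism of the square over $V$), thereby reducing to squares where $\cB_{Y}$ is free; alternatively, route both statements through the Tor-independence criterion for base change, which is a pointwise and hence local condition. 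One of these extra ideas is the real content of the converse, and it is missing from your argument.
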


\begin{remark} The constructions and lemmas above also hold for smooth proper Deligne--Mumford stacks over $k$.
\end{remark}

\section{Brauer group of $\mu_{n}$-gerbe of line bundle}\label{section Brauer}
In this section, we compute the Brauer group $\Br(\sX)$ of the $\mu_{n}$-gerbe of line bunlde $L$ and provide an explicit description of the elements in  $\Br(\mathbf{B}\mu_{n,k})$. The main result of this section is Proposition \ref{general non trivial}.

 We will fix $n\in \ZZ_{>0}$ in this section. Recall that  $k$ is a field of $\operatorname{char}(k)=p$ with $p\nmid n$ and contains $n$-th roots of unity. Let $X$ be a smooth projective variety over a field $k$. We begin by reviewing the computation of the Brauer group $\Br(\mathbf{B}\mu_{n,X})$, where $\mathbf{B}\mu_{n,X}$ denotes the classifying stack of the $n$-th cyclic group $\mu_n$ over $X$.

\begin{lemma}[{\cite[Proposition 3.2]{AM20}}]\label{Brauer group of classifying stack}

 Let $p: \mathbf{B}\mu_{n,X} \to X$ denote the coarse moduli space. 
Then we have:
\[
R^{0}p_{*}\mathbb{G}_m = \mathbb{G}_m, \quad R^{1}p_{*}\mathbb{G}_m = \mu_n, \quad R^{2}p_{*}\mathbb{G}_m = 0.
\]
Moreover, the Leray spectral sequence $$ E_{2}^{p_{1},q_{1}}= \HH^{p_{1}}(\operatorname{Spec}X, R^{q_{1}}p_{*}\mathbb{G}_{m} )\Longrightarrow \HH^{p_{1}+q_{1}}(\mathbf{B}\mu_{n,X}, \mathbb{G}_{m}) $$
yields a split short exact sequence:
\[
0 \to \Br(X) \xrightarrow{p^*} \Br(\mathbf{B}\mu_{n,X}) \xrightarrow{q} \mathrm{H}^1(X, \mu_n) \to 0.
\]
\end{lemma}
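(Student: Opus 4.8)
The plan is to compute the higher direct images $R^{q}p_{*}\mathbb{G}_{m}$ and then run the Leray spectral sequence, using crucially that $\mathbf{B}\mu_{n,X}\to X$ is a neutral gerbe. First I would determine the sheaves $R^{q}p_{*}\mathbb{G}_{m}$ by passing to stalks. Each $R^{q}p_{*}\mathbb{G}_{m}$ is the étale sheafification of $U\mapsto\HH^{q}(\mathbf{B}\mu_{n,U},\mathbb{G}_{m})$, so its stalk at a geometric point computes $\HH^{q}(\mathbf{B}\mu_{n,\kappa},\mathbb{G}_{m})$ for a separably closed field $\kappa$. Since $p\nmid n$, the group scheme $\mu_{n,\kappa}$ is finite, étale and constant, and the cohomology of its classifying stack is the ordinary group cohomology $\HH^{q}(\mathbb{Z}/n,\kappa^{*})$ computed via the bar resolution. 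For the cyclic group $\mathbb{Z}/n$ with trivial action these are $\HH^{0}=\kappa^{*}$, $\HH^{1}=\kappa^{*}[n]=\mu_{n}(\kappa)$, and $\HH^{2}=\kappa^{*}/(\kappa^{*})^{n}=0$, the last because $\kappa^{*}$ is $n$-divisible. Sheafifying, and using that $k$ contains the $n$-th roots of unity so that the Cartier-dual sheaf $\mathbb{Z}/n$ is identified with $\mu_{n}$, this yields $R^{0}p_{*}\mathbb{G}_{m}=\mathbb{G}_{m}$, $R^{1}p_{*}\mathbb{G}_{m}\cong\mu_{n}$ and $R^{2}p_{*}\mathbb{G}_{m}=0$.

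Next I would feed these into the Leray spectral sequence $E_{2}^{a,b}=\HH^{a}(X,R^{b}p_{*}\mathbb{G}_{m})\Rightarrow\HH^{a+b}(\mathbf{B}\mu_{n,X},\mathbb{G}_{m})$. Since $R^{2}p_{*}\mathbb{G}_{m}=0$, the term $E_{2}^{0,2}$ vanishes, so the induced filtration on $\HH^{2}(\mathbf{B}\mu_{n,X},\mathbb{G}_{m})$ has at most two graded pieces: $E_{\infty}^{2,0}$, a subquotient of $\HH^{2}(X,\mathbb{G}_{m})$, and $E_{\infty}^{1,1}$, a subquotient of $\HH^{1}(X,\mu_{n})$. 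Because $X$ is smooth projective and $\mathbf{B}\mu_{n,X}$ is a smooth separated tame Deligne--Mumford stack with projective coarse space $X$, Proposition \ref{two Brauer are same} lets me replace both $\HH^{2}$-groups by the corresponding Brauer groups. It then remains to show that the two graded pieces are the full groups and that the extension splits.

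This is where neutrality enters, and I expect it to be the crux. The trivial $\mu_{n}$-torsor gives a canonical section $s\colon X\to\mathbf{B}\mu_{n,X}$ of $p$, with $s^{*}p^{*}=\mathrm{id}$; by functoriality it induces a morphism of Leray spectral sequences retracting the one induced by $p$, which is an isomorphism on the bottom row $E_{2}^{\bullet,0}=\HH^{\bullet}(X,\mathbb{G}_{m})$. Since this retraction commutes with the differentials and is injective on the bottom row, every differential whose target lies in the bottom row must vanish. In particular $d_{2}\colon E_{2}^{0,1}\to E_{2}^{2,0}$ vanishes, giving $E_{\infty}^{2,0}=\HH^{2}(X,\mathbb{G}_{m})=\Br(X)$ together with the injectivity of $p^{*}$; and, the genuinely delicate point, the same argument forces $d_{2}\colon E_{2}^{1,1}=\HH^{1}(X,\mu_{n})\to E_{2}^{3,0}=\HH^{3}(X,\mathbb{G}_{m})$ to vanish, so $E_{\infty}^{1,1}=\HH^{1}(X,\mu_{n})$ and the edge map $q$ is surjective. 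Finally $s^{*}$ splits $p^{*}$, so assembling these facts produces the asserted split short exact sequence $0\to\Br(X)\xrightarrow{p^{*}}\Br(\mathbf{B}\mu_{n,X})\xrightarrow{q}\HH^{1}(X,\mu_{n})\to0$.
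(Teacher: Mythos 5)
The paper offers no proof of this lemma at all: it is imported verbatim from \cite[Proposition 3.2]{AM20}, so there is no internal argument to compare against, and your proposal has to stand on its own as a proof of the cited result. Its overall structure is correct and is indeed the standard one: compute $R^{q}p_{*}\mathbb{G}_m$ locally, feed the answer into the Leray spectral sequence, and use the section $s\colon X\to \mathbf{B}\mu_{n,X}$ coming from the trivial torsor twice — once to kill every differential landing in the row $q_{1}=0$ (your retraction-of-spectral-sequences argument is fine, and it is exactly what handles the delicate differential $d_{2}\colon \HH^{1}(X,\mu_n)\to \HH^{3}(X,\mathbb{G}_m)$), and once more to observe that $s^{*}$ is a retraction of $p^{*}$ and therefore splits the resulting extension. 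The vanishing $E_{2}^{0,2}=0$, the identification $E_{\infty}^{2,0}=E_{2}^{2,0}$ and $E_{\infty}^{1,1}=E_{2}^{1,1}$, and the appeal to Proposition \ref{two Brauer are same} to convert both $\HH^{2}(-,\mathbb{G}_m)$ groups into Brauer groups are all legitimate in the paper's setting ($X$ smooth projective, $p\nmid n$).

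The one step you should tighten is the stalk computation. The stalk of $R^{q}p_{*}\mathbb{G}_m$ at a geometric point $\bar{x}$ is \emph{not} $\HH^{q}(\mathbf{B}\mu_{n,\kappa},\mathbb{G}_m)$ for the residue field $\kappa$; it is $\HH^{q}(\mathbf{B}\mu_{n,R},\mathbb{G}_m)$ for the strict henselization $R=\mathcal{O}_{X,\bar{x}}^{sh}$. Passing to the closed fibre is what proper base change would give, and that theorem is unavailable here because $\mathbb{G}_m$ is not torsion; as stated, your claim even gives the wrong answer for $q=0$ (the stalk of $\mathbb{G}_m$ is $R^{*}$, not $\kappa^{*}$). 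The repair is routine: descent along $\operatorname{Spec}R\to \mathbf{B}\mu_{n,R}$ identifies the stalk with group cohomology $\HH^{q}(\ZZ/n\ZZ, R^{*})$, and since $n$ is invertible in the henselian local ring $R$ with separably closed residue field, $R^{*}$ is $n$-divisible; one then gets $R^{*}$, $R^{*}[n]=\mu_{n}(R)$, and $R^{*}/(R^{*})^{n}=0$, so your conclusions survive (and $R^{0}p_{*}\mathbb{G}_m=\mathbb{G}_m$ can be seen directly from $\mu_n$-invariants, with no stalks at all). Relatedly, stalk computations alone do not determine $R^{1}p_{*}\mathbb{G}_m$ as a sheaf — a locally constant sheaf with stalks $\ZZ/n\ZZ$ could a priori be a nontrivial twisted form — so one should write down the natural map from the character sheaf $\mathcal{H}om(\mu_{n},\mathbb{G}_m)\cong \ZZ/n\ZZ$ (characters of $\mu_n$ give line bundles on $\mathbf{B}\mu_{n,U}$) and check that it is a stalkwise isomorphism; your appeal to Cartier duality and to $\mu_{n}\subset k$ is the right idea, and becomes a complete argument once phrased this way.
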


Now let $L$ be a line bundle on $X$, and let
$p: \sX \to X$
be the  $\mu_{n}$-gerbe of line bundle $L$, as in Definition~\ref{rootgerbe}. To generalize Lemma~\ref{Brauer group of classifying stack} to compute $\Br(\sX)$, we need the following lemmas.

\begin{lemma}\label{injective of pushforward}
The pullback map
\[
p^{*} : \Br(X) \to \Br(\sX)
\]
is injective and admits a splitting.
\end{lemma}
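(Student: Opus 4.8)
The plan is to exhibit an explicit retraction of $p^{*}\colon \Br(X)\to \Br(\sX)$, which simultaneously forces injectivity and produces the desired splitting. First I would observe that the structural situation here is parallel to that of Lemma \ref{Brauer group of classifying stack}: since $p\colon \sX\to X$ is the coarse moduli space of a $\mu_{n}$-gerbe, the relevant higher direct images of $\mathbb{G}_{m}$ are governed by the same local computation, so I expect $R^{0}p_{*}\mathbb{G}_{m}=\mathbb{G}_{m}$ and $R^{2}p_{*}\mathbb{G}_{m}=0$ exactly as there. The key point is that $\sX\to X$ admits a section \emph{Brauer-theoretically}: the adjunction unit $\mathcal{O}_{X}\to p_{*}\mathcal{O}_{\sX}$ is an isomorphism because $p$ is the coarse space of a gerbe, and hence $p^{*}$ on $\mathbb{G}_{m}$-cohomology has a natural left inverse coming from the edge maps of the Leray spectral sequence.

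The main steps I would carry out are as follows. First, set up the Leray spectral sequence $E_{2}^{s,t}=\HH^{s}(X,R^{t}p_{*}\mathbb{G}_{m})\Rightarrow \HH^{s+t}(\sX,\mathbb{G}_{m})$, and identify $R^{0}p_{*}\mathbb{G}_{m}=\mathbb{G}_{m}$ and $R^{2}p_{*}\mathbb{G}_{m}=0$ by the same étale-local argument as in Lemma \ref{Brauer group of classifying stack} (locally on $X$ the gerbe trivializes to $\mathbf{B}\mu_{n}$ over an affine, where the computation is known). Second, the vanishing $R^{2}p_{*}\mathbb{G}_{m}=0$ makes the low-degree five-term exact sequence of the spectral sequence produce an exact sequence
\[
0\to \HH^{2}(X,\mathbb{G}_{m})\xrightarrow{\,p^{*}\,}\HH^{2}(\sX,\mathbb{G}_{m})\to \HH^{1}(X,R^{1}p_{*}\mathbb{G}_{m}),
\]
which already gives injectivity of $p^{*}$ on $\HH^{2}$, hence on the torsion subgroups $\Br(X)\hookrightarrow \Br'(X)$ via Proposition \ref{two Brauer are same}. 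Third, to split $p^{*}$ I would produce a map $\Br(\sX)\to \Br(X)$ left-inverse to it. The cleanest way is to use that the edge homomorphism $\HH^{2}(\sX,\mathbb{G}_{m})\to E_{\infty}^{0,2}\subseteq E_{2}^{0,2}=\HH^{0}(X,R^{2}p_{*}\mathbb{G}_{m})=0$ is trivial, so the filtration on $\HH^{2}(\sX,\mathbb{G}_{m})$ has its bottom piece $E_{\infty}^{2,0}$ equal to the image of $p^{*}$ up to the differentials, and the splitting of the sequence in Lemma \ref{Brauer group of classifying stack} can be transported to $\sX$; concretely, I would construct the retraction by restricting an Azumaya algebra on $\sX$ along a $\mu_{n}$-rigidification or, equivalently, by using that $p_{*}$ sends the class back to $X$ after twisting by $\rho_{0}$, using the orthogonal decomposition of Lemma \ref{decomposition}.

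The hard part will be constructing the splitting canonically rather than merely deducing injectivity, since the vanishing of $R^{2}p_{*}\mathbb{G}_{m}$ gives the left-exactness of $p^{*}$ for free but the existence of a genuine retraction requires identifying $E_{2}^{2,0}=\Br(X)$ as a \emph{direct summand} of $\HH^{2}(\sX,\mathbb{G}_{m})$. I expect this to follow from the $\Gm$-gerbe structure: the morphism $\sX\to \mathbf{B}\Gm$ in Definition \ref{rootgerbe} equips $\sX$ with a universal line bundle $\mathcal{M}$, and the associated weight decomposition (Lemma \ref{decomposition}) should let me define a projection onto the $X$-part functorially. The one subtlety to check carefully is that all differentials $d_{2}\colon E_{2}^{0,1}\to E_{2}^{2,0}$ into $\Br(X)$ vanish or are compatible with the splitting, so that no Brauer class on $X$ is killed and the retraction is well-defined; granting that, injectivity and the existence of a splitting both follow, completing the proof.
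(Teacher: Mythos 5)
There is a genuine gap, in fact two. First, your claim that the five-term exact sequence of the Leray spectral sequence ``already gives injectivity'' of $p^{*}$ is not correct as stated: the low-degree exact sequence reads
\[
\HH^{0}(X,R^{1}p_{*}\mathbb{G}_{m})\xrightarrow{\,d_{2}\,}\HH^{2}(X,\mathbb{G}_{m})\xrightarrow{\,p^{*}\,}\HH^{2}(\sX,\mathbb{G}_{m})\to \HH^{1}(X,R^{1}p_{*}\mathbb{G}_{m}),
\]
so the kernel of $p^{*}$ is exactly the image of the differential $d_{2}\colon \HH^{0}(X,\mu_{n})\to \HH^{2}(X,\mathbb{G}_{m})$, which is not zero for a general $\mu_{n}$-gerbe (it is essentially the $\mathbb{G}_{m}$-gerbe class of $\sX$). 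You flag this vanishing at the very end as ``the one subtlety to check,'' but it is precisely the content of the injectivity statement, so the argument as proposed is circular; one would need to invoke the specific structure of $\sX$ (it is the gerbe of $n$-th roots of a line bundle, whose class dies in $\HH^{2}(X,\mathbb{G}_{m})$ by the Kummer sequence) to kill this differential. Second, the splitting is never actually constructed: the spectral sequence exhibits $E_{\infty}^{2,0}$ only as a \emph{subgroup} of $\HH^{2}(\sX,\mathbb{G}_{m})$ (the bottom filtration step, i.e.\ the image of the edge map $p^{*}$), and filtrations do not come with canonical projections, so no retraction ``comes from the edge maps.'' Your fallback suggestions also do not work as stated: $p_{*}$ of an Azumaya algebra on $\sX$ is generally not Azumaya (e.g.\ $p_{*}M_{n,B}(k)\cong k[x]/(x^{n}-a)$ by Lemma \ref{pushforward of algebra}), and taking the weight-zero piece of the decomposition in Lemma \ref{decomposition} does not obviously produce an algebra representing a retracted Brauer class.

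The paper's proof avoids both issues by working Zariski-locally rather than with the spectral sequence. It chooses an affine cover $\{U_{i}\}$ trivializing $L$, so that $\sX|_{U_{i}}\cong \mathbf{B}\mu_{n,U_{i}}$ admits a genuine section $\pi_{i}\colon U_{i}\to \mathbf{B}\mu_{n,U_{i}}$ with $p_{i}\circ\pi_{i}=\operatorname{id}$, giving local retractions $\pi_{i}^{*}$ of $p_{i}^{*}$. It then uses the exactness of
\[
0\to \Br(X)\to \bigoplus_{i}\Br(U_{i})\to \bigoplus_{i,j}\Br(U_{ij})
\]
for smooth varieties (\cite[Theorem 3.2.2]{CTS21}), applied to all three rows of a commutative diagram, to conclude injectivity of $p^{*}$ from injectivity of the $p_{i}^{*}$, and to glue the local retractions $\pi_{i}^{*}$ into a global map $\pi^{*}\colon \Br(\sX)\to \Br(X)$ with $\pi^{*}\circ p^{*}=\operatorname{id}$ by a diagram chase. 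The essential input you are missing is this local-to-global mechanism: the gerbe has no global section, but it has local sections, and the Zariski-sheaf-like exactness of the Brauer group on a smooth variety is what lets one assemble the local retractions into a global one.
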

\begin{proof}
There is a Zariski affine open cover $\{U_{i}\}$ of $X$, such that $L|_{U_{i}}\cong \mathcal{O}_{U_{i}}$ for each $U_{i}$. Let $U_{ij}:=U_{i}\cap U_{j}$. For each $U_{i}$, we have the following maps 
 $$U_{i}\xrightarrow{\pi_{i}} \mathbf{B}\mu_{n,U_{i}}\xrightarrow{p_{i}} U_{i},  $$
 where $p_{i}\circ \pi_{i}=\operatorname{id}$. Since $X$ is smooth, by  \cite[Theorem 3.2.2]{CTS21}, we have the following commutative diagram
 \begin{center}
 \begin{tikzcd}
0 \arrow[r] & \Br(X) \arrow[r] \arrow[d, "p^{*}"] & \bigoplus_{i}\Br(U_{i}) \arrow[r] \arrow[d, "\bigoplus_{i} p^{*}_{i}"] & \bigoplus_{i,j}\Br(U_{ij}) \arrow[d, "\bigoplus_{i,j}p^{*}_{ij}"] \\
0 \arrow[r] & \Br(\sX)\arrow[r] \arrow[d, "\pi^{*}",  dashed]               & \bigoplus_{i}\Br(\mathbf{B}\mu_{n,U_{i}}) \arrow[r] \arrow[d, "\bigoplus_{i}\pi^{*}_{i}"]               & \bigoplus_{i,j}\Br(\mathbf{B}\mu_{n,U_{ij}}) \arrow[d, "\bigoplus_{i,j}\pi^{*}_{ij}"] \\
0 \arrow[r] & \Br(X) \arrow[r]  & \bigoplus_{i}\Br(U_{i})\arrow[r]                & \bigoplus_{i,j}\Br(U_{ij}).
\end{tikzcd} 
\end{center}
Since \( p_{i}^{*} \) are injective, \( p^{*} \) is injective. On the other hand, by diagram chasing, we know there exists a morphism \( \pi^{*}: \Br(\sX) \to \Br(X) \). Since \( \pi_{i}^{*} \circ p_{i}^{*} = \operatorname{id} \), we have \( \pi^{*} \circ p^{*} = \operatorname{id} \). We complete the proof.
\end{proof}
\begin{lemma}\label{pushforward}
We have
$R^{0}p_{*}\mathbb{G}_{m}=\mathbb{G}_{m}, R^{1}p_{*}\mathbb{G}_{m}=\mu_{n}, R^{2}p_{*}\mathbb{G}_{m}=0. $
\end{lemma}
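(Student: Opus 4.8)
The plan is to reduce the computation to the case of the classifying stack $\mathbf{B}\mu_{n}$, which is already settled by Lemma \ref{Brauer group of classifying stack}, and then to glue the local answers into global sheaves on $X$. The crucial observation is that the formation of the higher direct images $R^{q}p_{*}\Gm$ is local on the base $X$: for any open immersion $j\colon U\hookrightarrow X$ there is a canonical identification $(R^{q}p_{*}\Gm)|_{U}\cong R^{q}p_{U,*}\Gm$, where $p_{U}\colon \sX\times_{X}U\to U$ is the restriction of $p$, since $R^{q}p_{*}\Gm$ is the sheafification of $V\mapsto \HH^{q}(p^{-1}(V),\Gm)$ and restriction to opens commutes with this. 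Hence to pin down each sheaf $R^{q}p_{*}\Gm$ it suffices to compute its restriction to the members of a well-chosen open cover of $X$.

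First I would choose a Zariski affine open cover $\{U_{i}\}$ of $X$ on which $L$ trivializes, i.e.\ $L|_{U_{i}}\cong \mathcal{O}_{U_{i}}$. By Proposition \ref{rootgerbeDM}(iii), over each $U_{i}$ the $\mu_{n}$-gerbe of the now trivial line bundle becomes the classifying stack, $\sX\times_{X}U_{i}\cong \mathbf{B}\mu_{n,U_{i}}$. Applying Lemma \ref{Brauer group of classifying stack} with $X$ replaced by $U_{i}$ then gives, for $p_{i}\colon \mathbf{B}\mu_{n,U_{i}}\to U_{i}$,
$$R^{0}p_{i,*}\Gm=\Gm,\qquad R^{1}p_{i,*}\Gm=\mu_{n},\qquad R^{2}p_{i,*}\Gm=0$$
on $U_{i}$. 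Combined with the locality statement above, this shows that $R^{q}p_{*}\Gm$ restricts to $\Gm|_{U_{i}}$, $\mu_{n}|_{U_{i}}$, and $0$ on each $U_{i}$ for $q=0,1,2$ respectively.

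It remains to promote these local identifications to global isomorphisms on $X$, and this gluing is the only delicate point. For $q=0$ and $q=2$ it is immediate: the vanishing $R^{2}p_{*}\Gm=0$ is checked on a cover, and for $q=0$ the adjunction unit $\Gm\to p_{*}\Gm$ is a globally defined map that is an isomorphism on each $U_{i}$, hence an isomorphism. The main obstacle is the case $q=1$, where the local isomorphisms $R^{1}p_{i,*}\Gm\cong \mu_{n}$ could a priori disagree on the overlaps $U_{i}\cap U_{j}$ by an automorphism of $\mu_{n}$ arising from the choice of trivialization of $L$. To rule this out I would use a genuinely global object: the universal line bundle $\mathcal{M}$ on $\sX$ of \cite{IU15}, together with its structure isomorphism $\Phi\colon \mathcal{M}^{\otimes n}\xrightarrow{\sim} p^{*}L$. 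The class $[\mathcal{M}]$ defines a global section of the relative Picard sheaf $R^{1}p_{*}\Gm$ which, under the identification $\sX\times_{X}U_{i}\cong\mathbf{B}\mu_{n,U_{i}}$, corresponds to the weight-one character and therefore generates each stalk. Since $\Phi$ exhibits $\mathcal{M}^{\otimes n}$ as pulled back from $X$, the relation $n[\mathcal{M}]=0$ holds globally, while locally $[\mathcal{M}]$ has exact order $n$; thus $[\mathcal{M}]$ determines an isomorphism of constant sheaves $\mathbb{Z}/n\xrightarrow{\sim} R^{1}p_{*}\Gm$. Composing with $\mathbb{Z}/n\cong\mu_{n}$, which is legitimate because $k$ contains the $n$-th roots of unity and hence $\mu_{n}$ is constant, yields the desired global identification $R^{1}p_{*}\Gm\cong\mu_{n}$ and completes the argument.
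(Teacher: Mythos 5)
Your proposal is correct and takes essentially the same route as the paper: restrict to an affine cover trivializing $L$, identify $\sX\times_{X}U_{i}$ with $\mathbf{B}\mu_{n,U_{i}}$ via Proposition \ref{rootgerbeDM}, and apply Lemma \ref{Brauer group of classifying stack}, using that the formation of $R^{q}p_{*}\mathbb{G}_{m}$ is local on the base. The only difference is that you additionally justify the gluing of the local identifications of $R^{1}p_{*}\mathbb{G}_{m}$ by means of the universal line bundle $\mathcal{M}$ and its class of order $n$; the paper's proof leaves this step implicit, so your version is a slightly more complete rendering of the same argument.
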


\begin{proof}
There is an affine open cover $\{U_{i}\}$ of $X$, such that $L|_{U_{i}}\cong \mathcal{O}_{U_{i}}$ for each $U_{i}$. For each $U_{i}$,   we have the following Cartesian diagram 
\begin{center}
\begin{tikzcd}
\mathbf{B}\mu_{n,U_{i}} \arrow[r] \arrow[d, "p_{i}"] & \sX \arrow[d, "p"] \\
U_{i} \arrow[r]                & X               
\end{tikzcd}
\end{center}
 By Lemma \ref{Brauer group of classifying stack}, we have 
$R^{0}p_{i*}\mathbb{G}_{m}=\mathbb{G}_{m}, R^{1}p_{i*}\mathbb{G}_{m}=\mu_{n}, R^{2}p_{i*}\mathbb{G}_{m}=0.$ Thus, we get the lemma.
\end{proof}

Let $K(X)$ be the function field of $X$, and let $U$ be an open subvariety of $X$. Let $\eta: \operatorname{Spec}K(X) \hookrightarrow U \xhookrightarrow{\eta_{U}} X$ denote the inclusion morphisms. We have the following lemma.

\begin{lemma}\label{injective of H}
The morphisms $\eta^{*}: \HH^{1}(X, \mu_{n}) \to \HH^{1}(\operatorname{Spec}K(X), \mu_{n})$ and $\eta^{*}_{U}: \HH^{1}(X, \mu_{n}) \to \HH^{1}(U, \mu_{n})$ are injective.
\end{lemma}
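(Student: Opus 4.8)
The plan is to use the Kummer-theoretic description of $\HH^1(-,\mu_n)$ and reduce injectivity to a divisor computation on the smooth (hence normal) variety $X$. Since $\operatorname{char}(k)\nmid n$, the Kummer sequence $1\to\mu_n\to\Gm\xrightarrow{n}\Gm\to 1$ is exact in the étale topology, and taking cohomology identifies, for any $k$-scheme $Y$, the group $\HH^1(Y,\mu_n)$ with the group of isomorphism classes of pairs $(\mathcal{L},\varphi)$, where $\mathcal{L}$ is a line bundle on $Y$ and $\varphi:\mathcal{L}^{\otimes n}\xrightarrow{\sim}\mathcal{O}_Y$ is a trivialization, with group law given by tensor product and trivial element $(\mathcal{O}_Y,\mathrm{id})$. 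Under this identification the maps $\eta^*$ and $\eta_U^*$ are simply restriction of pairs, so I would treat them simultaneously, writing $V$ for either the dense open $U$ or the generic point $\Spec K(X)$.

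Next I would unwind what membership in the kernel means. Suppose $(\mathcal{L},\varphi)$ restricts to the trivial torsor on $V$. By definition this provides a nowhere-vanishing section $s\in\Gamma(V,\mathcal{L}|_V)$ (a trivialization of $\mathcal{L}|_V$) compatible with $\varphi$, in the sense that $\varphi(s^{\otimes n})=1$ on $V$. Regarding $s$ as a rational section of $\mathcal{L}$ over all of $X$, the element $f:=\varphi(s^{\otimes n})\in K(X)^*$ is a rational function whose restriction to $V$ equals $1$; since $X$ is integral (being a variety) and $V$ is dense, this forces $f=1$ in $K(X)^*$.

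The key step is then the divisor computation. Because $\varphi$ is an isomorphism of line bundles it preserves divisors of rational sections, so
$$ 0=\operatorname{div}(f)=\operatorname{div}(s^{\otimes n})=n\,\operatorname{div}(s) $$
in the free abelian group of Weil divisors on $X$, whence $\operatorname{div}(s)=0$. As $X$ is smooth and therefore normal, a rational section of a line bundle with trivial divisor is a global nowhere-vanishing section, so $s$ extends to a trivialization $\mathcal{O}_X\xrightarrow{\sim}\mathcal{L}$ on all of $X$; under it $\varphi$ corresponds to the function $f=1$. Hence $(\mathcal{L},\varphi)$ is the trivial torsor and its class vanishes in $\HH^1(X,\mu_n)$. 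Running this argument for $V=U$ and for $V=\Spec K(X)$ yields injectivity of both $\eta_U^*$ and $\eta^*$.

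I expect the main conceptual obstacle to be resisting the temptation to argue through the graded pieces of the Kummer decomposition $0\to\Gamma(X,\mathcal{O}_X^*)/n\to\HH^1(X,\mu_n)\to\Pic(X)[n]\to 0$: the restriction $\Pic(X)[n]\to\Pic(V)[n]$ is genuinely \emph{not} injective, since a nonzero torsion line bundle can become trivial on $U$, so a snake-lemma argument on the associated graded fails. The point is that triviality of the full torsor is strictly stronger than triviality of the underlying line bundle, and it is precisely the compatibility $\varphi(s^{\otimes n})=1$ --- forcing $f=1$ and hence $\operatorname{div}(s)=0$ --- that rescues injectivity. Equivalently, one could deduce both statements from absolute purity, since for $Z\subset X$ closed of codimension $\geq 1$ one has $\HH^1_Z(X,\mu_n)=0$, so the localization sequence makes $\HH^1(X,\mu_n)\to\HH^1(U,\mu_n)$ injective and the case of $\Spec K(X)$ follows by passing to the colimit over dense opens; but the divisor argument above is self-contained and I would present it.
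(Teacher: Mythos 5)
Your proof is correct, but it takes a genuinely different route from the paper. The paper's proof is sheaf-theoretic: it applies the Leray spectral sequence for the inclusion of the generic point $\eta:\operatorname{Spec}K(X)\to X$, so that the five-term exact sequence gives an injection $\HH^{1}(X,\eta_{*}\mu_{n})\hookrightarrow \HH^{1}(\operatorname{Spec}K(X),\mu_{n})$, and then shows the adjunction map $\mu_{n}\to\eta_{*}\mu_{n}$ is an isomorphism of étale sheaves (using that a connected étale cover of the normal scheme $X$ is integral); injectivity of $\eta_{U}^{*}$ is then deduced formally from the factorization $\eta^{*}=(\text{restriction to the generic point})\circ\eta_{U}^{*}$. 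You instead work with the concrete Kummer-theoretic description of classes as pairs $(\mathcal{L},\varphi)$ and kill the kernel by a divisor computation: the compatibility $f=\varphi(s^{\otimes n})=1$ forces $n\operatorname{div}(s)=0$, hence $\operatorname{div}(s)=0$, and normality upgrades $s$ to a global trivialization of the pair. Both arguments hinge on the same geometric input (smooth $\Rightarrow$ normal), but yours is more elementary and self-contained (no spectral sequence), handles $U$ and $\operatorname{Spec}K(X)$ uniformly rather than deducing one from the other, and usefully isolates why the naive argument through $\Pic(X)[n]\to\Pic(U)[n]$ cannot work — a point the paper's proof never needs to confront. The paper's approach, in exchange, is shorter and immediately generalizes to other locally constant coefficient sheaves.
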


\begin{proof}
    We have the spectral sequence $$E_{2}^{p,q}=\HH^{p}(X, R^{q}\eta_{*}\mu_{n})\Longrightarrow \HH^{p+q}(\operatorname{Spec} K(X), \mu_{n}). $$
    So we have an injective morphism $$\HH^{1}(X, \eta_{*}\mu_{n})\hookrightarrow \HH^{1}(\operatorname{Spec} K(X), \mu_{n}).$$
    Note that we have the natural map $\mu_{n}\to \eta_{*}\mu_{n}.$ We claim it is an isomorphism. Indeed, let $U\to X$ be an \'etale morphism. Since $X$ is smooth, $X$ is normal. Hence, $U$ is normal. If it is connected, then it is integral. This shows that the map $\mu_{n} \to \eta_{*}\mu_{n}$ is an isomorphim. Thus, the morphism $\eta^{*}$ is injective. Since $\eta^{*}$ factors through $\eta_{U}^{*}$, $\eta_{U}^{*}$ is injective.
\end{proof}

By Lemmas \ref{injective of pushforward} and  \ref{pushforward},   the Leray spectral sequence
$$ E_{2}^{p_{1},q_{1}}= \HH^{p_{1}}(\operatorname{Spec}X, R^{q_{1}}p_{*}\mathbb{G}_{m} )\Longrightarrow \HH^{p_{1}+q_{1}}(\sX, \mathbb{G}_{m}) $$ will induce a short exact sequence
$$0\to \HH^{2}( X, \mathbb{G}_{m})\xrightarrow{p^{*}}  \HH^{2}(\sX, \mathbb{G}_{m})\xrightarrow{q} \HH^{1}( X, \mu_{n}).$$

\begin{lemma}\label{HH map}

There exists a morphism $i: \HH^{1}(X,\mu_{n})\to \HH^{2}(\sX,\mathbb{G}_{m})$ such that $q\circ i=\operatorname{id}.$ 
    
\end{lemma}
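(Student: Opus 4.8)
The plan is to build the splitting $i$ by cup product with a \emph{tautological} $\mu_n$-class coming from the universal line bundle $\mathcal{M}$, and to check $q\circ i=\id$ by restricting to the fibres of $p$, where $\sX$ is étale-locally $\mathbf{B}\mu_{n}$ and the splitting is already known from Lemma~\ref{Brauer group of classifying stack}. Fix a primitive $n$-th root of unity (available since $\mu_n\subset k$); this gives an identification $\mu_n\otimes\mu_n\cong\mu_n$ and hence a cup product $\cup\colon\HH^{1}(-,\mu_n)\otimes\HH^{1}(-,\mu_n)\to\HH^{2}(-,\mu_n)\to\HH^{2}(-,\mathbb{G}_m)$, the last arrow induced by $\mu_n\hookrightarrow\mathbb{G}_m$.

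On a fibre $\mathbf{B}\mu_{n}$ there is the tautological class $\tau_0\in\HH^{1}(\mathbf{B}\mu_{n},\mu_n)$ given by the identity character of the band, and the assignment $\xi\mapsto\tau_0\cup p^{*}\xi$ is exactly the section implicit in Lemma~\ref{Brauer group of classifying stack}: in the Leray filtration the $E^{0,1}$-part of $\tau_0$ is the canonical generator of $R^{1}p_*\mu_n$ (Lemma~\ref{pushforward}) and the $E^{1,0}$-part of $p^{*}\xi$ is $\xi$, so their product is the class $\xi\in E^{1,1}=\HH^{1}(X,\mu_n)$ read off by $q$; thus $q(\tau_0\cup p^{*}\xi)=\xi$. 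To globalise I would choose a Zariski cover $\{U_i\}$ of $X$ trivialising $L$, so that $\sX|_{U_i}\cong\mathbf{B}\mu_{n,U_i}$ carries an honest tautological class $\tau_i$, set $a_i:=\tau_i\cup p^{*}\xi|_{U_i}\in\HH^{2}(\sX|_{U_i},\mathbb{G}_m)$, and glue the $a_i$ to a global class $i(\xi)$; the fibrewise computation then yields $q\circ i=\id$.

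The main obstacle is this gluing, because the tautological class does \emph{not} exist globally on $\sX$: its obstruction is precisely the gerbe class $\iota(L)\in\HH^{2}(X,\mu_n)$ of Definition~\ref{rootgerbe}. Concretely, on overlaps the two tautological classes restrict to the same fibre generator, so $\tau_i-\tau_j=p^{*}c_{ij}$ with $c_{ij}\in\HH^{1}(U_{ij},\mu_n)$, the cocycle $\{c_{ij}\}$ encoding $\iota(L)$; hence the discrepancies $a_i-a_j=p^{*}(\xi\cup c_{ij})$ are pulled back from $X$. The point that makes the gluing succeed is that $\iota(L)$ \emph{dies} in $\HH^{2}(X,\mathbb{G}_m)$: this follows from the Kummer long exact sequence $\HH^{1}(X,\mathbb{G}_m)\xrightarrow{\ \delta\ }\HH^{2}(X,\mu_n)\to\HH^{2}(X,\mathbb{G}_m)$ together with $\iota(L)=\delta([L])$, which forces the discrepancy cocycle $\{\xi\cup c_{ij}\}$ to become a coboundary after passing to $\mathbb{G}_m$-coefficients, the residual ambiguity being absorbed into the summand $p^{*}\Br(X)$ via the retraction of Lemma~\ref{injective of pushforward}. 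Carrying out this obstruction computation cleanly is the crux. An alternative route that avoids the cup-product bookkeeping is to define $i(\xi)$ as the Brauer class of the crossed-product Azumaya algebra $\cA'$ on $\sX$ determined by $p_{*}\cA'\cong (q_\xi)_{*}\mathcal{O}_{\tX}$, where $q_\xi\colon\tX\to X$ is the $\mu_n$-torsor classified by $\xi$; then $q([\cA'])=\xi$ can be recovered directly from $p_{*}\cA'$, which reconstructs $\tX\to X$ and hence $\xi$.
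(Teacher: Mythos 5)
Your proposal has a genuine gap, and it sits exactly where the lemma's actual content lies. Your first route reproduces the paper's starting point --- restrict to a cover $\{U_i\}$ trivializing $L$, so that $\sX|_{U_i}\cong\mathbf{B}\mu_{n,U_i}$ carries the splitting of Lemma \ref{Brauer group of classifying stack} (your $\tau_i\cup p^{*}\xi$) --- and you correctly isolate the difficulty: the local sections disagree on overlaps by $a_i-a_j=p^{*}(\xi\cup c_{ij})$, which are symbol (cyclic-algebra) classes in $\Br(U_{ij})$ with no reason to vanish. But your resolution of this difficulty is not a proof, and you say so yourself (``carrying out this obstruction computation cleanly is the crux''). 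Two things are missing. First, the fact that $\iota(L)$ maps to $0$ in $\HH^{2}(X,\mathbb{G}_{m})$ does not by itself produce corrections $b_i\in\Br(U_i)$ with $b_j-b_i=\xi\cup c_{ij}$ on overlaps: ``$\iota(L)$ becomes a coboundary in $\mathbb{G}_{m}$-coefficients'' does not commute with the operation $\xi\cup(-)$ without a genuine cochain-level or spectral-sequence argument, and no such argument is given. Second, even granting agreement on overlaps after correction, a family of classes in $\Br(\sX|_{U_i})$ agreeing on overlaps does not automatically glue to a class on $\sX$: $\HH^{2}(-,\mathbb{G}_{m})$ has no sheaf-type descent for Zariski covers. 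The paper buys both the gluing and the injectivity needed to conclude $q\circ i=\operatorname{id}$ from smoothness, via \cite[Theorem 3.2.2]{CTS21} and Lemma \ref{injective of H}: its proof is a diagram chase through the exact rows $0\to\Br(\sX)\to\bigoplus_i\Br(\mathbf{B}\mu_{n,U_i})\to\bigoplus_{i,j}\Br(\mathbf{B}\mu_{n,U_{ij}})$ and $0\to\HH^{1}(X,\mu_n)\to\bigoplus_i\HH^{1}(U_i,\mu_n)\to\bigoplus_{i,j}\HH^{1}(U_{ij},\mu_n)$ of Lemmas \ref{injective of pushforward} and \ref{HH map}'s displayed diagram. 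Your write-up never invokes any exactness or injectivity statement of this kind, so the gluing step has no support.

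The fallback route does not close the gap either. An Azumaya algebra on $\sX$ is not ``determined by'' its pushforward: $p_{*}$ retains only the weight-zero summand of the decomposition in Lemma \ref{decomposition} (cf.\ Lemma \ref{identity lemma}), so postulating an algebra $\cA'$ with $p_{*}\cA'\cong(q_{\xi})_{*}\OO_{\tX}$ defines nothing; actually producing such an algebra on the \emph{nontrivial} gerbe is real work --- it is the paper's Construction \ref{construction1} (gluing the local $\mu_n$-equivariant algebras $\cEnd(F)|_{U_i}$) together with Lemma \ref{pushforward of sheaf}. Moreover, $q([\cA'])=\xi$ cannot be ``recovered directly from $p_{*}\cA'$'': the $\OO_X$-algebra $(q_{\xi})_{*}\OO_{\tX}$ determines $\tX\to X$ only as a scheme, not as a $\mu_n$-torsor, so it cannot distinguish $\xi$ from $m\xi$ with $(m,n)=1$ (already over a field, $k[x]/(x^{n}-a)\cong k[x]/(x^{n}-a^{m})$ by Lemma \ref{importan corollary} and Proposition \ref{Kummer theory}, while $[a]\neq[a^{m}]$ in $k^{*}/k^{*n}$ in general --- this is precisely the phenomenon behind Theorem \ref{Theorem for field}). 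Finally, identifying the Leray-spectral-sequence projection $q$ with any such geometric data is itself a statement that needs proof; the paper proves it only later, in Proposition \ref{description of injective}, by restricting to the generic point and using Lemma \ref{injective of H} and Proposition \ref{general non trivial} --- and that argument cites Lemma \ref{HH map}, so it cannot be invoked here without circularity.
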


\begin{proof}

Using the notation in Lemma \ref{injective of pushforward}, and by Lemmas \ref{Brauer group of classifying stack} and \ref{injective of H}, we obtain the following commutative diagram:

\begin{center}
\begin{tikzcd}
0 \arrow[r] & \HH^{1}(X,\mu_{n}) \arrow[r] \arrow[d, dashed, "i"]              & \oplus_{i}\HH^{1}(U_{i},\mu_{n}) \arrow[r] \arrow[d] & \oplus_{i,j}\HH^{1}(U_{ij},\mu_{n}) \arrow[d] \\
0 \arrow[r] & \Br(\sX) \arrow[r] \arrow[d, "q"] & \oplus_{i}\Br(\mathbf{B}\mu_{n,U_{i}}) \arrow[r] \arrow[d, "q_{i}"] & \oplus_{i,j}\Br(\mathbf{B}\mu_{n,U_{ij}}) \arrow[d, "q_{ij}"] \\
0 \arrow[r] & \HH^{1}(X,\mu_{n}) \arrow[r]                        & \oplus_{i}\HH^{1}(U_{i},\mu_{n}) \arrow[r]                & \oplus_{i,j}\HH^{1}(U_{ij},\mu_{n})               
\end{tikzcd}
\end{center}

By diagram chasing, we know there exists an injective morphism $i: \HH^{1}(X, \mu_{n}) \to \Br(\sX)$ such that $q \circ i = \operatorname{id}$.

\end{proof}

Now, from Lemma \ref{HH map}, we can get the following proposition.

\begin{proposition}\label{exact sequence}
   We have the short exact sequence
   \begin{equation}\label{exact sequence of Brauer group}
        0\to \Br(X)\xrightarrow{p^{*}} \Br(\sX)\xrightarrow{q} \HH^{1}(X, \mu_{n})\to 0.
   \end{equation}
    The short exact sequence \ref{exact sequence of Brauer group} is split. 
\end{proposition}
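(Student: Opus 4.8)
The plan is to assemble the three preceding lemmas into the desired split short exact sequence; the proposition itself is formal once Lemma~\ref{HH map} is in hand. First I would replace the cohomology groups appearing in the Leray computation by honest Brauer groups. By Proposition~\ref{two Brauer are same}, the hypothesis that $X$ is smooth projective gives $\Br(X)=\HH^{2}(X,\mathbb{G}_{m})$; likewise, since $\sX$ is a smooth separated Deligne--Mumford stack which is generically tame (indeed tame, as $\operatorname{char}(k)=p\nmid n$) with quasi-projective coarse moduli space $X$, the same proposition yields $\Br(\sX)=\HH^{2}(\sX,\mathbb{G}_{m})$. Under these identifications, the left-exact sequence extracted from the Leray spectral sequence, obtained from Lemmas~\ref{injective of pushforward} and~\ref{pushforward}, reads
$$0\to \Br(X)\xrightarrow{p^{*}}\Br(\sX)\xrightarrow{q}\HH^{1}(X,\mu_{n}),$$
so exactness at $\Br(X)$ and at $\Br(\sX)$ is already established.

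It then remains only to show that $q$ is surjective and that the sequence splits, and both follow at once from Lemma~\ref{HH map}. That lemma produces a homomorphism $i:\HH^{1}(X,\mu_{n})\to\Br(\sX)$ with $q\circ i=\id$. Since a map admitting a section is surjective, $q$ is onto and the sequence closes up to
$$0\to \Br(X)\xrightarrow{p^{*}}\Br(\sX)\xrightarrow{q}\HH^{1}(X,\mu_{n})\to 0;$$
the same $i$ simultaneously exhibits the splitting.

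The genuine content, and the only real obstacle, lies upstream in Lemma~\ref{HH map}: constructing the section $i$ and thereby proving surjectivity of $q$. I expect this to be handled by the local-to-global diagram chase over the Zariski cover $\{U_{i}\}$ trivializing $L$, comparing the split sequence of Lemma~\ref{Brauer group of classifying stack} for each $\mathbf{B}\mu_{n,U_{i}}$ with the global sequence, and using the injectivity of the restriction maps on $\HH^{1}(\,\cdot\,,\mu_{n})$ from Lemma~\ref{injective of H} to control the fibered structure. Once that section exists, the present proposition requires no further computation beyond the Brauer-group identifications recorded above.
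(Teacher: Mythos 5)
Your proposal is correct and follows essentially the same route as the paper: the Leray spectral sequence (via Lemmas~\ref{injective of pushforward} and~\ref{pushforward}) gives left exactness, and the section $i$ from Lemma~\ref{HH map} simultaneously yields surjectivity of $q$ and the splitting, which is precisely how the paper deduces the proposition. Your explicit invocation of Proposition~\ref{two Brauer are same} to pass between $\HH^{2}(\cdot,\mathbb{G}_{m})$ and the Brauer groups is a point the paper leaves implicit, and your guess at the content of Lemma~\ref{HH map} matches its actual proof.
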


Let $X=\operatorname{Spec} k$ and $\sX=\mathbf{B}\mu_{n,k}$. Recall that we have the maps $\operatorname{Spec}k\xrightarrow{\pi} \mathbf{B}\mu_{n,k}\xrightarrow{p} \operatorname{Spec} k$, where $p\circ \pi=\operatorname{id}$.
     We will provide an explicit description of elements in $\Br(\mathbf{B}\mu_{n,k})$ in terms of matrices.

\subsection{Explicit matrix description of $\Br(\mathbf{B}\mu_{n,k})$.}
In this subsection, we describe $\Br(\mathbf{B}\mu_{n,k})$ explicitly. The idea was first used in \cite{{lieblich2011period}}. We first deal with fields $k$ with $\Br(k)=0$.

\begin{proposition}\label{Brauer group of  field}
    Let $k$ be a field with $\Br(k)=0$. Then there is an isomorphism $$\psi: \Br(\mathbf{B}\mu_{n,k})\xrightarrow{\sim} k^{*}/k^{*n}. $$
\end{proposition}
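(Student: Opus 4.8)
The plan is to obtain $\psi$ as the composite of two isomorphisms: the splitting from Proposition \ref{exact sequence} in the case $X=\operatorname{Spec} k$, and the Kummer-theoretic identification of $\HH^{1}(k,\mu_{n})$ with $k^{*}/k^{*n}$. First I would specialize the split short exact sequence (\ref{exact sequence of Brauer group}) to $X=\operatorname{Spec} k$ and $\sX=\mathbf{B}\mu_{n,k}$, which reads
\[
0\to \Br(k)\xrightarrow{p^{*}}\Br(\mathbf{B}\mu_{n,k})\xrightarrow{q}\HH^{1}(k,\mu_{n})\to 0.
\]
The hypothesis $\Br(k)=0$ then forces the injection $q$ to be an isomorphism $q:\Br(\mathbf{B}\mu_{n,k})\xrightarrow{\sim}\HH^{1}(k,\mu_{n})$. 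This is the step where the assumption $\Br(k)=0$ is used in an essential way: without it one would only have $q$ surjective with kernel $\Br(k)$.

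Next I would identify the target group $\HH^{1}(k,\mu_{n})$ with $k^{*}/k^{*n}$ via the Kummer sequence (\ref{mu exact sequence}) over $\operatorname{Spec} k$. Taking the long exact sequence in étale cohomology yields
\[
k^{*}\xrightarrow{(-)^{n}}k^{*}\to \HH^{1}(k,\mu_{n})\to \HH^{1}(k,\mathbb{G}_{m}).
\]
Since $\HH^{1}(k,\mathbb{G}_{m})=\Pic(\operatorname{Spec} k)=0$ by Hilbert's Theorem 90, the connecting homomorphism induces an isomorphism $k^{*}/k^{*n}\xrightarrow{\sim}\HH^{1}(k,\mu_{n})$. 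Here I use that $\operatorname{char}(k)=p\nmid n$ and that $k$ contains the $n$-th roots of unity, so that the Kummer sequence is exact in the étale topology and $\mu_{n}$ is the honest sheaf of $n$-th roots of unity.

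Composing these two isomorphisms gives the desired $\psi:\Br(\mathbf{B}\mu_{n,k})\xrightarrow{\sim}k^{*}/k^{*n}$. I do not expect a genuine obstacle in this argument, as it simply assembles standard tools; the only points requiring care are the two noted above, namely that $\Br(k)=0$ upgrades $q$ from an injection to an isomorphism, and that $\HH^{1}(k,\mathbb{G}_{m})=0$ (which holds precisely because $k$ is a field). I would remark that under $\psi$ a class in $k^{*}/k^{*n}$ represented by $a\in k^{*}$ corresponds, via the inverse of $q$ and the splitting $i$ of Lemma \ref{HH map}, to a concrete $\mu_{n}$-torsor $\operatorname{Spec}(k[t]/(t^{n}-a))\to\operatorname{Spec} k$; recording this explicitly is what sets up the matrix description pursued in the remainder of the subsection.
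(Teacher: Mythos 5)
Your proposal is correct and follows essentially the same route as the paper: the paper's proof is precisely the one-line observation that $\Br(k)=0$ collapses the split exact sequence of Proposition \ref{exact sequence} to an isomorphism $\Br(\mathbf{B}\mu_{n,k})\cong \HH^{1}(\operatorname{Spec}k,\mu_{n})\cong k^{*}/k^{*n}$, with the Kummer identification left implicit. Your write-up merely spells out the Kummer/Hilbert 90 step (and note the map forced to be an isomorphism is the surjection $q$, whose kernel $p^{*}\Br(k)$ vanishes), which is a harmless elaboration rather than a different argument.
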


\begin{proof}
    Since $\Br(k)=0$, by Proposition \ref{exact sequence}, $\Br(\mathbf{B}\mu_{n,k})\cong \HH^{1}(\operatorname{Spec} k, \mu_{n})\cong k^{*}/k^{*n}$
\end{proof}
To describe the isomorphism $\psi$ in Proposition \ref{Brauer group of field} explicitly, we first establish the following lemma.
\begin{lemma}[{\cite[Corollary 2.4.2]{GS17}}]\label{SN theorem}
  Let $m\in \mathbb{Z}_{>0}. $ We have the following short exact sequence
    $$1\too k^{*} \too \operatorname{GL}_{m}(k) \too \operatorname{Aut}(M_{m}(k)) \too 1,$$
where the map $\operatorname{GL}_{m}(k)\to \operatorname{Aut}(M_{m}(k))$ is given by $: B \mapsto (M \mapsto B^{-1}MB). $
\end{lemma}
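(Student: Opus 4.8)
The plan is to check exactness term by term, the only real content being the surjectivity of the conjugation map, which is the Skolem--Noether statement. Write $A := M_{m}(k)$ and let $c\colon \operatorname{GL}_{m}(k)\to \operatorname{Aut}(A)$ denote the map $B\mapsto (M\mapsto B^{-1}MB)$. First I would record that $c$ is a well-defined group homomorphism: each $M\mapsto B^{-1}MB$ is a $k$-algebra automorphism (it is $k$-linear, sends $I$ to $I$, and satisfies $B^{-1}MB\cdot B^{-1}NB=B^{-1}MNB$), and $(BC)^{-1}M(BC)=C^{-1}(B^{-1}MB)C$ shows $c(BC)=c(C)\circ c(B)$. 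The left-hand map $k^{*}\hookrightarrow \operatorname{GL}_{m}(k)$ is $\lambda\mapsto \lambda I$ and is plainly injective.

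For exactness at $\operatorname{GL}_{m}(k)$ I would identify $\ker c$ with the scalar matrices. If $c(B)=\id$, then $B^{-1}MB=M$, i.e.\ $BM=MB$, for every $M\in A$, so $B$ lies in the center of $A$. Testing this against the matrix units $e_{ij}$ (the relation $Be_{ij}=e_{ij}B$ forces the off-diagonal entries of $B$ to vanish and the diagonal entries to coincide) shows that the center of $A$ is exactly $k\cdot I$. Hence $\ker c=k^{*}\cdot I$, which is precisely the image of $k^{*}$, giving exactness in the middle.

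The surjectivity of $c$ is the heart of the argument and the step I expect to be the main obstacle. Here I would use that $A=M_{m}(k)$ is a simple $k$-algebra whose unique simple left module, up to isomorphism, is the standard module $V=k^{m}$ of column vectors (for instance via Artin--Wedderburn, or directly from the decomposition $A\cong V^{\oplus m}$ as a left $A$-module). Given $\phi\in \operatorname{Aut}(A)$, I would form the twisted module $V^{\phi}$, namely $V$ equipped with the action $a\ast v:=\phi(a)v$; twisting a simple module by an automorphism again yields a simple module, so $V^{\phi}\cong V$ by uniqueness. An $A$-linear isomorphism $V\to V^{\phi}$ is in particular a $k$-linear automorphism of $V$, hence is given by some $B\in \operatorname{GL}_{m}(k)$, and its defining intertwining property $B(av)=\phi(a)(Bv)$ for all $a\in A$ and $v\in V$ translates into the matrix identity $Ba=\phi(a)B$, that is $\phi(a)=BaB^{-1}$. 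Replacing $B$ by $B^{-1}$ puts this in the required form $\phi(a)=B^{-1}aB$, so $\phi=c(B)$ and $c$ is surjective; assembling the three steps yields the asserted short exact sequence.
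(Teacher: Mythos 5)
Your proof is correct. Note that the paper does not prove this lemma at all: it is quoted directly from Gille--Szamuely \cite[Corollary 2.4.2]{GS17}, where it is deduced from the Skolem--Noether theorem for central simple algebras. Your argument is the standard specialization of that proof to the split case $A=M_m(k)$: kernel computation via the center (matrix units), and surjectivity via the uniqueness of the simple left $A$-module $V=k^m$ together with the twisted module $V^{\phi}$, so in effect you have supplied a self-contained, more elementary proof of exactly the input the paper imports as a black box. One small point of bookkeeping: with the paper's stated convention $B\mapsto (M\mapsto B^{-1}MB)$, your own computation $c(BC)=c(C)\circ c(B)$ shows that $c$ is an \emph{anti}-homomorphism rather than a homomorphism, so either precompose with $B\mapsto B^{-1}$ (equivalently use $M\mapsto BMB^{-1}$) or remark that kernel and image are unaffected; this quirk is inherited from the formulation in the paper and does not affect exactness or any of its later uses (Lemma \ref{isomorphism class of matrix}, Proposition \ref{equivalence of Azumaya}).
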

Let $\cA$ be an Azumaya algebra of degree $m$ over $\mathbf{B}\mu_{n,k}$. Assume $\Br(k)=0$. Then  $\pi^{*}\cA\simeq M_{m}(k)$ as $\mu_{n}$-equivariant algebras. Let $\zeta$ be a generator of $\mu_{n}$. By Lemma \ref{SN theorem}, the action of $\mu_{n}$ on $M_{m}(k)$ is given by
  $$\zeta \cdot M =B^{-1}MB \quad \operatorname{for} \quad M\in M_{m}(k),  $$
for some  $B\in \operatorname{GL}_{m}(k)$ such that  $B^{n}$ is a scalar matrix.  Conversely, given a matrix $B$ such that $B^{n}$ is a scalar matrix, we can get an Azumaya algebra on $\textbf{B}\mu_{n,k}$. 
\begin{notation}\label{matrix notation}
Let $B\in \operatorname{GL}_{m}(k)$ so that $B^{n}$ is a scalar matrix.    We will use $M_{m,B}(k)$ to denote the $\mu_{n}$-equivariant algebra $M_{m}(k)$, where the action  is given by $\zeta\cdot M=B^{-1}MB$. 
\end{notation}
In the remainder of this subsection, we assume that the morphisms between two $\mu_{n}$-equivariant algebras are $\mu_{n}$-equivariant.
By Lemma \ref{SN theorem}, we have the following lemma.
\begin{lemma}\label{isomorphism class of matrix}
Let $M_{m,B}(k), M_{m,B'}(k)$ be the two $\mu_{n}$-equivariant $k$-algebra. Then $M_{m,B}(k)\cong M_{m,B'}(k)$ if and only if there is $C\in \operatorname{GL}_{m}(k)$, such that $[B]=[C^{-1}B'C]$ in $\operatorname{PGL}_{m}(k)$. 
\end{lemma}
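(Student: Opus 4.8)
The plan is to reduce everything to the Skolem--Noether sequence of Lemma~\ref{SN theorem}, which identifies $\operatorname{Aut}(M_m(k))$ with $\operatorname{PGL}_m(k)$. Unwinding the convention of Notation~\ref{matrix notation}, a $\mu_n$-equivariant isomorphism $\phi\colon M_{m,B}(k)\to M_{m,B'}(k)$ is nothing but a $k$-algebra automorphism $\phi$ of the underlying algebra $M_m(k)$ which intertwines the two $\zeta$-actions, i.e.\ satisfies $\phi(B^{-1}MB)=B'^{-1}\phi(M)B'$ for all $M\in M_m(k)$. By Lemma~\ref{SN theorem} every such $\phi$ has the form $\phi(M)=CMC^{-1}$ for some $C\in\operatorname{GL}_m(k)$, and $C$ is determined by $\phi$ only up to a scalar, i.e.\ only as a class $[C]\in\operatorname{PGL}_m(k)$. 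So the whole statement becomes a translation of the equivariance condition into a condition on $[C]$.

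For the forward direction, I would substitute $\phi(M)=CMC^{-1}$ into the equivariance identity and simplify. Writing $\phi(B^{-1}MB)=B'^{-1}\phi(M)B'$ out gives $CB^{-1}MBC^{-1}=B'^{-1}CMC^{-1}B'$ for all $M$; conjugating both sides by $C$ (multiply on the left by $C^{-1}$ and on the right by $C$) turns this into $B^{-1}MB=(C^{-1}B'C)^{-1}M(C^{-1}B'C)$ for all $M$. Thus conjugation by $B$ and conjugation by $C^{-1}B'C$ define the same inner automorphism of $M_m(k)$. Since two invertible matrices induce the same conjugation exactly when their ratio is central, hence scalar, this says precisely that $[B]=[C^{-1}B'C]$ in $\operatorname{PGL}_m(k)$, which is the asserted condition.

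For the converse, given $C\in\operatorname{GL}_m(k)$ with $[B]=[C^{-1}B'C]$, I would read the same chain of equalities backwards: the map $\phi(M):=CMC^{-1}$ is a $k$-algebra automorphism of $M_m(k)$, and the equality in $\operatorname{PGL}_m(k)$ guarantees $B^{-1}MB=(C^{-1}B'C)^{-1}M(C^{-1}B'C)$, which rearranges to the equivariance identity $\phi(B^{-1}MB)=B'^{-1}\phi(M)B'$. Hence $\phi$ is an isomorphism $M_{m,B}(k)\xrightarrow{\sim}M_{m,B'}(k)$ of $\mu_n$-equivariant algebras, completing the equivalence.

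There is essentially no deep obstacle here; the content is entirely carried by the Skolem--Noether description of $\operatorname{Aut}(M_m(k))$. The one point that needs care is the bookkeeping with the two-sided nature of the actions and of $\phi$: the automorphisms coming from the $\mu_n$-action are \emph{left-handed} conjugations $M\mapsto B^{-1}MB$, while $\phi$ is most cleanly written as a \emph{right-handed} conjugation $M\mapsto CMC^{-1}$, and it is the interaction of these two that produces $C^{-1}B'C$ rather than $CB'C^{-1}$ (the distinction is harmless, since $C$ ranges over all of $\operatorname{GL}_m(k)$). I would also note in passing that the hypotheses that $B^{n}$ and $B'^{n}$ are scalar, needed only so that the actions factor through $\mu_n$, play no role in this comparison and are automatically respected by conjugation, so they require no separate discussion.
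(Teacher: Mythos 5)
Your proposal is correct and is exactly the argument the paper intends: the paper offers no written proof, deriving the lemma directly from the Skolem--Noether sequence of Lemma~\ref{SN theorem}, and your computation (translating equivariance of a conjugation automorphism into the identity $B^{-1}MB=(C^{-1}B'C)^{-1}M(C^{-1}B'C)$ and then using that the kernel of $\operatorname{GL}_{m}(k)\to\operatorname{Aut}(M_{m}(k))$ consists of scalars) is precisely the fleshed-out version of that citation. The handedness bookkeeping you flag is harmless for the reason you give, so nothing is missing.
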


\begin{definition}
    Let $B, B'\in GL_{m}(k)$. We say $B\sim B'$ if there exist $C\in \operatorname{GL}_{m}(k) $ such that $[B]=[C^{-1}B'C]$ in $  \operatorname{PGL}_{m}(k).$
\end{definition}
By Lemma \ref{isomorphism class of matrix}, we get the following proposition.
 \begin{proposition}\label{equivalence of Azumaya}
  Assume $\Br(k)=0$. There is a one-to-one correspondence 
\[
 \left\{\begin{tabular}{l}
\textit{isomorphism classes of degree}  $m$ \ \\ \textit{Azumaya algebras on}  $\mathbf{B}\mu_{n,k}$
\end{tabular}
\right\} \longleftrightarrow \biggl\{ B\in \operatorname{GL}_{m}(k)\bigg\vert B^{n} \textit{is a scalar matrix} \biggr\}\bigg/\sim. 
\]
\end{proposition}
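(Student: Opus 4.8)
The plan is to assemble the correspondence directly from the Skolem--Noether sequence (Lemma \ref{SN theorem}) together with descent along the atlas $\pi\colon \operatorname{Spec} k \to \mathbf{B}\mu_{n,k}$. The starting observation is that, since $\mathbf{B}\mu_{n,k} = [\operatorname{Spec} k/\mu_n]$, pullback along $\pi$ induces an equivalence between coherent sheaves of $\mathcal{O}_{\mathbf{B}\mu_{n,k}}$-algebras and $\mu_n$-equivariant coherent sheaves of $k$-algebras on $\operatorname{Spec} k$; in particular $\pi^{*}$ identifies isomorphism classes of degree $m$ Azumaya algebras on $\mathbf{B}\mu_{n,k}$ with isomorphism classes of $\mu_n$-equivariant degree $m$ Azumaya $k$-algebras. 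This reduces the whole statement to a computation with equivariant matrix algebras.

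First I would check surjectivity of the proposed map from matrices to Azumaya algebras. Given a degree $m$ Azumaya algebra $\cA$ on $\mathbf{B}\mu_{n,k}$, its pullback $\pi^{*}\cA$ is a degree $m$ Azumaya $k$-algebra carrying a $\mu_n$-action. Because $\Br(k)=0$, the underlying algebra is isomorphic to $M_m(k)$, and by Lemma \ref{SN theorem} every $k$-algebra automorphism of $M_m(k)$ is conjugation by an element of $\operatorname{GL}_m(k)$. Hence the action of the generator $\zeta$ has the form $M \mapsto B^{-1}MB$ for some $B \in \operatorname{GL}_m(k)$, and the relation $\zeta^{n}=1$ forces conjugation by $B^{n}$ to be trivial, so $B^{n}$ is central, i.e.\ scalar. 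Thus $\pi^{*}\cA \cong M_{m,B}(k)$ for some admissible $B$, and every isomorphism class of Azumaya algebra lies in the image.

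Next I would establish that the map descends to $\sim$-classes and is injective. By the descent equivalence above, two Azumaya algebras on $\mathbf{B}\mu_{n,k}$ are isomorphic if and only if their pullbacks are isomorphic as $\mu_n$-equivariant algebras; and Lemma \ref{isomorphism class of matrix} says precisely that $M_{m,B}(k)\cong M_{m,B'}(k)$ as $\mu_n$-equivariant algebras if and only if $[B] = [C^{-1}B'C]$ in $\operatorname{PGL}_m(k)$ for some $C$, which is the definition of $B\sim B'$. Combining these two equivalences shows that the assignment sending the class of $B$ to the isomorphism class of the Azumaya algebra descending $M_{m,B}(k)$ is well defined on $\sim$-classes and injective, which together with the previous paragraph yields the stated bijection.

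The genuinely substantive input is the descent equivalence identifying Azumaya algebras on the quotient stack with $\mu_n$-equivariant Azumaya algebras on the cover: this is what lets the matrix bookkeeping faithfully record the stacky algebra, and it is the step I would state carefully. Everything else is a direct transcription of Lemmas \ref{SN theorem} and \ref{isomorphism class of matrix}, so I do not expect any real obstacle beyond keeping the equivariance conventions consistent throughout.
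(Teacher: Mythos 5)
Your proposal is correct and follows essentially the same route as the paper: the paper's own justification is the discussion preceding the proposition, which pulls back along $\pi\colon \operatorname{Spec}k \to \mathbf{B}\mu_{n,k}$ to view a degree $m$ Azumaya algebra as a $\mu_n$-equivariant algebra, uses $\Br(k)=0$ and Lemma \ref{SN theorem} to write the $\zeta$-action as conjugation by some $B$ with $B^n$ scalar, and then invokes Lemma \ref{isomorphism class of matrix} to match isomorphism classes with $\sim$-classes. Your write-up merely makes explicit the descent equivalence that the paper leaves implicit, which is a reasonable clarification rather than a different argument.
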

Let $\cA$ be an Azumaya algebra of degree $m$ on $\mathbf{B}\mu_{n,k}$ with trivial Brauer class. Then there is a $\mu_{n}$-equivariant vector space $V$ of rank $m$ such that $\cA\cong \operatorname{End}(V)$. 

On the one hand, since $V$ is $\mu_{n}$-equivariant, it induces a group homomorphism $\rho: \mu_{n}\to \operatorname{GL}_{m}(k)$. This produces the matrix $B=\rho(\zeta)\in \operatorname{GL}_{m}(k)$ where $B^{n}=\operatorname{Id}$. 


On the other hand, the $\mu_{n}$-action on $\operatorname{End}(V)=\Hom_{k}(V, V)$ is induced by the action of $\mu_{n}$ on $V$. Since $\zeta \cdot v = Bv$ for $v \in V$,
 the induced action on $\operatorname{End}(V)$ is given by conjugation: for any $M \in M_m(k) = \operatorname{Hom}_k(V, V)$, we have $\zeta \cdot M = B^{-1} M B$.
Thus, the trivial Azumaya algebra $\cA$ on $\mathbf{B}\mu_{n,k}$ corresponds to $M_{m,B}$ for some matrix $B \in \operatorname{GL}_m(k)$ satisfying $B^n = \operatorname{Id}$. So we get the following correspondence.
\begin{proposition}\label{trivial azumaya}
   Assume $\Br(k)=0$. There is a one-to-one correspondence 
\[
 \left\{\begin{tabular}{l}
\textit{isomorphism classes of degree}  $m$ \textit{Azumaya} \ \\ \textit{ algebras on}  $\mathbf{B}\mu_{n,k}$ \textit{with trivial Brauer class}
\end{tabular}
\right\} \longleftrightarrow \biggl\{ B\in \operatorname{GL}_{m}(k)\bigg\vert B^{n}=\operatorname{Id} \biggr\}\bigg/\sim. 
\]
\end{proposition}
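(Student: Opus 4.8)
The plan is to obtain this statement as a refinement of the bijection of Proposition \ref{equivalence of Azumaya}. That proposition already identifies isomorphism classes of degree $m$ Azumaya algebras on $\mathbf{B}\mu_{n,k}$ with $\sim$-classes of matrices $B \in \operatorname{GL}_m(k)$ having $B^n$ scalar, via $\cA \cong M_{m,B}(k)$ (Notation \ref{matrix notation}). Hence it suffices to show that, under this correspondence, $\cA$ has trivial Brauer class if and only if its $\sim$-class admits a representative $B$ with $B^n = \operatorname{Id}$; the asserted bijection is then simply the restriction of the bijection of Proposition \ref{equivalence of Azumaya} to these classes.

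For the forward implication I would invoke the discussion preceding the statement: if $[\cA] = 0$ in $\Br(\mathbf{B}\mu_{n,k})$ then $\cA \cong \operatorname{End}(V)$ for some $\mu_n$-equivariant vector space $V$ of rank $m$, the $\mu_n$-action on $V$ is encoded by a homomorphism $\rho \colon \mu_n \to \operatorname{GL}_m(k)$, and the induced conjugation action on $\operatorname{End}(V)$ exhibits $\cA \cong M_{m,B}(k)$ with $B = \rho(\zeta)$. Because $\rho$ is a homomorphism out of the cyclic group $\mu_n = \langle \zeta \rangle$ of order $n$, the matrix $B$ satisfies $B^n = \rho(\zeta^n) = \operatorname{Id}$. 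Conversely, given any $B$ with $B^n = \operatorname{Id}$, the rule $\rho(\zeta^i) := B^i$ defines a genuine representation of $\mu_n$ on $V = k^{\oplus m}$, and transporting the conjugation action shows $M_{m,B}(k) \cong \operatorname{End}(V)$, so the associated algebra has trivial Brauer class. This matches the two sides set-theoretically; well-definedness and injectivity of $\cA \mapsto [B]$ follow from Lemma \ref{isomorphism class of matrix} (isomorphic $\mu_n$-equivariant algebras $M_{m,B}(k) \cong M_{m,B'}(k)$ force $B \sim B'$), and surjectivity is exactly the converse construction.

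The step needing the most care is the equivalence \emph{trivial Brauer class} $\iff$ \emph{$\cA$ is the endomorphism algebra of a $\mu_n$-representation}, which is the conceptual heart and is supplied by the paragraph before the statement; once granted, the rest is formal bookkeeping. The only genuine subtlety in that bookkeeping is that the scalar freedom built into $\sim$ (a consequence of working in $\operatorname{PGL}_m(k)$) must account for both the freedom of twisting $V$ by a character of $\mu_n$ and the freedom of choosing a trivialization $\cA \cong \operatorname{End}(V)$; both are absorbed by Lemma \ref{isomorphism class of matrix}, so no extra algebras are identified or lost. As a sanity check one can also track the Brauer class through the isomorphism $\psi \colon \Br(\mathbf{B}\mu_{n,k}) \xrightarrow{\sim} k^{*}/k^{*n}$: writing $B^n = \lambda \operatorname{Id}$, the class $[\lambda] \in k^{*}/k^{*n}$ records $[\cA]$, which vanishes precisely when $\lambda$ is an $n$-th power, i.e. precisely when $B$ can be rescaled so that $B^n = \operatorname{Id}$.
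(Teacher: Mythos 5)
Your proof is correct and takes essentially the same route as the paper: the paper's own (informal) argument is precisely the equivalence between having trivial Brauer class and being the endomorphism algebra of a $\mu_n$-representation, combined with the matrix bookkeeping supplied by Proposition~\ref{equivalence of Azumaya} and Lemma~\ref{isomorphism class of matrix}, which is exactly your structure. One small caveat: your closing ``sanity check'' via the isomorphism $\psi$ of Proposition~\ref{important propersition} would be circular if it were load-bearing, since the paper proves that proposition using Proposition~\ref{trivial azumaya}; as you present it only as a consistency check and not as part of the argument, the proof stands as written.
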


By Proposition \ref{equivalence of Azumaya}, we also explicitly describe the tensor product of Azumaya algebras.
\begin{lemma}\label{basic lemma}
   Let $\cA, \cA'$ be Azumaya algebras on $\mathbf{B}\mu_{n,k}$. Suppose $\cA\cong M_{m,B}(k)$ and $\cA'\cong M_{m',B'}(k)$, then  $ \cA\otimes \cA'\cong M_{mm', B\otimes B' }(k)$, where $B\otimes B'$ is the Kronecker product of $B$ and $B'$.
       
\end{lemma}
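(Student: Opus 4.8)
The plan is to reduce the statement to a direct computation with the Kronecker product, relying only on the description of the $\mu_{n}$-action by conjugation recorded in Notation \ref{matrix notation}. Since the tensor product $\cA\otimes\cA'$ is taken over $\mathcal{O}_{\mathbf{B}\mu_{n,k}}$, which corresponds to $k$ equipped with the trivial $\mu_{n}$-action, the underlying $k$-algebra of $\cA\otimes\cA'$ is $M_{m}(k)\otimes_{k}M_{m'}(k)$, and its $\mu_{n}$-equivariant structure is the diagonal action, namely $\zeta\cdot(M\otimes M')=(B^{-1}MB)\otimes(B'^{-1}M'B')$. This is the standard description of the tensor product of two representations (equivariant sheaves) on $\mathbf{B}\mu_{n,k}$.

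First I would invoke the standard algebra isomorphism $M_{m}(k)\otimes_{k}M_{m'}(k)\xrightarrow{\sim}M_{mm'}(k)$ sending $M\otimes M'$ to the Kronecker product. The point is then to transport the diagonal action along this isomorphism and to identify it with conjugation by a single matrix, so that the result is again of the form $M_{mm',(-)}(k)$.

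The key step is the mixed-product identity $(A\otimes C)(D\otimes E)=(AD)\otimes(CE)$ for Kronecker products, together with $(B\otimes B')^{-1}=B^{-1}\otimes B'^{-1}$. Applying these,
\[
(B^{-1}MB)\otimes(B'^{-1}M'B')=(B^{-1}\otimes B'^{-1})(M\otimes M')(B\otimes B')=(B\otimes B')^{-1}(M\otimes M')(B\otimes B'),
\]
so under the Kronecker isomorphism the diagonal $\mu_{n}$-action is precisely conjugation by $B\otimes B'$. This exhibits a $\mu_{n}$-equivariant algebra isomorphism $\cA\otimes\cA'\cong M_{mm',B\otimes B'}(k)$, which is the claim.

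Finally I would check that the right-hand side is legitimate in the sense of Notation \ref{matrix notation}, i.e. that $(B\otimes B')^{n}$ is a scalar matrix: writing $B^{n}=\lambda\operatorname{Id}_{m}$ and $B'^{n}=\lambda'\operatorname{Id}_{m'}$ (scalar by hypothesis), one has $(B\otimes B')^{n}=B^{n}\otimes B'^{n}=\lambda\lambda'\operatorname{Id}_{mm'}$. There is no serious obstacle in this argument; the only subtlety worth flagging is that the Kronecker isomorphism must be genuinely $\mu_{n}$-\emph{equivariant} rather than merely an abstract isomorphism of $k$-algebras, and this equivariance is exactly what the mixed-product identity in the key step guarantees.
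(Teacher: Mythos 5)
Your proof is correct, and it takes exactly the approach the paper intends: the paper states this lemma without proof, treating it as an immediate consequence of the equivariant-matrix description in Notation \ref{matrix notation} and Proposition \ref{equivalence of Azumaya}. Your argument---identifying the tensor product with the diagonal $\mu_{n}$-action, applying the mixed-product identity $(A\otimes C)(D\otimes E)=(AD)\otimes(CE)$ to see that this action is conjugation by $B\otimes B'$, and verifying that $(B\otimes B')^{n}=B^{n}\otimes B'^{n}$ is scalar---supplies precisely the details the paper leaves implicit.
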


\begin{definition}
    Let $S:=\{B\in \bigcup_{m\geq 1} \operatorname{GL}_{m}(k): ~B^{n}\ \text{is a scalar matrix}  \}$. We define a map $$\gamma: S\to k^{*}, \quad B\mapsto \gamma(B), $$
where $\gamma(B)=a$ if $B^{n}=a\operatorname{Id}$.
\end{definition}

Now we explicitly describe the isomorphism $\psi$ in Definition \ref{Brauer group of  field}.
\begin{proposition}\label{important propersition}
     Assume $\Br(k)=0$. Using Notation \ref{matrix notation}, the isomorphism $\psi$ is given by $$\psi: \Br(\mathbf{B}\mu_{n,k})\xrightarrow{\sim} k^{*}/k^{*n}, \quad [M_{m,B}(k)] \mapsto [\gamma(B)]. $$
\end{proposition}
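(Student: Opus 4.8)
The plan is to prove that the matrix recipe $[M_{m,B}(k)] \mapsto [\gamma(B)]$ defines a well-defined group homomorphism $\bar{\gamma}: \Br(\mathbf{B}\mu_{n,k}) \to k^{*}/k^{*n}$, that $\bar{\gamma}$ is an isomorphism, and that it agrees with the $\psi$ of Proposition \ref{Brauer group of  field}. First I would verify well-definedness at the level of isomorphism classes of degree $m$ algebras. If $B \sim B'$, then $B = \lambda C^{-1}B'C$ for some $\lambda \in k^{*}$ and $C \in \operatorname{GL}_{m}(k)$, so $B^{n} = \lambda^{n} C^{-1}(B')^{n}C$ and hence $\gamma(B) = \lambda^{n}\gamma(B')$, giving $[\gamma(B)] = [\gamma(B')]$ in $k^{*}/k^{*n}$. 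By Proposition \ref{equivalence of Azumaya} this shows $\gamma$ descends to isomorphism classes of Azumaya algebras. To promote this to the Brauer group I would use Lemma \ref{basic lemma}: for $\cA \cong M_{m,B}(k)$ and $\cA' \cong M_{m',B'}(k)$ one has $\cA \otimes \cA' \cong M_{mm', B\otimes B'}(k)$, and since $(B\otimes B')^{n} = B^{n}\otimes (B')^{n} = \gamma(B)\gamma(B')\operatorname{Id}$, the assignment is multiplicative: $\gamma(B\otimes B') = \gamma(B)\gamma(B')$. Because a trivial class $\cEnd(V)$ corresponds by Proposition \ref{trivial azumaya} to a matrix with $B^{n} = \operatorname{Id}$, so $[\gamma(B)] = 0$, multiplicativity together with the definition of Brauer equivalence $\cA \otimes \cEnd(E) \cong \cB \otimes \cEnd(F)$ forces $[\gamma(B_{\cA})] = [\gamma(B_{\cB})]$. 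Thus $\bar{\gamma}$ is a well-defined homomorphism.

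Next I would show $\bar{\gamma}$ is bijective by elementary matrix manipulations. For injectivity, suppose $[\gamma(B)] = 0$, i.e. $\gamma(B) = c^{n}$ for some $c \in k^{*}$; then $c^{-1}B \sim B$ since they have the same image in $\operatorname{PGL}_{m}(k)$, and $(c^{-1}B)^{n} = \operatorname{Id}$, so $M_{m,B}(k)$ has trivial Brauer class by Proposition \ref{trivial azumaya}. For surjectivity, given $[a] \in k^{*}/k^{*n}$, the matrix $B \in \operatorname{GL}_{n}(k)$ with $1$'s on the subdiagonal and $a$ in the upper-right corner satisfies $B^{n} = a\operatorname{Id}$, so $\bar{\gamma}([M_{n,B}(k)]) = [a]$. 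This exhibits $\bar{\gamma}$ as an isomorphism.

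The main obstacle is identifying $\bar{\gamma}$ with the specific isomorphism $\psi$, which in Proposition \ref{Brauer group of  field} was defined through the Leray edge map $q$ of Proposition \ref{exact sequence} composed with Kummer theory, rather than by the matrix formula. I expect to resolve this by unwinding $q$ along the atlas $\pi: \Spec k \to \mathbf{B}\mu_{n,k}$: pulling back $\cA = M_{m,B}(k)$ trivializes it (as $\Br(k) = 0$), and the descent datum recording the $\mu_{n}$-action is governed by $B^{n} = \gamma(B)\operatorname{Id}$. Concretely, $[B] \in \operatorname{PGL}_{m}(k)$ defines a projective $\mu_{n}$-representation whose obstruction to lifting to a linear representation is the central extension class measured by $\gamma(B)$, and this class is exactly the torsor $q([\cA]) \in \HH^{1}(\Spec k, \mu_{n})$ whose Kummer image is $[\gamma(B)]$. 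Matching the matrix cocycle with the edge map is the one computation requiring genuine care; everything else reduces to the bookkeeping above.
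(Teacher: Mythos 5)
Your core argument is exactly the paper's proof: well-definedness is obtained from Propositions \ref{equivalence of Azumaya} and \ref{trivial azumaya} together with the multiplicativity $\gamma(B\otimes C)=\gamma(B)\gamma(C)$ from Lemma \ref{basic lemma}, injectivity by rescaling $B$ to $a^{-1}B$ with $(a^{-1}B)^{n}=\operatorname{Id}$, and surjectivity via the companion matrix (\ref{matrix B}). The only divergence is the step you single out as the ``main obstacle'': matching the matrix recipe with the Leray edge map $q$ of Proposition \ref{exact sequence}. The paper does not perform this identification at all; its proof stops exactly where your bookkeeping stops, i.e.\ it shows that $[M_{m,B}(k)]\mapsto[\gamma(B)]$ is a well-defined isomorphism $\Br(\mathbf{B}\mu_{n,k})\to k^{*}/k^{*n}$, and from that point on $\psi$ is in effect taken to \emph{be} this matrix isomorphism. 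So, measured against the paper's own proof, your proposal is complete and follows the same route; what you have correctly noticed is that the statement as literally phrased (that the formula computes the particular isomorphism coming from the split sequence of Proposition \ref{exact sequence} composed with Kummer theory) would require the cocycle comparison you sketch via projective representations and the relation $B^{n}=\gamma(B)\operatorname{Id}$ --- a verification the paper omits, even though compatibilities of this kind are later used implicitly (e.g.\ in Proposition \ref{description of injective}). Your sketch of that comparison is the right idea, but, as you acknowledge, it is not carried out; it is also not needed to reproduce what the paper actually proves.
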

\begin{proof}

If $[M_{m,B}(k)]=[M_{m',B'}(k)]$ in $\Br(\mathbf{B}\mu_{n,k})$, then there exist $\mu_{n}$-equivariant vector spaces $V, V'$ such that $M_{m,B}\otimes \operatorname{End}(V)\cong  M_{m',B'}\otimes \operatorname{End}(V')$. Suppose $\operatorname{rank}(V)=r$ and $\operatorname{rank}(V')=r'$. By Proposition \ref{trivial azumaya}, $\operatorname{End(V)}\cong M_{r,C}(k)$ and $\operatorname{End}(V')\cong M_{r',C'}(k)$ for some matrices $C, C'$ such that $C^{n}=\operatorname{Id}$ and $C'^{n}=\operatorname{Id}$. By Lemma \ref{basic lemma}, we have $M_{mr, B\otimes C}(k)\cong M_{m'r', B'\otimes C'}(k). $ By Lemma \ref{isomorphism class of matrix}, we have $$[\gamma(B)]=[\gamma(B\otimes C)]=[\gamma(B'\otimes C')]=[\gamma(B')]. $$
So $\psi$ is well defined. Since $\gamma(B\otimes B')=\gamma(B)\gamma(B')$, by  Lemma \ref{basic lemma}, $\psi$ is a group homomorphism.

Now, suppose $\psi([M_{m,B}(k)])=[1]$. Then $\gamma(B)=a^{n}$ for some $a\in k^{*}.$ By Proposition \ref{equivalence of Azumaya}, $M_{m,B}(k)\cong M_{m,a^{-1}B}(k)$. Since $(a^{-1}B)^{n}=\operatorname{Id}$, we have $[M_{m,B}(k)]=[M_{m,a^{-1}B}(k)]=0$ in the $\Br(\mathbf{B}\mu_{n,k})$. So $\psi$ is injective.

Let $a\in k^{*}$. Consider the $n\times n$ matrix
\begin{equation}\label{matrix B}
B=\begin{pmatrix}
0 & ... & 0 & 0 & a\\
1 & ... & 0 & 0 & 0 \\
... & ... & ... & ... & ... \\
0 & ... & 1 & 0 & 0 \\
0 & ... & 0 & 1 & 0
\end{pmatrix}.
\end{equation}
Since $B^{n}=a\operatorname{Id}$, we have $\psi([M_{n,B}(k)])=[a]$. So $\psi$ is surjective. 

\end{proof}

 In general, we have an isomorphism $\psi: k^{*}/k^{*n}\oplus \Br(k) \xrightarrow{\sim} \Br(\mathbf{B}\mu_{n,k})$. The following proposition describes $\psi$ explicitly.

\begin{proposition}\label{general non trivial}
Let $a\in k^{*}$ and the matrix $B=$(\ref{matrix B}). Using Notation \ref{matrix notation}, the isomorphism $\psi$ is given by
$$\psi: k^{*}/k^{*n}\oplus \Br(k) \xrightarrow{\sim} \Br(\mathbf{B}\mu_{n,k}): ([a], [\cA])\mapsto [M_{n,B}(k)\otimes p^{*}\cA], $$
where $\cA$ is an Azumaya algebra over $k$.

\end{proposition}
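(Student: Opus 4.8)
The plan is to identify $\psi$ with the inverse of the canonical splitting isomorphism of the short exact sequence in Proposition \ref{exact sequence}. For $X=\Spec k$ that sequence reads
$$0\to\Br(k)\xrightarrow{\,p^*\,}\Br(\mathbf{B}\mu_{n,k})\xrightarrow{\,q\,}k^*/k^{*n}\to 0,$$
where we use $\HH^1(\Spec k,\mu_n)\cong k^*/k^{*n}$ (Kummer theory). Because $p\circ\pi=\mathrm{id}$, the pullback $\pi^*\colon\Br(\mathbf{B}\mu_{n,k})\to\Br(k)$ satisfies $\pi^*\circ p^*=\mathrm{id}$, so it is a retraction of $p^*$; by the splitting lemma the pair
$$(q,\pi^*)\colon\Br(\mathbf{B}\mu_{n,k})\xrightarrow{\ \sim\ }k^*/k^{*n}\oplus\Br(k)$$
is an isomorphism. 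It therefore suffices to check that $\psi$ is a well-defined map and that $(q,\pi^*)\circ\psi=\mathrm{id}$; then $\psi=(q,\pi^*)^{-1}$ is automatically a group isomorphism.

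First I would dispose of the easy coordinate and of well-definedness. Well-definedness in the second argument is clear since $p^*$ is defined on Brauer classes; for the first argument, if $a'=ac^{n}$ then conjugating the matrix $B'$ (given by (\ref{matrix B}) with $a'$ in the corner) by $D=\mathrm{diag}(1,c^{-1},\dots,c^{-(n-1)})$ and rescaling by the scalar $c^{-1}$ returns $B$, so $[B]=[D^{-1}B'D]$ in $\mathrm{PGL}_n(k)$ and hence $M_{n,B'}(k)\cong M_{n,B}(k)$ by Lemma \ref{isomorphism class of matrix}; thus $[M_{n,B}(k)]$ depends only on $[a]\in k^*/k^{*n}$. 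For the second coordinate of $(q,\pi^*)\circ\psi$, I use that $\pi^*$ is a homomorphism with $\pi^*p^*=\mathrm{id}$ and that $\pi^*[M_{n,B}(k)]=0$: pulling the $\mu_n$-equivariant algebra $M_{n,B}(k)$ back along $\pi$ forgets the equivariant structure and returns $M_n(k)=\mathrm{End}_k(k^n)$, which is trivial in $\Br(k)$. Hence $\pi^*\psi([a],[\cA])=\pi^*[M_{n,B}(k)]+\pi^*p^*[\cA]=[\cA]$.

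The crux is the first coordinate, namely the identity $q([M_{n,B}(k)])=[a]$; since $q$ is a homomorphism with $q\circ p^*=0$ this also absorbs the $p^*\cA$ factor, giving $q\psi([a],[\cA])=[a]$. When $\Br(k)=0$ this is exactly the content of Proposition \ref{important propersition} (there $\psi=q$ and $\gamma(B)=a$), so the remaining task is to compute the edge map $q$ over an arbitrary $k$. I would do this by Galois descent of the trivialization over the separable closure $k_s$. Since $\mathrm{char}(k)\nmid n$, the polynomial $t^n-a$ is separable, so I may choose $\beta\in k_s$ with $\beta^n=a$; then $(\beta^{-1}B)^n=\mathrm{Id}$, so $\beta^{-1}B$ defines an honest $\mu_n$-representation $W$ over $k_s$ with $M_{n,B}(k)\otimes_k k_s\cong\mathrm{End}_{k_s}(W)$ (here one uses $\Br(\mathbf{B}\mu_{n,k_s})=0$). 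For $\sigma\in\mathrm{Gal}(k_s/k)$ one has $\sigma(\beta)=\chi_a(\sigma)\beta$ with $\chi_a\colon\mathrm{Gal}(k_s/k)\to\mu_n$ the Kummer character of $a$, whence $\sigma^*W\cong W\otimes\rho$ for the character $\rho$ attached to $\chi_a(\sigma)$. Under the identification $R^1p_*\Gm=\mu_n=\mathrm{Pic}(\mathbf{B}\mu_{n,k_s})$, the $1$-cocycle $\sigma\mapsto[\sigma^*W\otimes W^{-1}]$ is precisely the image of $[M_{n,B}(k)]$ under the edge map $q$, and it coincides with the Kummer cocycle $\chi_a$ representing $[a]$ in $\HH^1(\Spec k,\mu_n)\cong k^*/k^{*n}$. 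This yields $q([M_{n,B}(k)])=[a]$.

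Combining the two coordinates gives $(q,\pi^*)\circ\psi=\mathrm{id}$, so $\psi$ is the inverse of the isomorphism $(q,\pi^*)$ and is therefore itself a group isomorphism, as claimed. The main obstacle is the Galois-descent computation of the edge map $q$ in the previous paragraph: one must pin down the conventions identifying $R^1p_*\Gm$ with $\mu_n$ and with the character lattice of $\mu_n$ so that the resulting $\mu_n$-valued descent cocycle is exactly the Kummer class of $a$ and not its inverse. This normalization is the only delicate point, and it is precisely the place where compatibility with Proposition \ref{important propersition} must be verified.
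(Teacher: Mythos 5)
Your proof is correct in substance, but note that the paper never actually proves Proposition \ref{general non trivial} --- it is stated without proof --- so the natural comparison is with the toolkit the paper does develop: the matrix calculus in the proof of Proposition \ref{important propersition} and the split sequence of Proposition \ref{exact sequence}. Your route is genuinely different. You pin down $\psi$ as the inverse of the concrete splitting $(q,\pi^{*})$ and reduce everything to the single identity $q([M_{n,B}(k)])=[a]$, computed as a Leray edge map via Galois descent of the trivialization $M_{n,B}(k)\otimes_{k}k_{s}\cong \cEnd(W)$, where $W$ is the representation $\zeta\mapsto \beta^{-1}B$. The individual steps check out: $(q,\pi^{*})$ is an isomorphism by the splitting lemma applied to Proposition \ref{exact sequence}; your diagonal matrix $D$ satisfies $D^{-1}B'D=cB$, so $\psi$ is well defined on $k^{*}/k^{*n}$; $\pi^{*}[M_{n,B}(k)]=0$ together with $\pi^{*}p^{*}=\operatorname{id}$ handles the $\Br(k)$-coordinate; and the descent cocycle $\sigma\mapsto L_{\sigma}$ (your $\sigma^{*}W\otimes W^{-1}$ must be read as the character $L_{\sigma}$ with $\sigma^{*}W\cong W\otimes L_{\sigma}$, since $W$ has rank $n$) does compute $q$, because the Leray spectral sequence of $p$ over a point coincides with the Hochschild--Serre sequence for $k_{s}/k$.

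Two remarks. First, the normalization you flag is real (with the usual twisting convention, $\zeta$ acts on $\sigma^{*}W$ by $\chi_{a}(\sigma)^{-1}\beta^{-1}B$, so you may land on $[a^{-1}]$ rather than $[a]$), but you need not defer its verification: at worst $(q,\pi^{*})\circ\psi=\operatorname{inv}\oplus\operatorname{id}$, which is still an automorphism of $k^{*}/k^{*n}\oplus\Br(k)$, so $\psi$ is an isomorphism with the stated formula in either case; only the literal identity $\psi=(q,\pi^{*})^{-1}$ is convention dependent, and the splitting in Lemma \ref{HH map} is not canonical anyway. Second, there is a more elementary argument in the spirit of the paper that avoids the edge map entirely: if a class on $\mathbf{B}\mu_{n,k}$ is killed by $\pi^{*}$, any Azumaya representative becomes a matrix algebra after applying $\pi^{*}$, so by Lemma \ref{SN theorem} it is isomorphic to some $M_{m,C}(k)$; the $\gamma$-arguments in the proof of Proposition \ref{important propersition} (which never use $\Br(k)=0$ once the underlying algebra is split) show $[M_{m,C}(k)]=[M_{m',C'}(k)]$ if and only if $[\gamma(C)]=[\gamma(C')]$ in $k^{*}/k^{*n}$. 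Combining this with the retraction $\pi^{*}$ (write $[\cD]=([\cD]-p^{*}\pi^{*}[\cD])+p^{*}\pi^{*}[\cD]$) gives injectivity and surjectivity of $\psi$ directly. Your approach buys more --- it identifies the explicit formula with the cohomological splitting of Proposition \ref{exact sequence}, a compatibility the paper never establishes --- at the cost of spectral-sequence conventions; the matrix route is self-contained but says nothing about $q$.
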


\section{Categories of coherent modules over Azumaya algebras}\label{section 4}
 In this section, we study the categories of twisted sheaves on $\mu_{n}$-gerbes of line bundles. We will first prove Theorem \ref{main theorem}, and then  get Theorem \ref{main theorem 2} in the case $X=\operatorname{Spec} k$. As a corollary, we  provide an example  showing   that Căldăraru's Conjecture does not hold for Deligne--Mumford stacks.
 
\begin{lemma}\label{exact functor} Recall that  we have the map $p: \sX\to X.$ The functors $p_{*}$ and $p^{*}$ are exact. Moreover, $p_{*}(\mathcal{O}_{\sX})=\mathcal{O}_{X}$
\end{lemma}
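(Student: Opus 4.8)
The plan is to reduce the entire statement to the local model $\mathbf{B}\mu_{n,U}$ and then exploit that $\mu_{n}$ is linearly reductive in our setting. Concretely, I would choose an affine open cover $\{U_{i}\}$ of $X$ trivializing $L$, exactly as in the proof of Lemma \ref{pushforward}, so that $\sX\times_{X}U_{i}\cong \mathbf{B}\mu_{n,U_{i}}$ and $p$ restricts to the coarse-space map $p_{i}\colon \mathbf{B}\mu_{n,U_{i}}\to U_{i}$ of Proposition \ref{rootgerbeDM}(iii). Since each $U_{i}\hookrightarrow X$ is flat (an open immersion) and these squares are Cartesian, flat base change identifies the restriction of $p_{*}$ to $U_{i}$ with $(p_{i})_{*}$ and the restriction of $p^{*}$ with $p_{i}^{*}$; both exactness and the formula $p_{*}\mathcal{O}_{\sX}=\mathcal{O}_{X}$ may therefore be checked after restricting to the $U_{i}$, which cover $X$.

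On the local model, $\Coh(\mathbf{B}\mu_{n,U_{i}})$ is the category of $\mu_{n}$-equivariant coherent sheaves on $U_{i}$ with trivial $\mu_{n}$-action, which splits into isotypic components indexed by the characters of $\mu_{n}$; the pushforward $(p_{i})_{*}$ is the functor of $\mu_{n}$-invariants, i.e. projection onto the trivial isotypic summand, while $p_{i}^{*}$ equips a sheaf on $U_{i}$ with the trivial equivariant structure. The functor $p_{i}^{*}$ does not alter the underlying $\mathcal{O}_{U_{i}}$-module, so it is visibly exact. For $(p_{i})_{*}$, exactness is where the hypotheses $\operatorname{char}k=p\nmid n$ and $\mu_{n}(k)$ full are essential: since $n$ is invertible in $k$, the averaging idempotent $e=\tfrac{1}{n}\sum_{g\in\mu_{n}}g$ is defined and splits off the invariants as a functorial direct summand, so $\mu_{n}$ is linearly reductive and taking invariants is exact. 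Finally $\mathcal{O}_{\mathbf{B}\mu_{n,U_{i}}}$ carries the trivial character, so its invariants are $\mathcal{O}_{U_{i}}$, giving $(p_{i})_{*}\mathcal{O}=\mathcal{O}_{U_{i}}$; gluing over the cover yields $p_{*}\mathcal{O}_{\sX}=\mathcal{O}_{X}$.

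Alternatively, and perhaps more in the spirit of the earlier results, I would phrase the same argument globally through the orthogonal decomposition of Lemma \ref{decomposition}: $p^{*}$ is the inclusion of the $\rho_{0}=\mathcal{O}_{\sX}$ summand $\Coh(X)\rho_{0}$, and $p_{*}$ is the projection onto it. Both an inclusion and a projection attached to a direct-sum decomposition of abelian categories are exact, so exactness is immediate once $p_{*}$ is identified with this projection. That identification is the one computation I would still need to carry out by hand: using the projection formula to reduce to the sheaves $p_{*}\rho_{i}$, and checking locally that $p_{*}\rho_{i}=0$ for $0<i<n$ (nontrivial weight) while $p_{*}\rho_{0}=\mathcal{O}_{X}$, which again rests on linear reductivity.

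The main obstacle is precisely the exactness of $p_{*}$: pushforward is only left exact in general, and the statement would fail without the coprimality assumption (for a finite group whose order is divisible by the characteristic, the invariants functor is not exact). Everything else, namely the exactness of $p^{*}$ and the structure-sheaf computation, is formal once the local model is in place; so the crux of the proof is simply to record that $\mu_{n}$ is linearly reductive under our running hypotheses and that the invariants functor for a linearly reductive group is exact.
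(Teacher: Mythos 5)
Your proposal is correct, and for half of the statement it coincides with the paper's argument, while for the other half it replaces the paper's citation with a self-contained proof. The paper handles $p_{*}$ (both exactness and $p_{*}\mathcal{O}_{\sX}=\mathcal{O}_{X}$) in one stroke by citing \cite[Lemma 2.3.4]{AV02}, the general exactness of pushforward to the coarse moduli space of a tame stack, and then proves exactness of $p^{*}$ from the local diagram $U_{i}\xrightarrow{\pi}[U_{i}/\mu_{n}]\xrightarrow{p}U_{i}$ with $p\circ\pi=\operatorname{id}$: since $\pi$ is a faithfully flat $\mu_{n}$-Galois cover, exactness of $p^{*}$ may be tested after applying $\pi^{*}$, where it becomes the identity functor. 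Your treatment of $p^{*}$ is this same argument in different words --- the ``underlying $\mathcal{O}_{U_{i}}$-module'' functor on $\mu_{n}$-equivariant sheaves is precisely $\pi^{*}$, so checking exactness on underlying modules is checking it after the faithfully flat cover. Where you genuinely diverge is $p_{*}$: instead of invoking tameness as a black box, you reduce to the local model (via the cover trivializing $L$, as in Lemma \ref{pushforward}) and prove exactness of the invariants functor by the averaging idempotent $\tfrac{1}{n}\sum_{g\in\mu_{n}}g$, which is legitimate under the running hypotheses ($\operatorname{char}(k)=p\nmid n$ and $k$ containing the $n$-th roots of unity, so $\mu_{n}$ is a constant group scheme and the sum makes sense). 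Your version buys transparency about where the hypotheses enter; the paper's citation buys brevity and applies to arbitrary tame stacks. Your alternative route through the orthogonal decomposition of Lemma \ref{decomposition} is also sound --- the right adjoint of the inclusion of an orthogonal summand is the projection onto it --- and is effectively what the paper itself relies on immediately afterwards, in Lemma \ref{identity lemma}, to read off $p_{*}F\cong F_{0}$. One correction to a side remark only: it is not true that the lemma ``would fail without the coprimality assumption.'' The group scheme $\mu_{n}$ is diagonalizable, hence linearly reductive in every characteristic (its representations always decompose into weight spaces); what fails when $p\mid n$ is your specific averaging argument and the Deligne--Mumford property of $\sX$, not the exactness of invariants. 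The failure you describe occurs for the constant group scheme $\underline{\ZZ/p\ZZ}$ in characteristic $p$, which in that case is no longer isomorphic to $\mu_{p}$.
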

\begin{proof}
    By \cite[Lemma 2.3.4]{AV02}, $p_{*}$ is exact and $p_{*}(\mathcal{O}_{\sX})=\mathcal{O}_{X}$.  Locally,  we  have the following commutative diagram:
    \begin{center}
\begin{tikzcd}
U_{i} \arrow[r, "\pi"] \arrow[rd, "\operatorname{id}"] & \lbrack  U_{i}/\mu_{n} \rbrack \arrow[d, "p  "] \\
                                 &  U_{i}.           
\end{tikzcd}
   \end{center} 
   Since $\pi$ is a $\mu_{n}$-Galois cover and $p\circ\pi=\operatorname{id}$, $p^{*}$ is exact.
\end{proof}

\begin{lemma}\label{identity lemma}
 Let $F\in \Coh(\sX)$, and suppose $F=F_{0}\rho_{0}\oplus...\oplus F_{n-1}\rho_{n-1}$. Then we have   $\operatorname{id}\xrightarrow{\sim}p_{*}p^{*}$ and $p_{*}F\cong F_{0}. $
\end{lemma}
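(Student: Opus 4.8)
The plan is to combine the orthogonal decomposition of Lemma \ref{decomposition} with the projection formula and the exactness of $p_*$ from Lemma \ref{exact functor}. Writing $F = \bigoplus_{i=0}^{n-1} F_i\rho_i = \bigoplus_{i=0}^{n-1}\bigl(p^*F_i \otimes \rho_i\bigr)$ and using that $p_*$ commutes with finite direct sums, the computation of $p_*F$ reduces to computing each $p_*(p^*F_i \otimes \rho_i)$ separately. First I would apply the projection formula (Lemma \ref{projection formula} with $\cB_X = \mathcal{O}_X$, which applies to $\sX$ by the remark closing Section \ref{section sheaf}) to obtain $p_*(p^*F_i \otimes \rho_i) \cong F_i \otimes p_*\rho_i$, so that everything hinges on knowing the line bundles $p_*\rho_i$ on $X$.

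The heart of the argument is therefore the computation of $p_*\rho_i$. For $i=0$ we have $\rho_0 = \mathcal{O}_{\sX}$, so $p_*\rho_0 = \mathcal{O}_X$ by Lemma \ref{exact functor}. For $0 < i < n$ I claim $p_*\rho_i = 0$. The cleanest way I see is to use the adjunction $p^* \dashv p_*$: for every $G \in \Coh(X)$,
\[
\Hom_X(G, p_*\rho_i) \cong \Hom_{\sX}(p^*G, \rho_i).
\]
Here $p^*G = p^*G \otimes \rho_0$ lies in the summand $\Coh(X)\rho_0$ and $\rho_i = p^*\mathcal{O}_X \otimes \rho_i$ lies in $\Coh(X)\rho_i$; since the decomposition of Lemma \ref{decomposition} is orthogonal and $i \neq 0$, the right-hand group vanishes. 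As $p_*\rho_i$ is coherent ($p_*$ being exact and coherence-preserving), taking $G = p_*\rho_i$ forces $\operatorname{id}_{p_*\rho_i} = 0$, hence $p_*\rho_i = 0$. Alternatively this can be checked étale-locally, where $\sX \cong \mathbf{B}\mu_{n,U}$, the functor $p_*$ becomes passage to $\mu_n$-invariants, and $\rho_i$ is the weight-$i$ character line, whose invariants vanish exactly when $n \nmid i$.

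Combining these gives $p_*F \cong \bigoplus_{i=0}^{n-1} F_i \otimes p_*\rho_i \cong F_0 \otimes \mathcal{O}_X = F_0$, which is the second assertion. The first assertion, that the unit $\operatorname{id} \to p_*p^*$ is an isomorphism, is then the special case $F = p^*G = p^*G \otimes \rho_0$ (so $F_0 = G$ and $F_i = 0$ for $i \neq 0$); equivalently, it is a restatement of the full faithfulness of $p^*$ already recorded in Lemma \ref{decomposition}, since for the adjunction $p^* \dashv p_*$ full faithfulness of the left adjoint is equivalent to the unit being an isomorphism. The only point requiring care is verifying that the projection-formula isomorphism is compatible with the unit map, so that the identification $p_*p^*G \cong G$ is genuinely the canonical one; I expect this naturality check to be the main (though minor) technical obstacle, and it is standard.
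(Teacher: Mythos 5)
Your proof is correct and takes essentially the same route as the paper: the paper's entire proof of this lemma is the single line ``This directly follows from Lemma \ref{decomposition},'' and your argument (projection formula, vanishing of $p_{*}\rho_{i}$ for $0<i<n$ via orthogonality and adjunction, and the unit being an isomorphism by full faithfulness of $p^{*}$) is exactly the careful expansion of that one line. Nothing to correct.
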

\begin{proof}
    This directly follows from Lemma \ref{decomposition}.
\end{proof}
Let $\cA$ be an Azumaya algebra on $\sX$. There is a decomposition for $\cA$ as an $\mathcal{O}_{\dX}$-module: $$\cA=\cA_{0}\rho_{0}\oplus ...\oplus \cA_{n-1}\rho_{n-1},$$ where $\cA_{i}=p_{*}(\cA\otimes \rho_{-i})$. Since $\cA$ is locally free as an $\mathcal{O}_{\sX}$-module, $\cA_{i}$ will  be locally free as  $\mathcal{O}_{X}$-modules for all $i$.  Using the notions introduced in the subsection \ref{section sheaf}, we can define a map between the noncommutative algebraic varieties $(\sX, \cA)$ and $(X, p_{*}\cA)=(X,\cA_{0})$.

\begin{definition}
    There is a map $\widetilde{p}=(p, p_{\cA} ): (\sX, \cA )\to (X, p_{*}\cA), $ where $p_{\cA}:p^{*}p_{*}\cA \to \cA$ is the adjunction map. 
\end{definition}
Recall that $\Coh(\sX, \cA)$ is the category of coherent right $\cA$-modules. As in Lemma \ref{exact functor}, we can show that the functors $\widetilde{p}_{*}$ and $\widetilde{p}^{*}$ are also exact. 

\begin{lemma}
    The functors $\widetilde{p}_{*}: \Coh(\sX, \cA)\to \Coh(X,  p_{*}\cA) $ and $\widetilde{p}^{*}: \Coh(X, p_{*}\cA)\to \Coh(\sX,\cA)$ are exact.
\end{lemma}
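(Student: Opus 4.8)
The plan is to reduce the exactness of $\widetilde{p}_{*}$ and $\widetilde{p}^{*}$ to the already-established exactness of the underlying functors $p_{*}$ and $p^{*}$ on $\Coh(\sX)$ and $\Coh(X)$ from Lemma \ref{exact functor}, together with the flatness criterion of Lemma \ref{flat}. The point is that the noncommutative pushforward and pullback are defined on underlying $\mathcal{O}$-modules by exactly the same formulas as the commutative ones, decorated with a module structure, so exactness is a statement that can be checked after forgetting the algebra structure.

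First I would handle $\widetilde{p}_{*}$. By definition $\widetilde{p}_{*}(F)=p_{*}F$ with its induced right $p_{*}\cA$-module structure, so the underlying $\mathcal{O}_{X}$-module of $\widetilde{p}_{*}F$ is just $p_{*}F$. Given a short exact sequence $0\to F\to G\to H\to 0$ in $\Coh(\sX,\cA)$, forgetting the $\cA$-module structure yields a short exact sequence in $\Coh(\sX)$; applying $p_{*}$, which is exact by Lemma \ref{exact functor}, gives a short exact sequence of underlying $\mathcal{O}_{X}$-modules. Since the maps are all $p_{*}\cA$-linear, this is a short exact sequence in $\Coh(X,p_{*}\cA)$, proving $\widetilde{p}_{*}$ is exact.

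Next I would handle $\widetilde{p}^{*}$, which is the direction requiring slightly more care. Here $\widetilde{p}^{*}(G)=p^{*}G\otimes_{p^{*}p_{*}\cA}\cA$, so exactness is essentially a flatness statement about $\cA$ as a left $p^{*}p_{*}\cA$-module. This is exactly the situation of Corollary \ref{Kuz flat}: we have the morphism $\widetilde{p}=(p,p_{\cA}):(\sX,\cA)\to(X,p_{*}\cA)$, and $p^{*}:\Coh(X)\to\Coh(\sX)$ is exact by Lemma \ref{exact functor}. By Lemma \ref{flat}, $\cA$ is locally free as an $\mathcal{O}_{\sX}$-module (it is Azumaya, hence locally free of finite rank), so it is locally projective over $p^{*}p_{*}\cA$, which supplies the flatness needed to tensor exact sequences. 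Thus Corollary \ref{Kuz flat} applies verbatim and yields exactness of $\widetilde{p}^{*}$.

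The main obstacle, such as it is, lies in the $\widetilde{p}^{*}$ direction, since one must confirm that Corollary \ref{Kuz flat} is genuinely applicable in the stacky setting: its hypothesis is the exactness of the commutative pullback $p^{*}:\Coh(X)\to\Coh(\sX)$, which holds here because $p$ admits the section $\pi$ realizing $\sX$ étale-locally as $[U_{i}/\mu_{n}]$, as in the proof of Lemma \ref{exact functor}. I would note that all of Kuznetsov's lemmas on noncommutative varieties, including Corollary \ref{Kuz flat}, carry over to smooth proper Deligne--Mumford stacks by the Remark at the end of Section \ref{section sheaf}, so no new input is required beyond invoking the cited results.
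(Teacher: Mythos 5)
Your proposal is correct and follows essentially the same route as the paper: the paper's proof is exactly the two reductions you make, namely exactness of $\widetilde{p}_{*}$ from exactness of $p_{*}$ (Lemma \ref{exact functor}), and exactness of $\widetilde{p}^{*}$ from exactness of $p^{*}$ via Corollary \ref{Kuz flat}. Your additional remarks unpacking why Corollary \ref{Kuz flat} applies (local projectivity of $\cA$ over $p^{*}p_{*}\cA$ via Lemma \ref{flat}, and the extension to Deligne--Mumford stacks) are just the content already contained in the cited results, so no discrepancy arises.
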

\begin{proof}
    Since $p_{*}$ is exact, $\widetilde{p}_{*}$ is exact. Since $p^{*}$ is exact, by Lemma \ref{Kuz flat}, $\widetilde{p}^{*}$ is exact.
\end{proof}
Thus, the funcotrs $\widetilde{p}_{*}$ and $\widetilde{p}^{*}$ are the  same as the derived functors $R\widetilde{p}_{*}$ and $L\widetilde{p}^{*}$. As in Lemma \ref{identity lemma}, we have the following lemma.

\begin{lemma}
    For any $F\in \Coh(X, p_{*}\cA)$, we  have $F\cong \widetilde{p}_{*}\widetilde{p}^{*}F$ in $\Coh(\sX,\cA)$.
\end{lemma}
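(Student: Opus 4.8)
The plan is to prove the statement $F \cong \widetilde{p}_{*}\widetilde{p}^{*}F$ for any $F \in \Coh(X, p_{*}\cA)$ by reducing it to the analogous isomorphism $\id \xrightarrow{\sim} p_{*}p^{*}$ for the untwisted coarse moduli map, which was already established in Lemma \ref{identity lemma}. The key mechanism is the projection formula for noncommutative varieties (Lemma \ref{projection formula}), which lets me transport the module structure through the adjunction. First I would unwind the definitions: by construction $\widetilde{p}^{*}F = p^{*}F \otimes_{p^{*}p_{*}\cA} \cA$, and $\widetilde{p}_{*}$ is just the underlying $p_{*}$. The goal is therefore to show that the adjunction unit $F \to \widetilde{p}_{*}\widetilde{p}^{*}F$ is an isomorphism of $p_{*}\cA$-modules.

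The main computation I would carry out is to apply the projection formula of Lemma \ref{projection formula} to $\widetilde{p}$. Taking $G = F \in D^{b}(X, p_{*}\cA)$, that lemma gives
\[
\widetilde{p}_{*}\bigl(\widetilde{p}^{*}F \otimes_{\cA} \cA\bigr) \cong F \otimes_{p_{*}\cA} \widetilde{p}_{*}\cA.
\]
Since $\widetilde{p}^{*}F \otimes_{\cA}\cA \cong \widetilde{p}^{*}F$ and $\widetilde{p}_{*}\cA = p_{*}\cA$ (recall $p_{*}\cA = \cA_{0}$ by the decomposition preceding the statement, and $\widetilde{p}_{*}$ acts as $p_{*}$), the right-hand side is $F \otimes_{p_{*}\cA} p_{*}\cA \cong F$. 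Thus $\widetilde{p}_{*}\widetilde{p}^{*}F \cong F$. I would then check that this isomorphism is indeed the adjunction unit (or at least a natural isomorphism compatible with the module structure), so that the identification is canonical rather than merely abstract.

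I expect the main obstacle to be the careful bookkeeping of which side the $\cA$-module structures live on, since the projection formula as stated in Lemma \ref{projection formula} requires $F \in D^{b}(X, \cB_{X}^{opp})$ on the opposite-algebra side while $\widetilde{p}^{*}G$ carries the other-sided structure. I would need to verify that applying the formula with the correct opposite/right conventions yields precisely the tensor cancellation $\cA \otimes_{\cA} (-) \cong (-)$ and $(-)\otimes_{p_{*}\cA} p_{*}\cA \cong (-)$, both of which are the tautological unit isomorphisms for a ring acting on itself. The exactness statements already proved for $\widetilde{p}_{*}$ and $\widetilde{p}^{*}$ ensure that all the derived functors coincide with their underived versions, so no spectral-sequence or higher-cohomology vanishing arguments are required; the content is entirely the projection formula together with the untwisted case $\id \xrightarrow{\sim} p_{*}p^{*}$ from Lemma \ref{identity lemma}. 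As a sanity check I would confirm the statement locally on the affine cover $\{U_{i}\}$ where $L|_{U_{i}} \cong \OO_{U_{i}}$ and $\sX|_{U_{i}} \cong \mathbf{B}\mu_{n,U_{i}}$, reducing to the section-$\pi$ splitting $p \circ \pi = \id$ that makes $p^{*}$ split injective.
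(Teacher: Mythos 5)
Your proof is correct and follows essentially the same route as the paper: both reduce the statement to the noncommutative projection formula (Lemma \ref{projection formula}) applied with the unit bimodule $\cA$, yielding $\widetilde{p}_{*}\widetilde{p}^{*}F\cong F\otimes_{p_{*}\cA}\widetilde{p}_{*}\cA\cong F$. The only cosmetic difference is that the paper first factors $\widetilde{p}$ as $(\sX,\cA)\to(\sX,p^{*}p_{*}\cA)\to(X,p_{*}\cA)$ and applies the projection formula to the second map, which is precisely the device that handles the bimodule bookkeeping you flag as the main obstacle.
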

\begin{proof}
The map $\widetilde{p}: (\sX,\cA) \to (X, p_{*}\cA) $ admits the following decomposition:
\begin{center}

\begin{tikzcd}
(\sX, \cA) \arrow[r, "\widetilde{p^{e}}"] & (\sX, p^{*}p_{*}\cA ) \arrow[r, "\widetilde{p^{s}}"] & (X, p_{*}\cA ),
\end{tikzcd}    
\end{center}
where $\widetilde{p^{e}}=(\operatorname{id}, p_{\cA})$ and $\widetilde{p^{s}}=(p, \operatorname{id} ).$ Since $\widetilde{p^{s}}_{*}$ and $\widetilde{p^{s}}^{*}$ are also exact, by projection formula \ref{projection formula},  we have 
$$\widetilde{p}_{*}\widetilde{p}^{*}F\cong p_{*}(p^{*}F\otimes_{p^{*}p_{*}\cA}\cA)\cong \widetilde{p^{s}}_{*}(\widetilde{p^{s}}^{*}F\otimes_{p^{*}p_{*}\cA}\cA)\cong F \otimes_{p_{*}\cA}\widetilde{p^{s}}_{*}\cA\cong F.   $$
    
\end{proof}
By Lemma \ref{identity lemma}, we can show that the functor $\widetilde{p}^{*}$ is fully faithful. This leads to the following proposition.
\begin{proposition}\label{fully faithful}
    The functor $\widetilde{p}^{*}: \Coh(X, p_{*}\cA)\to \Coh(\sX, \cA)$ is  fully faithful.
\end{proposition}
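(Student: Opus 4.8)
The plan is to exhibit $\widetilde{p}^{*}$ as a left adjoint whose unit is an isomorphism, and then invoke the standard categorical criterion. By the adjunction lemma recalled above (the noncommutative analogue of $L\widetilde{f}^{*}\dashv R\widetilde{f}_{*}$), the functor $L\widetilde{p}^{*}$ is left adjoint to $R\widetilde{p}_{*}$; since we have already checked that both $\widetilde{p}^{*}$ and $\widetilde{p}_{*}$ are exact, the derived functors coincide with the underived ones, so $\widetilde{p}^{*}\colon \Coh(X,p_{*}\cA)\to\Coh(\sX,\cA)$ is genuinely left adjoint to $\widetilde{p}_{*}\colon\Coh(\sX,\cA)\to\Coh(X,p_{*}\cA)$. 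I would then reduce the statement to the general fact that a left adjoint is fully faithful precisely when the unit $\eta\colon \id\Rightarrow \widetilde{p}_{*}\widetilde{p}^{*}$ is a natural isomorphism.

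It therefore remains to show that for every $F\in\Coh(X,p_{*}\cA)$ the unit $\eta_{F}\colon F\to\widetilde{p}_{*}\widetilde{p}^{*}F$ is an isomorphism. The preceding lemma already produces an isomorphism $F\cong\widetilde{p}_{*}\widetilde{p}^{*}F$ from the projection formula (Lemma \ref{projection formula}); the point is to confirm that this isomorphism is exactly the unit. I would trace through the factorization $\widetilde{p}=\widetilde{p^{s}}\circ\widetilde{p^{e}}$ used there: the unit decomposes as the ordinary unit $F\to p_{*}p^{*}F$, which is an isomorphism by Lemma \ref{identity lemma} (giving $\id\xrightarrow{\sim}p_{*}p^{*}$), followed by the map induced by the adjunction arrow $p^{*}p_{*}\cA\to\cA$ upon applying $\widetilde{p^{s}}_{*}(-\otimes_{p^{*}p_{*}\cA}\cA)$, which the projection-formula identification $F\otimes_{p_{*}\cA}\widetilde{p^{s}}_{*}\cA\cong F\otimes_{p_{*}\cA}p_{*}\cA\xrightarrow{\sim}F$ turns into an isomorphism. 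Each arrow in this chain is invertible, so $\eta_{F}$ is.

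Alternatively, and perhaps more cleanly for the write-up, I would avoid identifying the unit on the nose. For $F,G\in\Coh(X,p_{*}\cA)$ the adjunction supplies a natural bijection
$$\Hom_{\sX,\cA}(\widetilde{p}^{*}F,\widetilde{p}^{*}G)\;\cong\;\Hom_{X,p_{*}\cA}(F,\widetilde{p}_{*}\widetilde{p}^{*}G),$$
and the triangle identity says that composing the structural map $\widetilde{p}^{*}\colon\Hom(F,G)\to\Hom(\widetilde{p}^{*}F,\widetilde{p}^{*}G)$ with this bijection equals postcomposition with the unit $\eta_{G}\colon G\to\widetilde{p}_{*}\widetilde{p}^{*}G$. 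Since $\eta_{G}$ is an isomorphism, postcomposition with it is bijective, whence $\widetilde{p}^{*}$ induces a bijection on $\Hom$-sets, i.e. it is fully faithful. The only real obstacle is the bookkeeping in the second paragraph: verifying that the abstract isomorphism $F\cong\widetilde{p}_{*}\widetilde{p}^{*}F$ furnished by the projection formula genuinely coincides with the adjunction unit (equivalently, that $\eta_{F}$ is invertible, not merely that source and target are abstractly isomorphic). Everything else—exactness, the adjunction, and the fully-faithfulness criterion—is already in place from the preceding lemmas.
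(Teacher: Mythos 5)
Your proof is correct and follows essentially the same route as the paper, whose entire argument is the chain $\Hom_{\cA}(\widetilde{p}^{*}F,\widetilde{p}^{*}G)=\Hom_{p_{*}\cA}(F,\widetilde{p}_{*}\widetilde{p}^{*}G)=\Hom_{p_{*}\cA}(F,G)$, i.e.\ the adjunction $\widetilde{p}^{*}\dashv\widetilde{p}_{*}$ followed by the isomorphism $\widetilde{p}_{*}\widetilde{p}^{*}G\cong G$ of the preceding lemma. The only difference is that you spell out the compatibility the paper leaves implicit --- that the isomorphism produced by the projection formula is (or can be identified with) the adjunction unit, so that the composite bijection really is the map induced by $\widetilde{p}^{*}$ --- which is a worthwhile point of rigor but not a genuinely different argument.
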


\begin{proof}
    Let $F, G\in \Coh(X,p_{*}\cA)$. Then by Lemma \ref{identity lemma}, we have
$$\Hom_{\cA}(\widetilde{p}^{*}F, \widetilde{p}^{*}G)=\Hom_{p_{*}\cA}(F, \widetilde{p}_{*}\widetilde{p}^{*}G)=\Hom_{p_{*}\cA}(F, G).$$    
\end{proof}
 It turns out that for some Azumaya algebras $\cA$, the functor $\widetilde{p}^{*}$ is an equivalence, as  indicated by the following lemma.
\begin{lemma}\label{equivalence lemma}
Let $F=F_{0}\rho_{0}\oplus\cdots F_{n-1}\rho_{n-1}$ be a vector bundle on $\sX$. Let $\cA:=\mathcal{E}nd(F)$.
If $F_{i}\not =0$ for all $i$, then   $\widetilde{p}^{*}:\Coh(X, p_{*}\cA)\to \Coh(\sX, \cA)$ is  an  equivalence.
\end{lemma}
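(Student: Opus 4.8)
The plan is to combine the full faithfulness already established with a reflection-of-zero argument for $\widetilde{p}_{*}$. Recall that $\widetilde{p}^{*}$ is left adjoint to the exact functor $\widetilde{p}_{*}$, and that Proposition~\ref{fully faithful} (together with the lemma preceding it, which gives $F\xrightarrow{\sim}\widetilde{p}_{*}\widetilde{p}^{*}F$) says precisely that $\widetilde{p}^{*}$ is fully faithful, i.e. the unit $\id\to\widetilde{p}_{*}\widetilde{p}^{*}$ is an isomorphism. Since $\widetilde{p}^{*}$ and $\widetilde{p}_{*}$ are both exact, to upgrade this to an equivalence it suffices to show that the counit $\varepsilon_{M}\colon\widetilde{p}^{*}\widetilde{p}_{*}M\to M$ is an isomorphism for every $M\in\Coh(\sX,\cA)$. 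By the triangle identity $\widetilde{p}_{*}\varepsilon_{M}\circ\eta_{\widetilde{p}_{*}M}=\id$, so $\widetilde{p}_{*}\varepsilon_{M}$ is an isomorphism; applying the exact functor $\widetilde{p}_{*}$ to the kernel and cokernel of $\varepsilon_{M}$ then yields $\widetilde{p}_{*}(\ker\varepsilon_{M})=\widetilde{p}_{*}(\operatorname{coker}\varepsilon_{M})=0$. Thus the whole statement reduces to the claim that $\widetilde{p}_{*}$ \emph{reflects the zero object}: if $M\in\Coh(\sX,\cA)$ satisfies $\widetilde{p}_{*}M=p_{*}M=0$, then $M=0$.

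To prove this claim I would pass through the Morita equivalence on $\sX$. Since $F$ is a vector bundle of positive rank it is an $\mathcal{O}_{\sX}$-progenerator, so by the Fundamental Theorem of Morita Theory (Theorem~\ref{fundamental theory}) $\cA=\cEnd(F)$ is Morita equivalent to $\mathcal{O}_{\sX}$ and $\Coh(\sX,\cA)\cong\Coh(\sX)$ via $N\mapsto N\otimes_{\mathcal{O}_{\sX}}F^{\vee}$, an equivalence under which $M=0$ if and only if the corresponding $N\in\Coh(\sX)$ is zero. Using the decomposition of Lemma~\ref{decomposition}, write $F=\bigoplus_{i=0}^{n-1}F_{i}\rho_{i}$ and $N=\bigoplus_{b=0}^{n-1}N_{b}\rho_{b}$, so that $F^{\vee}=\bigoplus_{i=0}^{n-1}F_{i}^{\vee}\rho_{-i}$ and
$$N\otimes_{\mathcal{O}_{\sX}}F^{\vee}\;\cong\;\bigoplus_{b,i}\bigl(N_{b}\otimes_{\mathcal{O}_{X}}F_{i}^{\vee}\bigr)\rho_{b-i}.$$
Since $p_{*}$ extracts the $\rho_{0}$-summand (Lemma~\ref{identity lemma}) and, for $b,i\in\{0,\dots,n-1\}$, the integer $b-i$ is divisible by $n$ exactly when $b=i$, I expect
$$\widetilde{p}_{*}M\;=\;p_{*}\bigl(N\otimes_{\mathcal{O}_{\sX}}F^{\vee}\bigr)\;\cong\;\bigoplus_{i=0}^{n-1}N_{i}\otimes_{\mathcal{O}_{X}}F_{i}^{\vee}.$$

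Finally I would invoke the hypothesis $F_{i}\neq0$ for all $i$. Each $F_{i}^{\vee}$ is then locally free of strictly positive rank, so $N_{i}\otimes_{\mathcal{O}_{X}}F_{i}^{\vee}=0$ forces $N_{i}=0$. Hence $\widetilde{p}_{*}M=0$ implies every $N_{i}=0$, whence $N=0$ and $M=0$, which is exactly the reflection-of-zero claim; this is precisely the step that uses the hypothesis (if some $F_{i_{0}}$ vanished, any nonzero $M$ corresponding to $N=N_{i_{0}}\rho_{i_{0}}$ would have $\widetilde{p}_{*}M=0$, and $\widetilde{p}^{*}$ would fail to be essentially surjective). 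The main obstacle is the middle step: identifying $\widetilde{p}_{*}M$ with $\bigoplus_{i}N_{i}\otimes F_{i}^{\vee}$ requires checking that the Morita correspondence on $\sX$ is compatible with the $\rho$-grading and that $p_{*}$ commutes with extracting graded pieces. The orthogonality of the decomposition in Lemma~\ref{decomposition} together with the projection formula (Lemma~\ref{projection formula}) and $p_{*}\mathcal{O}_{\sX}=\mathcal{O}_{X}$ are the tools I would use to make this rigorous.
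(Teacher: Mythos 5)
Your proof is correct and follows essentially the same route as the paper's: the paper likewise reduces the statement to showing that the adjunction map $\widetilde{p}^{*}\widetilde{p}_{*}H\to H$ becomes invisible to the exact functor $\widetilde{p}_{*}$ (phrased there via the cone $G^{\bullet}$ in $D^{b}(\sX,\cA)$ rather than your kernel/cokernel), then transfers the defect to $\Coh(\sX)$ through the Morita functor $-\otimes F^{\vee}$ and uses the $\rho$-grading together with the hypothesis $F_{i}\neq 0$ to conclude that it must vanish. Your abelian-category formulation and the explicit identity $p_{*}\bigl(N\otimes_{\mathcal{O}_{\sX}}F^{\vee}\bigr)\cong\bigoplus_{i}N_{i}\otimes_{\mathcal{O}_{X}}F_{i}^{\vee}$ merely replace the paper's derived-category cone and its case analysis on the components $G'^{\bullet}_{i}$; the mathematical content is the same.
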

\begin{proof}
By Lemma \ref{identity lemma}, we have $\operatorname{id}\xrightarrow{\sim}\widetilde{p}_{*}\widetilde{p}^{*}$. In order to show $\widetilde{p}^{*}$ is an equivalent functor, it is enough to prove $\widetilde{p}^{*}\widetilde{p}_{*}\xrightarrow{\sim} \operatorname{id}$. Let $H\in \Coh(\sX,\cA).$ Then
we have a distinguished triangle in $D^{b}(\sX, \cA)$ : $$ \widetilde{p}^{*}\widetilde{p}_{*}H \to H \to G^{\bullet} \to \widetilde{p}^{*}\widetilde{p}_{*}H[1], $$
where $G^{\bullet}$ is the cone of $\widetilde{p}^{*}\widetilde{p}_{*}H \to H$.
Apply  $\widetilde{p}_{*}$ to the distinguished triangle, we get $$\widetilde{p}_{*}\widetilde{p}^{*}\widetilde{p}_{*}H\to \widetilde{p}_{*}H \to \widetilde{p}_{*}G^{\bullet} \to\widetilde{p}_{*}\widetilde{p}^{*}\widetilde{p}_{*}H^{\bullet}[1].$$
Since the first arrow is an isomorphism, we have $\widetilde{p}_{*}G^{\bullet}=0.$

Let $F^{\vee}:=\mathcal{H}om(F, \mathcal{O}_{\sX})=F'_{0}\rho_{0}\oplus \cdots F_{n-1}'\rho_{n-1}$. Then, $F'_{0}\not = 0$ for all $i.$ By Theorem \ref{fundamental theory}, there is a Morita equivalent functor  $$ -\otimes_{\mathcal{O}_{X}} F^{\vee} : \Coh(\sX) \to \Coh(\sX, \cA). $$ Thus, there exists a complex  $G'^{\bullet}\in D^{b}(\sX)$ such that $G^{\bullet}\cong G'^{\bullet}\otimes_{\mathcal{O}_{X}}F^{\vee} $ as $\cA$-module complexes. Note that an $\cA$-module can be realized as an $\mathcal{O}_{\sX}$-module. So $G^{\bullet}\cong G'^{\bullet}\otimes_{\mathcal{O}_{X}} F^{\vee}$ as $\mathcal{O}_{\sX}$-module complexes.

Suppose $G^{\bullet}\not =0$, then $G'^{\bullet}\not  =0$. Note that we have an orthogonal decomposition: $$D^{b}(\sX)=D^{b}(X)\rho_{0}\oplus...\oplus D^{b}(X)\rho_{n-1}.$$
Assume $G^{\bullet}=G^{\bullet}_{0}\rho_{0}\oplus ... \oplus G^{\bullet}_{n-1}\rho_{n-1}$ and $G'^{\bullet}=G'^{\bullet}_{0}\rho_{0}\oplus ... \oplus G'^{\bullet}_{n-1}\rho_{n-1}$. Then $ \exists i$, $ 0\leq  i\leq n-1$, such that $G'^{\bullet}_{i}\not =0$.
\begin{enumerate}
    \item If $G'^{\bullet}_{0}\not = 0$. Since $F'_{0}\not = 0$ by assumption, we have  $0\not = F'_{0}\rho_{0}\otimes G'^{\bullet}_{0}\rho_{0}\subseteq G^{\bullet}_{0}\rho_{0}. $
    \item If $G'^{\bullet}_{i}\not =0$ for $0< i \leq n-1$. By  definition of the universal line on $\sX$, we have  $\rho_{n-i}\otimes\rho_{i}=L\rho_{0}$. Since $F'_{n-i}\not =0$, we have    
    $0\not = (F'_{n-i}\rho_{n-i}\otimes G'^{\bullet}_{i}\rho_{i})\otimes(L^{-1}\rho_{0})\subseteq G^{\bullet}_{0}\rho_{0}$
\end{enumerate}
So  $G^{\bullet}_{0}\rho_{0}\not =0$. Hence $\widetilde{p}_{*}G^{\bullet}=G^{\bullet}_{0}\not =0$, which contradicts  $\widetilde{p}_{*}G^{\bullet}=0$.  Thus, $G^{\bullet}=0$. So we have $\widetilde{p}^{*}\widetilde{p}_{*}H \xrightarrow{\sim} H$ and complete the proof.
\end{proof}

Now we begin to  prove the main results for $X=\operatorname{Spec} k$ and $\sX=\mathbf{B}\mu_{n,k}$. 
To do so, we first establish several lemmas. In the following, let $B$ be the $n\times n$ matrix described in \ref{matrix B}.

\begin{lemma}\label{eigenvector}
    Let $a\in k^{*}$ and $k_{1}:=k(\sqrt[n]{a})$. 
Then over the field $k_{1}$, the eigenvalues of $B$ are $\sqrt[n]{a},...,\sqrt[n]{a}\zeta^{n-1}$, where  $\zeta$ is the generator of $\mu_{n}$.
\end{lemma}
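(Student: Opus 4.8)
The plan is to recognize $B$ from (\ref{matrix B}) as (a companion matrix for) the polynomial $\lambda^{n}-a$ and to read off its eigenvalues as the roots of this polynomial over $k_{1}$.

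First I would compute the action of $B$ on the standard basis $e_{1},\dots,e_{n}$ of $k_{1}^{n}$. Reading off the columns of (\ref{matrix B}), one sees that $Be_{i}=e_{i+1}$ for $1\le i\le n-1$ and $Be_{n}=ae_{1}$; that is, $B$ cyclically shifts the basis and scales by $a$ after a full cycle. In particular $B^{j}e_{1}=e_{j+1}$ for $0\le j\le n-1$ and $B^{n}e_{1}=ae_{1}$, which also recovers the identity $B^{n}=a\operatorname{Id}$ already noted after Notation \ref{matrix notation}.

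Next I would identify the characteristic polynomial. Since $e_{1},Be_{1},\dots,B^{n-1}e_{1}$ are precisely $e_{1},\dots,e_{n}$, they form a basis, so $e_{1}$ is a cyclic vector for $B$; hence the minimal polynomial of $B$ coincides with its characteristic polynomial. Combining the linear independence of $e_{1},\dots,B^{n-1}e_{1}$ with $B^{n}e_{1}=ae_{1}$ shows this polynomial is $\lambda^{n}-a$. Finally I would factor it over $k_{1}$: by definition $\sqrt[n]{a}\in k_{1}$, and since $k$ (hence $k_{1}$) contains all $n$-th roots of unity with $\zeta$ a generator of $\mu_{n}$, we have $\lambda^{n}-a=\prod_{j=0}^{n-1}(\lambda-\sqrt[n]{a}\,\zeta^{j})$. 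Because $\operatorname{char}(k)=p\nmid n$, the derivative $n\lambda^{n-1}$ is coprime to $\lambda^{n}-a$, so the $n$ roots are distinct; therefore the eigenvalues of $B$ over $k_{1}$ are exactly $\sqrt[n]{a},\sqrt[n]{a}\zeta,\dots,\sqrt[n]{a}\zeta^{n-1}$, and $B$ is in fact diagonalizable over $k_{1}$.

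There is no serious obstacle here. The only point requiring care is confirming the companion-matrix structure under the particular index convention of (\ref{matrix B})—that is, checking that $e_{1}$ is genuinely a cyclic vector so the characteristic polynomial is $\lambda^{n}-a$ rather than some other degree-$n$ polynomial. Everything else is the standard factorization of $\lambda^{n}-a$ over a field containing the relevant roots of unity.
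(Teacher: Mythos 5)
Your proof is correct, but it takes a different route from the paper. The paper's proof is a one-line direct verification: it writes down the explicit eigenvectors $v_{i}=((\sqrt[n]{a}\zeta^{i})^{n-1},\dots,\sqrt[n]{a}\zeta^{i},1)^{\intercal}$ (the standard Vandermonde-type eigenvectors of a companion matrix) and checks $Bv_{i}=(\sqrt[n]{a}\zeta^{i})v_{i}$, which immediately produces all $n$ eigenvalues. You instead derive the characteristic polynomial: you observe that $e_{1}$ is a cyclic vector, so the characteristic polynomial equals the minimal polynomial, identify it as $\lambda^{n}-a$, and factor it over $k_{1}$ using the roots of unity in $k$. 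Both arguments are sound elementary linear algebra; the paper's is shorter and fully explicit, while yours has the advantage of making two points that the paper leaves implicit but actually uses later: that the $n$ eigenvalues are pairwise distinct (via $p\nmid n$) and hence that $B$ is diagonalizable over $k_{1}$ --- this is exactly what justifies the step in the proof of Theorem \ref{Theorem for field} where $a^{-1/n}B$ is declared similar to the diagonal matrix $B_{1}$. Your only stated point of care, confirming the index convention so that $Be_{i}=e_{i+1}$ for $i<n$ and $Be_{n}=ae_{1}$, is indeed the right thing to check and holds for the matrix in (\ref{matrix B}).
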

\begin{proof}
    Let $v_{i}:=((\sqrt[n]{a}\zeta^{i})^{n-1}, \cdots  \sqrt[n]{a}\zeta^{i},1 )^{\intercal}$ for $0\leq i \leq n-1$. Then $Bv_{i}=(\sqrt[n]{a}\zeta^{i})v_{i}$. Thus, $\sqrt[n]{a}\zeta^{i}$ are eigenvalues for $0\leq i \leq n-1$.
\end{proof}

Recall that $M_{n,B}(k)$ is the Azumaya algebra over $\mathbf{B}\mu_{n,k}$ associated with the matrix $B$.
\begin{lemma}\label{pushforward of algebra}
 $p_{*}M_{n,B}(k)\cong k[x]/(x^{n}-a) $ as $k$-algebras.   
\end{lemma}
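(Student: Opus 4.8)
The plan is to compute the pushforward $p_{*}M_{n,B}(k)$ explicitly by identifying it with the invariant subalgebra of $M_{n}(k)$ under the $\mu_{n}$-action, and then recognizing this invariant ring as a cyclic algebra that collapses to $k[x]/(x^{n}-a)$. Recall from Notation \ref{matrix notation} that $M_{n,B}(k)$ is the $\mu_{n}$-equivariant algebra $M_{n}(k)$ with $\zeta \cdot M = B^{-1}MB$. Since $p: \mathbf{B}\mu_{n,k}\to \operatorname{Spec} k$ is the coarse moduli map and $\pi: \operatorname{Spec} k \to \mathbf{B}\mu_{n,k}$ is the $\mu_{n}$-cover with $p\circ\pi=\operatorname{id}$, taking pushforward along $p$ amounts to extracting $\mu_{n}$-invariants: concretely $p_{*}M_{n,B}(k) = M_{n}(k)^{\mu_{n}} = \{M \in M_{n}(k) : B^{-1}MB = M\}$, the centralizer of $B$ in $M_{n}(k)$.

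\textbf{Key steps.} First I would diagonalize $B$ over $k_{1}=k(\sqrt[n]{a})$ using Lemma \ref{eigenvector}, which tells us $B$ has $n$ distinct eigenvalues $\sqrt[n]{a}\,\zeta^{i}$ for $0\le i\le n-1$. Since the eigenvalues are distinct, the centralizer of $B$ over $k_{1}$ is the set of matrices diagonal in the eigenbasis of $B$, which is an $n$-dimensional commutative $k_{1}$-algebra. This shows that the centralizer $Z:=M_{n}(k)^{\mu_{n}}$ is a commutative $k$-algebra of dimension $n$ over $k$ (the eigenvectors $v_i$ are defined over $k_1$, but commutativity and the dimension count descend to $k$, since $\dim_k Z = \dim_{k_1}(Z\otimes_k k_1) = n$). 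Next I would exhibit an explicit element generating $Z$: the natural candidate is $B$ itself, which manifestly commutes with itself and satisfies $B^{n}=a\operatorname{Id}$. So there is a $k$-algebra map $k[x]/(x^{n}-a) \to Z$ sending $x\mapsto B$. Finally I would check this map is an isomorphism: it suffices to verify injectivity (equivalently that $\operatorname{Id}, B, B^{2},\ldots, B^{n-1}$ are linearly independent over $k$) and then compare dimensions, both sides being $n$-dimensional over $k$. Linear independence follows because the companion-type matrix $B$ has $n$ distinct eigenvalues over $k_{1}$, so its minimal polynomial has degree $n$.

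\textbf{Main obstacle.} The only real subtlety is justifying that $p_{*}$ of the equivariant algebra is the ring of $\mu_{n}$-invariants and that this computation commutes with the base change to $k_{1}$; both are formal given Lemma \ref{exact functor} (exactness of $p_{*}$ with $p_{*}\mathcal{O}_{\sX}=\mathcal{O}_{X}$) and flat base change, since $k_1/k$ is finite and $\mathbf{B}\mu_{n,k_1} = \mathbf{B}\mu_{n,k}\times_k \operatorname{Spec}k_1$. Once the invariant-ring description is in place, identifying $Z$ with $k[x]/(x^n-a)$ is a clean consequence of the distinct-eigenvalue computation. I expect the verification that $\operatorname{Id}, B, \ldots, B^{n-1}$ are $k$-linearly independent—i.e. that $x^n-a$ is genuinely the minimal polynomial of $B$, not merely a polynomial it satisfies—to be the step requiring the most care, but it is settled immediately by Lemma \ref{eigenvector}.
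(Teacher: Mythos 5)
Your proposal is correct, and it shares the same skeleton as the paper's proof: both identify $p_{*}M_{n,B}(k)$ with the $\mu_{n}$-invariants $M_{n}(k)^{\mu_{n}}$, observe that these are exactly the matrices commuting with $B$, and conclude via the map $k[x]/(x^{n}-a)\to p_{*}M_{n,B}(k)$, $x\mapsto B$. Where you diverge is in how the centralizer is pinned down. The paper does a direct, self-contained matrix computation: it solves $BM=MB$ explicitly and finds that $M$ must be the circulant-type matrix $a_{1}\operatorname{Id}+a_{2}B+\cdots+a_{n}B^{n-1}$, from which surjectivity and injectivity of $f$ are both visible at a glance (the coefficients $a_{1},\dots,a_{n}$ appear as the first column of $M$). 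You instead avoid any explicit computation by base-changing to $k_{1}=k(\sqrt[n]{a})$, invoking Lemma \ref{eigenvector} to see that $B$ has $n$ distinct eigenvalues there, so the centralizer over $k_{1}$ is $n$-dimensional; flat base change then gives $\dim_{k}Z=n$, and injectivity of $x\mapsto B$ follows because the minimal polynomial of $B$ has degree $n$, again by distinctness of eigenvalues. Your route is more conceptual and generalizes more readily (it never needs the precise shape of the commutant, only a dimension count), and it has the minor virtue of making explicit the injectivity step that the paper's final sentence leaves implicit; the paper's route is more elementary, requiring no base change or descent, and produces the invariant subalgebra concretely, which is what its globalized version (Lemma \ref{pushforward of sheaf}) later reuses over the local rings $R_{i}$. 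Both arguments are complete and correct.
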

\begin{proof}
    We know $p_{*}(M_{n,B}(k))= M_{n}(k)^{\mu_{n}}$, the fixed subalgebra of $M_{n}(k)$ under the action of $\mu_{n}$. Note that the action of $\mu_{k}$ on $M_{n}(k)$ is given by $\zeta \cdot M=B^{-1}MB$. Thus, $M\in p_{*}M_{n,B}(k)$ if and only $BM=MB$. By calculation, $M$ needs to be the following form
$$M=\begin{pmatrix}
a_{1} & aa_{n} & aa_{n-1} & ... & aa_{2}\\
a_{2} & a_{1} & aa_{n} & ... & aa_{3} \\
... & ... & ... & ... & ... \\
a_{n-1} & a_{n-2} & a_{n-3} & ... & aa_{n} \\
a_{n} & a_{n-1} & a_{n-2} & ... & a_{1}
\end{pmatrix}=a_{1}\operatorname{Id}+a_{2}B+...+a_{n}B^{n-1}.
$$
Let $f: k[x]/(x^{n}-a)\to p_{*}M_{n,B}(k): x\to B$. Then $f$ is an isomorphism.
  \end{proof}

\begin{lemma} \label{importan corollary} 
    Let $a, b\in k$. Then $k[x]/(x^{n}-a)$ and $k[x]/(x^{n}-b)$ are isomorphic as $k$-algebras if and only if $k(\sqrt[n]{a})=k(\sqrt[n]{b})$.
\end{lemma}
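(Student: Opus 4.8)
The plan is to compute the $k$-algebra $k[x]/(x^{n}-a)$ explicitly as a product of fields and then to read off $k(\sqrt[n]{a})$ from its isomorphism type. I will take $a,b\in k^{*}$, which is the situation arising in our application (Lemma~\ref{pushforward of algebra}); note that $k(\sqrt[n]{a})$ is unambiguous, since $\mu_{n}\subseteq k$ forces any two $n$-th roots of $a$ to differ by an element of $k$.

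First I would fix an $n$-th root $\alpha=\sqrt[n]{a}$ in $\bar{k}$ and set $d_{a}:=[k(\sqrt[n]{a}):k]$. Because $\mu_{n}\subseteq k$, the roots of $x^{n}-a$ are exactly $\alpha\zeta^{i}$ for $0\le i\le n-1$, with $\zeta$ a primitive $n$-th root of unity, so every root generates the same field $k(\alpha\zeta^{i})=k(\alpha)=k(\sqrt[n]{a})$. Since $p=\operatorname{char}(k)\nmid n$ and $a\neq 0$, the polynomial $x^{n}-a$ is separable; hence $k(\sqrt[n]{a})$ is the splitting field of a separable polynomial and is therefore Galois over $k$ (in fact cyclic, by Kummer theory). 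Consequently $x^{n}-a$ factors over $k$ into $n/d_{a}$ distinct irreducible factors, each the minimal polynomial of some root and thus of degree $d_{a}$, and the Chinese Remainder Theorem yields a $k$-algebra isomorphism
\begin{equation}
 k[x]/(x^{n}-a)\;\cong\; k(\sqrt[n]{a})^{\oplus\, n/d_{a}}.
\end{equation}
The identical computation applies to $b$. The converse direction of the lemma is then immediate: if $k(\sqrt[n]{a})=k(\sqrt[n]{b})=:L$ then $d_{a}=d_{b}=[L:k]$, and both algebras are isomorphic to $L^{\oplus n/d_{a}}$.

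For the forward direction, suppose $\varphi\colon k[x]/(x^{n}-a)\xrightarrow{\sim}k[x]/(x^{n}-b)$ is a $k$-algebra isomorphism. Both sides are finite products of fields, hence semisimple, and their field factors are intrinsic (they are the residue fields at the minimal primes, equivalently the images of the minimal idempotents). Thus $\varphi$ induces a $k$-algebra isomorphism between a factor $k(\sqrt[n]{a})$ of the source and a factor $k(\sqrt[n]{b})$ of the target, so $k(\sqrt[n]{a})\cong k(\sqrt[n]{b})$ as $k$-algebras. To upgrade this to equality of subfields of $\bar{k}$, I would invoke normality: composing the isomorphism with the inclusion $k(\sqrt[n]{b})\hookrightarrow \bar{k}$ gives a $k$-embedding $k(\sqrt[n]{a})\hookrightarrow \bar{k}$, whose image is $k(\sqrt[n]{b})$; but $k(\sqrt[n]{a})/k$ is normal, so the image of any such embedding is $k(\sqrt[n]{a})$ itself, whence $k(\sqrt[n]{a})=k(\sqrt[n]{b})$.

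The only delicate point — and the step I would flag as the real content — is the passage from an abstract $k$-isomorphism of the field factors to the literal equality $k(\sqrt[n]{a})=k(\sqrt[n]{b})$ inside $\bar{k}$; this is precisely where the hypothesis $\mu_{n}\subseteq k$ is essential, as it makes each $k(\sqrt[n]{a})$ Galois, hence normal, over $k$. Everything else is the routine CRT factorization together with the uniqueness of the decomposition of a commutative semisimple algebra into fields.
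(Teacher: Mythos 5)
Your proof is correct and takes essentially the same route as the paper: both arguments decompose $k[x]/(x^{n}-a)$ via the Chinese Remainder Theorem into $n/[k(\sqrt[n]{a}):k]$ copies of $k(\sqrt[n]{a})$, using that $\mu_{n}\subseteq k$ forces every root of $x^{n}-a$ to generate the same field. Where the paper closes with ``then the statement follows,'' you spell out that final step --- the field factors of a commutative semisimple algebra are intrinsic, and normality of $k(\sqrt[n]{a})/k$ upgrades an abstract $k$-isomorphism to equality of subfields of $\bar{k}$ --- and you correctly flag that $a,b\in k^{*}$ is needed (the paper assumes this implicitly when it asserts $x^{n}-a$ has no multiple roots).
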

\begin{proof}
    Since $x^{n}-a$ has no multiple roots, it can be factored as $x^{n}-a=p_{1}...p_{l}$, where $p_{1}$,..,$p_{l}$ are irreducible polynomials in $k[x]$, and each pair of them is coprime. By Chinese remainder theorem, $k[x]/(x^{n}-a)\cong  k[x]/p_{1} \times ...k[x]/p_{l}$. Since all roots of $x^{n}-a$ are $\sqrt[n]{a},\sqrt[n]{a}\zeta,...,\sqrt[n]{a}\zeta^{n-1}$, $p_{i}$ are minimal polynomials of $\sqrt[n]{a}\zeta^{a_{i}}$. So $k[x]/p_{i}\cong k(\sqrt[n]{a})$ for $0\leq i\leq l$. Thus, we have $$k[x]/(x^{n}-a)\cong k(\sqrt[n]{a})\times...\times k(\sqrt[n]{a}) \cong    k(\sqrt[n]{a})^{l},$$
    where $l=n/[k(\sqrt[n]{a}):k]$. Then the statement follows.
\end{proof}

\begin{proposition}[{\cite[Theorem 8.2]{Lan02}}]\label{Kummer theory}
Let $a,b\in k$. Then $k(\sqrt[n]{a})= k(\sqrt[n]{b})$ as fields over $k$ if and only if $[a]$ and $[b]$ generate the same subgroup in $k^{*}/k^{*n}$.

\end{proposition}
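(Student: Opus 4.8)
The plan is to run the standard Kummer-theory argument, reducing both directions to the single fact that $[k(\sqrt[n]{a}):k]$ equals the order of $[a]$ in $k^{*}/k^{*n}$. Since $k$ contains all $n$-th roots of unity and $p\nmid n$, the polynomial $x^{n}-a$ is separable and splits completely in $L_{a}:=k(\sqrt[n]{a})$ as $\prod_{i=0}^{n-1}(x-\zeta^{i}\alpha)$, where $\alpha$ is a fixed root and $\zeta$ generates $\mu_{n}$; hence $L_{a}/k$ is Galois. First I would record the \emph{Kummer character}: for $\sigma\in G_{a}:=\mathrm{Gal}(L_{a}/k)$ one has $\sigma(\alpha)=\chi_{a}(\sigma)\,\alpha$ with $\chi_{a}(\sigma)\in\mu_{n}$, and $\chi_{a}\colon G_{a}\to\mu_{n}$ is an injective group homomorphism, so $G_{a}$ is cyclic. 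A short computation then shows $\alpha^{m}\in k$ iff $m$ annihilates $\chi_{a}(G_{a})$ iff $a^{m}\in k^{*n}$, which identifies $|G_{a}|=[L_{a}:k]$ with the order of $[a]$ in $k^{*}/k^{*n}$.

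The reverse implication is then immediate. If $\langle[a]\rangle=\langle[b]\rangle$, then $b=a^{j}c^{n}$ for some $j\in\ZZ$ and $c\in k^{*}$, so $(\alpha^{j}c)^{n}=b$ exhibits an $n$-th root of $b$ inside $L_{a}$; as any two $n$-th roots of $b$ differ by an element of $\mu_{n}\subseteq k$, we get $k(\sqrt[n]{b})\subseteq L_{a}$, and by symmetry equality.

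For the forward implication, assume $L_{a}=L_{b}=:L$ and write $G=\mathrm{Gal}(L/k)$, cyclic of order $m$. Both Kummer characters $\chi_{a},\chi_{b}\colon G\to\mu_{n}$ are injective, and $\Hom(G,\mu_{n})$ is itself cyclic of order $m$ (the image of a generator of $G$ must lie in the order-$m$ subgroup of $\mu_{n}$); an injective character is precisely a generator, so $\chi_{b}=\chi_{a}^{\,j}$ for some $j$ prime to $m$. Writing $\beta:=\sqrt[n]{b}$, the relation $\sigma(\beta)/\beta=\bigl(\sigma(\alpha)/\alpha\bigr)^{j}=\sigma(\alpha^{j})/\alpha^{j}$ for all $\sigma\in G$ forces $\beta\alpha^{-j}$ to be $G$-invariant, hence $\beta=c\,\alpha^{j}$ with $c\in k^{*}$ and $b=c^{n}a^{j}$. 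Thus $[b]=[a]^{j}\in\langle[a]\rangle$, and since $\gcd(j,m)=1$ (equivalently, comparing orders via the degree formula), $\langle[a]\rangle=\langle[b]\rangle$.

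The one genuinely non-formal input is the degree formula $[L_{a}:k]=\mathrm{ord}([a])$, i.e. the injectivity of the Kummer correspondence; everything else is bookkeeping with cyclic groups and their character groups. I expect this to be the main obstacle, and the cleanest way to secure it is the computation above that $\alpha^{m}\in k\iff a^{m}\in k^{*n}$, which rests only on the splitting of $x^{n}-a$ and on $\mu_{n}\subseteq k$. Alternatively one could invoke the non-degeneracy of the Kummer pairing $\langle[a]\rangle\times G\to\mu_{n}$ (equivalently Hilbert 90), but for a single generator the elementary computation suffices, and I would prefer it here.
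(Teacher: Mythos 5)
Your proof is correct. There is, however, no in-paper argument to compare against: the paper states this proposition as a quotation of Lang's \emph{Algebra} (the citation {[Lan02, Theorem 8.2]}), i.e.\ the general Kummer correspondence between subgroups of $k^{*}/k^{*n}$ and abelian extensions of $k$ of exponent dividing $n$, and gives no proof. What you have supplied is a self-contained proof of exactly the rank-one instance the paper needs. Your route --- the Kummer character $\chi_{a}(\sigma)=\sigma(\alpha)/\alpha$, the computation $\alpha^{m}\in k \iff a^{m}\in k^{*n}$ yielding the degree formula $[k(\sqrt[n]{a}):k]=\mathrm{ord}([a])$, and the observation that two injective characters of a cyclic group of order $m$ into $\mu_{n}$ differ by an exponent prime to $m$ --- is the standard proof of that correspondence specialized to cyclic subgroups. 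It buys self-containedness and avoids the full machinery (in particular the non-degeneracy of the Kummer pairing, or Hilbert 90, needed for the general bijection), at the cost of bookkeeping that the citation makes unnecessary. All the steps check out: Galoisness of $k(\sqrt[n]{a})/k$ uses $\mu_{n}\subseteq k$ and $p\nmid n$; injectivity of $\chi_{b}$ uses $L_{b}=L$; and $\Hom(G,\mu_{n})\cong\mu_{m}$ because $m=\lvert G\rvert$ divides $n$, as $G$ embeds in $\mu_{n}$. One cosmetic point: you should state the hypothesis as $a,b\in k^{*}$, which the paper's statement leaves implicit (for $a=0$ the class $[a]$ does not lie in $k^{*}/k^{*n}$ and $x^{n}-a$ is inseparable).
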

Now, we can show the first main theorem in this section as follows.

\begin{theorem}\label{Theorem for field}
Assume $\Br(k)=0$. Let $\cA, \cB$ be two Azumaya algebras on  $\mathbf{B}\mu_{n,k}$. Then $\cA$ is Morita equivalent to $\cB$ if and only $[\cA]$ and $[\cB]$ generate the same subgroup in $\Br(\mathbf{B}\mu_{n,k})$.
\end{theorem}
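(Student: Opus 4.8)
The plan is to reduce both implications simultaneously to a commutative computation: first I identify $\Coh(\mathbf{B}\mu_{n,k},\cA)$ with the module category of the étale algebra $k[x]/(x^{n}-a)$, and then I invoke Kummer theory. As a preliminary normalization, note that $\mathbf{B}\mu_{n,k}$ is a smooth, separated, generically tame Deligne--Mumford stack with quasi-projective coarse space, so Proposition~\ref{two Brauer are same} gives $\Br(\mathbf{B}\mu_{n,k})=\HH^{2}(\mathbf{B}\mu_{n,k},\Gm)$; combined with Lemma~\ref{same cohomology class} and Proposition~\ref{equivalence between twisted sheaf and Azumaya algebra}, this shows that $\Coh(\mathbf{B}\mu_{n,k},\cA)$ depends, up to $k$-linear equivalence, only on the class $[\cA]\in\Br(\mathbf{B}\mu_{n,k})$. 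Hence I may replace $\cA$ and $\cB$ by the standard representatives $M_{n,B}(k)$ and $M_{n,B'}(k)$, where $B,B'$ are the matrices \eqref{matrix B} with $B^{n}=a\operatorname{Id}$ and $B'^{n}=b\operatorname{Id}$, so that $\psi([\cA])=[a]$ and $\psi([\cB])=[b]$ under the isomorphism $\psi$ of Proposition~\ref{important propersition}. Since $\psi$ is an isomorphism, the condition that $[\cA]$ and $[\cB]$ generate the same subgroup of $\Br(\mathbf{B}\mu_{n,k})$ is equivalent to $[a]$ and $[b]$ generating the same subgroup of $k^{*}/k^{*n}$.

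The central claim is the $k$-linear identification
\begin{equation}\label{starcat}
\Coh(\mathbf{B}\mu_{n,k},M_{n,B}(k))\;\cong\;\Coh\!\left(\operatorname{Spec} k,\ k[x]/(x^{n}-a)\right),
\end{equation}
and its analogue for $b$. Granting \eqref{starcat}, the theorem follows at once. If $[a],[b]$ generate the same subgroup, then by Proposition~\ref{Kummer theory} $k(\sqrt[n]{a})=k(\sqrt[n]{b})$, so by Lemma~\ref{importan corollary} $k[x]/(x^{n}-a)\cong k[x]/(x^{n}-b)$ as $k$-algebras, whence the two categories in \eqref{starcat} agree and $\cA,\cB$ are Morita equivalent. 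Conversely, if $\cA,\cB$ are Morita equivalent, then \eqref{starcat} yields a $k$-linear equivalence $\Coh(\operatorname{Spec} k,k[x]/(x^{n}-a))\cong\Coh(\operatorname{Spec} k,k[x]/(x^{n}-b))$; as both algebras are commutative, such an equivalence forces a $k$-algebra isomorphism $k[x]/(x^{n}-a)\cong k[x]/(x^{n}-b)$ (each algebra is recovered, $k$-linearly, as the center of its module category; cf.\ the affine case of Theorem~\ref{reconsctruction theorem}), and Lemma~\ref{importan corollary} together with Proposition~\ref{Kummer theory} returns the subgroup condition.

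It remains to establish \eqref{starcat}, which I expect to be the main obstacle. By Lemma~\ref{pushforward of algebra} we have $p_{*}M_{n,B}(k)\cong k[x]/(x^{n}-a)$, so \eqref{starcat} is equivalent to the assertion that $\widetilde{p}^{*}\colon\Coh(\operatorname{Spec} k,\,p_{*}M_{n,B}(k))\to\Coh(\mathbf{B}\mu_{n,k},M_{n,B}(k))$ is an equivalence. This does not follow directly from Lemma~\ref{equivalence lemma}, since for $a\notin k^{*n}$ the algebra $M_{n,B}(k)$ has nontrivial Brauer class and is therefore not of the form $\mathcal{E}nd(F)$ over $k$. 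The idea is to pass to $k_{1}:=k(\sqrt[n]{a})$: by Lemma~\ref{eigenvector}, over $k_{1}$ the matrix $B$ is conjugate to $\sqrt[n]{a}\cdot\operatorname{diag}(1,\zeta,\dots,\zeta^{n-1})$, so by Lemma~\ref{isomorphism class of matrix} $M_{n,B}(k_{1})\cong\mathcal{E}nd\!\left(\bigoplus_{i=0}^{n-1}\rho_{i}\right)$, an endomorphism algebra all of whose $\rho$-components are nonzero. Lemma~\ref{equivalence lemma} then applies over $k_{1}$ and shows that $\widetilde{p}_{1}^{*}$ is an equivalence.

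To descend this to $k$, recall that $\widetilde{p}^{*}$ is always fully faithful (Proposition~\ref{fully faithful}), so it suffices to prove that the counit $\widetilde{p}^{*}\widetilde{p}_{*}H\to H$ is an isomorphism for every $H$. Since $\widetilde{p}_{*}$ and $\widetilde{p}^{*}$ are compatible with the faithfully flat base change $k\to k_{1}$ (Lemmas~\ref{fiber product} and \ref{flat equal}), this counit becomes an isomorphism after $-\otimes_{k}k_{1}$, and a morphism of coherent sheaves that is an isomorphism after faithfully flat base change is already an isomorphism. Thus $\widetilde{p}^{*}$ is an equivalence over $k$, establishing \eqref{starcat}. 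The technical heart of the whole argument is exactly this descent step — verifying the base-change compatibility of the counit and invoking faithful flatness — together with the diagonalization bookkeeping over $k_{1}$ that turns $M_{n,B}$ into an endomorphism algebra.
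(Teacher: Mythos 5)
Your proposal is correct and follows essentially the same route as the paper's own proof: reduce to the standard representative $M_{n,B}(k)$, diagonalize $B$ over $k_{1}=k(\sqrt[n]{a})$ so that $M_{n,B}(k_{1})\cong\mathcal{E}nd(\rho_{0}\oplus\cdots\oplus\rho_{n-1})$ and Lemma~\ref{equivalence lemma} applies, descend the equivalence $\widetilde{p}^{*}$ along the faithfully flat base change $k\to k_{1}$ via Lemmas~\ref{fiber product} and~\ref{flat equal} (the paper packages your ``counit is an isomorphism after faithfully flat pullback'' step as a cone-vanishing argument, which is the same thing), and finish with Lemma~\ref{pushforward of algebra}, Gabriel's theorem, Lemma~\ref{importan corollary}, and Kummer theory. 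Your added remark that the commutative algebra is recovered $k$-linearly as the center of its module category is a slightly more careful treatment of the $k$-linearity point than the paper's appeal to Theorem~\ref{reconsctruction theorem}, but it does not change the argument.
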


\begin{proof}
Suppose $\psi([\cA])=[a]$, where $\psi$ is the isomorphism defined in Proposition \ref{important propersition} and $a\in k^{*}$. Let $B$ be the $n\times n$-matrix described in \ref{matrix B}.
Since $B^{n}=a\operatorname{Id}$, by Proposition \ref{important propersition}, we have  $[\cA]=[M_{n,B}(k)]$ in $\Br(\mathbf{B}\mu_{n,k})$. Let $k_{1}:=k(\sqrt[n]{a})$. Then $k_{1}$ is a Galois extension of $k$. 
By Lemma \ref{eigenvector}, over $k_{1}$, the eigenvalues the matrix $a^{-\frac{1}{n}}B$ are  $1, \zeta,...,\zeta^{n-1}$. So over $k_{1}$, $a^{-\frac{1}{n}}B$ is similar to the matrix $B_{1}$, $$B_{1}=\begin{pmatrix}
1 & 0 & 0 & ... & 0\\
0 & \zeta & 0 & ... & 0 \\
... & ... & ... & ... & ... \\
0 & 0 & ... & \zeta^{n-2} & 0  \\
0 & 0 & 0 & ... & \zeta^{n-1}
\end{pmatrix}.$$
By Lemma \ref{isomorphism class of matrix}, we have $$M_{n,B}(k_{1})\cong M_{n, a^{-1/n}B}(k_{1})\cong M_{n,B_{1}}(k_{1}).$$

On the other hand, consider the vector bundle $\Xi:=\rho_{0}\oplus ... \oplus \rho_{n-1}$ on $\mathbf{B}\mu_{n,k_{1}}$. $\Xi$ is a $\mu_{n}$-equivariant vector space over $k_{1}$. It induces a group homomorphism $\rho: \mu_{n} \to \operatorname{GL}_{n}(k_{1})$, where $\rho(\zeta)=B_{1}$. Thus, by Proposition \ref{trivial azumaya}, we have $$M_{n,B}(k_{1})\cong M_{n,B_{1}}(k_{1})\cong \mathcal{E}nd(\Xi).$$ By Lemma \ref{fiber product} and \ref{flat equal}, we have the  Cartesian diagram
\begin{center}
\begin{tikzcd}
(\mathbf{B}\mu_{n,k_{1}},M_{n,B}(k_{1})) \arrow[r,"\widetilde{q_{1}}"] \arrow[d,"\widetilde{p_{1}}"] & (\mathbf{B}\mu_{n,k}, M_{n,B}(k)) \arrow[d,"\widetilde{p}"] \\
(\operatorname{Spec}k_{1}, p_{1*}M_{n,B}(k_{1}) ) \arrow[r,"\widetilde{q}"]           & (\operatorname{Spec}k, p_{*}M_{n,B}(k)).         
\end{tikzcd}    
\end{center}
Moreover $\widetilde{p_{1}}_{*}\widetilde{q_{1}}^{*}\xrightarrow{\sim}\widetilde{q}^{*}\widetilde{p}_{*}$. By Lemma \ref{equivalence lemma}, we have $\widetilde{p_{1}}^{*}\widetilde{p_{1}}_{*}\xrightarrow{\sim} \operatorname{id}   $ and $\widetilde{p_{1}}^{*}$ defines an equivalence:
$$\widetilde{p_{1}}^{*}: \Coh( \operatorname{Spec}k_{1}, p_{1*}M_{n,B}(k_{1}))\xrightarrow{\sim} \Coh(\mathbf{B}\mu_{n,k_{1}}, M_{n,B}(k_{1})).  $$
By Proposition \ref{fully faithful}, we have a fully faithful functor $$\widetilde{p}^{*}: \Coh( \operatorname{Spec}k, p_{*}M_{n,B}(k))\to \Coh(\mathbf{B}\mu_{n,k}, M_{n,B}(k)).$$
Let $H\in \Coh(\mathbf{B}\mu_{n,k}, M_{n,B}(k))$. Then we have a distinguished triangle in $D^{b}(\mathbf{B}\mu_{n,k}, M_{n,B}(k))$:
$$ \widetilde{p}^{*}\widetilde{p}_{*} H \to H \to G^{\bullet} \to\widetilde{p}^{*}\widetilde{p}_{*}H[1].$$
Applying $\widetilde{q_{1}}^{*}$ to it, we get the following short exact sequence: 
$$ \widetilde{q_{1}}^{*}\widetilde{p}^{*}\widetilde{p}_{*} H \to \widetilde{q_{1}}^{*}H \to \widetilde{q_{1}}^{*}G^{\bullet} \to\widetilde{p}^{*}\widetilde{p}_{*}H[1].$$
Since $\widetilde{q_{1}}^{*}\widetilde{p}^{*}\widetilde{p}_{*} H\cong \widetilde{p_{1}}^{*}\widetilde{q}^{*}\widetilde{p}_{*} H \cong\widetilde{p_{1}}^{*}\widetilde{p_{1}}_{*}\widetilde{q_{1}}^{*} H$ and $\widetilde{p_{1}}^{*}\widetilde{p_{1}}_{*}\xrightarrow{\sim} \operatorname{id}$, the first arrow in the  short exact sequence above is an isomorphism.  Thus,  $\widetilde{q_{1}}^{*}G^{\bullet}=0$, which implies $G^{\bullet}=0$. Hence $ \widetilde{p}^{*}\widetilde{p}_{*} H \xrightarrow{\sim} H $ and then the functor $\widetilde{p}^{*}$ an equivalence. By Lemma \ref{pushforward of algebra}, we have $$ \Coh(\mathbf{B}\mu_{n,k}, M_{n,B}(k))\cong\Coh( \operatorname{Spec}k, p_{*}M_{n,B}(k))\cong \Coh(k[x]/(x^{n}-a)).$$
Thus, we have 
$$\Coh(\mathbf{B}\mu_{n,k}, \cA)\cong\Coh(\mathbf{B}\mu_{n,k}, M_{n,B}(k))\cong \Coh(k[x]/(x^{n}-a)). $$
Let $\cB$ be another Azumaya algebra on $\mathbf{B}\mu_{n,k}$. Suppose $\psi([\cB])=[b]$. Then we have $$\Coh(\mathbf{B}\mu_{n,k}, \cB)\cong \Coh(k[x]/(x^{n}-b)).$$
By Theorem \ref{reconsctruction theorem}, and Corollary \ref{importan corollary}, we know  
$$ \cA \  \text{is Morita equivalent to} \ \cB \Longleftrightarrow k[x]/(x^{n}-a)\cong k[x]/(x^{n}-b) \Longleftrightarrow k(\sqrt[n]{a})=k(\sqrt[n]{b}).$$
So the theorem follows from Proposition \ref{Kummer theory}.

\end{proof}
As a corollary, we can show that Căldăraru's Conjecture holds in this case.

\begin{corollary}\label{hold calda}
Assume $\Br(k)=0$. Then   Căldăraru's Conjecture holds for $\mathbf{B}\mu_{n,k}$. Namely, two Azumaya algebras $\cA$ and $\cB$ on $\mathbf{B}\mu_{n,k}$ are Morita equivalent if and only if there exists an automorphism $\varphi: \mathbf{B}\mu_{n,k}\to \mathbf{B}\mu_{n,k} $ such that $[\cA]=[\varphi^{*}\cB]$ in $\Br(\mathbf{B}\mu_{n,k})$.
\end{corollary}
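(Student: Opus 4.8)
The plan is to reduce everything to Theorem \ref{Theorem for field}, which already identifies Morita equivalence of $\cA$ and $\cB$ with the condition that $[\cA]$ and $[\cB]$ generate the same cyclic subgroup of $\Br(\mathbf{B}\mu_{n,k})$. Transporting this along the isomorphism $\psi$ of Proposition \ref{important propersition}, it remains to show that for $[a]=\psi([\cA])$ and $[b]=\psi([\cB])$ in $k^{*}/k^{*n}$, the elements $[a]$ and $[b]$ generate the same subgroup if and only if there is an automorphism $\varphi$ of $\mathbf{B}\mu_{n,k}$ with $[\cA]=[\varphi^{*}\cB]$.

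For the ``if'' direction I would need no information about the automorphism group. For an arbitrary automorphism $\varphi$ over $k$, the pullback $\varphi^{*}$ sends $\cB$-modules to $\varphi^{*}\cB$-modules and is a $k$-linear equivalence $\Coh(\mathbf{B}\mu_{n,k},\cB)\xrightarrow{\sim}\Coh(\mathbf{B}\mu_{n,k},\varphi^{*}\cB)$; combined with the hypothesis $[\cA]=[\varphi^{*}\cB]$ and Lemma \ref{same cohomology class} (Brauer equivalence implies Morita equivalence), this gives $\Coh(\mathbf{B}\mu_{n,k},\varphi^{*}\cB)\cong\Coh(\mathbf{B}\mu_{n,k},\cA)$, so $\cA$ and $\cB$ are Morita equivalent, whence the subgroup condition follows from Theorem \ref{Theorem for field}.

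For the converse I would first compute how automorphisms act on $\Br(\mathbf{B}\mu_{n,k})$. Each integer $m$ with $\gcd(m,n)=1$ gives a group automorphism $\sigma_{m}:\mu_{n}\to\mu_{n}$, $\zeta\mapsto\zeta^{m}$, hence an automorphism $\varphi_{m}$ of $\mathbf{B}\mu_{n,k}$. Writing $[\cB]=[M_{n,B}(k)]$ with $B$ the matrix (\ref{matrix B}) satisfying $B^{n}=b\,\operatorname{Id}$, the equivariant-module description of Propositions \ref{trivial azumaya} and \ref{important propersition} shows that pulling back along $\varphi_{m}$ replaces the action $\zeta\cdot M=B^{-1}MB$ by $\zeta\cdot M=B^{-m}MB^{m}$, that is, it sends $M_{n,B}(k)$ to $M_{n,B^{m}}(k)$. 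Since $(B^{m})^{n}=(B^{n})^{m}=b^{m}\operatorname{Id}$, I obtain $\psi([\varphi_{m}^{*}\cB])=[b^{m}]$, so $\varphi_{m}^{*}$ acts on $k^{*}/k^{*n}$ by the $m$-th power map.

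Now assume the subgroup condition holds, so that $[a]=[b]^{j}$ for some $j$ with $\gcd(j,d)=1$, where $d=\operatorname{ord}([b])$ divides $n$. The main obstacle is the arithmetic lifting step: I must promote the residue $j\bmod d$ to an integer $m$ with $\gcd(m,n)=1$, so that $\sigma_{m}$ is genuinely an automorphism of $\mu_{n}$ rather than merely an endomorphism. This is arranged by the Chinese Remainder Theorem: choose $m\equiv j\pmod{d}$ and $m\equiv 1\pmod{p}$ for every prime $p\mid n$ with $p\nmid d$. Then no prime dividing $n$ divides $m$ (for $p\mid d$ use $\gcd(j,d)=1$, and for $p\mid n,\ p\nmid d$ use $m\equiv 1$), so $\gcd(m,n)=1$, while $[b]^{m}=[b]^{j}=[a]$ because $[b]^{d}=1$. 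Hence $\varphi_{m}$ is an automorphism of $\mathbf{B}\mu_{n,k}$ with $[\varphi_{m}^{*}\cB]=[b^{m}]=[a]=[\cA]$, which is exactly the desired conclusion.
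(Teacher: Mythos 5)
Your proof is correct and follows essentially the same route as the paper: both directions are reduced to Theorem \ref{Theorem for field} via the automorphisms $\varphi_{m}$ induced by $\zeta\mapsto\zeta^{m}$, $\gcd(m,n)=1$, whose pullback acts on $\Br(\mathbf{B}\mu_{n,k})\cong k^{*}/k^{*n}$ as the $m$-th power map. In fact your write-up is slightly more complete than the paper's, which says only that ``the corollary follows from Theorem \ref{Theorem for field}'' and omits the arithmetic lifting step you carry out via the Chinese Remainder Theorem (promoting $j$ coprime to $d=\operatorname{ord}([b])$ to an exponent $m$ coprime to $n$), a detail that is genuinely needed for the converse direction.
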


\begin{proof}
    One direction is clear. For the other direction, note that any automorphism $\varphi_{i}: \mu_{n}\to \mu_{n}: \zeta \to \zeta^{i}  $, where $(i,n)=1$,  induces an automorphism $\varphi_{i}: \mathbf{B}\mu_{n,k}\to \mathbf{B}\mu_{n,k}$. Then for any Azumaya algebra $M_{n,B}(k)$, $\varphi^{*}(M_{n,B}(k))=M_{n, B^{i} }(k)$. Thus $$ \varphi_{i}^{*}: \Br(\mathbf{B}\mu_{n,k})=k^{*}/k^{*n} \to \Br(\mathbf{B}\mu_{n,k})=k^{*}/k^{*n},  [a]\to [a^{i}].$$
    Then the corollary follows from Theorem \ref{Theorem for field}.
\end{proof}

By theorem above, we can construct two Azumaya algebras $\cA, \cB$ on  $\mathbf{B}\mu_{n,k}$ that are Morita equivalent, but $[\cA]\not =[\cB]$ in $\Br(\mathbf{B}\mu_{n,k})$.
\begin{example}\label{Example I}
    Let $k:=\CC(x)$ and $n=4$. Then by Tsen's theorem, $\Br(k)=0$. Let $B, B_{1}$ be the following matrices:

$$B=\begin{pmatrix}
0 & 0 & 0 &  x\\
1 & 0 & 0 &   0 \\
0 & 1 & 0 &  0 \\
 0 & 0 & 1 &  0
\end{pmatrix}, \quad 
B_{1}=\begin{pmatrix}
0 & 0 & 0 &  x^{3}\\
1 & 0 & 0 &   0 \\
0 & 1 & 0 &  0 \\
0 & 0 & 1 &  0
\end{pmatrix},
$$    
then $B^{4}=x\operatorname{Id}$ and $B_{1}^{4}=x^{3}\operatorname{Id}$. Note  $x$ and $x^{3}$ are not the same in the group $\CC(x)^{*}/\CC(x)^{*4}$ but generate the same group of it. By Proposition \ref{important propersition} and Theorem \ref{Theorem for field}, we know $[M_{4,B}(k)]\not =[M_{4,B_{1}}(k)]$ in $\Br(\mathbf{B}\mu_{4, k} )$, but $M_{4,B}(k)$ is Morita equivalent to $M_{4,B_{1}}(k)$.
\end{example}

Now, we consider the general case. To proceed, we need the following twisted version of Gabriel's Theorem.
\begin{lemma}[{\cite[Theorem 1.1]{Ant16}}]\label{twisted theorem antieau}
Let $X$ and $Y$ be Noetherian schemes over $k$, and let $\cA$ and $\cB$ be Azumaya algebras on $X$ and $Y$, respectively. Then 
\begin{center}
$\operatorname{Coh}(X,\cA)\cong \operatorname{Coh}(Y,\cB) $ as $k$-linear abelian categories $  \Longleftrightarrow$ 
there exists an isomorphism $f: X\to Y$ over $k$, such that $[f^{*}\cB]=[\cA]$ in $\Br(X)$.
\end{center} 
\end{lemma}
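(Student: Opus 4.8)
The plan is to establish the two implications separately, the reverse direction being the substantial one. For the ``if'' direction, suppose $f\colon X\to Y$ is an isomorphism over $k$ with $[f^{*}\cB]=[\cA]$ in $\Br(X)$. Since $f$ is an isomorphism, pullback gives an equivalence $f^{*}\colon\Coh(Y,\cB)\xrightarrow{\sim}\Coh(X,f^{*}\cB)$. Because $[f^{*}\cB]=[\cA]$, the two Azumaya algebras are Brauer equivalent, so there are vector bundles $E,F$ on $X$ with $f^{*}\cB\otimes_{\OO_{X}}\mathcal{E}nd(E)\cong\cA\otimes_{\OO_{X}}\mathcal{E}nd(F)$; applying the Fundamental Theorem of Morita Theory (Theorem \ref{fundamental theory}) to the associated $\OO_{X}$-algebra progenerators then yields $\Coh(X,f^{*}\cB)\cong\Coh(X,\cA)$. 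Composing the two equivalences produces the desired $k$-linear equivalence.

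For the ``only if'' direction, suppose $\Phi\colon\Coh(X,\cA)\xrightarrow{\sim}\Coh(Y,\cB)$ is a $k$-linear equivalence of abelian categories. I would first reconstruct the underlying scheme. Since $X$ and $Y$ are Noetherian, passing to ind-completions extends $\Phi$ to an equivalence $\operatorname{QCoh}(X,\cA)\cong\operatorname{QCoh}(Y,\cB)$, and the strategy is to feed this into Gabriel--Rosenberg reconstruction (the twisted analogue of Theorem \ref{reconsctruction theorem}). The crucial point is that the reconstruction is insensitive to the Azumaya twist: on an affine open $U=\Spec R$ where $\cA|_{U}$ is a matrix algebra, $\operatorname{QCoh}(U,\cA|_{U})$ is Morita equivalent to $\operatorname{QCoh}(U)$, so the spectral space of the category and the reconstructed structure sheaf --- read off from the sheafified center, which is $\OO_{X}$ --- agree with those of $X$. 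I would conclude that $\Phi$ induces an isomorphism of locally ringed spaces $f\colon X\xrightarrow{\sim}Y$, and that after composing with $f^{*}$ the equivalence may be arranged to be $\OO_{X}$-linear, i.e.\ compatible with the action of the center.

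It then remains to compare Brauer classes. Replacing $\cB$ by $f^{*}\cB$, the problem reduces to showing that an $\OO_{X}$-linear equivalence $\Coh(X,\cA)\cong\Coh(X,f^{*}\cB)$ forces $[\cA]=[f^{*}\cB]$ in $\Br(X)$. This is a sheaf-theoretic Morita statement: such an equivalence is represented by an $(\cA,f^{*}\cB)$-bimodule that is \'etale locally a progenerator, and its existence is precisely the assertion that $\cA$ and $f^{*}\cB$ are Morita equivalent as $\OO_{X}$-algebras, hence carry the same Brauer class (the local version being the implication recorded, over a field, in Proposition \ref{Azu--Br}).

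I expect the main obstacle to be the reconstruction step. Concretely, one must verify that the Gabriel--Rosenberg machinery returns $X$ itself rather than some twisted avatar, which requires gluing the local Morita identifications $\operatorname{QCoh}(U,\cA|_{U})\sim\operatorname{QCoh}(U)$ coherently over a non-affine $X$; and one must upgrade a merely $k$-linear equivalence to an $\OO_{X}$-linear one by tracking the center through the reconstruction. This passage between $k$-linearity and center-linearity, together with the descent of the bimodule data, is where the genuine work lies, whereas the final Brauer-class comparison is then formal.
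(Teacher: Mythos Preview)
The paper does not supply its own proof of this lemma; it is quoted from Antieau's paper (the reference marked \texttt{Ant16}, Theorem~1.1) and used as a black box. Your sketch is a faithful outline of the argument in that source: one reconstructs the underlying scheme from the abelian category via a Gabriel--Rosenberg type spectrum (the key input being that the center of $\Coh(X,\cA)$ is $\OO_{X}$ because $\cA$ is Azumaya), and then deduces the Brauer-class equality from a sheaf-level Morita comparison once the equivalence has been upgraded to one over $\OO_{X}$. So your approach is correct and aligns with the cited reference; there is simply no in-paper proof to compare against.
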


By Proposition \ref{general non trivial}, we have the following isomorphism
$$\psi: k^{*}/k^{*n}\oplus \Br(k) \xrightarrow{\sim} \Br(\mathbf{B}\mu_{n,k}): ([a], [\cA])\mapsto [M_{n,B}(k)\otimes p^{*}\cA], $$
where $B$ is the matrix described in \ref{matrix B}. Let $X_{a}:=\operatorname{Spec}(k[x]/(x^{n}-a))$ and $q_{a}: X_{a}\to \operatorname{Spec} k$.
Similarly, let \( X_b := \operatorname{Spec}(k[x]/(x^n - b)) \) and \( q_b: X_b \to \operatorname{Spec} k \). We are now ready to state the main theorem.
\begin{theorem}\label{general theorem}
    Let $\cA_{a}$ and $\cB_{b}$ be two Azumaya algebras over $\mathbf{B}\mu_{n,k}$, such that $[\cA_{a}]=\psi([a],[\cA])$ and $[\cB_{b}]=\psi([b],[\cB])$ in $\Br(\mathbf{B}\mu_{n,k})$. Then $\cA_{a}$ is Morita equivalent to $\cB_{b}$ if and only if there exists an isomorphism  f: $X_{a}\to X_{b}$ such that $[q_{a}^{*}\cA]=[f^{*}q^{*}_{b}\cB]$ in $\Br(X_{a})$.
\end{theorem}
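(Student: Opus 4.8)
The plan is to reduce the statement, via the twisted Gabriel theorem (Lemma~\ref{twisted theorem antieau}), to a computation of the module categories $\Coh(\mathbf{B}\mu_{n,k},\cA_{a})$ and $\Coh(\mathbf{B}\mu_{n,k},\cB_{b})$ in terms of the coarse space. Concretely, I would show
$$\Coh(\mathbf{B}\mu_{n,k},\cA_{a})\;\cong\;\Coh(X_{a},q_{a}^{*}\cA),\qquad \Coh(\mathbf{B}\mu_{n,k},\cB_{b})\;\cong\;\Coh(X_{b},q_{b}^{*}\cB).$$
Granting these, $\cA_{a}$ is Morita equivalent to $\cB_{b}$ if and only if $\Coh(X_{a},q_{a}^{*}\cA)\cong\Coh(X_{b},q_{b}^{*}\cB)$, and Lemma~\ref{twisted theorem antieau}, applied to the Noetherian $k$-schemes $X_{a},X_{b}$ and the Azumaya algebras $q_{a}^{*}\cA,q_{b}^{*}\cB$, rewrites this as the existence of an isomorphism $f\colon X_{a}\xrightarrow{\sim}X_{b}$ over $k$ with $[f^{*}q_{b}^{*}\cB]=[q_{a}^{*}\cA]$ in $\Br(X_{a})$, which is exactly the assertion.

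For the computation, I first reduce to a convenient representative. Since $[\cA_{a}]=\psi([a],[\cA])=[M_{n,B}(k)\otimes p^{*}\cA]$ in $\Br(\mathbf{B}\mu_{n,k})$ by Proposition~\ref{general non trivial}, Brauer equivalence yields a Morita equivalence (Proposition~\ref{same cohomology class}), so I may take $\cA_{a}=M_{n,B}(k)\otimes p^{*}\cA$. Writing $R_{a}:=k[x]/(x^{n}-a)$, the projection formula (Lemma~\ref{projection formula}) together with Lemma~\ref{pushforward of algebra} gives an isomorphism of $k$-algebras $p_{*}\cA_{a}\cong p_{*}M_{n,B}(k)\otimes_{k}\cA\cong R_{a}\otimes_{k}\cA=q_{a*}(q_{a}^{*}\cA)$; since $q_{a}\colon X_{a}\to\operatorname{Spec}k$ is finite and $X_{a}$ is affine, this identifies $\Coh(\operatorname{Spec}k,p_{*}\cA_{a})$ with $\Coh(X_{a},q_{a}^{*}\cA)$. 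Thus it suffices to prove that $\widetilde{p}^{*}\colon\Coh(\operatorname{Spec}k,p_{*}\cA_{a})\to\Coh(\mathbf{B}\mu_{n,k},\cA_{a})$ is an equivalence.

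The main work is this last equivalence, and it is precisely where the nontrivial Brauer class $[\cA]$ creates difficulty: over $k$ the algebra $M_{n,B}(k)$ is not an endomorphism algebra (its class $[a]$ need not vanish in $k^{*}/k^{*n}$), so Lemma~\ref{equivalence lemma} does not apply directly. I would first prove a twisted refinement of Lemma~\ref{equivalence lemma}: for an Azumaya algebra $\cA$ over $X=\operatorname{Spec}k$ and a vector bundle $F=F_{0}\rho_{0}\oplus\cdots\oplus F_{n-1}\rho_{n-1}$ with all $F_{i}\neq 0$, the functor $\widetilde{p}^{*}\colon\Coh(X,p_{*}(\mathcal{E}nd(F)\otimes p^{*}\cA))\to\Coh(\mathbf{B}\mu_{n,k},\mathcal{E}nd(F)\otimes p^{*}\cA)$ is an equivalence. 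The argument mirrors Lemma~\ref{equivalence lemma}: full faithfulness holds by Proposition~\ref{fully faithful}; for essential surjectivity one forms the cone $G^{\bullet}$ of $\widetilde{p}^{*}\widetilde{p}_{*}H\to H$, observes $\widetilde{p}_{*}G^{\bullet}=0$, and uses the Morita equivalence $-\otimes F^{\vee}\colon\Coh(\mathbf{B}\mu_{n,k},p^{*}\cA)\to\Coh(\mathbf{B}\mu_{n,k},\mathcal{E}nd(F)\otimes p^{*}\cA)$ to write $G^{\bullet}\cong G'^{\bullet}\otimes F^{\vee}$. The key compatibility to check is that Lemma~\ref{decomposition} upgrades to an orthogonal decomposition $\Coh(\mathbf{B}\mu_{n,k},p^{*}\cA)\cong\bigoplus_{i=0}^{n-1}\Coh(X,\cA)\rho_{i}$, valid because $p^{*}\cA$ has weight zero; with this in hand the weight-counting argument of Lemma~\ref{equivalence lemma} shows $\widetilde{p}_{*}G^{\bullet}=G^{\bullet}_{0}\neq 0$ unless $G'^{\bullet}=0$, forcing $G^{\bullet}=0$.

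To descend to $k$ itself, I base change to $k_{1}:=k(\sqrt[n]{a})$, where by Lemma~\ref{eigenvector} and Proposition~\ref{trivial azumaya} one has $\cA_{a}\otimes_{k}k_{1}\cong M_{n,B}(k_{1})\otimes p^{*}\cA_{k_{1}}\cong\mathcal{E}nd(\Xi)\otimes p^{*}\cA_{k_{1}}$ with $\Xi=\rho_{0}\oplus\cdots\oplus\rho_{n-1}$; the twisted lemma above then applies over $k_{1}$ and makes $\widetilde{p_{1}}^{*}$ an equivalence. Exactly as in the proof of Theorem~\ref{Theorem for field}, I would use the Cartesian diagram of Lemma~\ref{fiber product} and the base-change isomorphism $\widetilde{p_{1}}_{*}\widetilde{q_{1}}^{*}\xrightarrow{\sim}\widetilde{q}^{*}\widetilde{p}_{*}$ (Lemma~\ref{flat equal}) to show that the cone of $\widetilde{p}^{*}\widetilde{p}_{*}H\to H$ vanishes after the faithfully flat (hence conservative) functor $\widetilde{q_{1}}^{*}$, and is therefore already zero. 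This proves $\widetilde{p}^{*}$ is an equivalence over $k$, completes the two displayed computations, and lets Lemma~\ref{twisted theorem antieau} finish the proof. I expect the twisted equivalence lemma and its descent to be the main obstacle: one must verify that the pulled-back factor $p^{*}\cA$ genuinely respects the weight decomposition and that faithfully flat descent along $k_{1}/k$ survives passage to the noncommutative varieties $(\mathbf{B}\mu_{n,k},\cA_{a})$.
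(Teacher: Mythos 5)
Your proposal is correct, and its skeleton is the same as the paper's: replace $\cA_{a}$ by the representative $M_{n,B}(k)\otimes p^{*}\cA$, identify $p_{*}\cA_{a}\cong k[x]/(x^{n}-a)\otimes\cA$ via the projection formula and Lemma \ref{pushforward of algebra}, prove that $\widetilde{p}^{*}$ is an equivalence onto $\Coh(\mathbf{B}\mu_{n,k},\cA_{a})$, and finish with Lemma \ref{twisted theorem antieau}. Where you genuinely diverge is in how the possibly nontrivial class $[\cA]$ is handled in the middle step. The paper passes to a \emph{second} finite extension $k_{2}\supseteq k_{1}=k(\sqrt[n]{a})$ chosen so that $[\cA\otimes k_{2}]=0$ in $\Br(k_{2})$; over $k_{2}$ the pullback of $M_{n,B}(k)\otimes p^{*}\cA$ becomes an honest endomorphism algebra $\mathcal{E}nd(\Xi\otimes E)$, so the untwisted Lemma \ref{equivalence lemma} applies verbatim, and the equivalence descends to $k$ by the base-change/cone argument of Theorem \ref{Theorem for field}. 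You instead stop at $k_{1}$, where only the class $[a]$ dies, and prove a twisted refinement of Lemma \ref{equivalence lemma} for algebras of the form $\mathcal{E}nd(F)\otimes p^{*}\cA$; this rests on the (valid) observation that $\Coh(\mathbf{B}\mu_{n,k},p^{*}\cA)\cong\bigoplus_{i=0}^{n-1}\Coh(\operatorname{Spec}k,\cA)\rho_{i}$ because $p^{*}\cA$ sits in weight zero, after which the weight-counting in the cone argument goes through with $\cA$-linear coefficients (tensoring a nonzero complex with a nonzero $k$-vector space is faithful). Both routes are sound. The paper's buys economy: no new lemma, at the cost of a larger extension and a longer descent. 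Yours buys a sharper intermediate statement --- your twisted refinement is essentially Lemma \ref{actualmaintheorem}, which the paper only establishes in Section \ref{section 5} when globalizing --- together with a single descent step along $k_{1}/k$. One point worth making explicit in your write-up: the conservativity of $\widetilde{q_{1}}^{*}$ that closes the descent reduces to conservativity of $q_{1}^{*}$ on underlying $\mathcal{O}$-modules, because the algebra is pulled back along $q_{1}$ so that $\widetilde{q_{1}}^{*}$ acts as $q_{1}^{*}$ on underlying complexes; this is what justifies concluding $G^{\bullet}=0$ from $\widetilde{q_{1}}^{*}G^{\bullet}=0$.
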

\begin{proof}
Let $k_{1}:=k(\sqrt[n]{a})$ and $k_{2}$ be a finite field extension of $k_{1}$ such that $[\cA\otimes k_{2}]=0$ in $\Br(k_{2})$.  We have the following Cartesian diagrams:

    \begin{center}

\begin{tikzcd}
\mathbf{B}\mu_{n,k_{2}} \arrow[r,"q_{2}"] \arrow[d, "p_{2}"] & \mathbf{B}\mu_{n,k_{1}} \arrow[r, "q_{1}"] \arrow[d, "p_{1}"] & \mathbf{B}\mu_{n,k} \arrow[d, "p"] \\
\operatorname{Spec}k_{2} \arrow[r, "h"]                & \operatorname{Spec}k_{1} \arrow[r, "q"]                & \operatorname{Spec}k.              
\end{tikzcd}
 \end{center} 

Let $\cA_{1}:= M_{n,B}(k)\otimes p^{*}\cA $ be the Azumaya algebra over $\mathbf{B}\mu_{n,k}$, where $B$ is the matrix described in \ref{matrix B}. Then $[\cA_{a}]=[\cA_{1}]$ in $\Br(\mathbf{B}\mu_{n,k})$.  By the proof in the Theorem \ref{Theorem for field},  $q_{1}^{*}\cA_{1}\cong \mathcal{E}nd(\Xi)\otimes p_{1}^{*}q^{*}\cA $, where $\Xi:=\rho_{0}\oplus ...\oplus \rho_{n-1}$. By the choice of $k_{2}$, $q_{2}^{*}q_{1}^{*}q^{*}\cA\cong \mathcal{E}nd(E)$ for some vector bundle $E$ on $\mathbf{B}\mu_{n,k_{2}}$. Thus, we have $$q_{2}^{*}q_{1}^{*}\cA_{1}\cong \mathcal{E}nd(\Xi)\otimes \mathcal{E}nd(E)\cong \mathcal{E}nd(\Xi\otimes E).  $$
By Lemma \ref{equivalence lemma},  the functor $$\widetilde{p_{2}}^{*}: \Coh (\operatorname{Spec}k_{2},   p_{2*} q_{2}^{*}q_{1}^{*}\cA_{1} ) \to \Coh(\mathbf{B}\mu_{n,k_{2}},q_{2}^{*}q_{1}^{*}\cA_{1}) $$
is an equivalence. By the same argument as in the proof of Theorem \ref{Theorem for field}, the functor $$\widetilde{p}^{*}:
\Coh (\operatorname{Spec}k,   p_{*} \cA_{1} ) \to \Coh(\mathbf{B}\mu_{n,k},\cA_{1})
$$
is an equivalence. Note that $$p_{*}\cA_{1}=p_{*}(M_{n,B}(k)\otimes p^{*}\cA  )\cong p_{*}M_{n,B}(K)\otimes \cA\cong k[x]/(x^{n}-a)\otimes \cA. $$
So  we have $$\Coh(\mathbf{B}\mu_{n,k},\cA_{a})\cong\Coh(\mathbf{B}\mu_{n,k},\cA_{1}) \cong \Coh(\operatorname{Spec} k, p_{*}\cA_{1})\cong   \Coh(X_{a}, q_{a}^{*}\cA ) . $$
Then the theorem follows from Lemma \ref{twisted theorem antieau}.

\end{proof}

\begin{remark}
     Theorem \ref{Theorem for field} does not hold in general cases. There exists two Azumaya algebras on $\mathbf{B}\mu_{n,k}$ that are Morita equivalent, but do not generate the same subgroup in $\Br(\mathbf{B}\mu_{n,k})$.

\end{remark}
\begin{example}\label{example mortia}
   Recall that $\Br(\RR)=\ZZ/2\ZZ=\langle \RR, \mathbb{H} \rangle$, where $\mathbb{H}$ is the quaternion algebra. By Proposition \ref{general non trivial}, we have
    $$\Br(\mathbf{B}\mu_{2,\RR})= \RR^{*}/\RR^{*2} \oplus \Br(\RR)=  \ZZ/2\ZZ \oplus \ZZ/2\ZZ. $$
Let $B$ be the matrix $$ B:=\begin{pmatrix}
0 & -1 \\
1 & 0 
\end{pmatrix}.$$
Let $\cA:= M_{2,B}(\RR) $ and $\cB:=M_{2,B}(\RR)\otimes p^{*}\mathbb{H}$ be two Azumaya algebras on $\mathbf{B}\mu_{2,\RR}$. Note that $p_{*}\cA=\RR[x]/(x^{2}+1)\cong \CC$ and $p_{*}\cB=\RR[x]/(x^{2}+1)\otimes \mathbb{H}\cong \CC\otimes \mathbb{H}\cong M_{2}(\CC)$. Hence, by the proof of Theorem \ref{general theorem}, we have $$\Coh(\mathbf{B}\mu_{2,\mathbb{R}}, \cA)\cong \Coh(\mathbb{C}), \ \ \text{and} \ \  \Coh(\mathbf{B}\mu_{2,\mathbb{R}}, \cB)\cong \Coh(M_{2}(\mathbb{C}))\cong \Coh (\mathbb{C}). $$
Therefore, $\cA$ is Morita equivalent to $\cB$. However, $[\cA]= \langle \bar{1}, 0\rangle $ and $[\cB]=\langle \bar{1}, \bar{1} \rangle $ in $\Br(\mathbf{B}\mu_{2,\RR})=\ZZ/2\ZZ\oplus \ZZ/2\ZZ$. So $[\cA]$ and $[\cB]$ do not generate the same group.
\end{example}

As a corollary, we can show that, in general,  Căldăraru's conjecture
\textbf{does not} hold for stacks.

\begin{corollary}\label{caldararu}
 Căldăraru's conjecture
does not hold for stacks. More precisely, let $\cA$ and $\cB$ be the two Azumaya algebras from Example \ref{example mortia}, which are Morita equivalent. However, there does not exist an automorphism $\varphi: \mathbf{B}\mu_{2,\mathbb{R}}\to \mathbf{B}\mu_{2,\mathbb{R}} $ over  $\mathbb{R}$ such that $[\varphi^{*}\cB]=[\cA]$ in $\Br(\mathbf{B}\mu_{2,\mathbb{R}})$.
    
\end{corollary}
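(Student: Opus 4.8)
The plan is to reduce everything to the explicit matrix/Brauer description and then to control how the automorphism group of $\mathbf{B}\mu_{2,\RR}$ acts on $\Br(\mathbf{B}\mu_{2,\RR})$. By Example \ref{example mortia} together with Proposition \ref{general non trivial} we have
$\Br(\mathbf{B}\mu_{2,\RR})\cong\RR^{*}/\RR^{*2}\oplus\Br(\RR)\cong\ZZ/2\ZZ\oplus\ZZ/2\ZZ$,
with $[\cA]=(\bar1,0)$ and $[\cB]=(\bar1,\bar1)$. Thus $[\cA]$ and $[\cB]$ agree in the $\HH^{1}(\RR,\mu_{2})$--component detected by $q$ and differ exactly in the $\Br(\RR)$--component. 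So the statement is equivalent to showing that, for every $\RR$-automorphism $\varphi$, the class $\varphi^{*}[\cB]$ still has nonzero $\Br(\RR)$--component, hence cannot equal $[\cA]$.

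First I would pin down the interaction of $\varphi^{*}$ with the split sequence of Proposition \ref{exact sequence}. Since $p\colon\mathbf{B}\mu_{2,\RR}\to\Spec\RR$ is the coarse moduli space (Proposition \ref{rootgerbeDM}) and the only $\RR$-automorphism of $\Spec\RR$ is the identity, any $\varphi$ satisfies $p\circ\varphi=p$; hence $\varphi^{*}\circ p^{*}=p^{*}$, so $\varphi^{*}$ fixes the subgroup $p^{*}\Br(\RR)$ pointwise. By functoriality of the Leray spectral sequence defining $q$, the map $\varphi^{*}$ also commutes with $q$ and induces an automorphism of $\HH^{1}(\RR,\mu_{2})\cong\ZZ/2\ZZ$, which is forced to be the identity. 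Consequently $\varphi^{*}$ preserves the first coordinate, and in the above basis $\varphi^{*}(s,t)=(s,\,t+L(s))$ for some homomorphism $L\colon\HH^{1}(\RR,\mu_{2})\to\Br(\RR)$; the whole corollary comes down to showing $L(\bar1)\neq\bar1$.

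The step I expect to be the main obstacle is precisely the determination of $L$, because this forces one to understand all of $\operatorname{Aut}(\mathbf{B}\mu_{2,\RR})$ and not merely the automorphisms coming from $\operatorname{Aut}(\mu_{2})=\{1\}$ used in Corollary \ref{hold calda}. In addition to the latter there are the genuinely stacky twisting automorphisms $\varphi_{P}\colon E\mapsto E\times^{\mu_{2}}P$ attached to a torsor class $P\in\HH^{1}(\RR,\mu_{2})$, and one must compute their effect on $\Br$. The natural tool is the identity $\pi^{*}\circ\varphi_{P}^{*}=(\varphi_{P}\circ\pi)^{*}$, where $\pi$ is the trivial-torsor atlas of Lemma \ref{exact functor}: since $\varphi_{P}\circ\pi$ is the point classifying $P$, evaluating on $[\cA]=[M_{2,B}(\RR)]$ reduces to the descent computation $\bigl(M_{2}(\RR)\otimes_{\RR}\OO_{P}\bigr)^{\mu_{2}}$, which expresses $L(s)$ as a cup product $s\cup[P]$ with values in $\Br(\RR)$. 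Controlling this cup product is the crux on which correctness rests, and I would isolate it as the key lemma.

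Finally I would assemble the pieces. Because $\pi^{*}$ reads off the $\Br(\RR)$-coordinate (using $\pi^{*}p^{*}=\operatorname{id}$ and $\pi^{*}\circ i=0$ for the splitting $i$ of Lemma \ref{HH map}), the equation $\varphi^{*}[\cB]=[\cA]$ translates into $L(\bar1)=\bar1$. For the identity automorphism one has $L=0$, so $\operatorname{id}^{*}[\cB]=[\cB]\neq[\cA]$ immediately; hence the entire content is to rule out the twisting automorphism $\varphi_{P}$ with $P$ the nontrivial class $[-1]$, i.e.\ to evaluate the cup product $(-1,-1)_{\RR}$ in $\Br(\RR)$. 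I would therefore reduce to this single, explicit computation, treating the two automorphisms (identity and $\varphi_{P}$) case by case, and it is exactly here that the behaviour of $\Br$ under stacky automorphisms must be settled to complete the argument.
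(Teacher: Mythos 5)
Your reduction is sound, and it isolates exactly what the paper's own proof overlooks; but your proposal stops at the decisive computation, and that computation comes out the \emph{opposite} way from what the corollary needs. The cup product $(-1,-1)_{\RR}$ is the class of the Hamilton quaternions, which is nonzero in $\Br(\RR)$. So in your notation $L(\bar{1})=\bar{1}$, not $L(\bar{1})\neq\bar{1}$: the translation automorphism $\varphi_{P}$ attached to the nontrivial torsor $P=\operatorname{Spec}\CC$ satisfies $\varphi_{P}^{*}[\cB]=[\cA]$. Carried to completion, your argument therefore refutes Corollary \ref{caldararu} rather than proving it, and in doing so it exposes a genuine error in the paper. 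The paper's proof deduces $\varphi^{*}\langle\bar{1},0\rangle=\langle\bar{1},0\rangle$ from $\varphi^{*}\circ p^{*}=p^{*}$ alone, which is a non sequitur: a group automorphism of $\ZZ/2\ZZ\oplus\ZZ/2\ZZ$ fixing the summand $p^{*}\Br(\RR)$ pointwise may still send $\langle\bar{1},0\rangle$ to $\langle\bar{1},\bar{1}\rangle$. The paper never considers the translation automorphisms you identify, which (up to $2$-isomorphism) are the only automorphisms of $\mathbf{B}\mu_{2,\RR}$ over $\RR$ besides the identity, since $\operatorname{Aut}(\mu_{2})$ is trivial.

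Here is the computation you left open, done directly. Since $\varphi_{P}\circ\pi$ classifies the torsor $P$, pulling $\cA=M_{2,B}(\RR)$ back along it is Galois descent through $P$:
$$\pi^{*}\varphi_{P}^{*}\cA\;\cong\;\bigl(M_{2}(\RR)\otimes_{\RR}\CC\bigr)^{\mu_{2}}\;=\;\bigl\{M\in M_{2}(\CC)\;\big|\;\bar{M}=BMB^{-1}\bigr\}\;=\;\left\{\begin{pmatrix}a&b\\-\bar{b}&\bar{a}\end{pmatrix}\;\bigg|\;a,b\in\CC\right\}\;\cong\;\mathbb{H},$$
where $\mu_{2}$ acts diagonally (conjugation by $B$ on $M_{2}(\RR)$, Galois conjugation on $\CC$); this is precisely the cyclic-algebra construction of the symbol $(-1,-1)_{\RR}$. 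Hence $\pi^{*}\varphi_{P}^{*}[\cA]=[\mathbb{H}]\neq 0=\pi^{*}[\cA]$, so $\varphi_{P}^{*}[\cA]\neq[\cA]$. Since $\varphi_{P}^{*}$ is a group automorphism of $\Br(\mathbf{B}\mu_{2,\RR})\cong\ZZ/2\ZZ\oplus\ZZ/2\ZZ$ fixing $\langle 0,\bar{1}\rangle=p^{*}[\mathbb{H}]$, the only remaining possibility is $\varphi_{P}^{*}[\cA]=\langle\bar{1},\bar{1}\rangle=[\cB]$, and then $\varphi_{P}^{*}[\cB]=\varphi_{P}^{*}[\cA]+p^{*}[\mathbb{H}]=[\cA]$. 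So Căldăraru's conjecture in fact \emph{holds} for this pair: the statement of the corollary is false as written, not merely insufficiently proved. Any repair would need either a different example, in which the relevant cup products $\HH^{1}(k,\mu_{n})\times\HH^{1}(k,\mu_{n})\to\Br(k)$ vanish on the classes in play, or an argument that genuinely rules out translations — which, as your own analysis makes clear, the paper's one-line proof does not attempt.
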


\begin{proof}
    Recall that $p: \mathbf{B}\mu_{2, \mathbb{R}}\to \operatorname{Spec}(\mathbb{R}) $ is the coarse moduli space. Let $\varphi:\mathbf{B}\mu_{2,\mathbb{R}}\to \mathbf{B}\mu_{2,\mathbb{R}}$ be an automorphism over $\mathbb{R}$. Then $\varphi^{*}\circ p^{*}=p^{*}$. As a result, $$\varphi^{*}: \Br(\mathbf{B}\mu_{2,\mathbb{R}})= \mathbb{R}^{*}/\mathbb{R}^{*2} \oplus \Br(\mathbb{R})\to \mathbb{R}^{*}/\mathbb{R}^{*2}\oplus \Br(\mathbb{R})$$ acts as the identity on 
 $\Br(\mathbb{R})$. Thus, $\varphi^{*}([\cA])=\varphi^{*}(\langle \bar{1},0\rangle) = \langle \bar{1}, 0\rangle \not = \langle \bar{1}, \bar{1}  \rangle=[\cB].$
\end{proof}

By Example \ref{example mortia}, we know if $\cA$ and $\cB$ are Morita equivalent, they may not generate the same subgroup. However, it turns out they must have the same order in the Brauer group.

\begin{corollary}\label{the last corollary}
  Let $\cA$ and $\cB$ be two Azumaya algebras over $\mathbf{B}\mu_{n,k}$. If $\cA$ is Morita equivalent to $\cB$, then $[\cA]$ and $[\cB]$ have the same order in $\Br(\mathbf{B}\mu_{n,k}).$  
\end{corollary}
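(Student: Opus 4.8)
The plan is to reduce the statement about orders in $\Br(\mathbf{B}\mu_{n,k})$ to the corresponding statement about field extensions, via the explicit classification already established. Recall from Theorem~\ref{general theorem} that if $[\cA]=\psi([a],[\cA'])$ and $[\cB]=\psi([b],[\cB'])$, then $\cA$ is Morita equivalent to $\cB$ precisely when there is an isomorphism $f\colon X_{a}\xrightarrow{\sim} X_{b}$ with $[q_{a}^{*}\cA']=[f^{*}q_{b}^{*}\cB']$ in $\Br(X_{a})$. The isomorphism $X_{a}\cong X_{b}$ of $k$-algebras $k[x]/(x^{n}-a)\cong k[x]/(x^{n}-b)$ forces, by Lemma~\ref{importan corollary} and Proposition~\ref{Kummer theory}, the equality $k(\sqrt[n]{a})=k(\sqrt[n]{b})$, so $[a]$ and $[b]$ generate the \emph{same} cyclic subgroup of $k^{*}/k^{*n}$; in particular they have the same order there.

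Next I would compute the order of $\psi([a],[\cA'])$ in $\Br(\mathbf{B}\mu_{n,k})\cong k^{*}/k^{*n}\oplus\Br(k)$. Since $\psi$ is a group isomorphism (Proposition~\ref{general non trivial}), the order of $[\cA]$ is simply the least common multiple of the order of $[a]$ in $k^{*}/k^{*n}$ and the order of $[\cA']$ in $\Br(k)$. So the task becomes: show that Morita equivalence forces both the order of $[a]$ to equal that of $[b]$, \emph{and} the order of $[\cA']$ to equal that of $[\cB']$. The first I already have from the previous paragraph. For the second, the condition $[q_{a}^{*}\cA']=[f^{*}q_{b}^{*}\cB']$ in $\Br(X_{a})$ gives, after decomposing $X_{a}\cong\Spec\big(k(\sqrt[n]{a})^{\oplus l}\big)$ with $l=n/[k(\sqrt[n]{a}):k]$, that the images of $[\cA']$ and $[\cB']$ in $\Br(k(\sqrt[n]{a}))$ agree, i.e.\ $[\cA'\otimes_{k}k(\sqrt[n]{a})]=[\cB'\otimes_{k}k(\sqrt[n]{a})]$.

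The main obstacle is the final step: passing from equality of orders of $[\cA']$ and $[\cB']$ \emph{after restriction to} $k(\sqrt[n]{a})$ back to equality of their orders in $\Br(k)$. This is not automatic, since restriction along a finite extension can lower the order of a Brauer class. The plan is to exploit that $k(\sqrt[n]{a})/k$ is cyclic of degree $[k(\sqrt[n]{a}):k]=m$ dividing $n$ (it is a Kummer extension, hence Galois with cyclic group), so that the restriction map $\Br(k)\to\Br(k(\sqrt[n]{a}))$ has kernel killed by $m$, and more precisely the order of a class can drop by at most the factor $m$. One would then combine the symmetric restriction inequalities for $[\cA']$ and $[\cB']$ with the fact that $[a]$ and $[b]$ generate the same subgroup (so the two extensions $k(\sqrt[n]{a})$ and $k(\sqrt[n]{b})$ coincide, giving symmetric control) to conclude the orders of $[\cA]=\psi([a],[\cA'])$ and $[\cB]=\psi([b],[\cB'])$ in $\Br(\mathbf{B}\mu_{n,k})$ coincide. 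Carefully bookkeeping these divisibilities via the $\mathrm{lcm}$ description above, together with the compatibility of the cyclic subgroups generated by $[a]$ and $[b]$, is where the real content lies; the rest is formal.
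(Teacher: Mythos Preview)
Your setup matches the paper's proof exactly: decompose via $\psi$, use Theorem~\ref{general theorem} to get $k(\sqrt[n]{a})=k(\sqrt[n]{b})$ (so $[a]$ and $[b]$ have the same order $d:=[k(\sqrt[n]{a}):k]$ in $k^{*}/k^{*n}$) together with $[\cA'\otimes k(\sqrt[n]{a})]=[\cB'\otimes k(\sqrt[n]{a})]$, and then compute $\operatorname{ord}([\cA])=\operatorname{lcm}(d,\operatorname{ord}[\cA'])$.

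The gap is in your stated intermediate goal. You write that the task is to show $\operatorname{ord}([\cA'])=\operatorname{ord}([\cB'])$ in $\Br(k)$. This is \emph{false} in general: in Example~\ref{example mortia} one has $[\cA']=0$ (order $1$) and $[\cB']=[\mathbb{H}]$ (order $2$), yet $\cA$ and $\cB$ are Morita equivalent. So any attempt to prove that equality will fail, and the ``symmetric restriction inequalities'' you allude to cannot produce it.

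What actually works is already implicit in your observation that the kernel of restriction is killed by $d$ (equivalently, $\operatorname{cores}\circ\operatorname{res}$ is multiplication by $d$). Applied to $[\cA']-[\cB']$, whose restriction vanishes, this gives the \emph{equality} $d[\cA']=d[\cB']$ in $\Br(k)$, not merely a divisibility bound. Now the elementary identity
\[
\operatorname{lcm}(d,r)=\frac{d\,r}{\gcd(d,r)}=d\cdot\operatorname{ord}\big(d\cdot(\text{element of order }r)\big)
\]
shows $\operatorname{lcm}(d,\operatorname{ord}[\cA'])=d\cdot\operatorname{ord}(d[\cA'])=d\cdot\operatorname{ord}(d[\cB'])=\operatorname{lcm}(d,\operatorname{ord}[\cB'])$, which is exactly $\operatorname{ord}([\cA])=\operatorname{ord}([\cB])$. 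This is the ``careful bookkeeping'' you left unspecified; once you replace the incorrect subgoal with $d[\cA']=d[\cB']$, the argument is complete and coincides with the paper's.
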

\begin{proof}
Suppose $[\cA]=\psi([a], [\cA_{1}] )$ and $[\cB]=\psi ([b], [\cB_{1}])$. Then the order of $[\cA]$, $\operatorname{ord}([\cA])$ is 
$$\operatorname{ord}([\cA])=\operatorname{lcm}(\operatorname{ord}([a]), \operatorname{ord}([\cA_{1}]) )=\operatorname{lcm} ([k(\sqrt[n]{a}):k], \operatorname{ord}([\cA_{1}]) ).$$
Since $\cA$ is Morita equivalent to $\cB$, by Theorem \ref{general theorem}, $k(\sqrt[n]{a})=k(\sqrt[n]{b})$ and $ [\cA\otimes k(\sqrt[n]{a})]=[\cB\otimes k(\sqrt[n]{a})] $ in $\Br(k(\sqrt[n]{a}))$. Suppose $[k(\sqrt[n]{a}):k]=d$. Note that we have the restriction map $\operatorname{res}_{k(\sqrt[n]{a})/k}:\Br(k)\to \Br(k(\sqrt[n]{a}))$ and the corestriction map $\operatorname{cores}_{k(\sqrt[n]{a})/k}: \Br(k(\sqrt[n]{a}))\to \Br(k)$. The composition $$\operatorname{cores}_{k(\sqrt[n]{a})/k}\circ \operatorname{res}_{k(\sqrt[n]{a})/k}: \Br(k)\to \Br(k(\sqrt[n]{a}))\to \Br(k) $$
is the multiplication by degree $d$. So we have $d([\cA_{1}]-[\cB_{1}])=\operatorname{cores}_{k(\sqrt[n]{a})/k}\circ \operatorname{res}_{k(\sqrt[n]{a})/k}([\cA_{1}]-[\cB_{1}])=0$. Thus, $d[\cA_{1}]=d[\cB_{1}]$. Since $\operatorname{ord}(d[\cA_{1}])=\frac{\operatorname{ord}([\cA_{1}])}{\operatorname{gcd}(d, \operatorname{ord}([\cA_{1}]))}$, we have

$$\operatorname{ord}([\cA])=\operatorname{lcm}(d, \operatorname{ord}[\cA_{1}])=\frac{d\operatorname{ord}([\cA_{1}])}{\operatorname{gcd}(d, \operatorname{ord}([\cA_{1}]))}=\frac{d\operatorname{ord}([\cB_{1}])}{\operatorname{gcd}(d, \operatorname{ord}([\cB_{1}]))}=\operatorname{ord}([\cB]).$$
We complete the proof.
\end{proof}

\section{globalizing the constructions}\label{section 5}

In this section, we extend the constructions from Section \ref{section 4} to $\mu_{n}$-gerbes over varieties, and prove Theorem \ref{main theorem 2}. Recall that $p: \sX \to X $ is the $\mu_{n}$-gerbe of the line bundle $L$ in Definition \ref{rootgerbe}. By Proposition \ref{exact sequence}, we have an isomorphism   $\psi: \Br(X)\oplus \HH^{1}(X,\mu_{n})\cong \Br(\sX).$ 
Taking cohomology of the short exact sequence (\ref{mu exact sequence}) on $X$, we have
$$
0 \too \Gamma(X,\mathcal{O}_{X}^{*})/\Gamma(X,\mathcal{O}_{X}^{*})^{n} \too \HH^{1}(X,\mu_{n}) \too \Pic(X)[n]\to 0, $$
where $\Pic(X)[n]$ is the group of $n$-torsion line bundles on $X$.

The elements of the group $\HH^{1}(X,\mu_{n})$ can be written as a pair $(\mathcal{L},\alpha)$, where $\mathcal{L}\in \Pic(X)[n] $ and $\alpha$ is a trivialization of $n$-th power of $\mathcal{L}$ \cite[03PK]{stackproject}. The $\mu_{n}$-torsor corresponding to $(\mathcal{L},\alpha)$ is $\widetilde{X}=Spec B \to X $, where $B$ is the algebra $B=\bigoplus_{i=0}^{n-1}\mathcal{L}^{\otimes i} $. The multiplication is given by the natural isomorphism $\mathcal{L}^{\otimes i}\otimes \mathcal{L}^{\otimes j} \cong \mathcal{L}^{\otimes i+j}$ when $i+j<n$, and $$\mathcal{L}^{\otimes i} \otimes \mathcal{L}^{\otimes j} \xrightarrow{\sim} \mathcal{L}^{\otimes(i+j)} \xrightarrow{\alpha \otimes \operatorname{id}}\mathcal{L}^{\otimes (i+j-n)}$$ when $i+j\geq n$.

\subsection{Explicit description of $\HH^{1}(X,\mu_{n})\hookrightarrow \Br(\sX)$}
In this subsection, we explicitly describe the map $i: \HH^{1}(X,\mu_{n})\hookrightarrow \Br(\sX)$
in Proposition \ref{exact sequence}. 

\begin{construction}\label{construction1}
First, we associate to each class $(\mathcal{L}, \alpha) \in \HH^{1}(X,\mu_{n})$ an Azumaya algebra on $\sX$ as follows. Let $\{U_{i}\}$ be an affine open cover of $X$, such that $L|_{U_{i}}\cong \mathcal{O}_{U_{i}}$ for each $U_{i}$, where $L$ is the line bundle in the Definition \ref{rootgerbe}. For each $U_{i}$, we have the following maps 
 $$U_{i}\xrightarrow{\pi_{i}} \mathbf{B}\mu_{n,U_{i}}\xrightarrow{p_{i}} U_{i}.  $$

 Let $F$ be the vector bundle $F:=\bigoplus_{j=0}^{n-1}\mathcal{L}^{\otimes j}$ on $X$. Then there is an isomorphism $F\cong F\otimes \mathcal{L}^{-1}$ induced by $\alpha$. We  will also denote this isomorphism by  $\alpha$. Let $\phi\in \cEnd(F)$ be a local section of $\cEnd(F)$. We define a $\mu_{n}$-action on $\cEnd(F)$ by:
$$\zeta\cdot \phi:= F \xrightarrow{\alpha} F\otimes \mathcal{L}^{-1}\xrightarrow{\phi\otimes \operatorname{id}} F\otimes \mathcal{L}^{-1} \xrightarrow{\alpha^{-1}} F.$$
 
 It turns out that $\cEnd(F)|_{U_{i}}$ are $\mu_{n}$-equivariant algebras for all $U_i$. Thus, we get Azumaya algebras $\cA_{i}$ on $\textbf{B}\mu_{n,U_{i}}$ for all $i$. The readers may check that $\cA_{i}$ can be glued. Therefore we obtain an Azumaya algebra on $\sX$, denote by $\cA_{(\mathcal{L},\alpha)}$.
 \end{construction}
Similar to Lemma \ref{pushforward of algebra}, we have the following generalized version.

\begin{lemma}\label{pushforward of sheaf}
Let $\cA_{(\mathcal{L},\alpha)}$ be the Azumaya algebra associated with the class $(\mathcal{L}, \alpha)$ in Construction \ref{construction1}.
Then we have $p_{*}\cA_{(\mathcal{L},\alpha)}\cong q_{*}\mathcal{O}_{\widetilde{X}}$, where $q: \widetilde{X}\to X$ is the $\mu_{n}$-torsor corresponding to the class $(\mathcal{L},\alpha)\in \HH^{1}(X,\mu_{n})$.
 \end{lemma}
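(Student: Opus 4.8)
The plan is to globalize Lemma \ref{pushforward of algebra} by computing $p_{*}\cA_{(\mathcal{L},\alpha)}$ as a sheaf of $\mu_{n}$-invariants and then identifying that invariant algebra with the torsor algebra $B:=q_{*}\mathcal{O}_{\widetilde{X}}=\bigoplus_{i=0}^{n-1}\mathcal{L}^{\otimes i}$. I prefer this invariant-theoretic route over gluing the local isomorphisms of Lemma \ref{pushforward of algebra} directly, since it produces a manifestly global (hence gluing-compatible) algebra isomorphism and avoids checking that the local transition data match.

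First I would reduce $p_{*}$ to taking invariants. On each $U_{i}$ of the cover trivializing $L$ we have $\sX|_{U_{i}}\cong\mathbf{B}\mu_{n,U_{i}}=[U_{i}/\mu_{n}]$ with trivial action, and since $\mu_{n}$ is linearly reductive ($\operatorname{char}k\nmid n$ and $k\supseteq\mu_{n}$) the coarse-space pushforward $p_{i*}$ carries a $\mu_{n}$-equivariant sheaf to its invariant subsheaf, consistent with the exactness in Lemma \ref{exact functor}. As $\cA_{(\mathcal{L},\alpha)}$ is glued from the globally defined equivariant algebra $\cEnd(F)$ (Construction \ref{construction1}), these local invariants are the restrictions of one global subsheaf, so $p_{*}\cA_{(\mathcal{L},\alpha)}=\cEnd(F)^{\mu_{n}}$, the invariants for the conjugation action $\zeta\cdot\phi=\alpha^{-1}\circ(\phi\otimes\operatorname{id}_{\mathcal{L}^{-1}})\circ\alpha$.

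The crux of the argument — and the step I expect to require the most care — is to identify the isomorphism $\alpha\colon F\to F\otimes\mathcal{L}^{-1}$ of Construction \ref{construction1}. Writing $F=B$ as $\mathcal{O}_{X}$-modules, I claim $\alpha$ is precisely the adjoint of the multiplication map $m_{1}\colon\mathcal{L}\otimes F\to F$ given by multiplying with the degree-one summand $\mathcal{L}=B_{1}\subset B$, where the wraparound on the top graded piece uses the trivialization $\alpha\colon\mathcal{L}^{\otimes n}\xrightarrow{\sim}\mathcal{O}_{X}$ (this is the global avatar of the corner entry of the matrix $B$ in \eqref{matrix B}). Granting this, un-adjointing the invariance condition $(\phi\otimes\operatorname{id}_{\mathcal{L}^{-1}})\circ\alpha=\alpha\circ\phi$ turns it into $\phi\circ m_{1}=m_{1}\circ(\operatorname{id}_{\mathcal{L}}\otimes\phi)$, i.e.\ $\phi$ is $B_{1}$-linear. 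This is exactly the point where the torsor algebra structure of $B$ is used.

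Finally I would conclude. Since each graded multiplication $B_{1}\otimes B_{i}\to B_{i+1}$ is an isomorphism (the canonical identity for $i+1<n$, and $\alpha$ for $i+1=n$), the summand $B_{1}=\mathcal{L}$ generates $B$ as an $\mathcal{O}_{X}$-algebra; hence any $\mathcal{O}_{X}$-linear, $B_{1}$-linear endomorphism of $F=B$ is automatically $B$-linear. Therefore $\cEnd(F)^{\mu_{n}}=\cEnd_{B}(B)$, and because $B$ is a commutative $\mathcal{O}_{X}$-algebra, right multiplication gives an isomorphism of $\mathcal{O}_{X}$-algebras $B\xrightarrow{\sim}\cEnd_{B}(B)$. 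Composing yields $p_{*}\cA_{(\mathcal{L},\alpha)}=\cEnd(F)^{\mu_{n}}\cong B=q_{*}\mathcal{O}_{\widetilde{X}}$ as sheaves of $\mathcal{O}_{X}$-algebras; checking compatibility of this map with composition on one side and multiplication in $B$ on the other is routine given the commutativity of $B$.
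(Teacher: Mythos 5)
Your proof is correct, but it takes a genuinely different route from the paper's. The paper argues locally: it restricts to an affine cover trivializing both $L$ and $\mathcal{L}$, chooses generators $s_{i}$ of $\mathcal{L}|_{U_{i}}$ with $\alpha(s_{i}^{\otimes n})=a_{i}$, identifies $\cA_{(\mathcal{L},\alpha)}|_{\mathbf{B}\mu_{n,U_{i}}}$ with the matrix algebra $M_{n,B_{i}}(R_{i})$ for the companion matrix $B_{i}$, and then invokes the commutant computation from the proof of Lemma \ref{pushforward of algebra} to get $p_{*}\cA_{i}\cong R_{i}[x]/(x^{n}-a_{i})\cong q_{*}\mathcal{O}_{\widetilde{X}}|_{U_{i}}$, concluding by gluing. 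You instead globalize: you realize $p_{*}\cA_{(\mathcal{L},\alpha)}$ as the invariant subsheaf $\cEnd(F)^{\mu_{n}}$, identify $\alpha$ with the adjoint of multiplication by the degree-one summand $\mathcal{L}=B_{1}\subset B$, and deduce that invariance is exactly $B$-linearity, whence $\cEnd(F)^{\mu_{n}}=\cEnd_{B}(B)\cong B$. The mathematical core is the same in both cases --- the commutant of multiplication by $\mathcal{L}$ inside $\cEnd(B)$ is $B$ itself, which locally is the statement that a companion matrix, being non-derogatory, has commutant equal to the polynomial algebra it generates --- but your version produces a single canonical isomorphism $\phi\mapsto\phi(1)$ defined globally, so there is no gluing of local isomorphisms to verify; the paper's proof leaves implicit the check that its local identifications, which depend on the chosen generators $s_{i}$, agree on overlaps with the gluing of $q_{*}\mathcal{O}_{\widetilde{X}}$. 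In exchange, the paper's route is more concrete and reuses Lemma \ref{pushforward of algebra} verbatim. Note that both arguments rest equally on the gluing assertion of Construction \ref{construction1}: your identification $p_{*}\cA_{(\mathcal{L},\alpha)}=\cEnd(F)^{\mu_{n}}$ uses that the gerbe-local algebras are glued along the canonical identifications of $\cEnd(F)$ on overlaps, which is exactly the point the construction leaves to the reader, so neither proof is more complete on that score.
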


 \begin{proof}
     Choose an affine open covering $X=\bigcup U_{i}$, such that $\mathcal{L}|_{V_{i}}\cong \mathcal{O}_{U_{i}}$ and $L|_{V_{i}}\cong \mathcal{O}_{U_{i}}$. Let $s_{i}\in \mathcal{L}(U_{i})$ be a generator and $\alpha(s_{i}^{\otimes n}): =a_{i}\in \mathcal{O}_{X}(U_{i})^{*}$. Suppose $U_{i}=\operatorname{Spec}(R_{i})$, then $\widetilde{X}|_{U_{i}}=\operatorname{Spec}R_{i}[x]/(x^{n}-a_{i})$.
Note that $\cA_{i}:=\cA_{(\mathcal{L}, \alpha)}|_{\mathbf{B}\mu_{n, U_{i}}}$ is the  Azumaya algebra associated to the $\mu_{n}$-equivariant algebra $\cEnd(F)|_{U_{i}}$. By  Construction \ref{construction1}, we have $\cA_{i}\cong M_{n,B_{i}}(R_{i})$, where $M_{n, B_{i}}(R_{i})$ is the $\mu_{n}$-equivariant algebra over $R_{i}$ described in \ref{matrix notation} and $B_{i}$ is the matrix 
$$B_{i}=\begin{pmatrix}
0 & ... & 0 & 0 & a_{i}\\
1 & ... & 0 & 0 & 0 \\
... & ... & ... & ... & ... \\
0 & ... & 1 & 0 & 0 \\
0 & ... & 0 & 1 & 0
\end{pmatrix}
.$$
By the proof of Lemma \ref{pushforward of algebra}, we have $p_{*}\cA_{i}\cong q_{*}\mathcal{O}_{\widetilde{X}}|_{U_{i}} \cong R_{i}[x]/(x^{n}-a_{i})$. So we have $p_{*}\cA\cong q_{*}\mathcal{O}_{\widetilde{X}}$.

 \end{proof}

It turns out that the embedding $i: \HH^{1}(X,\mu_{n}) \hookrightarrow \Br(\sX)$ is given by Construction \ref{construction1}, as shown in the following proposition.
\begin{proposition}\label{description of injective}
    Explicitly, the map $i: \HH^{1}(X,\mu_{n})\hookrightarrow \Br(\sX) $ in Proposition \ref{exact sequence} is given by $i: (\mathcal{L}, \alpha)\to [\cA_{(\mathcal{L},\alpha)}]$, where $\cA_{(\mathcal{L},\alpha)}$ is in Construction \ref{construction1}.
\end{proposition}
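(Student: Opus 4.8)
The plan is to reduce the global identity $i(\mathcal{L},\alpha)=[\cA_{(\mathcal{L},\alpha)}]$ to a local computation on each chart $\mathbf{B}\mu_{n,U_i}$ of the cover $\{U_i\}$ used in Construction \ref{construction1}. Recall from the proof of Lemma \ref{HH map} that the section $i$ is assembled from the local splittings $i_i\colon \HH^1(U_i,\mu_n)\to \Br(\mathbf{B}\mu_{n,U_i})$ of Lemma \ref{Brauer group of classifying stack}: for $\beta\in\HH^1(X,\mu_n)$, the class $i(\beta)$ is the unique element of $\Br(\sX)$ whose restriction to each $\mathbf{B}\mu_{n,U_i}$ equals $i_i(\beta|_{U_i})$, uniqueness coming from the injection $\Br(\sX)\hookrightarrow\bigoplus_i\Br(\mathbf{B}\mu_{n,U_i})$. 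On the other hand, writing $\cA_i:=\cA_{(\mathcal{L},\alpha)}|_{\mathbf{B}\mu_{n,U_i}}$, the restriction of $[\cA_{(\mathcal{L},\alpha)}]$ to $\mathbf{B}\mu_{n,U_i}$ is $[\cA_i]$. Hence it suffices to prove the local statement $[\cA_i]=i_i\big((\mathcal{L},\alpha)|_{U_i}\big)$ in $\Br(\mathbf{B}\mu_{n,U_i})$ for every $i$.

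For the local statement I would use that the retraction $\pi_i^{*}$ of Lemma \ref{injective of pushforward} and the section $i_i$ are complementary splittings of the same short exact sequence: the degeneration of the Leray spectral sequence underlying Lemma \ref{Brauer group of classifying stack} yields a direct sum decomposition $\Br(\mathbf{B}\mu_{n,U_i})=p_i^{*}\Br(U_i)\oplus i_i\HH^1(U_i,\mu_n)$ whose two projections are $p_i^{*}\pi_i^{*}$ and $i_i q_i$; in particular $\mathrm{id}=p_i^{*}\pi_i^{*}+i_i q_i$ and $\pi_i^{*}i_i=0$. Applying this to $[\cA_i]$ reduces the claim to computing the two components $\pi_i^{*}[\cA_i]$ and $q_i[\cA_i]$. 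The first vanishes: by the computation in the proof of Lemma \ref{pushforward of sheaf} we have $\cA_i\cong M_{n,B_i}(R_i)$, so pulling back along the atlas $\pi_i\colon U_i\to\mathbf{B}\mu_{n,U_i}$ forgets the $\mu_n$-equivariant structure and returns the underlying matrix algebra $M_n(R_i)$, which is trivial in $\Br(U_i)$; thus $\pi_i^{*}[\cA_i]=0$.

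The heart of the argument is the identity $q_i[\cA_i]=(\mathcal{L},\alpha)|_{U_i}$. Writing $a_i:=\alpha(s_i^{\otimes n})\in\mathcal{O}(U_i)^{*}$ as in Lemma \ref{pushforward of sheaf}, the restriction $(\mathcal{L},\alpha)|_{U_i}$ is the Kummer class $[a_i]$. I would detect this at the generic point $\eta\colon\Spec K(X)\hookrightarrow U_i$: by functoriality of the short exact sequence under base change one has $\eta^{*}\big(q_i[\cA_i]\big)=q_{K(X)}\big([\cA_i|_{\mathbf{B}\mu_{n,K(X)}}]\big)$, and by Lemma \ref{pushforward of sheaf} the restriction $\cA_i|_{\mathbf{B}\mu_{n,K(X)}}$ is $M_{n,B}(K(X))$ with $B$ the matrix \eqref{matrix B} satisfying $B^{n}=a_i\operatorname{Id}$. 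Proposition \ref{general non trivial}, which is valid over the arbitrary field $K(X)$ and does \emph{not} require $\Br(K(X))=0$, gives $q_{K(X)}([M_{n,B}(K(X))])=[a_i]=\eta^{*}[a_i]$. Finally $\eta^{*}\colon\HH^1(U_i,\mu_n)\to\HH^1(K(X),\mu_n)$ is injective, since the normality argument of Lemma \ref{injective of H} applies verbatim to the smooth open $U_i$; therefore $q_i[\cA_i]=[a_i]$. Combining the two components gives $[\cA_i]=i_i q_i[\cA_i]=i_i([a_i])$, and gluing over $i$ proves the proposition.

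The main obstacle is precisely this last edge-map computation: identifying the image under the spectral-sequence projection $q$ of the explicitly glued Azumaya algebra $\cA_{(\mathcal{L},\alpha)}$ with the chosen class $(\mathcal{L},\alpha)$. The generic-point reduction is the key device, as it converts the computation over the non-field rings $R_i$ into the field-level formula of Proposition \ref{general non trivial}, while the injectivity of $\eta^{*}$ on $\HH^1(-,\mu_n)$ transports the answer back to $U_i$; one must check that this proceeds with no vanishing hypothesis on $\Br(K(X))$. A secondary point to verify is the complementarity $\pi_i^{*}i_i=0$, which is what allows $\pi_i^{*}$ to read off the $\Br(U_i)$-component; this I would justify from the degeneration of the Leray spectral sequence in Lemma \ref{Brauer group of classifying stack}.
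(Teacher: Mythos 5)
Your computational core --- detecting classes at the generic point $\Spec K(X)$, evaluating there with Proposition \ref{general non trivial}, and transporting the answer back via the injectivity of restriction on $\HH^{1}(-,\mu_{n})$ (Lemma \ref{injective of H}) --- is exactly the engine of the paper's proof. Where you differ is the surrounding bookkeeping: the paper never passes to the charts $\mathbf{B}\mu_{n,U_{i}}$ at all. It writes a single commutative square (from Lemmas \ref{injective of H} and \ref{HH map}) with injective vertical arrows $\HH^{1}(X,\mu_{n})\hookrightarrow \HH^{1}(\Spec K(X),\mu_{n})$ and $\Br(\sX)\hookrightarrow \Br(\mathbf{B}\mu_{n,K(X)})$, observes that $i(\mathcal{L},\alpha)$ and $[\cA_{(\mathcal{L},\alpha)}]$ both restrict to $[M_{n,B}(K(X))]$ at the generic gerbe (the former via the square and Proposition \ref{general non trivial}, the latter from the local description underlying Lemma \ref{pushforward of sheaf}), and concludes by injectivity. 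It thus only ever compares two elements under one injective map, and never needs to split $\Br(\mathbf{B}\mu_{n,U_{i}})$ into components.

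That splitting is where your proposal has a genuine soft spot. The identity $\operatorname{id}=p_{i}^{*}\pi_{i}^{*}+i_{i}q_{i}$ is equivalent to the complementarity $\pi_{i}^{*}i_{i}=0$, and you really need it: from $q_{i}[\cA_{i}]=q_{i}\,i_{i}\bigl(\beta|_{U_{i}}\bigr)$ alone, the difference $[\cA_{i}]-i_{i}\bigl(\beta|_{U_{i}}\bigr)$ is only known to lie in $p_{i}^{*}\Br(U_{i})$, and applying $\pi_{i}^{*}$ identifies it with $-p_{i}^{*}\pi_{i}^{*}i_{i}\bigl(\beta|_{U_{i}}\bigr)$, so everything hinges on that composite vanishing. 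But degeneration of the Leray spectral sequence cannot supply this: degeneration yields the short exact sequence, whereas a section of it is extra, non-unique data (any section may be modified by a homomorphism into $p_{i}^{*}\Br(U_{i})$), and Lemma \ref{Brauer group of classifying stack} only quotes the existence of a splitting from \cite{AM20} without describing it, so its compatibility with the atlas retraction $\pi_{i}^{*}$ of Lemma \ref{injective of pushforward} is not available from the paper's toolkit. One can prove $\pi_{i}^{*}i_{i}=0$, but the natural argument restricts to the generic point, applies Proposition \ref{general non trivial} (over a field the explicit section is represented by an equivariant matrix algebra, whose class $\pi^{*}$ kills), and uses injectivity of $\Br(U_{i})\to \Br(K(X))$ --- at which point you have reproduced the paper's direct comparison, and the chart-wise projector scaffolding becomes redundant. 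I would drop it and run the generic-point comparison once, globally, as the paper does.
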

\begin{proof}
 By Lemmas \ref{injective of H} and \ref{HH map},   we have the following commutative diagram.
\begin{center}
\begin{tikzcd}
\HH^{1}(X,\mu_{n}) \arrow[r, "i", hook] \arrow[d, "\eta^{*}", hook] & \Br(\sX) \arrow[d, hook] \\
\HH^{1}(\operatorname{Spec}K(X), \mu_{n}) \arrow[r, "\psi", hook]                      & \Br(\mathbf{B}\mu_{n, K(X)})                     
\end{tikzcd}
\end{center}
Suppose $\eta_{1}^{*}(\mathcal{L},\alpha)=[a]$. By Proposition \ref{general non trivial}, $\psi^{*}([a])=[M_{n,B}(K(X))]$, where $B$ is the $n\times n$-matrix described in \ref{matrix B}. By the description above, $\eta_{3}^{*}( [\cA_{(\mathcal{L},\alpha)}] )=[M_{n,B}(K(X))].$ So $i(\mathcal{L}, \alpha)=[\cA_{(\mathcal{L},\alpha)}].$

\end{proof}
In order to show the main theorem, we need the following lemma.
\begin{lemma}\label{halfmaintheorem}
The functor  $\widetilde{p}^{*}:\Coh(X, p_{*}\cA_{(\mathcal{L},\alpha)})\to \Coh(\sX, \cA_{(\mathcal{L},\alpha)})$ is  an  equivalence.
\end{lemma}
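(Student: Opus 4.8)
The plan is to reduce the statement to Lemma \ref{equivalence lemma} by base change along the $\mu_{n}$-torsor $q:\widetilde{X}\to X$ attached to $(\mathcal{L},\alpha)$, in exact parallel with the proof of Theorem \ref{Theorem for field}. By Proposition \ref{fully faithful} the functor $\widetilde{p}^{*}$ is already fully faithful, so it suffices to prove that the counit $\widetilde{p}^{*}\widetilde{p}_{*}H\to H$ is an isomorphism for every $H\in\Coh(\sX,\cA_{(\mathcal{L},\alpha)})$; equivalently, that the cone of this map vanishes.

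First I would pass to the cover. Set $\sX_{\widetilde{X}}:=\sX\times_{X}\widetilde{X}$, which is the $\mu_{n}$-gerbe of $q^{*}L$ over $\widetilde{X}$, with projections $\widetilde{q}:\sX_{\widetilde{X}}\to\sX$ and $p':\sX_{\widetilde{X}}\to\widetilde{X}$. The key geometric input is the identification $\widetilde{q}^{*}\cA_{(\mathcal{L},\alpha)}\cong\cEnd(\rho_{0}\oplus\cdots\oplus\rho_{n-1})$ on $\sX_{\widetilde{X}}$. Indeed, by construction $q^{*}(\mathcal{L},\alpha)$ is the trivial class in $\HH^{1}(\widetilde{X},\mu_{n})$, so the twist by $\alpha$ used in Construction \ref{construction1} becomes trivial after pullback and the $\mu_{n}$-action there becomes the standard diagonal action; this is the global form of the isomorphism $M_{n,B}(k_{1})\cong\cEnd(\Xi)$ with $\Xi=\rho_{0}\oplus\cdots\oplus\rho_{n-1}$ obtained over $k_{1}$ in Theorem \ref{Theorem for field}, and locally it is exactly the computation already carried out in Lemma \ref{pushforward of sheaf}. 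Since $\rho_{0}\oplus\cdots\oplus\rho_{n-1}$ has all of its graded pieces nonzero, Lemma \ref{equivalence lemma} applies and shows that $\widetilde{p'}^{*}:\Coh(\widetilde{X},p'_{*}\widetilde{q}^{*}\cA_{(\mathcal{L},\alpha)})\to\Coh(\sX_{\widetilde{X}},\widetilde{q}^{*}\cA_{(\mathcal{L},\alpha)})$ is an equivalence, in particular $\widetilde{p'}^{*}\widetilde{p'}_{*}\xrightarrow{\sim}\operatorname{id}$.

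Finally I would descend. The Cartesian square of $(p,q)$ lifts to a Cartesian square of noncommutative varieties with top arrow $\widetilde{q}'$, left arrow $\widetilde{p'}$, and bottom arrow $\widetilde{q}_{0}:(\widetilde{X},p'_{*}\widetilde{q}^{*}\cA_{(\mathcal{L},\alpha)})\to(X,p_{*}\cA_{(\mathcal{L},\alpha)})$; since $q$ is finite flat, flat base change for the coarse spaces together with Lemmas \ref{fiber product} and \ref{flat equal} gives the base change isomorphism $\widetilde{p'}_{*}\widetilde{q}'^{*}\cong\widetilde{q}_{0}^{*}\widetilde{p}_{*}$. Now for $H\in\Coh(\sX,\cA_{(\mathcal{L},\alpha)})$ form the distinguished triangle $\widetilde{p}^{*}\widetilde{p}_{*}H\to H\to G^{\bullet}\to\widetilde{p}^{*}\widetilde{p}_{*}H[1]$. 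Applying $\widetilde{q}'^{*}$ and using commutativity of the square together with the base change isomorphism yields $\widetilde{q}'^{*}\widetilde{p}^{*}\widetilde{p}_{*}H\cong\widetilde{p'}^{*}\widetilde{q}_{0}^{*}\widetilde{p}_{*}H\cong\widetilde{p'}^{*}\widetilde{p'}_{*}\widetilde{q}'^{*}H\cong\widetilde{q}'^{*}H$, so $\widetilde{q}'^{*}$ sends the first map to an isomorphism and hence $\widetilde{q}'^{*}G^{\bullet}=0$. Because $q$, and therefore $\widetilde{q}'$, is faithfully flat, the underlying $\mathcal{O}$-module pullback is conservative, so $G^{\bullet}=0$ and $\widetilde{p}^{*}\widetilde{p}_{*}H\xrightarrow{\sim}H$, completing the proof. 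The main obstacle is the identification $\widetilde{q}^{*}\cA_{(\mathcal{L},\alpha)}\cong\cEnd(\rho_{0}\oplus\cdots\oplus\rho_{n-1})$ in the second step: one must check that the $\alpha$-twisted $\mu_{n}$-action of Construction \ref{construction1} genuinely trivializes to the diagonal action over $\widetilde{X}$ and glue the local pictures of Lemma \ref{pushforward of sheaf} into this global statement. Once this is in place, full faithfulness and faithfully flat descent make the remaining steps formal.
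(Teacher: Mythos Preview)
Your proposal is correct and follows essentially the same approach as the paper: both base-change along the $\mu_{n}$-torsor $q:\widetilde{X}\to X$, observe that the pulled-back Azumaya algebra becomes $\cEnd(\rho_{0}\oplus\cdots\oplus\rho_{n-1})$ so that Lemma~\ref{equivalence lemma} applies on the cover, and then descend via the distinguished-triangle and faithful-flatness argument from Theorem~\ref{Theorem for field}. The paper's own proof is in fact just a one-line pointer to ``the same techniques in Theorem~\ref{Theorem for field}'', and you have faithfully unpacked precisely those techniques.
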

\begin{proof}
    We have following Cartesian diagram:
\begin{center}
\begin{tikzcd}
\widetilde{\sX} \arrow[r, "q_{1}"] \arrow[d, "p_{1}"] & \sX \arrow[d, "p"] \\
\widetilde{X} \arrow[r, "q"]                & X,               
\end{tikzcd}
\end{center}
where $q: \widetilde{X}\to X$ is the $\mu_{n}$-torsor corresponding to the class $(\mathcal{L},\alpha)\in \HH^{1}(X,\mu_{n})$ and $\widetilde{\sX}$ is the $\mu_{n}$-gerbe of line bundle $q^{*}L$. Note that $[q_{1}^{*}\cA_{(\mathcal{L},\alpha)}]=0$ in $\Br(\widetilde{\sX}).$  Then by the same techniques in Theorem \ref{Theorem for field}, we know the functor $\widetilde{p}^{*}$ is an equivalence.
\end{proof}
Now we begin to prove the main theorems in this section. 
We first assume $\Br(X)=0$. In this case, we have $\Br(\sX)=\HH^{1}(X, \mu_{n})$. Let $\cA$ and $\cB$ be Azumaya algebras over $\sX$ such that $[\cA]$ and $[\cB]$ correspond to the $\mu_{n}$-torsors $q_{1}: \widetilde{X}_{1} \to X$ and $q_{2}: \widetilde{X}_{2} \to X$, respectively. Then we have the following theorem, which generalizes Theorem \ref{Theorem for field}.

\begin{theorem}\label{Theorem root gerbe}
Assume $\Br(X)=0$.
	Then $\cA$ and $\cB$ are Morita equivalent if and only if there exists an isomorphism $f: \widetilde{X}_{1}\xrightarrow{\sim} \widetilde{X}_{2}$ over $k$.
\end{theorem}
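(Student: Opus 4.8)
The plan is to reduce the twisted category $\Coh(\sX,\cA)$ to the ordinary category of coherent sheaves on the torsor $\widetilde{X}_{1}$, and likewise $\Coh(\sX,\cB)$ to $\Coh(\widetilde{X}_{2})$, after which a reconstruction theorem finishes the argument. Since $\Br(X)=0$, Proposition \ref{exact sequence} identifies $\Br(\sX)$ with $\HH^{1}(X,\mu_{n})$, and by Proposition \ref{description of injective} the class $[\cA]$ equals $[\cA_{(\mathcal{L}_{1},\alpha_{1})}]$ for the pair $(\mathcal{L}_{1},\alpha_{1})\in\HH^{1}(X,\mu_{n})$ corresponding to the torsor $q_{1}\colon\widetilde{X}_{1}\to X$; similarly $[\cB]=[\cA_{(\mathcal{L}_{2},\alpha_{2})}]$. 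Because Brauer equivalence implies Morita equivalence (via Proposition \ref{equivalence between twisted sheaf and Azumaya algebra} and Lemma \ref{same cohomology class}), I may replace $\cA$ by $\cA_{(\mathcal{L}_{1},\alpha_{1})}$ and $\cB$ by $\cA_{(\mathcal{L}_{2},\alpha_{2})}$ without changing whether they are Morita equivalent.

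Next I would compute the two categories. By Lemma \ref{halfmaintheorem} the functor $\widetilde{p}^{*}$ is a $k$-linear equivalence $\Coh(X,p_{*}\cA_{(\mathcal{L}_{i},\alpha_{i})})\cong\Coh(\sX,\cA_{(\mathcal{L}_{i},\alpha_{i})})$, and by Lemma \ref{pushforward of sheaf} we have $p_{*}\cA_{(\mathcal{L}_{i},\alpha_{i})}\cong q_{i*}\mathcal{O}_{\widetilde{X}_{i}}$. Since $q_{i}\colon\widetilde{X}_{i}\to X$ is finite, hence affine, pushforward along $q_{i}$ identifies quasi-coherent sheaves on $\widetilde{X}_{i}$ with quasi-coherent modules over the commutative $\mathcal{O}_{X}$-algebra $q_{i*}\mathcal{O}_{\widetilde{X}_{i}}$, restricting to a $k$-linear equivalence $\Coh(\widetilde{X}_{i})\cong\Coh(X,q_{i*}\mathcal{O}_{\widetilde{X}_{i}})$; this is just the identification $\widetilde{X}_{i}=\underline{\Spec}_{X}(q_{i*}\mathcal{O}_{\widetilde{X}_{i}})$. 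Composing the three equivalences yields $k$-linear equivalences
\[
\Coh(\sX,\cA)\cong\Coh(\widetilde{X}_{1}),\qquad\Coh(\sX,\cB)\cong\Coh(\widetilde{X}_{2}).
\]

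With this in hand both implications are immediate. If $f\colon\widetilde{X}_{1}\xrightarrow{\sim}\widetilde{X}_{2}$ is an isomorphism over $k$, then $f^{*}$ is a $k$-linear equivalence $\Coh(\widetilde{X}_{2})\cong\Coh(\widetilde{X}_{1})$, and concatenating gives $\Coh(\sX,\cA)\cong\Coh(\sX,\cB)$, i.e. $\cA$ and $\cB$ are Morita equivalent. Conversely, a Morita equivalence $\Coh(\sX,\cA)\cong\Coh(\sX,\cB)$ transports to a $k$-linear equivalence $\Coh(\widetilde{X}_{1})\cong\Coh(\widetilde{X}_{2})$; since $\widetilde{X}_{1}$ and $\widetilde{X}_{2}$ are Noetherian schemes (finite over the Noetherian $X$), the $k$-linear form of Gabriel's reconstruction theorem---equivalently Lemma \ref{twisted theorem antieau} applied with the trivial Azumaya algebras $\mathcal{O}_{\widetilde{X}_{1}}$ and $\mathcal{O}_{\widetilde{X}_{2}}$---produces an isomorphism $f\colon\widetilde{X}_{1}\xrightarrow{\sim}\widetilde{X}_{2}$ over $k$.

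The genuinely new content is already packaged into Lemmas \ref{halfmaintheorem} and \ref{pushforward of sheaf}, so the remaining work is bookkeeping. The point that needs care is $k$-linearity: Gabriel's theorem as recalled in Theorem \ref{reconsctruction theorem} only asserts an abstract isomorphism, whereas the statement demands an isomorphism \emph{over $k$}, so I would invoke the $k$-linear refinement (Lemma \ref{twisted theorem antieau} with trivial twist) rather than Theorem \ref{reconsctruction theorem} itself, and check that each equivalence in the chain---the Brauer-to-Morita step, the functor $\widetilde{p}^{*}$, and the affine pushforward $q_{i*}$---is $k$-linear. I expect the affine-pushforward identification $\Coh(\widetilde{X}_{i})\cong\Coh(X,q_{i*}\mathcal{O}_{\widetilde{X}_{i}})$ to be the only spot requiring an explicit (if routine) verification, as this is where commutativity of $q_{i*}\mathcal{O}_{\widetilde{X}_{i}}$ is used to pass between sheaves on a scheme and modules over a sheaf of algebras.
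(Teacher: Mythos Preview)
Your proposal is correct and follows essentially the same route as the paper: replace $\cA$ by the explicit algebra $\cA_{(\mathcal{L},\alpha)}$ via Proposition~\ref{description of injective}, apply Lemma~\ref{halfmaintheorem} and Lemma~\ref{pushforward of sheaf} to obtain $\Coh(\sX,\cA)\cong\Coh(\widetilde{X}_{1})$, and conclude by reconstruction. Your write-up is in fact more careful than the paper's, which simply cites Theorem~\ref{reconsctruction theorem} (the abstract-abelian-category Gabriel theorem) at the last step; your observation that the ``over $k$'' conclusion requires the $k$-linear refinement, and your suggestion to invoke Lemma~\ref{twisted theorem antieau} with trivial twist, is a genuine improvement in precision.
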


\begin{proof} 
By Proposition of \ref{description of injective}, there exist $(\mathcal{L}, \alpha )\in \HH^{1}(X, \mu_{n})$,  such that $[\cA]=[\cA_{(\mathcal{L}, \alpha)} ]$. By Lemma \ref{pushforward of sheaf}, we have $$\Coh(\sX, \cA)\cong \Coh(\sX, \cA_{(\mathcal{L},\alpha)})\cong \Coh(X,p_{*}\cA_{(\mathcal{L},\alpha)})\cong \Coh(\widetilde{X}_{1}).$$
The theorem follows from Gabriel's theorem \ref{reconsctruction theorem}.	 
\end{proof}

\begin{remark}
Note that $[\cA]$ and $[\cB]$ generate the same subgroup of $\Br(\sX) = \HH^{1}(X, \mu_{n})$ if and only if there exists an isomorphism $f: \widetilde{X}_{1} \to \widetilde{X}_{2}$ over $X$. In this case, if $[\cA]$ and $[\cB]$ generate the same subgroup of $\Br(\sX)$, then  Theorem \ref{Theorem root gerbe} implies that they are Morita equivalent. However, unlike Theorem \ref{Theorem for field},  Morita equivalent Azumaya algebras on $\sX$ do not necessarily generate the same subgroup of $\Br(\sX)$.
  
\end{remark}
  In general, by Proposition  \ref{exact sequence} we have an isomorphism 
$$\psi: \HH^{1}(X,\mu_{n})\oplus \Br(X)\xrightarrow{\sim} \Br(\sX): ((\mathcal{L}, \alpha), [\cA'])\mapsto [\cA_{(\mathcal{L},\alpha)}\otimes p^{*}\cA'].  $$
 We can generalize Lemma \ref{halfmaintheorem} in the following way.
\begin{lemma}\label{actualmaintheorem}
    We have an equivalence of categories
    $$\widetilde{p}^{*}: \Coh(\tX, q^{*}\cA')\xrightarrow{\sim} \Coh(\sX, \cA_{(\mathcal{L},\alpha)}\otimes p^{*}\cA'), $$
where $q: \widetilde{X}\to X$ is the $\mu_{n}$-torsor corresponding to $(\mathcal{L}, \alpha)\in \HH^{1}(X,\mu_{n})$.
\end{lemma}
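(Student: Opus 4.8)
The plan is to realize the functor $\widetilde{p}^{*}$ in the statement as a composite of two equivalences, thereby reducing everything to the already-established Lemma~\ref{halfmaintheorem}. Write $\cB:=\cA_{(\mathcal{L},\alpha)}\otimes p^{*}\cA'$ for brevity. First I would record the algebra identity
$$p_{*}\cB=p_{*}(\cA_{(\mathcal{L},\alpha)}\otimes p^{*}\cA')\cong (p_{*}\cA_{(\mathcal{L},\alpha)})\otimes\cA'\cong q_{*}\OO_{\tX}\otimes\cA'\cong q_{*}(q^{*}\cA'),$$
as sheaves of $\OO_{X}$-algebras: the first and last isomorphisms are the projection formula (Lemma~\ref{projection formula}, using that $p_{*}$ and $q_{*}$ are exact and that $\cA'$, $\OO_{\tX}$ are locally free), and the middle one is Lemma~\ref{pushforward of sheaf}; all three respect the multiplications because these are the componentwise tensor multiplications. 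Since $q\colon\tX\to X$ is finite (a $\mu_{n}$-torsor), pushforward gives an equivalence $q_{*}\colon\Coh(\tX,q^{*}\cA')\xrightarrow{\sim}\Coh(X,q_{*}q^{*}\cA')=\Coh(X,p_{*}\cB)$. Thus it remains only to prove that $\widetilde{p}^{*}\colon \Coh(X,p_{*}\cB)\to \Coh(\sX,\cB)$ is an equivalence, and the composite $\widetilde{p}^{*}\circ q_{*}$ is then the equivalence asserted in the statement.

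For this last step --- which is the heart of the matter --- I would upgrade the untwisted equivalence of Lemma~\ref{halfmaintheorem}, namely $\widetilde{p}^{*}\colon\Coh(X,p_{*}\cA_{(\mathcal{L},\alpha)})\xrightarrow{\sim}\Coh(\sX,\cA_{(\mathcal{L},\alpha)})$, to the $\cA'$-twisted setting. The key observation is the projection-formula compatibility
$$\widetilde{p}^{*}(G\otimes_{\OO_{X}}N)\cong \widetilde{p}^{*}(G)\otimes_{\OO_{\sX}}p^{*}N \qquad (G\in\Coh(X,p_{*}\cA_{(\mathcal{L},\alpha)}),\ N\in\Coh(X)),$$
which holds because $p^{*}N$ is a central $\OO_{\sX}$-factor and commutes with the $\cA_{(\mathcal{L},\alpha)}$-balanced tensor product defining $\widetilde{p}^{*}$. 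Consequently the equivalence of Lemma~\ref{halfmaintheorem} intertwines ``tensor with $\cA'$'' on the base with ``tensor with $p^{*}\cA'$'' on the gerbe, and hence carries objects equipped with a compatible right $\cA'$-action to objects equipped with a compatible right $p^{*}\cA'$-action. These two categories are exactly $\Coh(X,p_{*}\cA_{(\mathcal{L},\alpha)}\otimes\cA')=\Coh(X,p_{*}\cB)$ and $\Coh(\sX,\cA_{(\mathcal{L},\alpha)}\otimes p^{*}\cA')=\Coh(\sX,\cB)$, so $\widetilde{p}^{*}$ induces the desired equivalence.

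I expect this promotion to the twisted categories to be the main obstacle, since one must check that the $\cA'$-module structures genuinely correspond under the equivalence (e.g. that the unit and counit of the adjunction $\widetilde{p}^{*}\dashv\widetilde{p}_{*}$ are $\cA'$-linear), not merely that the underlying objects match. A more computational route that avoids this bookkeeping is to re-run the descent argument of Theorem~\ref{Theorem for field} along the Cartesian square of Lemma~\ref{halfmaintheorem}: on $\widetilde{\sX}$ one has $q_{1}^{*}\cB\cong\cEnd(\Xi)\otimes p_{1}^{*}q^{*}\cA'$ with $\Xi=\rho_{0}\oplus\cdots\oplus\rho_{n-1}$, and a twisted version of Lemma~\ref{equivalence lemma} shows $\widetilde{p_{1}}^{*}$ is an equivalence there; the extra input needed is the orthogonal decomposition $\Coh(\widetilde{\sX},p_{1}^{*}q^{*}\cA')\cong\bigoplus_{i}\Coh(\tX,q^{*}\cA')\rho_{i}$ coming from Lemma~\ref{decomposition}, which lets the cone-vanishing computation go through verbatim. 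One then descends from $\widetilde{\sX}$ to $\sX$ using base change (Lemma~\ref{flat equal}) and the full faithfulness of $\widetilde{p}^{*}$ (Proposition~\ref{fully faithful}) exactly as in Theorem~\ref{Theorem for field}, concluding $\widetilde{p}^{*}\widetilde{p}_{*}\xrightarrow{\sim}\id$ because $q_{1}$ is faithfully flat.
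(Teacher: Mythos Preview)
Your proposal is correct; the two routes you sketch both work. The paper's own proof is a single sentence, ``This follows from the proof of Theorem~\ref{general theorem},'' so let me unpack what that buys and how it compares.

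The argument of Theorem~\ref{general theorem} globalized is precisely your second route, but with one extra layer of cover: after passing to $\widetilde{\sX}\to\sX$ (which kills $[\cA_{(\mathcal{L},\alpha)}]$, giving $q_{1}^{*}\cB\cong\cEnd(\Xi)\otimes p_{1}^{*}q^{*}\cA'$), the paper passes to a \emph{further} \'etale cover on which $q^{*}\cA'$ itself trivializes, so that the pullback of $\cB$ literally becomes $\cEnd(\Xi\otimes E)$ for a vector bundle $E$. At that point Lemma~\ref{equivalence lemma} applies verbatim, with no twisted version needed, and two applications of the descent step from Theorem~\ref{Theorem for field} bring the equivalence back down to $\sX$. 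Your variant stays on $\widetilde{\sX}$ and instead proves a twisted analogue of Lemma~\ref{equivalence lemma} using the decomposition $\Coh(\widetilde{\sX},p_{1}^{*}q^{*}\cA')\cong\bigoplus_{i}\Coh(\tX,q^{*}\cA')\rho_{i}$; this is valid and saves one descent, at the cost of redoing the cone computation in the twisted category. Your first route---promoting the equivalence of Lemma~\ref{halfmaintheorem} along the central tensor action of $\cA'$---is a genuinely different, more categorical argument not present in the paper; it is cleaner once the $\cA'$-linearity of the adjunction unit/counit is checked, but the paper's double-cover approach has the virtue of reducing everything mechanically to the single untwisted Lemma~\ref{equivalence lemma}.
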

\begin{proof}
    This follows from the proof of Theorem \ref{general theorem}.
\end{proof}

Note that Lemma \ref{actualmaintheorem} has many interesting applications. For instance, it demonstrates that a decomposable category can become indecomposable under a Brauer twist. One such example is as follows:
\begin{example}\label{elliptic}
Let $X$ be an elliptic curve over $\CC$, and $\sX$ be the $\mu_{n}$-gerbe of any line bundle. Then we have $$D^{b}(\sX)\cong \oplus_{k=0}^{n-1} D^{b}(X)\rho_{k}. $$ 
Let $\cA'=0$, $0\neq \cL\in \HH^{1}(X, \mu_{2})=\Pic(X)[2]$, and $\cA_{(\cL, 1)}$ be the Azumaya algebra over $\sX$ defined in Construction \ref{construction1}. Then by Lemma \ref{actualmaintheorem}, we have 
$$D^{b}(\sX, \cA_{(\cL, 1)}) \cong D^{b}(\tX), $$
where $\tX$ is also an elliptic curve. Hence $D^{b}(\sX, \cA_{(\cL, 1)})$ is indecomposable.
\end{example}

 Let $\cA$ and $\cB$ be two Azumaya algebras over $\sX$, such that $[\cA]=\psi((\mathcal{L}_{1}, \alpha_{1}), [\cA'])$ and $[\cB]=\psi((\mathcal{L}_{2}, \alpha_{2} ), [\cB'])$ in $\Br(\sX)$. Let $q_{1}: \widetilde{X}_{1}\to X, \ q_{2}: \widetilde{X}_{2}\to X$ be the corresponding $\mu_{n}$-torsors defined by $(\mathcal{L}_{1}, \alpha_{1})$ and $(\mathcal{L}_{2}, \alpha_{2})$, respectively. As Theorem \ref{general theorem}, we have the following theorem.
 \begin{theorem}\label{Theorem root gerbe general}     Let $\cA$ and $\cB$ be two Azumaya algebras as above. Then $\cA$ and $\cB$ are Morita equivalent if and only if   there exists an isomorphism $f: \widetilde{X}_{1}\xrightarrow{\sim} \widetilde{X}_{2}$  over $k$, such that $[q_{1}^{*}\cA']=[f^{*}q_{2}^{*}\cB']$ in $\Br(\widetilde{X}_{1})$.
 \end{theorem}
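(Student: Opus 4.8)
The plan is to reduce the statement to Antieau's twisted version of Gabriel's theorem (Lemma \ref{twisted theorem antieau}) by using the equivalence of categories established in Lemma \ref{actualmaintheorem}, which is precisely the tool that converts the twisted module categories on the gerbe $\sX$ into twisted module categories on the associated $\mu_{n}$-torsors. The entire argument is a two-step transport: first off the gerbe, then apply the known reconstruction theorem on schemes.

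First I would replace $\cA$ and $\cB$ by the canonical representatives of their Brauer classes coming from the splitting $\psi$. Since $[\cA]=\psi((\mathcal{L}_{1},\alpha_{1}),[\cA'])=[\cA_{(\mathcal{L}_{1},\alpha_{1})}\otimes p^{*}\cA']$ in $\Br(\sX)$, and Brauer-equivalent Azumaya algebras have equivalent module categories (Lemma \ref{same cohomology class} together with Proposition \ref{equivalence between twisted sheaf and Azumaya algebra}), we obtain $k$-linear equivalences $\Coh(\sX,\cA)\cong \Coh(\sX,\cA_{(\mathcal{L}_{1},\alpha_{1})}\otimes p^{*}\cA')$ and $\Coh(\sX,\cB)\cong \Coh(\sX,\cA_{(\mathcal{L}_{2},\alpha_{2})}\otimes p^{*}\cB')$. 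Now Lemma \ref{actualmaintheorem} supplies exact equivalences $\widetilde{p}^{*}\colon \Coh(\widetilde{X}_{1},q_{1}^{*}\cA')\xrightarrow{\sim}\Coh(\sX,\cA_{(\mathcal{L}_{1},\alpha_{1})}\otimes p^{*}\cA')$ and $\widetilde{p}^{*}\colon \Coh(\widetilde{X}_{2},q_{2}^{*}\cB')\xrightarrow{\sim}\Coh(\sX,\cA_{(\mathcal{L}_{2},\alpha_{2})}\otimes p^{*}\cB')$. Composing, $\cA$ is Morita equivalent to $\cB$ if and only if
\[
\Coh(\widetilde{X}_{1},q_{1}^{*}\cA')\cong \Coh(\widetilde{X}_{2},q_{2}^{*}\cB')
\]
as $k$-linear abelian categories.

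It then remains to invoke Lemma \ref{twisted theorem antieau}. To do so I must check its hypotheses: $\widetilde{X}_{1}$ and $\widetilde{X}_{2}$ are Noetherian schemes over $k$, which holds because each $q_{i}\colon \widetilde{X}_{i}\to X$ is finite (indeed $\widetilde{X}_{i}=\Spec\bigoplus_{j=0}^{n-1}\mathcal{L}_{i}^{\otimes j}$) over the smooth projective, hence Noetherian, variety $X$; and $q_{i}^{*}\cA'$, $q_{i}^{*}\cB'$ are Azumaya algebras by pullback. Lemma \ref{twisted theorem antieau} then yields that the displayed equivalence holds if and only if there is an isomorphism $f\colon \widetilde{X}_{1}\xrightarrow{\sim}\widetilde{X}_{2}$ over $k$ with $[f^{*}q_{2}^{*}\cB']=[q_{1}^{*}\cA']$ in $\Br(\widetilde{X}_{1})$, which is exactly the asserted criterion.

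I expect the genuinely substantive content to be entirely concentrated in Lemma \ref{actualmaintheorem} (which itself rests on the proof of Theorem \ref{general theorem}); the present argument is a clean concatenation of that equivalence with Antieau's theorem. The only points requiring care are bookkeeping ones: confirming that the equivalences in play are equivalences of \emph{abelian} $k$-linear categories rather than merely derived equivalences — this is automatic since $\widetilde{p}^{*}$ is exact and an equivalence — and noting that no connectedness of the $\widetilde{X}_{i}$ is needed, since Lemma \ref{twisted theorem antieau} is stated for arbitrary Noetherian schemes and thus accommodates the possibly disconnected torsors that arise when $\mathcal{L}_{i}$ has order strictly less than $n$. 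Thus the main obstacle is not in this proof but in having the hypotheses of the two input lemmas correctly aligned.
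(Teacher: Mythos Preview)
Your proposal is correct and follows exactly the paper's own argument: the paper's proof consists of the single sentence ``This theorem follows from Lemma \ref{twisted theorem antieau} and Lemma \ref{actualmaintheorem},'' and you have simply unpacked that sentence, including the preliminary replacement of $\cA$ and $\cB$ by their canonical representatives and the verification of hypotheses.
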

 \begin{proof}
     This theorem follow from Lemma \ref{twisted theorem antieau} and Lemma \ref{actualmaintheorem}.
 \end{proof}

\begin{remark}
It is reasonable to expect that the theorems above can be generalized to $\mu_{n}$-gerbes of line bundles over Noetherian schemes using the same idea.
\end{remark}

\section*{References}
\bibliographystyle{alpha}
\renewcommand{\section}[2]{} 
\bibliography{ref}

\end{document}